   \def\MR#1{}
\long\def\@savemarbox#1#2{\global\setbox#1\vtop{\hsize\marginparwidth 
  \@parboxrestore\tiny\raggedright #2}}
\renewcommand*{\backref}[1]{}
\renewcommand*{\backrefalt}[4]{
  \ifcase #1
  [No citations.]
  \or [#2]
  \else [#2]
  \fi }
\theoremstyle{plain}
\newtheorem{theorem}{Theorem}
\numberwithin{theorem}{section}
\newtheorem{corollary}[theorem]{Corollary}
\newtheorem{lemma}[theorem]{Lemma}
\newtheorem{proposition}[theorem]{Proposition}
\newtheorem{claim}[theorem]{Claim}
\newtheorem*{namedtheorem}{\theoremname}
\newcommand{\theoremname}{testing}
\theoremstyle{definition}
\newtheorem{definition}[theorem]{Definition}
\newtheorem{remark}[theorem]{Remark}
\numberwithin{equation}{section}
\newcommand{\refthm}[1]{Theorem~\ref{Thm:#1}}
\newcommand{\reflem}[1]{Lemma~\ref{Lem:#1}}
\newcommand{\refprop}[1]{Proposition~\ref{Prop:#1}}
\newcommand{\refcor}[1]{Corollary~\ref{Cor:#1}}
\newcommand{\refrem}[1]{Remark~\ref{Rem:#1}}
\newcommand{\refclaim}[1]{Claim~\ref{Claim:#1}}
\newcommand{\refeqn}[1]{\eqref{Eqn:#1}}
\newcommand{\refitm}[1]{\eqref{Itm:#1}}
\newcommand{\refdef}[1]{Definition~\ref{Def:#1}}
\newcommand{\refsec}[1]{Section~\ref{Sec:#1}}
\newcommand{\reffig}[1]{Figure~\ref{Fig:#1}}
\newcommand{\HH}{{\mathbb{H}}}
\newcommand{\RR}{{\mathbb{R}}}
\newcommand{\ZZ}{{\mathbb{Z}}}
\newcommand{\NN}{{\mathbb{N}}}
\newcommand{\CC}{{\mathbb{C}}}
\newcommand{\PP}{{\mathbb{P}}}
\newcommand{\calC}{\mathcal{C}}
\newcommand{\calT}{\mathcal{T}}
\newcommand{\calA}{\mathcal{A}}
\newcommand{\calL}{\mathcal{L}}
\newcommand{\calE}{\mathcal{E}}
\newcommand{\calN}{\mathcal{N}}
\newcommand{\from}{\colon} 
\newcommand{\cross}{\times}
\newcommand{\bdy}{\partial}
\newcommand{\vol}{\operatorname{vol}}
\newcommand{\PSL}{\operatorname{PSL}}
\newcommand{\area}{\operatorname{area}}
\newcommand{\len}{\operatorname{len}}
\newcommand{\dhyp}{{d_{\mathrm{hyp}}}}
\newcommand{\Rmin}{{R_{\min}}}
\newcommand{\zmin}{{z_{\min}}}
\renewcommand{\ss}{{\mathbf{s}}}
\newcommand{\injrad}{{\operatorname{injrad}}}
\newcommand{\haze}{{\operatorname{haze}}}
\newcommand{\ERROR}{{10^{-5}}}
\newcommand{\Isom}{{\operatorname{Isom}}}
\newcommand{\Mod}{{\operatorname{Mod}}}
\newcommand{\MC}{{\mathcal{MC}}}
\newcommand{\ML}{{\mathcal{ML}}}
\newcommand{\PML}{{\mathcal{PML}}}
\DeclareMathOperator{\arctanh}{arctanh}
\title
{Effective drilling and filling of tame hyperbolic 3-manifolds}
\author[D.~Futer]{David Futer}
\address[]{Department of Mathematics, Temple University,
Philadelphia, PA 19122, USA}
\email[]{dfuter@temple.edu}
\author[J.~Purcell]{Jessica S.~Purcell}
\address[]{School of Mathematics, Monash University, VIC 3800, Australia }
\email[]{jessica.purcell@monash.edu}
\author[S.~Schleimer]{Saul Schleimer}
\address[]{Department of Mathematics, 
University of Warwick, Coventry CV4 7AL, UK}
\email[]{s.schleimer@warwick.ac.uk}
\subjclass[2020]{57K32, 30F40}
\thanks{\today}
\begin{document}

\begin{abstract}
We give effective bilipschitz bounds on the change in metric between thick parts of a cusped hyperbolic 3-manifold and its long Dehn fillings. In the thin parts of the manifold, we give effective bounds on the change in complex length of a short closed geodesic. These results quantify the filling theorem of Brock and Bromberg, and extend previous results of the authors from finite volume hyperbolic 3-manifolds to any tame hyperbolic 3-manifold. To prove the main results, we assemble tools from Kleinian group theory into a template for transferring theorems about finite-volume manifolds into theorems about infinite-volume manifolds. We also prove and apply an infinite-volume version of the 6-Theorem.
\end{abstract}

\maketitle

\section{Introduction}

Thurston's celebrated hyperbolic Dehn surgery theorem says that almost all Dehn fillings of a cusped hyperbolic 3-manifold produce closed hyperbolic 3-manifolds. The other direction is also true: drilling a closed geodesic from a hyperbolic 3-manifold produces another hyperbolic 3-manifold, with a cusp \cite{agol:drilling}. These original results provide the existence of a hyperbolic metric but do not construct it. Hodgson and Kerckhoff's subsequent work \cite{hk:ConeRigidity, hk:univ, hk:shape} produces a way to continuously interpolate between the drilled and filled manifolds via a family of manifolds with cone singularities, in a process called \emph{cone deformation}. Their work provides analytic control over quantities such as volume. 

Bromberg extended the theory of cone deformations to infinite-volume hyperbolic 3-manifolds~\cite{bromberg:conemflds}. Brock and Bromberg further proved bilipschitz estimates on the change in geometry for such manifolds~\cite{brock-bromberg:density}. Their results are \emph{uniform}, in the sense that the change in geometry is controlled by constants independent of the manifold. However, they are not \emph{effective}, in the sense that the constants are not explicitly given. For instance, Brock and Bromberg's drilling theorem is as follows.

\begin{theorem}[Drilling theorem, \cite{brock-bromberg:density}]\label{Thm:BBDrillingThm}
Fix $J > 1$ and $\epsilon > 0$, where $\epsilon$ is smaller than the Margulis constant $\epsilon_3$.  Then there is a number $\ell_0 = \ell_0(\epsilon, J) > 0$ such that the following holds for every geometrically finite hyperbolic 3-manifold $Y$ without rank-1 cusps. 
Suppose that $\Sigma \subset Y$ is a geodesic link, whose total length is less than $\ell_0$. Then $Y - \Sigma$ admits a hyperbolic structure $Z$ with the same end invariants as those of $Y$.
Furthermore, the inclusion
\[ 
\iota \from Z \hookrightarrow Y
\]
restricts to a $J$--bilipschitz diffeomorphism on the complement of $\epsilon$--thin tubes about $\Sigma$. 
\end{theorem}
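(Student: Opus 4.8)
The plan is to deduce this theorem from the \emph{cone deformation} machinery of Hodgson--Kerckhoff \cite{hk:ConeRigidity, hk:univ, hk:shape}, in the geometrically finite generality established by Bromberg \cite{bromberg:conemflds}. Viewing $Y = Y_{2\pi}$ as a hyperbolic cone manifold with cone angle $2\pi$ along the link $\Sigma$, I would decrease the cone angles of all components of $\Sigma$ simultaneously, seeking a one--parameter family $\{Y_t\}_{t \in [0,2\pi]}$ of geometrically finite cone manifolds with cone angle $t$ along $\Sigma$ and with end invariants independent of $t$. If this family exists, its endpoint $Y_0$ is a complete hyperbolic structure on $Y - \Sigma$ in which $\Sigma$ has been opened to rank--two cusps, and this is the manifold $Z$. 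Local rigidity of cone structures --- Hodgson--Kerckhoff in the closed case, Bromberg in the geometrically finite case, where the no--rank--$1$--cusp hypothesis keeps us inside the deformation space --- shows that wherever the family exists it is smoothly parametrized by $t$, so that the set of attainable cone angles is open.

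The first substantive step is to show that the deformation continues all the way to $t = 0$. The only obstruction to continuation is degeneration of the geometry near $\Sigma$; by the Hodgson--Kerckhoff estimates this is ruled out once the total length $\len(\Sigma)$ in $Y$ is below a universal threshold $\ell_0$, which forces a uniform lower bound on the radius of an embedded tube about the singular locus that persists for every $t \in [0, 2\pi]$. With this tube bound in hand, closedness of the interval of attainable angles follows from a compactness argument (geometric, equivalently Chabauty, convergence of the associated holonomy representations). Hence the family reaches $Y_0 = Z$.

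The second substantive step is the bilipschitz estimate. The infinitesimal change $\tfrac{d}{dt}Y_t$ is represented by a harmonic strain field $\eta_t$ on the complement of the singular locus; the Hodgson--Kerckhoff/Bromberg inequalities bound its $L^2$ norm over a maximal embedded tube about $\Sigma$ in terms of the core lengths and cone angles, and a mean--value inequality for such fields converts this into a pointwise bound outside an embedded tube of definite radius, decaying exponentially with distance from $\Sigma$. Integrating the pointwise bound over $t \in [0, 2\pi]$ controls $\log$ of the bilipschitz constant of the natural map $Y_{2\pi} \to Y_0$ (the identity on the underlying smooth manifold) on the complement of the $\epsilon$--thin tubes about $\Sigma$; taking $\ell_0$ smaller if needed makes this constant at most $\log J$, producing the claimed $J$--bilipschitz diffeomorphism $\iota \from Z \hookrightarrow Y$. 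Since the strain fields decay into the ends, the conformal boundary and cusps of $Y_t$ do not move, so $Z$ indeed has the same end invariants as $Y$.

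I expect the continuation step to be the main obstacle: one must guarantee that the embedded tube radius about $\Sigma$ stays bounded below for the \emph{entire} deformation, not merely near $t = 2\pi$, and this is exactly where the smallness of $\len(\Sigma)$ is consumed. A secondary difficulty, and the technical heart of Bromberg's work, is that the Hodgson--Kerckhoff analytic estimates were originally proved for finite--volume cone manifolds and must be re--established in the presence of infinite--volume geometrically finite ends. Finally, the whole argument as sketched is only qualitative; extracting explicit values for $\ell_0$ and the tube radii --- so that the bilipschitz bound is genuinely effective --- is the problem addressed in the rest of the paper.
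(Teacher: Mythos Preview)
Your sketch accurately describes the Brock--Bromberg proof of \refthm{BBDrillingThm}: direct cone deformation on the geometrically finite manifold $Y$, using Bromberg's extension \cite{bromberg:conemflds} of the Hodgson--Kerckhoff analytic estimates to the infinite-volume setting. This is correct, and the difficulties you flag (tube-radius control throughout the deformation, re-proving the harmonic estimates with geometrically finite ends) are indeed the substantive content of \cite{bromberg:conemflds, brock-bromberg:density}.

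However, the present paper does not prove \refthm{BBDrillingThm}; it is quoted as a known result. The paper's own drilling theorems (\refthm{EffectiveDrillingTame} and its non-effective extension \refthm{GeoInfiniteBB}) are proved by a genuinely different route that \emph{avoids} Bromberg's infinite-volume cone deformation analysis altogether. Instead, the paper approximates $Y$ strongly by circle-packed geometrically finite manifolds $V_n$ (via Brooks' theorem), double-doubles the scooped $V_n^\circ$ to obtain \emph{finite-volume} manifolds $DD(V_n^\circ)$, applies the finite-volume effective drilling theorem \cite[Theorem~1.2]{FPS:EffectiveBilipschitz} there, restricts back to $V_n^\circ$ by equivariance, and extracts the bilipschitz map on $Y$ as a geometric limit (\refthm{ConstructBilip}). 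What this buys: the delicate step you identify --- redoing the Hodgson--Kerckhoff $L^2$ and pointwise estimates for harmonic strain fields in the presence of infinite-volume ends --- is replaced by soft limit arguments plus the already-established finite-volume case. The price is a factor of $4$ in the length hypothesis (from the four copies of $\Sigma$ in the double-double) and reliance on the density and approximation theorems from Kleinian group theory.
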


Uniform results such as \refthm{BBDrillingThm} are very useful for studying convergent sequences of hyperbolic manifolds. Indeed, Brock and Bromberg's application was a special case of the density conjecture (compare \refthm{Density}). On the other hand, ineffective results such as \refthm{BBDrillingThm} are hard to apply in the study of individual manifolds. 

In recent years, there has been a major push to make geometric estimates effective. For instance, in a previous paper, we prove an effective version of \refthm{BBDrillingThm} in the special case of finite-volume manifolds~\cite[Theorem 1.2]{FPS:EffectiveBilipschitz}. Among other applications, effective geometric estimates can be used to control Margulis numbers and rule out cosmetic surgeries.

Our main result in this paper is an effective version of  \refthm{BBDrillingThm} for all tame hyperbolic 3-manifolds. A 3-manifold $Y$ is called \emph{tame} if it is the interior of a compact 3-manifold with boundary. By the tameness theorem, due to Agol~\cite{Agol:Tameness} and Calegari and Gabai~\cite{CalegariGabai:Tameness}, a hyperbolic 3-manifold $Y$ is tame if and only if $\pi_1(Y)$ is finitely generated; see \refthm{Tameness}. Thus our results apply to all hyperbolic 3-manifolds with finitely generated fundamental group.

\begin{theorem}[Effective drilling, tame manifolds]\label{Thm:EffectiveDrillingTame}
Let $Y$ be a tame hyperbolic 3-manifold. Fix any $0<\epsilon \leq \log 3$ and any $J>1$. Let $\Sigma$ be a geodesic link in $Y$ whose total length $\ell$ satisfies
\[ \ell < \frac{1}{4} \min\left\{ \frac{\epsilon^5}{6771 \cosh^5(0.6 \epsilon + 0.1475)}, \, \frac{\epsilon^{5/2}\log(J)}{11.35} \right\}. \]
Then $Y - \Sigma$ admits a hyperbolic structure $Z$ with the same end invariants as $Y$. Moreover, there are $J$--bilipschitz inclusions
\[
\varphi \from Y^{\geq \epsilon} \hookrightarrow Z^{\geq \epsilon/1.2}, 
\qquad
\psi \from Z^{\geq \epsilon} \hookrightarrow Y^{\geq \epsilon/1.2}.
\]
\end{theorem}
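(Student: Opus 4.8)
The plan is to deduce \refthm{EffectiveDrillingTame} from the finite-volume effective drilling theorem \cite[Theorem 1.2]{FPS:EffectiveBilipschitz} by a geometric limit argument. The underlying principle is that a tame hyperbolic $3$--manifold is never far from the finite-volume world: by the tameness theorem $\pi_1(Y)$ is finitely generated, and the density theorem -- in its strengthened form giving strong (i.e.\ simultaneously algebraic and geometric) convergence -- together with standard facts about geometric limits should let us realize $Y$ as a strong limit of finite-volume hyperbolic $3$--manifolds $M_n$, if necessary by first approximating $Y$ by geometrically finite manifolds and then approximating each of those by finite-volume ones and diagonalizing. I would choose baseframes $\omega_n$ in the $\epsilon$--thick part of $M_n$ so that $(M_n,\omega_n)\to(Y,\omega)$ strongly, hence geometrically on larger and larger compact sets. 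Isolating precisely which approximation statement is required, and packaging it so that it can be fed into other theorems, is the \emph{template} referred to in the abstract.

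Next I would push the drilling data onto the approximants. Each component of the geodesic link $\Sigma$ is a closed geodesic contained in a fixed compact subset of $Y$, so for $n$ large it has a corresponding closed geodesic in $M_n$; together these form a geodesic link $\Sigma_n\subset M_n$ with $\len(\Sigma_n)\to\len(\Sigma)=\ell$. Since the length inequality in the hypothesis is strict, for all large $n$ the link $\Sigma_n$ is embedded and $\len(\Sigma_n)$ still satisfies that inequality. Applying \cite[Theorem 1.2]{FPS:EffectiveBilipschitz} to $(M_n,\Sigma_n)$ then yields a complete hyperbolic structure $Z_n$ on $M_n-\Sigma_n$ -- again of finite volume, now with extra rank-$2$ cusps along $\Sigma_n$ -- together with $J$--bilipschitz inclusions $\varphi_n\from M_n^{\geq\epsilon}\hookrightarrow Z_n^{\geq\epsilon/1.2}$ and $\psi_n\from Z_n^{\geq\epsilon}\hookrightarrow M_n^{\geq\epsilon/1.2}$, the constant $J$ and the factor $1.2$ being independent of $n$.

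Now I would take a geometric limit of the drilled manifolds. Basing $Z_n$ at $\varphi_n(\omega_n)$, which lies in the $(\epsilon/1.2)$--thick part, keeps the injectivity radius at the basepoint bounded below, so after passing to a subsequence $(Z_n,\varphi_n(\omega_n))$ converges geometrically to a hyperbolic $3$--manifold $Z_\infty$. The two families of bilipschitz inclusions pin the thick part of $Z_n$ to that of $M_n$ up to bounded distortion, while the thin part of $Z_n$ is the union of the Margulis tubes and cusps of $M_n$ disjoint from $\Sigma_n$ with the new cusps along $\Sigma_n$; passing to the limit and using $M_n\to Y$ strongly, I would identify $Z_\infty$ topologically with $Y-\Sigma$ and identify the ends of $Z_\infty$ inherited from $Y$ with those of $Y$, so that continuity of end invariants under strong convergence gives that $Z:=Z_\infty$ has the same end invariants as $Y$. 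Finally, for each compact $K\subset Y^{\geq\epsilon}$ the maps $\varphi_n|_K$ (defined for $n$ large) subconverge to a $J$--bilipschitz map into $Z^{\geq\epsilon/1.2}$; a diagonal argument along an exhaustion of $Y^{\geq\epsilon}$ assembles these into the desired $\varphi\from Y^{\geq\epsilon}\hookrightarrow Z^{\geq\epsilon/1.2}$, and symmetrically for $\psi$.

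The hard part will be this last step. One has to run the geometric limit so that it truly transports the bilipschitz maps without weakening the constant $J$, match the thin parts of $Z_n$ and locate the newly created cusps accurately enough to recognize $Y-\Sigma$, and -- most substantively -- check that the end invariants of $Y$ survive both the approximation $M_n\to Y$ and the drilling. The last point is a genuine input from the theory of strong convergence of Kleinian groups: the finite-volume manifolds $M_n$ carry no end data of their own, so the end invariants of $Z$ only materialize in the limit. A secondary difficulty -- the substance of the \emph{template} -- is to identify and establish the exact approximation result needed, namely strong approximation of a tame manifold by finite-volume ones with enough geometric control to track $\Sigma$, and to verify that every hypothesis of the finite-volume theorem, the strict length bound above all, is stable under it. (The infinite-volume $6$--Theorem promised in the abstract enters the companion filling statements rather than this one.)
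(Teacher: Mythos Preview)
Your high-level strategy---approximate by finite-volume manifolds, drill there, extract bilipschitz limits---matches the paper's. But the step you treat as routine is exactly the one the paper works hardest to supply, and as written your approximation is not correct.

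The density theorem does \emph{not} yield finite-volume approximants. It gives a sequence of geometrically finite, minimally parabolic representations $\rho_n\in AH(M,P)$ converging strongly to $\rho$; the quotient manifolds $Y_n$ are still infinite-volume (they have conformal boundary). ``Strong'' convergence is a statement about representations of the fixed group $\pi_1(M)$, so it cannot even be formulated for a sequence of finite-volume manifolds with different fundamental groups. To bridge the gap to finite volume, the paper perturbs each $Y_n$ using Brooks' theorem to a circle-packed manifold $V_n$, scoops out half-spaces to get a convex polyhedral core $V_n^\circ$, and then \emph{double-doubles} across the red and blue faces to obtain a genuinely finite-volume $DD(V_n^\circ)$. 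This construction---packaged as \refthm{DrillingApproximation}---is the ``template'' you allude to; it is not a standard fact about geometric limits.

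This omission also explains the factor $\tfrac14$ in the hypothesis, which your sketch leaves unaccounted for. In $DD(V_n^\circ)$ the link $\Sigma_n$ appears four times, so one must apply the finite-volume drilling theorem to $DD(\Sigma_n)$ of length $4\len(\Sigma_n)\approx 4\ell$; see \refeqn{FactorOf4Length}. Your direct approach, if it worked, would not need the $\tfrac14$, which should have been a signal that something was missing.

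Finally, the paper does not construct $Z$ as a geometric limit and then try to recognize it. Instead it builds $Z$ \emph{a priori} via \reflem{DrillHyperbolic} (using the realization theorem), builds drilled approximants $W_n$ with the same conformal boundary as $V_n$, and invokes the approximation theorem plus the ending lamination theorem to show $W_n\to Z$ strongly. Your plan to let $Z_\infty$ emerge as a limit of $Z_n=M_n-\Sigma_n$ and then argue ``continuity of end invariants under strong convergence'' is both harder and ill-posed as stated: the finite-volume $M_n$ carry no end invariants, so there is nothing to be continuous. The extraction of the bilipschitz limit maps (your last paragraph) is done in the paper via \refthm{ConstructBilip}, much as you describe; that part of your outline is fine.
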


We remark that existence of the hyperbolic metric $Z$ does not need any numerical hypotheses; see \reflem{DrillHyperbolic}. Those hypotheses are only needed for the $J$--bilipschitz conclusion.

\refthm{EffectiveDrillingTame} is stronger than Brock and Bromberg's \refthm{BBDrillingThm} in two respects and weaker in one respect.
Most notably, the hypotheses and conclusion of \refthm{EffectiveDrillingTame} are completely effective. 
In addition, \refthm{EffectiveDrillingTame} applies to manifolds with 
rank-1 cusps and geometrically infinite ends, which are excluded by the hypotheses of \refthm{BBDrillingThm}. 
On the other hand, \refthm{BBDrillingThm} extends its bilipschitz control into the thin parts of $Y$ that do not correspond to components of $\Sigma$; this extension is not present in \refthm{EffectiveDrillingTame}. In \refthm{LenBoundDownInfinite} below, we provide some geometric control in the thin parts of $Y$ by estimating the change in complex length of the core geodesics.

We also prove a version of \refthm{EffectiveDrillingTame} with hypotheses on the drilled manifold $Z$ rather than the filled manifold $Y$. That result requires the following definition. If $s$ is a slope on a rank-2 cusp $C$, the \emph{normalized length} of $s$ is
\[ L(s) = \frac{\len(s)}{\sqrt{\area(\bdy C)}}, \]
where $\len(s)$ denotes the length of a Euclidean geodesic representative of $s$ on $\bdy C$, and $\area(\bdy C)$ is the area in the induced Euclidean metric on $\bdy C$. The quantity $L(s)$ is scaling-invariant, hence does not depend on the choice of cusp neighborhood $C$.
Next, suppose we have a tuple of slopes $\ss = (s_1, \dots, s_k)$ on rank-2 cusps $C_1, \dots, C_k$, respectively. The \emph{total normalized length} $L = L(\ss)$ is defined by
\begin{equation}\label{Eqn:NormalizedLength}
\frac{1}{L(\ss)^2} = \sum_{j=1}^k \frac{1}{L(s_j)^2}.
\end{equation}

In \cite[Corollary~9.34]{FPS:EffectiveBilipschitz}, we prove effective bounds on the total normalized length $L$ that guarantee $J$--bilipschitz inclusions similar to those of  \refthm{EffectiveDrillingTame}.
We can now generalize that result to all tame hyperbolic 3-manifolds. 

\begin{theorem}[Effective filling, tame manifolds]\label{Thm:EffectiveFillingTame}
Fix any $0 < \epsilon \leq \log 3$ and any $J>1$. Let $M$ be a tame 3-manifold and $\Sigma \subset M$ a link, such that $M-\Sigma$ admits a hyperbolic structure $Z$. 
Suppose that, in the hyperbolic structure $Z$ on $M-\Sigma$, the total normalized length of the meridians of $\Sigma$ satisfies
\begin{equation*}
L^2 \geq 4 \, \max \left\{ \frac{2\pi \cdot 6771 \cosh^5(0.6 \epsilon + 0.1475)}{\epsilon^5} + 11.7, \:
\frac{2\pi \cdot 11.35}{\epsilon^{5/2}\log(J)} + 11.7 \right\}.
\end{equation*}
Then $M$ admits a hyperbolic structure $Y$ with the same end invariants as those of $Z$,  in which $\Sigma$ is a geodesic link. 
Moreover, there are $J$--bilipschitz inclusions
\[
\varphi \from Y^{\geq \epsilon} \hookrightarrow Z^{\geq \epsilon/1.2}, 
\qquad
\psi \from Z^{\geq \epsilon} \hookrightarrow Y^{\geq \epsilon/1.2}.
\]
\end{theorem}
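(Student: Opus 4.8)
The plan is to deduce \refthm{EffectiveFillingTame} from \refthm{EffectiveDrillingTame} by interchanging the roles of the filled manifold and the drilled manifold, exactly as \cite[Corollary~9.34]{FPS:EffectiveBilipschitz} is obtained from \cite[Theorem~1.2]{FPS:EffectiveBilipschitz} in the finite-volume case. The first step is to produce the hyperbolic structure $Y$ on $M$. Viewing the cusped structure $Z$ on $M-\Sigma$ as a cone structure of cone angle $0$ along $\Sigma$, one increases the cone angle along each component of $\Sigma$ continuously up to $2\pi$; when the total normalized length $L$ of the meridians is large, Bromberg's cone-deformation theory guarantees that this deformation exists and terminates in a smooth hyperbolic structure $Y$ on $M$ in which $\Sigma$ is a geodesic link, and --- since a cone deformation supported on $\Sigma$ leaves the rest of the manifold untouched --- $Y$ has the same end invariants as $Z$. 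For tame but geometrically infinite $M$, neither Bromberg's theorem nor its effective refinement in \cite{FPS:EffectiveBilipschitz} applies verbatim, and here one invokes the transfer template of the present paper, together with the infinite-volume 6-Theorem, to obtain $Y$ with these properties.

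The second step is the quantitative comparison between $L$ and the total geodesic length $\ell$ of $\Sigma$ in $Y$. Along the cone deformation one has the effective Hodgson--Kerckhoff-type estimate
\[ \ell \;\leq\; \frac{2\pi}{L^2 - 11.7}, \]
valid in the finite-volume case by \cite{FPS:EffectiveBilipschitz} and extended to the tame case by the template. Writing $A = 2\pi\cdot 6771\cosh^5(0.6\epsilon + 0.1475)/\epsilon^5$ and $B = 2\pi\cdot 11.35/(\epsilon^{5/2}\log J)$, the hypothesis $L^2 \geq 4\max\{A+11.7,\, B+11.7\}$ gives $L^2 - 11.7 > 4\max\{A,B\}$, so that
\[ \ell \;<\; \frac{2\pi}{4\max\{A,B\}} \;=\; \frac14\min\left\{\frac{2\pi}{A},\,\frac{2\pi}{B}\right\} \;=\; \frac14\min\left\{\frac{\epsilon^5}{6771\cosh^5(0.6\epsilon+0.1475)},\,\frac{\epsilon^{5/2}\log J}{11.35}\right\}, \]
which is precisely the length hypothesis of \refthm{EffectiveDrillingTame} for the pair $(Y,\Sigma)$.

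Applying \refthm{EffectiveDrillingTame} to $(Y,\Sigma)$ then yields a hyperbolic structure $Z'$ on $Y-\Sigma = M-\Sigma$ with the same end invariants as $Y$, hence as $Z$, together with $J$--bilipschitz inclusions $\varphi\from Y^{\geq\epsilon}\hookrightarrow (Z')^{\geq\epsilon/1.2}$ and $\psi\from (Z')^{\geq\epsilon}\hookrightarrow Y^{\geq\epsilon/1.2}$. Since $Z'$ and the given structure $Z$ are both hyperbolic structures on the tame manifold $M-\Sigma$ with the same end invariants --- equivalently, since the cone-deformation path joining $Z$ to $Y$ is unique --- they are isometric, and the displayed inclusions may be read with $Z$ in place of $Z'$. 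The main obstacle is the first step: for tame, possibly geometrically infinite $M$, one must establish both the existence of the filled structure $Y$ (with the correct end invariants and $\Sigma$ geodesic) and the effective length estimate. This is exactly where the transfer template and the infinite-volume 6-Theorem do the real work, and the delicate point is to arrange a geometric limit of geometrically finite approximations in which the core geodesics persist, the effective constants do not degenerate, and the end invariants are preserved.
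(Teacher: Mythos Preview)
Your reduction of the filling theorem to the drilling theorem is a valid alternative to the paper's direct argument. The paper proceeds differently: it applies the filling template (\refthm{FillingApproximation}) to obtain circle-packed approximations $W_n \to Z$ and $V_n \to Y$, applies the finite-volume filling theorem (\refthm{EffectiveFillFiniteVolume}) directly to the double-doubles $DD(W_n^\circ)$ and $DD(V_n^\circ)$, restricts by the $\ZZ_2\times\ZZ_2$ symmetry, and then extracts the limiting bilipschitz inclusions via \refthm{ConstructBilip}. Your route uses the template only to produce $Y$, check that $\Sigma$ is geodesic in $Y$, and bound $\ell = \len_Y(\Sigma)$ in terms of $L$; then it invokes the already-proved \refthm{EffectiveDrillingTame} as a black box and identifies the resulting $Z'$ with $Z$ via the ending lamination theorem. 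This is conceptually tidy but effectively runs the template twice, since \refthm{EffectiveDrillingTame} itself is proved by the same approximate--double-double--limit scheme.

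One correction is needed in your second step. The inequality $\ell \leq 2\pi/(L^2 - 11.7)$ does not transfer to the tame case via the template with that constant: applying the Hodgson--Kerckhoff length bound inside the finite-volume manifold $DD(W_n^\circ)$, where the total normalized length of $DD(\ss^n)$ is $L(\ss^n)/2$, yields $4\ell_n \leq 2\pi/\big((L(\ss^n)/2)^2 - 11.7\big)$, hence in the limit only $\ell \leq 2\pi/(L^2 - 46.8)$. Fortunately the hypothesis $L^2 \geq 4\max\{A+11.7,\,B+11.7\} = 4\max\{A,B\} + 46.8$ is exactly calibrated for this weaker bound, so your chain of inequalities survives with $46.8$ in place of $11.7$, and the length hypothesis of \refthm{EffectiveDrillingTame} is indeed met.
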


The proofs of Theorems~\ref{Thm:EffectiveDrillingTame} and~\ref{Thm:EffectiveFillingTame} rely on three major recent results in Kleinian groups, recalled in \refsec{Kleinian}. The first of these is the ending lamination theorem, due to Minsky~\cite{Minsky:ModelsBounds} and Brock, Canary, and Minsky~\cite{BrockCanaryMinsky:ELC}, with an alternate proof by Bowditch~\cite{Bowditch:ELC} that includes the result for compressible ends. The second is the tameness theorem, 
due to Agol~\cite{Agol:Tameness} and Calegari and Gabai~\cite{CalegariGabai:Tameness}.
The third and most directly relevant result is the density theorem, which asserts that geometrically finite hyperbolic manifolds are dense in the space of all tame hyperbolic 3-manifolds.
The proof of the density theorem was concluded independently by Ohshika~\cite{ohshika:density} and Namazi and Souto~\cite{namazi-souto:density}, relying on many previous results including the tameness and ending lamination theorems.
These major results enable us to transfer bilipschitz control  from finite-volume manifolds to infinite-volume manifolds, by expressing the infinite-volume manifolds as geometric limits of finite-volume ones. See \refsec{Bootstrapping} and specifically \refthm{FiniteVolApproxCor} for a template describing this transfer process.

Because we use finite-volume manifolds to approximate infinite-volume ones, 
 the effective hypotheses on length in Theorems~\ref{Thm:EffectiveDrillingTame} and~\ref{Thm:EffectiveFillingTame} are directly derived from the corresponding hypotheses in the finite-volume setting. See Theorems~\ref{Thm:BilipFiniteVolume} and~\ref{Thm:EffectiveFillFiniteVolume}, and compare \refrem{FactorOf4}.
In turn, the constants appearing in Theorems~\ref{Thm:BilipFiniteVolume} and~\ref{Thm:EffectiveFillFiniteVolume} were obtained by bounding a number of analytic quantities appearing in a cone deformation. The given hypotheses on length ensure the existence of a cone deformation with a large embedded tube about the singular locus $\Sigma$, and provide control on the infinitesimal change in geometry. See  \cite[Section 1.3]{FPS:EffectiveBilipschitz} for a detailed and relatively non-technical summary of the argument. We  have no reason to believe the constants in our theorems are sharp. However, they are uniform and effective, so we expect the results to be useful.

If the manifolds $Y$ and $Z$ in Theorems~\ref{Thm:EffectiveDrillingTame} and~\ref{Thm:EffectiveFillingTame} are geometrically finite, those theorems can be proved using far less machinery.  We only need Brooks' work on circle packings~\cite{Brooks}, classical results from Ahlfors--Bers theory, and our Theorems~\ref{Thm:BilipFiniteVolume} and~\ref{Thm:EffectiveFillFiniteVolume} for finite-volume manifolds. See Remarks~\ref{Rem:GFDrillingApproximation} and~\ref{Rem:GFFillingApproximation} for details. 
 The geometrically finite case of \refthm{EffectiveDrillingTame}, which is an effective version of Brock and Bromberg's \refthm{BBDrillingThm}, can be used to prove a case of the density theorem; this was its main application in \cite{brock-bromberg:density}. In the converse direction, the density theorem allows us to extend \refthm{BBDrillingThm} from geometrically finite manifolds to any tame 3-manifold without rank-1 cusps; see \refthm{GeoInfiniteBB}. 

Our proof of \refthm{EffectiveFillingTame} also requires a version of the 6--theorem for tame manifolds, which is likely of some independent interest, although probably not surprising to experts. The original 6--theorem, due to Agol~\cite{agol:6theorem} and Lackenby~\cite{lackenby:surgery}, states that Dehn filling a finite volume 3-manifold along a slope of length greater than $6$ yields a manifold that admits a hyperbolic structure. Their result can be generalized as follows.

\begin{theorem}[6--Theorem for tame manifolds]\label{Thm:6ThmIntro}
Let $Z$ be a tame hyperbolic 3-manifold, with parabolic locus $P\cup(T_1\cup \dots\cup T_k)$, where $T_1, \dots, T_k$ are a subcollection of the torus ends of $Z$.
Let $H_1 \cup \dots \cup H_k$ be pairwise disjoint horocusps, with $H_i$ a neighborhood of $T_i$.
Let $\ss = (s_1, \dots, s_k)$ be a tuple of slopes, such that the length of a geodesic representative of each $s_i$ on $\bdy H_i$ is strictly greater than $6$. Then the manifold $Z(\ss) = Z(s_1, \ldots, s_k)$ obtained from $Z$ by Dehn filling along slopes $s_1, \dots, s_k$ admits a hyperbolic structure $Y$ with end invariants identical to those of $Z$.
\end{theorem}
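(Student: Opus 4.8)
The plan is to reduce the statement to the classical 6--theorem of Agol and Lackenby by the approximation machinery of \refsec{Bootstrapping}, treating first the geometrically finite case and then the general one.

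\emph{Geometrically finite $Z$.} Assume first that $Z$ is geometrically finite with no rank--1 cusps. Following Brooks' circle--packing theorem as used in \cite{brock-bromberg:density}, I would produce geometrically finite manifolds $Z_j$ converging strongly to $Z$ whose conformal boundaries support circle packings; each such $Z_j$ is the Dehn filling $\hat Z_j(\mu_j)$ of a finite--volume cusped manifold $\hat Z_j$ along slopes $\mu_j$ with $\len(\mu_j) \to \infty$. This construction leaves the torus cusps $T_1,\dots,T_k$ untouched, so $\hat Z_j$ carries pairwise disjoint embedded horocusps $H_i^{(j)}$ with $(\hat Z_j, H_i^{(j)}) \to (Z, H_i)$ near $T_i$; hence $\len_{\bdy H_i^{(j)}}(s_i) \to \len_{\bdy H_i}(s_i) > 6$, so for all large $j$ each slope in $\mu_j$ together with each $s_i$ has length exceeding $6$ on a disjoint embedded horocusp of $\hat Z_j$. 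The classical 6--theorem then gives a complete hyperbolic structure on $\hat Z_j(\mu_j \cup \ss) = Z_j(\ss)$.

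\emph{Taking the limit, and the general case.} Because $\ss$ is a fixed tuple of slopes on the cusps $T_i$, which persist along $Z_j \to Z$, Dehn filling commutes with this geometric limit, so the manifolds $Z_j(\ss)$ converge geometrically to a complete hyperbolic structure $Y$ on $Z(\ss)$. The conformal boundary of $Z_j(\ss)$ equals that of $Z_j$ and converges to that of $Z$, so $Y$ has the end invariants of $Z$. For a general tame $Z$, I would first invoke the density theorem to realize $Z$ as a geometric limit of geometrically finite manifolds $Z_n$ that still carry the torus ends $T_i$ but have no rank--1 cusps --- any rank--1 part of the parabolic locus $P$, and any geometrically infinite ends, being recovered only in the limit --- apply the previous paragraph to each $Z_n$, and then pass to a further geometric limit; here the ending lamination theorem is what identifies the limiting manifold as $Z(\ss)$ equipped with exactly the end invariants of $Z$.

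The main obstacle, as always with this scheme, is controlling the geometric limits: one must show that Dehn filling along the fixed tuple $\ss$ genuinely commutes with geometric convergence and that the limit is $Z(\ss)$ itself --- with no covering--space ambiguity and no lost or spurious ends --- carrying precisely the prescribed end invariants rather than a degeneration of them. This is the role of \refthm{FiniteVolApproxCor}, and checking its hypotheses here reduces to the (standard) facts that the density theorem and Brooks' construction can be run so as to preserve the designated cusps $T_i$ with controlled horocusp neighborhoods. By contrast the length hypothesis requires nothing: ``$\len_{\bdy H_i}(s_i) > 6$'' is an open condition and so survives approximation for free, and --- unlike in Theorems~\ref{Thm:EffectiveDrillingTame} and~\ref{Thm:EffectiveFillingTame} --- there are no effective constants to track, the number $6$ being simply the classical threshold. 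When $Z$ already has finite volume the entire argument collapses to the Agol--Lackenby theorem, with ``identical end invariants'' recording the surviving cusps.
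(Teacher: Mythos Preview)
Your approach has a fundamental circularity problem. The approximation template you invoke --- \refthm{FiniteVolApproxCor} and its filling analogue \refthm{FillingFteVolApproxCor} --- requires as \emph{input} that the target manifold $Z(\ss)$ already admits a hyperbolic structure with the prescribed end invariants. Indeed, the proof of \refthm{FillingApproximation} (which underlies \refthm{FillingFteVolApproxCor}) begins by invoking precisely the 6--theorem for pared manifolds, \refthm{6TheoremInf}, to establish that $Y = Z(\ss)$ is hyperbolic. So you cannot use the template to prove the 6--theorem. More concretely, your key assertion --- that ``Dehn filling commutes with this geometric limit, so the manifolds $Z_j(\ss)$ converge geometrically to a complete hyperbolic structure $Y$ on $Z(\ss)$'' --- is exactly what needs proof. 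Even if you extract a geometric limit of the $Z_j(\ss)$, you have no a priori control over its topological type or end invariants; the ending lamination theorem identifies two hyperbolic manifolds as isometric once you know both exist with matching invariants, but it does not manufacture a hyperbolic structure on a prescribed topological manifold. (There is also a smaller confusion in your setup: an infinite-volume geometrically finite $Z_j$ is not obtained by Dehn filling a finite-volume $\hat Z_j$; rather, the scooped manifold $Z_j^\circ$ embeds isometrically in the finite-volume double--double $DD(Z_j^\circ)$.)

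The paper's route is more direct. It first shows, via the classical Agol--Lackenby pleated-surface argument carried out in $Z$ itself, that $(M(\ss), P)$ is a pared manifold (\reflem{6ThmParedMfld}): an essential sphere, disk, torus, or annulus in $M(\ss)$ would yield a pleated surface whose horocyclic cusp area exceeds its Gauss--Bonnet area. It then checks that the end invariants of $Z$, viewed as end invariants for $(M(\ss), P)$, satisfy the filling conditions (*) and (**) of \refdef{FillingEndInvts}. Here the paper \emph{does} approximate $Z$ by geometrically finite $Z_n$, but only in order to run further pleated-surface area arguments inside each $Z_n$ (for annuli witnessing a failure of (*), and for compressing disks witnessing a failure of (**)); it never fills $Z_n$ and takes a limit. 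With (*) and (**) in hand, the realization theorem (\refthm{Realization}) furnishes the hyperbolic structure $Y$ on $(M(\ss), P)$ directly.
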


The proof of \refthm{6ThmIntro} for infinite-volume manifolds closely parallels part of the proofs by Agol and Lackenby for finite-volume manifolds~\cite{agol:6theorem, lackenby:surgery}. In both arguments, one has to show that the filled manifold $Z(s_1, \ldots, s_k)$ does not contain any embedded surfaces that would obstruct hyperbolicity. The proof of the infinite-volume case also uses the above-mentioned major recent results in Kleinian groups, particularly the density theorem.
We give the argument in \refsec{6Theorem}.

Although we do not have full control on the change in geometry in the thin part of the manifold $Y$,
 we do have results that bound the change in length of the short geodesics that lie in at the cores of the thin part. The following result is an effective version of a theorem of Bromberg~\cite[Theorem 1.4]{bromberg:conemflds} and an extension of~\cite[Corollary~7.20]{FPS:EffectiveBilipschitz} to the infinite-volume case.

\begin{theorem}\label{Thm:LenBoundDownInfinite}
Let $Y$ be a tame hyperbolic 3-manifold.
Let $\Sigma$ be a geodesic link in $Y$, and $\gamma$ a closed geodesic disjoint from $\Sigma$ with complex length $\len_Y(\gamma)+i\tau_{Y}(\gamma)$, where $\len_Y(\cdot)$ denotes the length in the complete metric on $Y$.
Suppose that $\max(4\len_Y(\Sigma), \len_Y(\gamma)) < 0.0735$. 

Then 
$Y - \Sigma$ admits a hyperbolic structure $Z$ with with the same end invariants as those of $Y$. Furthermore,
$\gamma$ is isotopic to a geodesic in $Z$, whose complex length $\len_{Z}(\gamma)+i\tau_{Z}(\gamma)$ satisfies
\[
1.9793^{-1} \leq \frac{\len_{Z}(\gamma)}{\len_{Y}(\gamma)} \leq 1.9793
\qquad \text{and} \qquad
|\tau_{Z}(\gamma) - \tau_{Y}(\gamma) | \leq 0.05417.
\]
\end{theorem}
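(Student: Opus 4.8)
The plan is to deduce this from the finite-volume version \cite[Corollary~7.20]{FPS:EffectiveBilipschitz} via the approximation template of \refsec{Bootstrapping}, exactly as Theorems~\ref{Thm:EffectiveDrillingTame} and~\ref{Thm:EffectiveFillingTame} are obtained from their finite-volume counterparts. First, the existence of $Z$ needs no numerical hypothesis: by \reflem{DrillHyperbolic}, $Y-\Sigma$ carries a hyperbolic structure $Z$ with the same end invariants as $Y$. Since $\len_Y(\gamma) < 0.0735$, the geodesic $\gamma$ lies in a Margulis tube of $Y$ disjoint from the tubes about the components of $\Sigma$; in particular $\gamma$ is homotopically essential and non-peripheral in $Y-\Sigma$ (it cannot be homotopic into a tube about a component of $\Sigma$, since distinct closed geodesics are not freely homotopic, nor is it parabolic in $Z$), so it is isotopic to a unique closed geodesic in $Z$. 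It remains to bound the change in its complex length.

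Next, invoke \refthm{FiniteVolApproxCor} to write $Y$ as a geometric limit of finite-volume hyperbolic $3$-manifolds $Y_n$ in which $\Sigma$ and $\gamma$ are represented by a geodesic link $\Sigma_n$ and a closed geodesic $\gamma_n$, with $\len_{Y_n}(\Sigma_n)\to \len_Y(\Sigma)$, $\len_{Y_n}(\gamma_n)\to\len_Y(\gamma)$, $\tau_{Y_n}(\gamma_n)\to\tau_Y(\gamma)$, and with the drilled manifolds $Z_n = Y_n - \Sigma_n$ converging geometrically to $Z$ while carrying $\gamma_n$ to $\gamma$. Because our hypothesis $\max(4\len_Y(\Sigma),\len_Y(\gamma)) < 0.0735$ carries the usual factor of $4$ of slack over the finite-volume threshold (compare \refrem{FactorOf4}) and the relevant lengths converge, for all large $n$ the pair $(\Sigma_n,\gamma_n)\subset Y_n$ satisfies the hypotheses of \cite[Corollary~7.20]{FPS:EffectiveBilipschitz}, which then gives
\[
1.9793^{-1} \leq \frac{\len_{Z_n}(\gamma_n)}{\len_{Y_n}(\gamma_n)} \leq 1.9793
\qquad\text{and}\qquad
|\tau_{Z_n}(\gamma_n) - \tau_{Y_n}(\gamma_n)| \leq 0.05417
\]
for each such $n$. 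Letting $n\to\infty$ and using continuity of complex length of a fixed essential curve under the geometric convergences $Y_n\to Y$ and $Z_n\to Z$, these pass to the claimed (non-strict) inequalities for $\gamma\subset Z$.

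The step requiring the most care is the application of \refthm{FiniteVolApproxCor}: one must check that the template produces finite-volume approximants in which \emph{both} $\Sigma$ and $\gamma$ persist as controlled geodesics, \emph{and} in which the drilled manifolds themselves converge geometrically, so that all four real quantities $\len_Y(\gamma)$, $\tau_Y(\gamma)$, $\len_Z(\gamma)$, $\tau_Z(\gamma)$ are genuine limits of their finite-volume analogues. Tracking $\gamma$ — which sits in the thin part of $Y$ — through the geometric limit on the drilled side, and pinning down the $2\pi$-ambiguity of the twisting term $\tau$ (using that it is controlled along the cone-deformation path and hence along the limit), is where the real work lies; the remaining ingredients are direct citations.
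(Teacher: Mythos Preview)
Your approach is correct and essentially the same as the paper's: the paper factors the argument through an intermediate \refthm{ShortDrillingTame} (which bounds $\dhyp(\calL_Y(\gamma),\calL_Z(\gamma))$ via the approximation template of \refthm{DrillingApproximation} and the finite-volume \refthm{ShortDrillingFinVolume}), and then derives the present statement by a numerical evaluation together with \reflem{KenPropH2}; you have simply unfolded those two steps into one pass and cited \cite[Corollary~7.20]{FPS:EffectiveBilipschitz} directly.

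One point to tighten: in the finite-volume approximant $Y_n = DD(V_n^\circ)$ the link that actually gets drilled is $DD(\Sigma_n)$, whose total length converges to $4\len_Y(\Sigma)$, not to $\len_Y(\Sigma)$ as you wrote. The factor of $4$ in the hypothesis is therefore not ``slack over the finite-volume threshold'' but the literal quadrupling coming from the double-double construction; this is exactly why the hypothesis reads $4\len_Y(\Sigma) < 0.0735$, and it is what makes \cite[Corollary~7.20]{FPS:EffectiveBilipschitz} applicable to $(DD(V_n^\circ), DD(\Sigma_n), \gamma_n)$. Once that is corrected, the convergence of $\gamma_n$ on the drilled side (which you flag as the delicate point) is handled in the paper by \refprop{GeodesicsConverge}, using the uniform length bound on $\gamma_n$ in $DD(W_n^\circ)$ that already follows from the finite-volume estimate.
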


We will prove \refthm{LenBoundDownInfinite} in \refsec{ShortGeodesic} as a corollary of~\refthm{ShortDrillingTame}, which provides explicit bounds on the change in complex length of $\gamma$ between $Y$ and $Z$, as a function of the real lengths $\len_Y(\Sigma)$ and $ \len_Y(\gamma)$.
If we hold $\len_Y(\gamma)$ fixed, we find that the change in complex length of $\gamma$
tends to $0$ as $\len_Y(\gamma) \to 0$. Thus, when the geodesic link $\Sigma$ is very short, the geometry of $\gamma$ barely changes at all under drilling.

In an analogous fashion, we prove a result that bounds the change in complex length of a short geodesic $\gamma$ under filling, with hypotheses that use the geometry of the drilled manifold $Z$ rather than the filled manifold $Y$. See \refthm{ShortFillingTame} for a general result that provides a bound as a function of $\len_{Z}(\gamma)$ and the normalized length $L(\ss)$ in $Z$, with the change in complex length 
 tending to $0$ as $L(\ss) \to \infty$. See also \refcor{LenBoundUpInfinite} for a simple statement akin to \refthm{LenBoundDownInfinite}.

\subsection{Bootstrapping from finite-volume manifolds to tame manifolds}\label{Sec:Bootstrapping}
The proofs of Theorems~\ref{Thm:EffectiveDrillingTame} and~\ref{Thm:EffectiveFillingTame} both start with the analogous result for finite-volume manifolds in \cite{FPS:EffectiveBilipschitz}.
Using a strong version of density theorem due to Namazi and Souto~\cite{namazi-souto:density} (compare \refthm{Density}), we approximate any tame hyperbolic 3-manifold by a sequence of geometrically finite hyperbolic 3-manifolds. These geometrically finite manifolds, in turn, can be perturbed slightly to obtain manifolds that admit circle packings on their ends, by work of Brooks~\cite{Brooks}. Finally, manifolds that admit circle packings on their ends have convex cores embedding isometrically in finite volume manifolds, by a process of ``scooping'' and ``doubling'' (see \refdef{DoubleDouble}). At this point, the results for finite-volume manifolds can be applied. By taking better and better finite-volume approximates, we obtain the desired results for any tame manifold. 

See \reffig{DrillingApproxOutline} for a diagram summarizing the above process. In that figure, $DD(V_n^\circ)$ and $DD(W_n^\circ)$ are finite-volume hyperbolic manifolds obtained by the doubling process, to which we can apply the results of \cite{FPS:EffectiveBilipschitz}. The construction depicted in \reffig{DrillingApproxOutline} can also be summarized as follows:

\begin{theorem}\label{Thm:FiniteVolApproxCor}
Let $Y$ be a tame, infinite-volume hyperbolic 3-manifold. Let $\Sigma \subset Y$ be a geodesic link, such that each component $\sigma \subset \Sigma$ is shorter than $\log 3$. Then $Y - \Sigma$ admits a hyperbolic metric $Z$ with the same end invariants as those of $Y$. Furthermore, there is a sequence of finite-volume approximating manifolds $DD(V_n^\circ)$ and $DD(W_n^\circ)$ with the following properties:
\begin{enumerate}[\: \: $(1)$]
\item The manifold $DD(V_n^\circ)$ contains a geodesic link $DD(\Sigma_n)$, consisting of four isometric copies of a link $\Sigma_n$, such that $DD(W_n^\circ) = DD(V_n^\circ) - DD(\Sigma_n)$.
\item For any choice of basepoints $y \in Y$ and $z \in Z$, there are basepoints in the approximating manifolds such that $(DD(V_n^\circ), v_n) \to (Y, y)$ and $(DD(W_n^\circ), w_n) \to (Z, z)$ are geometric limits.
\item In the geometric limit $(DD(V_n^\circ), v_n) \to (Y, y)$, we have $\Sigma_n \to \Sigma$.
\end{enumerate}
\end{theorem}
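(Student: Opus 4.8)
The plan is to realize $Z$ and the finite-volume approximates $DD(V_n^\circ)$, $DD(W_n^\circ)$ as geometric limits, following the chain of constructions indicated in \reffig{DrillingApproxOutline}. Existence of a hyperbolic structure $Z$ on $Y - \Sigma$ with the same end invariants as $Y$ is \reflem{DrillHyperbolic}, which needs no numerical hypotheses; the content of the statement is in building the approximating manifolds and verifying the three listed properties.

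First I would apply the strong form of the density theorem (compare \refthm{Density}) to write $Y$ as a strong limit $(M_n, y_n) \to (Y, y)$ of geometrically finite hyperbolic $3$-manifolds. Strong convergence implies geometric convergence and, more importantly, lets us follow the loxodromic conjugacy classes making up $\Sigma$: for $n$ large these are represented by geodesic links $\Sigma_n \subset M_n$ with $\Sigma_n \to \Sigma$ and $\len_{M_n}(\Sigma_n) \to \len_Y(\Sigma)$, so every component of $\Sigma_n$ is shorter than $\log 3$ and, by the Margulis lemma, bounds an embedded tube of definite radius. Next, using Brooks' theorem on circle packings together with Ahlfors--Bers theory, I would perturb each $M_n$ within its quasiconformal deformation space to a geometrically finite $M_n'$ whose conformal boundary admits a circle packing, taking the perturbation small enough that $(M_n', y_n') \to (Y,y)$ still, that $\Sigma_n' \to \Sigma$, and that the components of $\Sigma_n'$ stay short with embedded tubes.

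Now I would apply the scooping-and-doubling construction of \refdef{DoubleDouble}. The circle packing on $\bdy_\infty M_n'$ cuts out a family of totally geodesic disks; scooping the cusp-truncated convex core of $M_n'$ along these disks yields a compact manifold $V_n^\circ$ with totally geodesic boundary, into which the convex core embeds isometrically and whose double-double $DD(V_n^\circ)$ is a finite-volume hyperbolic $3$-manifold --- with four isometric copies of $\interior V_n^\circ$, because one doubles twice. Choosing the scooping regions to exhaust $M_n'$, for $n$ large I can take $\Sigma_n \subset \interior V_n^\circ$; since the totally geodesic walls then recede to infinity from the basepoint while $M_n' \to Y$, this produces a geometric limit $(DD(V_n^\circ), v_n) \to (Y, y)$ in which $DD(\Sigma_n) \to \Sigma$, only the central copy of the link surviving. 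That gives property~(3) and the first half of property~(2). Since each component of $DD(\Sigma_n)$ is shorter than $\log 3$, \reflem{DrillHyperbolic} makes $DD(V_n^\circ) - DD(\Sigma_n)$ hyperbolic, and because drilling a link disjoint from the totally geodesic walls commutes with doubling, this drilled manifold is exactly $DD(W_n^\circ)$ with $W_n^\circ = V_n^\circ - \Sigma_n$, which is property~(1).

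It remains to show $(DD(W_n^\circ), w_n) \to (Z, z)$, and this is where the main difficulty lies. I would deduce it from continuity of drilling under geometric convergence: applied to $(DD(V_n^\circ), v_n) \to (Y, y)$ with $DD(\Sigma_n) \to \Sigma$, this yields that $DD(W_n^\circ) = DD(V_n^\circ) - DD(\Sigma_n)$ converges geometrically to a hyperbolic structure on $Y - \Sigma$ whose surviving ends carry end invariants equal to the limits of the corresponding invariants of $DD(V_n^\circ)$; those limits are the end invariants of $Y$, hence --- by \reflem{DrillHyperbolic} --- the end invariants of $Z$, so the ending lamination theorem identifies the limit with $Z$, and $w_n$ may be chosen over any prescribed $z \in Z$. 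The delicate point is precisely this continuity: one must know that $DD(V_n^\circ) \to Y$ forces $DD(V_n^\circ) - DD(\Sigma_n) \to Y - \Sigma$ with no convergence lost at the newly opened rank-$2$ cusp, and that end invariants vary continuously through the limit, so that the ending lamination theorem can be invoked to pin the limit down to $Z$ rather than some other hyperbolic structure on $Y - \Sigma$. The surrounding bookkeeping --- tracking which cusps, which of the four copies of $\Sigma_n$, and which doubling walls persist in each geometric limit, and choosing basepoints $v_n, w_n$ over arbitrary $y \in Y$ and $z \in Z$ --- is routine once these convergence facts are in place.
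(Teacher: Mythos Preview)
Your construction of the $DD(V_n^\circ)$ side is essentially the paper's: strong density, Brooks perturbation to circle-packed $V_n$, scoop and double-double, and \refprop{GeodesicsConverge} to carry $\Sigma$ along. That part is fine.

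The gap is exactly where you flag it, and it is not merely delicate bookkeeping: your proposed ``continuity of drilling under geometric convergence'' is not a theorem you have available, and invoking it here is circular in spirit. The passage from $DD(V_n^\circ) \to Y$ to $DD(V_n^\circ) - DD(\Sigma_n) \to Z$ requires comparing two \emph{different} hyperbolic metrics on the drilled manifolds, and nothing in the geometric limit $DD(V_n^\circ) \to Y$ controls that comparison. Your appeal to end invariants of $DD(V_n^\circ)$ is also confused: these are finite-volume manifolds with no free sides, so there are no conformal or lamination invariants to track through the limit.

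The paper avoids this entirely by running the approximation machinery a second time, on the drilled side. It constructs $W_n$ directly via \refthm{ABUniformization} as the hyperbolic structure on $(M - \calN(\Sigma), P \cup \bdy\calN(\Sigma))$ whose conformal end invariants match those of $V_n$. Since the end invariants of $Y_n$ (hence of $Z_n$, the drilling of $Y_n$) form a filling sequence by \refthm{Density}, the approximation \refthm{NamaziSouto} together with \refthm{ELC} gives a strong limit $W_n \to Z$. Only \emph{then} does one identify $DD(W_n^\circ)$ with $DD(V_n^\circ) - DD(\Sigma_n)$, and this identification is an isometry by Mostow--Prasad rigidity (or circle-packing rigidity, \refthm{CirclePackingRigidity}), not a definition. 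So the convergence $(DD(W_n^\circ), w_n) \to (Z,z)$ comes from $W_n \to Z$ plus the receding scooped boundary, not from any drilling-continuity principle.
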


See \refthm{DrillingApproximation} for a more detailed statement, of which \refthm{FiniteVolApproxCor} is a corollary. See also \reffig{FillingApproxOutline} and Theorems~\ref{Thm:FillingFteVolApproxCor} and \ref{Thm:FillingApproximation} for a parallel statement about filling rather than drilling.

Our hope is that  \refthm{FiniteVolApproxCor}, and the more detailed Theorems~\ref{Thm:DrillingApproximation} and~\ref{Thm:FillingApproximation}, can serve as user-friendly templates for transferring results about finite-volume manifolds to the infinite-volume setting. 
While the proofs of those results rely on the full machinery of Kleinian groups that will be described in \refsec{Kleinian}, knowledge of this machinery is not needed to \emph{apply} those theorems. We hope that this feature will make these templates useful to other researchers.

\subsection{Organization}
The paper is organized as follows.
In \refsec{Kleinian}, we review the tools from Kleinian groups that we need in this paper, particularly results on tame manifolds, their hyperbolic structures, and their limits. In \refsec{6Theorem}, we extend the 6-theorem to tame hyperbolic manifolds. Sections~\ref{Sec:ConvergenceOfLinks} and~\ref{Sec:Extracting} contain technical results that make the proofs of the main theorems work smoothly. In \refsec{ConvergenceOfLinks}, we show that under appropriate hypotheses, geodesics in approximating manifolds converge to geodesics in the limiting manifold. In \refsec{Extracting}, we prove that if there are two sequences of manifolds converging geometrically, and bilipschitz maps between the approximating manifolds, then one has bilipschitz maps between their limits as well. Then, we combine these technical results with the finite-volume bilipschitz theorems from \cite{FPS:EffectiveBilipschitz} to establish the effective drilling theorem in \refsec{Drilling} and the effective filling theorem in \refsec{Filling}. Finally, the results on short geodesics are proved in \refsec{ShortGeodesic}.

\subsection{Acknowledgements}
We thank Dick Canary for pointing us in the correct direction for results on strong limits, particularly \refthm{SameTopologyQH}. We thank Craig Hodgson and Steve Kerckhoff for helping us sort out some analytic details about maps between cone manifolds. We thank Juan Souto for insight into circle packings. 

During the course of this project, the first author was was partially supported by NSF grant DMS--1907708. 
The second author was partially supported by a grant from the Australian Research Council. 

\section{Tools from Kleinian groups}\label{Sec:Kleinian}

This section reviews a number of definitions and results from Kleinian groups that will be needed for our applications. As mentioned above, the proofs of our main theorems
use the full trifecta of major results in Kleinian groups from the early 2000s:  the tameness theorem, the ending lamination theorem, and the density theorem. We also review the (older) work of Brooks on circle packings, which will similarly prove crucial to our constructions.

Much of our exposition and notation is modeled on that of Namazi and Souto~\cite{namazi-souto:density}. Another excellent source that surveys these recent results is Canary \cite{Canary:TamenessSurvey}.

The tameness, ending lamination, and density theorems can be seen as results that relate the geometry of a hyperbolic 3-manifold to its topology. While each theorem has a succinct statement, we find it most useful to frame each result in the 
context, notation, and terminology that will be used for the applications. Setting up this notation and terminology requires a number of definitions.
We have endeavored to keep notation to a minimum, and to use consistent letters for parallel notions throughout the paper.

\subsection{Topology and Geometry}\label{Sec:TopologyGeometry}
Throughout the paper, the symbol $M$ denotes a compact 3-manifold with nonempty boundary, which is oriented, irreducible, and atoroidal. 

\begin{definition}[Pared manifolds]\label{Def:ParedManifold}
Let $M$ be a 3-manifold as above: compact, oriented, irreducible,   atoroidal, 
with $\bdy M \neq \emptyset$. We further assume that $M$ is neither a 3-ball nor a solid torus. Let $P\subset \bdy M$ be a compact subsurface consisting of incompressible tori and annuli. The pair $(M,P)$ is called a \emph{pared manifold} if the following additional conditions hold:
\begin{itemize}
\item Every $\pi_1$--injective map of a torus $T^2 \to M$ is homotopic to a map whose image is contained in $P$.
\item Every $\pi_1$--injective map of an annulus $(S^1\times I, S^1\times\bdy I)\to (M,P)$ is homotopic as a map of pairs to a map whose image is contained in $P$.
\end{itemize}
We call $P$ the \emph{parabolic locus} of $(M,P)$. The non-parabolic portion of $\bdy M$ is denoted $\bdy_0 M = \bdy M - P$.
\end{definition}

Throughout the paper, variants of the letter $M$ ($M$, $M'$, etc) always denote a 3-manifold defined only up to topological type. Similarly, variants of the letter $P$ denote the parabolic locus in a pared manifold. We will use variants of the letter $N$ ($N'$, $N_n$, etc) to denote a generic a 3-manifold endowed with a hyperbolic metric. In the context of drilling and filling, we will use variants of $V,W,Y, Z$ to denote 3-manifolds with hyperbolic metrics.

A \emph{Kleinian group} $\Gamma$ is a discrete group of isometries of $\HH^3$. For this paper, all Kleinian groups are presumed to be torsion-free and orientation-preserving, ensuring that the quotient $N = \HH^3 / \Gamma$ is an oriented hyperbolic manifold. 
All Kleinian groups are also assumed \emph{non-elementary}: this means that $\Gamma$ has no global fixed points on $\bdy \HH^3$ and implies that the topological type of $N$ is neither a solid torus nor the product of a torus and an interval.
If  $N$ is homeomorphic to the interior of $M$, we say that the Kleinian group $\Gamma$, abstractly isomorphic to $\pi_1(M)$,  endows $M$ with a \emph{hyperbolic structure}.

A \emph{horocusp} is the quotient $C = H/G$, where $H \subset \HH^3$ is an open horoball and $G$ is a discrete group of parabolic isometries of $H$, isomorphic to $\ZZ$ or $\ZZ^2$. In the first case, $C$ is homeomorphic to $A \times (0,\infty)$ where $A$ is a noncompact annulus, and is called \emph{rank 1}. In the second case, $C$ is homeomorphic to $T \times (0,\infty)$ where $T$ is a torus, and is called \emph{rank 2}. A \emph{horocusp in $N$} is an isometrically embedded (rank 1 or 2) horocusp in a hyperbolic 3-manifold $N$. A \emph{tube in $N$} is a regular neighborhood of a simple closed geodesic, of fixed radius. 

Given a constant $\epsilon > 0$, the \emph{thin part of $N$} is set of points in $N$ with injectivity radius less than $\epsilon/2$, denoted $N^{<\epsilon}$. A \emph{Margulis number} for a hyperbolic 3-manifold $N$ is any number $\epsilon>0$ such that $N^{< \epsilon}$ is a disjoint union of tubes and horocusps. The optimal Margulis number of $N$, denoted $\mu(N)$, is the supremum of its Margulis numbers. The \emph{Margulis constant} $\epsilon_3$ is the infimum of optimal Margulis numbers over all hyperbolic 3-manifolds. While it is known that $\epsilon_3 > 0$, the precise value is currently unknown. Since the Weeks manifold $W$ has $\mu(W) \leq 0.776$, it follows that $\epsilon_3 \leq 0.776$. Meyerhoff~\cite{meyerhoff} showed that $\epsilon_3 \geq 0.104$. 

In the setting of infinite-volume manifolds, we have a stronger estimate. The following result is due to Culler and Shalen~\cite[Theorem~9.1]{CullerShalen:Paradoxical}, combined with the tameness and density theorems. See Shalen~\cite[Proposition~3.12]{Shalen:SmallOptimalMargulis} for the derivation.

\begin{theorem}[Margulis numbers]\label{Thm:MargulisEstimate}
Let $N$ be a hyperbolic 3-manifold of infinite volume. Then the optimal Margulis number of $N$ satisfies $\mu(N) \geq \log 3$.
\end{theorem}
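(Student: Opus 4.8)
The plan is to reformulate the statement via the Margulis lemma and then feed it into the Culler--Shalen ``$\log 3$ theorem''. Write $N = \HH^3/\Gamma$. It suffices to show that for every $\epsilon < \log 3$, every $z \in \HH^3$, the subgroup $\Gamma_\epsilon(z) = \langle\, \gamma \in \Gamma : d(z,\gamma z) < \epsilon\,\rangle$ is elementary, i.e.\ virtually abelian: granting this, $N^{<\epsilon}$ is a disjoint union of tubes about short geodesics and horocusps, so every such $\epsilon$ is a Margulis number, and letting $\epsilon \nearrow \log 3$ gives $\mu(N) \geq \log 3$. Note that $\Gamma_\epsilon(z)$ is finitely generated, since discreteness forces $\{\gamma : d(z,\gamma z) < \epsilon\}$ to be finite; this is what will let the tameness theorem be applied.

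So suppose for contradiction that $\Gamma_\epsilon(z)$ is non-elementary for some $\epsilon < \log 3$ and some $z$. First I would extract a two-generator culprit: a short argument with discrete elementary groups shows that if a discrete group generated by a symmetric set $S$ is non-elementary, then some pair $s, t \in S$ generates a non-elementary subgroup $\Delta = \langle s, t \rangle$ --- and in our situation $s$ and $t$ each displace $z$ by less than $\epsilon$. Now $\Delta$ is a torsion-free non-elementary two-generator Kleinian group, and by the tameness theorem the manifold $\HH^3/\Delta$ is tame. The core input is then the structural dichotomy for such groups, due to Culler and Shalen \cite[Theorem~9.1]{CullerShalen:Paradoxical} (using tameness and, to handle geometrically infinite behavior, the density theorem --- compare \refthm{Density}): such a $\Delta$ must be either free of rank two, or a lattice.

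If $\Delta$ is free of rank two, then (since any generating pair of a free group of rank two is a free basis) $s$ and $t$ freely generate it, and the Culler--Shalen $\log 3$ theorem applies: for every point, in particular for $z$, one has $\max\{d(z,sz),\, d(z,tz)\} \geq \log 3$. This contradicts $d(z,sz), d(z,tz) < \epsilon < \log 3$. If instead $\Delta$ is a lattice, then $\Delta \leq \Gamma$ realizes $N$ as a covering space of the finite-volume manifold $\HH^3/\Delta$; since covering multiplies volume by the degree, which must be finite because $\HH^3/\Delta$ has finite volume, we get $\vol(N) < \infty$, contradicting the hypothesis. Either way we reach a contradiction, so $\Gamma_\epsilon(z)$ is elementary, which completes the proof.

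I expect the genuine difficulty to lie entirely in the dichotomy ``free of rank two or lattice'' for two-generator tame Kleinian groups, not in the $\log 3$ inequality itself, which is a ping-pong/paradoxical-decomposition argument on the limit set and forms the technical heart of \cite{CullerShalen:Paradoxical}. Making that dichotomy rigorous for an arbitrary tame $N$ --- rather than just the geometrically finite ones, where Culler and Shalen's combination-theoretic analysis naturally lives --- is exactly where one invokes the tameness and density theorems. Concretely, I would follow the route of Shalen's \cite[Proposition~3.12]{Shalen:SmallOptimalMargulis}: establish the Margulis-number conclusion for geometrically finite infinite-volume manifolds, and then pass to a general tame $N$ by geometric approximation, checking that the property ``there is a point whose short-loop subgroup is non-elementary'' cannot appear in a geometric limit without already appearing in the approximants.
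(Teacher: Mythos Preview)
Your proposal is correct and follows exactly the route the paper indicates: the paper does not give its own proof but attributes the result to Culler--Shalen \cite[Theorem~9.1]{CullerShalen:Paradoxical} combined with the tameness and density theorems, pointing to Shalen \cite[Proposition~3.12]{Shalen:SmallOptimalMargulis} for the derivation---precisely the ingredients and reference you invoke. One small slip: since $\Delta \leq \Gamma$, it is $\HH^3/\Delta$ that covers $N = \HH^3/\Gamma$, not the other way around; the volume argument still gives $\vol(N) \leq \vol(\HH^3/\Delta) < \infty$, so your contradiction in the lattice branch survives.
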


Several results in the paper assume a bound of the form $\epsilon \leq \log 3$. This can be viewed as ensuring $\epsilon$ is a Margulis number for both $N$ and the manifolds that will be used to approximate $N$.

\subsection{Tameness and compact cores}

Let $N$ be a hyperbolic 3-manifold, and suppose that $\epsilon$ is a Margulis number for $N$. Following Namazi and Souto, we let $N^\epsilon$ denote the complement in $N$ of the cusp components of $N^{<\epsilon}$. 
Then $\bdy N^{\epsilon}$ is a disjoint  union of tori and open annuli that satisfies the incompressibility requirements for the parabolic locus of a pared manifold. However, $(N^{\epsilon}, \bdy N^{\epsilon})$ is not a pared manifold because $N^{\epsilon}$ is typically not compact. The powerful tool that gives us a pared manifold from this data is the tameness theorem, proved  independently by Agol~\cite{Agol:Tameness} and by Calegari and Gabai~\cite{CalegariGabai:Tameness}.

\begin{theorem}[Tameness]\label{Thm:Tameness}
  Suppose $N$ is a hyperbolic 3-manifold with finitely generated fundamental group. Then $N$ is
 homeomorphic to the interior of a compact 3-manifold $M$. That is, $N$ is
   tame.
\end{theorem}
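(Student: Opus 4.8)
The plan is to follow the two independent arguments of Agol~\cite{Agol:Tameness} and of Calegari and Gabai~\cite{CalegariGabai:Tameness}; I sketch the Calegari--Gabai strategy, which is the one I would attempt. Since $\pi_1(N)$ is finitely generated, Scott's compact core theorem supplies a compact submanifold $C \subset N$ with $C \hookrightarrow N$ a homotopy equivalence, and a relative (pared) version, handling the rank-1 and rank-2 cusps of $N$, lets one arrange that $C$ meets the thin parts of $N$ in standard annuli and tori. The manifold $N$ is then tame as soon as we show that each component $E$ of $N \setminus C$ — a neighborhood of an end of $N$, abutting a component $S$ of $\bdy C$ — is \emph{topologically tame}, i.e.\ homeomorphic to $S \times [0,\infty)$. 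There are only finitely many such ends, since $\bdy C$ has finitely many components, so it suffices to treat one end at a time.

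Next I would invoke the dichotomy for ends. If $E$ is \emph{geometrically finite} — a neighborhood of $E$ lies outside a bounded neighborhood of the convex core of $N$ — then $E$ is tame by classical deformation theory (Marden, Ahlfors--Bers). Otherwise $E$ is \emph{degenerate}, and the goal becomes to build an exhaustion of $E$ by surfaces isotopic to $S$. The first step is to produce a sequence of closed geodesics $\delta_1, \delta_2, \dots$ in $N$ that eventually leave every compact set, i.e.\ exit $E$: when $S$ is incompressible in $N$ this follows from work of Bonahon and Canary (geodesic representatives of a suitable sequence of simple closed curves on $S$ exit $E$), and the genuinely new difficulty — the heart of the theorem — is the case when $S$ is compressible.

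The central tool is \emph{shrinkwrapping}. After choosing the $\delta_i$ to form a locally finite, ``sufficiently separated'' collection $\Delta$, one shrinkwraps a surface mapped in homotopic to $S$ around the finite link $\delta_1 \cup \dots \cup \delta_i$: this yields a surface $\Sigma_i \to N$, still carrying the homotopy data of $S$, which is minimal away from $\Delta$ and is pulled taut against the $\delta_j$ it encloses. Intrinsically $\Sigma_i$ is $\CAT$ in the complement of $\Delta$, so Gauss--Bonnet gives an area bound depending only on $\chi(S)$; the Margulis lemma then yields a bounded-diameter estimate for $\Sigma_i \cap N^{\geq \epsilon}$. Because the $\delta_j$ exit $E$ and the $\Sigma_i$ are forced to follow them, each $\Sigma_i$ lies deep in $E$; passing to a subsequence and perturbing the $\Sigma_i$ to be embedded, consecutive surfaces cobound product regions $S \times I$ inside $E$, and assembling these gives $E \cong S \times [0,\infty)$, hence tameness of $N$.

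I expect the main obstacles to be exactly the two most delicate points. First, the existence and regularity of the shrinkwrapped surface $\Sigma_i$ is not formal: one must choose $\Delta$ separated enough that a length-minimizing sequence in the relevant homotopy class can neither escape into $\Delta$ nor collapse, so that a genuine immersed $\CAT$ limit surface exists. Second, in the compressible case one must formulate the correct substitute for incompressibility — a ``$2$-incompressibility rel $\Delta$'' condition — verify that it is inherited by the shrinkwrapped surfaces, and show it suffices to force the product structure as the surfaces are pushed to infinity; this is precisely where Agol's argument diverges, using the end-reduction machinery of Brin--Thickstun and Myers to peel off the compressible directions before running a parallel argument. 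Controlling how successive $\Sigma_i$ sit relative to one another — interpolating by honest product regions rather than mere homotopies — is the remaining technical hurdle.
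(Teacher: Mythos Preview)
The paper does not prove \refthm{Tameness}; it is stated as a black box and attributed to Agol~\cite{Agol:Tameness} and Calegari--Gabai~\cite{CalegariGabai:Tameness}, with no argument given beyond the citation. So there is no ``paper's own proof'' to compare against: in this survey section the tameness theorem is an imported tool, on the same footing as the ending lamination theorem and the density theorem.

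Your sketch is a fair high-level outline of the Calegari--Gabai strategy, and you correctly identify the two genuinely hard points (existence/regularity of shrinkwrapped surfaces, and the replacement for incompressibility in the compressible case). But be aware that what you have written is an outline, not a proof: each of the steps you flag as ``obstacles'' is in fact a substantial chapter of \cite{CalegariGabai:Tameness}, and several of the steps you pass over quickly (e.g.\ producing exiting geodesics in a compressible end, or promoting the immersed $\Sigma_i$ to embedded surfaces cobounding honest products) are themselves nontrivial. For the purposes of this paper, the appropriate ``proof'' is exactly what the authors wrote: a citation.
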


As a corollary of \refthm{Tameness}, we obtain:

\begin{corollary}[Standard compact cores]\label{Cor:StandardCore}
  Suppose $N$ is a hyperbolic 3-manifold with finitely generated fundamental group and let $0<\epsilon \leq \mu(N)$. Then there is a compact 3-manifold $M$ whose boundary $\bdy M$ contains a subsurface $P$, consisting of all toroidal components of $\bdy M$ and a possibly empty collection of annuli, such that $N^{\epsilon}$ is homeomorphic to $M - \bdy_0 M$. 
\end{corollary}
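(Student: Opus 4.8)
The plan is to deduce \refcor{StandardCore} from \refthm{Tameness} by choosing a compact core that interacts nicely with the $\epsilon$--thin part. First I would apply \refthm{Tameness} to $N$ (which has finitely generated $\pi_1$), obtaining a compact 3-manifold $\bar M$ with $N \isom \interior \bar M$. Since $\epsilon \leq \mu(N)$, the thin part $N^{<\epsilon}$ is a disjoint union of horocusps and tubes; the tubes are contained in a compact set, and each cusp component is homeomorphic to $A \times (0,\infty)$ or $T \times (0,\infty)$. Removing the open cusp components of $N^{<\epsilon}$ yields $N^\epsilon$, a manifold with boundary consisting of tori (from rank-2 cusps) and open annuli (from rank-1 cusps), as noted just before the corollary.

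The heart of the argument is to promote $N^\epsilon$ to \emph{something homeomorphic to} $M - \bdy_0 M$ for a compact pared manifold $M$. For this I would invoke the standard relative compact core theorem (McCullough, or Kulkarni--Shalen): inside the tame manifold $N^\epsilon$ — equivalently, inside $\bar M$ after truncating the cusp neighborhoods — there is a compact core $M \subset N^\epsilon$ meeting every cusp cross-section in an incompressible torus or annulus, such that $N^\epsilon$ deformation retracts to $M$ and, crucially, the ends of $N^\epsilon$ that are ``cut off'' by $M$ are products. Concretely, one takes $M$ to be the complement in $\bar M$ of open collar neighborhoods of the toroidal boundary components together with open neighborhoods of the rank-1 cusp annuli; one then sets $P \subset \bdy M$ to be the union of all toroidal boundary components of $\bar M$ together with the annuli coming from the rank-1 cusps. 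Tameness guarantees that $N^\epsilon \setminus M$ is a disjoint union of products $S \times [0,\infty)$ (the geometrically infinite and geometrically finite ends), so that $N^\epsilon \isom M - \bdy_0 M$: gluing the product ends back on recovers exactly the interior-minus-parabolic-locus picture. One also checks $(M,P)$ satisfies the pared conditions of \refdef{ParedManifold} using the hypothesis that $N$ is non-elementary, so $M$ is neither a ball nor a solid torus, together with irreducibility and atoroidality of hyperbolic 3-manifolds.

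The main obstacle I expect is the verification that $M$ can be chosen so that $\bdy M$ contains \emph{all} toroidal components of $\bdy \bar M$ (not missing any) and that the annular part of $P$ really is incompressible and $\pi_1$--injective as a pair — i.e., that the relative compact core can be taken in a normalized ``standard'' form compatible with the thin-part decomposition. This is where one needs the relative core theorem in its sharpest form (every noncompact end is either a product or a cusp, and the cusps are exhausted), rather than a bare-bones compact core. Once that normalization is in hand, the homeomorphism $N^\epsilon \isom M - \bdy_0 M$ is essentially by inspection, since $M - \bdy_0 M = \interior M \cup P$ and the ends of $N^\epsilon$ not accounted for by $P$ are precisely the product ends collared by $\bdy_0 M$. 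Scaling $\epsilon$ down within $(0,\mu(N)]$ only shrinks the cusp neighborhoods and does not affect the topological conclusion, so no genericity of $\epsilon$ is needed.
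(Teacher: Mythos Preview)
Your proposal is correct and follows the intended route: the paper gives no explicit proof, presenting the corollary as an immediate consequence of \refthm{Tameness}, and your argument is precisely the standard way to unpack that claim (tameness plus a relative compact core compatible with the cusp cross-sections). One small remark: the verification that $(M,P)$ is a pared manifold is not part of the corollary as stated---the paper asserts that separately in the paragraph following the corollary---so you need not include it in this proof.
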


The pair $(M,P)$ produced by \refcor{StandardCore} is a pared manifold, unique up to pared homeomorphism, and independent of the choice of $\epsilon$. We say that $(M,P)$ is the \emph{pared manifold associated with $N$}. Note that $M - \bdy_0 M$ is homeomorphic to $N^{\epsilon}$, but is not viewed as a submanifold of $N^{\epsilon}$. A consequence of the tameness theorem is that there is a compact submanifold $(M',P')\subset (N^{\epsilon}, \bdy N^{\epsilon})$ that is homeomorphic to $(M,P)$. We call such a submanifold a \emph{standard compact core} of $(N^{\epsilon}, \bdy N^{\epsilon})$. 

The components $F_1, \dots, F_s$ of $\bdy_0 M = \bdy M - P$ are called the \emph{free sides} of $(M,P)$. Then
the submanifold $N^{\epsilon}-M'$ consists of $s$ different components called \emph{geometric ends}, each homeomorphic to $F_i\times (0,\infty)$ for some $i$. We sometimes refer to $F_i$ as an \emph{end} of $(M,P)$.
In the main case of interest, where $N$ has infinite volume, $\bdy M$ must have some non-torus boundary components, hence $(M,P)$ must contain at least one free side.

\subsection{End invariants and the ending lamination theorem}

Let $N=\HH^3/\Gamma$ be a tame hyperbolic 3-manifold. The \emph{limit set} of $\Gamma$, denoted $\Lambda_{\Gamma}$, is the set of accumulation points of an orbit $\{\Gamma x\}$ in $\bdy_{\infty} \HH^3 \cong S^2$. The \emph{convex core} $CC(N)=CH(\Lambda_{\Gamma})/\Gamma$ is the quotient by $\Gamma$ of the convex hull of the limit set.

The \emph{domain of discontinuity} of $\Gamma$, denoted $\Omega_{\Gamma}$, is $\bdy_{\infty}\HH^3-\Lambda_{\Gamma}$. The group $\Gamma$ acts properly discontinuously on $\HH^3 \cup \Omega_\Gamma$. The quotient of the action of $\Gamma$ on $\Omega_{\Gamma}$ gives a (possibly disconnected) surface with a conformal structure. This surface is the \emph{conformal boundary} of $N$.

Following \refcor{StandardCore}, let  $(M,P)$ be a pared manifold associated to $N$, and let $\calE \cong F \times (0,\infty)$ be a geometric end of $N^\epsilon$ associated to a free side $F \subset \bdy_0 M$. Then the geometric structure on $\calE$ endows $\calE$ and $F$ with  an \emph{end invariant}, as follows.

A geometric end $\calE \subset N^\epsilon$ is called \emph{geometrically finite} if it has a neighborhood whose intersection with the convex core $CC(N)$ is compact. The \emph{end invariant} of a geometrically finite end is the point in the Teichm\"uller space $\calT(F)$ determined by the component of the conformal boundary corresponding to $F$. If every end of $N$ is geometrically finite, we say $N$ is \emph{geometrically finite}.

If the end $\calE$ is not geometrically finite, then it is said to be \emph{degenerate}. In this case, the end comes equipped with a filling geodesic lamination $\lambda$ on the free side $F$. This lamination, called the \emph{ending lamination} of $\calE$, is the end invariant of $\calE$. 

The following theorem is due to Minsky~\cite{Minsky:ModelsBounds} and Brock--Canary--Minsky~\cite{BrockCanaryMinsky:ELC}. See also Bowditch~\cite{Bowditch:ELC} for an alternate proof that covers the case of compressible ends. 

\begin{theorem}[Ending Lamination]\label{Thm:ELC}
  Let $N$, $N'$ be tame hyperbolic 3-manifolds. Let $(M,P)$ and $(M',P')$ be standard compact cores of $N$ and $N'$, respectively.
Suppose there is a homeomorphism $\phi\from (M,P)\to (M',P')$ satisfying the following:
  \begin{itemize}
  \item If $F\subset \bdy M-P$ is a geometrically finite end of $N$, then $\phi(F)$ is a geometrically finite end of $N'$, and the induced map between conformal boundaries is homotopic to a bi-holomorphic map.
  \item If $F\subset \bdy M-P$ is a degenerate end with ending lamination $\lambda$, then $\phi(F)$ is a degenerate end of $N'$ with ending lamination $\phi(\lambda)$.
  \end{itemize}
  Then there is an isometry $\Phi\from N\to N'$, homotopic to $\phi$.
\end{theorem}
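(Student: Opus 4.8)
The plan is to follow the established route to the ending lamination theorem, via the \emph{model manifold} of Minsky and Brock--Canary--Minsky, and to indicate where Bowditch's variant diverges. The first step is to \emph{build a combinatorial model} from the topological data $(M,P)$ together with the prescribed end invariants. Using the subsurface-projection and hierarchy machinery of Masur--Minsky, the Teichm\"uller points assigned to the geometrically finite free sides and the ending laminations assigned to the degenerate free sides assemble into a hierarchy of tight geodesics in the curve complexes of the essential subsurfaces of $\bdy_0 M$. Resolving this hierarchy yields a combinatorial complex, which one thickens into a Riemannian model manifold $\mathcal{M}_\nu$, built from finitely many standard ``blocks'' glued along solid-torus ``tubes'' whose meridian coefficients are read off from subsurface-projection distances. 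The key point is that $\mathcal{M}_\nu$ depends only on $(M,P)$ and the end invariants, not on any choice of hyperbolic structure.

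The second and third steps are the \emph{Lipschitz model map} and the \emph{a priori bounds}. One first constructs a degree-one map $f \from \mathcal{M}_\nu \to N$ that is Lipschitz with a universal constant, realizing the curves that appear in the hierarchy by simplicial hyperbolic (pleated) surfaces and interpolating across the blocks between consecutive such surfaces. The heart of the matter is to upgrade $f$ to a bilipschitz homeomorphism. This requires two-sided, \emph{quantitative} a priori length bounds: a simple closed curve is short in $N$ if and only if its aggregate subsurface-projection coefficient is large, and, more precisely, the geometry of each Margulis tube in $N$ is comparable to that of the corresponding model tube. Proving this comparison --- controlling the thick part by pleated-surface geometry and the thin part by the Hodgson--Kerckhoff and Dehn-filling analysis of tube geometry --- is the main obstacle, and constitutes the bulk of \cite{Minsky:ModelsBounds, BrockCanaryMinsky:ELC}. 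Bowditch's alternative in \cite{Bowditch:ELC} replaces the explicit model with a direct study of the coarse geometry of $N$ relative to its Margulis tubes; this is also what allows his argument to dispense with incompressibility and so treat compressible free sides.

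Granting the bilipschitz model theorem, the conclusion follows quickly. Given $N$ and $N'$ with homeomorphic standard cores $(M,P) \cong (M',P')$ and matching end invariants, the models $\mathcal{M}_\nu$ and $\mathcal{M}_{\nu'}$ coincide, since they are built from the same combinatorial input; composing the two bilipschitz model maps therefore produces a bilipschitz homeomorphism $h \from N \to N'$ in the homotopy class of $\phi$. Lifting to universal covers gives a $\pi_1$--equivariant quasi-isometry of $\HH^3$ (after identifying $\pi_1(N)=\Gamma$ with $\pi_1(N')=\Gamma'$ through $\phi_*$), which extends to a quasiconformal homeomorphism of $S^2 = \bdy_\infty \HH^3$ conjugating $\Gamma$ to $\Gamma'$. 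On the domain of discontinuity $\Omega_\Gamma$, the hypothesis that the induced map of conformal boundaries is homotopic to a biholomorphism lets us modify $h$ within its homotopy class so that the boundary map is conformal on $\Omega_\Gamma$; the Sullivan rigidity theorem --- applicable since $\Gamma$ is finitely generated and tame, so that its limit set carries no measurable invariant line field --- then forces the boundary map to be conformal on all of $S^2$, hence an element of $\PSL(2,\CC)$. This M\"obius transformation conjugates $\Gamma$ to $\Gamma'$ and descends to the desired isometry $\Phi \from N \to N'$ homotopic to $\phi$. I expect the quantitative a priori length bounds to be by far the hardest ingredient; the combinatorial construction of the model and the final rigidity step are comparatively routine, the latter being classical Kleinian-group theory.
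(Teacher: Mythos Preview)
The paper does not prove this theorem. Theorem~\ref{Thm:ELC} is stated in \refsec{Kleinian} (``Tools from Kleinian groups''), which is explicitly a review section collecting background results; the paper simply attributes the theorem to Minsky~\cite{Minsky:ModelsBounds} and Brock--Canary--Minsky~\cite{BrockCanaryMinsky:ELC}, with Bowditch~\cite{Bowditch:ELC} cited for the compressible case, and then uses it as a black box (notably inside the proofs of Theorems~\ref{Thm:DrillingApproximation} and~\ref{Thm:FillingApproximation}).

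Your sketch is a faithful outline of the cited proofs --- the hierarchy/model construction, the Lipschitz model map, the a priori length bounds and bilipschitz upgrade, and the Sullivan rigidity endgame --- and correctly identifies the a priori bounds as the deep step. But there is nothing in the paper to compare it to: no argument is given here, and none is expected, since the ending lamination theorem is one of the major inputs the paper is built on rather than something it establishes.
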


\subsection{Hyperbolization theorems}

We now review several results that guarantee that the topological pared manifold $(M,P)$ admits a hyperbolic structure with specified end invariants.

A hyperbolic structure on a 3-manifold $M$ defines a representation $\rho\from \pi_1(M) \to \PSL(2,\CC)$, and conjugate representations define isometric hyperbolic 3-manifolds. We let $AH(M,P)$ denote the set of conjugacy classes of discrete and faithful representations $\rho\from\pi_1(M)\to\PSL(2,\CC)$ such that those elements whose conjugacy classes are represented by loops on $P$ are mapped to parabolic elements.  The space $AH(M,P)$ is endowed with the topology of \emph{algebraic convergence}: a sequence of representations $\rho_n$ converges algebraically to $\rho$ if for all $\gamma \in \pi_1(M)$, the sequence $\{\rho_n(\gamma)\}$ converges to $\rho(\gamma)$ in $\PSL(2,\CC)$.  Similarly, a sequence $\{[\rho_n] \}$ converges algebraically to $[\rho] \in AH(M)$ if there are representatives $\rho_n \in [\rho_n]$ and $\rho\in[\rho]$ such that $\rho_n \to \rho$.
In a slight abuse of notation, we will write ``$\rho \in AH(M,P)$'' as a shorthand for the correct statement $\rho \in [\rho] \in AH(M,P)$.

A representation $\rho\in AH(M,P)$ is called \emph{minimally parabolic} if it satisfies the following property: $\rho(\gamma)\in \PSL(2,\CC)$ is parabolic if and only if $\gamma$ is conjugate into the fundamental group of some component of $P$. The following result, due to Thurston~\cite{Thurston:survey}, establishes the existence of at least one such representation.

\begin{theorem}[Hyperbolization]\label{Thm:Hyperbolization}
Let $(M,P)$ be a pared manifold. Then there is a geometrically finite hyperbolic 3-manifold $N$ such that the pared manifold associated to $N$ is $(M,P)$. Equivalently, there is a geometrically finite, minimally parabolic representation $\rho \in AH(M,P)$. 
\end{theorem}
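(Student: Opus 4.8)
**The plan for proving the Hyperbolization Theorem (Theorem: existence of a geometrically finite minimally parabolic structure on a pared manifold).**

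The strategy is to reduce the statement to Thurston's Geometrization Theorem for Haken manifolds (the "geometrization of pared manifolds" case), which is the natural hyperbolization statement one proves by induction on a hierarchy. First I would observe that $(M,P)$ is a pared manifold in the sense of \refdef{ParedManifold}: $M$ is compact, oriented, irreducible, atoroidal, with nonempty boundary, not a ball or solid torus, and $P$ consists of incompressible tori and annuli satisfying the two homotopy conditions. The task is to produce a complete hyperbolic metric on the interior of $M$ realizing exactly $P$ as the cusp locus. I would first dispose of the degenerate small cases (for instance where $\bdy_0 M$ is a sphere or a torus, or $M$ is an $I$-bundle over a small surface), either by checking they are excluded by the pared hypotheses or by treating them directly, since these are the usual edge cases in Thurston's argument.

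Next, the main body of the argument: I would invoke the fact that every Haken manifold with the atoroidal hypothesis and the given pared structure admits a hierarchy — a sequence of splittings along incompressible surfaces terminating in a union of balls — compatible with $P$, and then argue by induction along this hierarchy. At the base of the induction, a ball (or a handlebody, or an $I$-bundle) with an appropriate incompressible pattern on its boundary carries a geometrically finite structure by Ahlfors--Bers deformation theory and the theory of quasi-Fuchsian / Schottky groups. For the inductive step, when one glues two pieces (or self-glues one piece) along an incompressible free subsurface, one uses Thurston's double limit theorem together with the theory of the skinning map and McMullen's fixed-point argument to find a geometrically finite structure on the glued-up manifold whose conformal boundary on the remaining free sides is arbitrary and whose cusps are exactly $P$. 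Keeping careful track of the annular and toroidal locus $P$ throughout the hierarchy — ensuring that the incompressibility-in-pairs conditions in \refdef{ParedManifold} persist under splitting, and that no accidental parabolics are created — is what yields the \emph{minimally parabolic} conclusion rather than merely a geometrically finite one. The equivalence of the two formulations in the statement is then immediate: a geometrically finite minimally parabolic $\rho \in AH(M,P)$ gives a hyperbolic 3-manifold $N = \HH^3/\rho(\pi_1(M))$ whose thick-part/convex-core structure exhibits $(M,P)$ as its associated pared manifold via \refcor{StandardCore}, and conversely the holonomy of such an $N$ furnishes the representation.

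The hard part is the inductive gluing step: controlling the behavior of the skinning map $\sigma$ on Teichm\"uller space of the gluing surface and extracting a fixed point, which in general requires the double limit theorem to rule out degeneration and a compactness/fixed-point argument (or, in the presence of an orientation-reversing symmetry, McMullen's contraction estimate) to actually produce the metric. This is precisely the technical heart of Thurston's hyperbolization program, and I would cite \cite{Thurston:survey} (and the standard expository accounts) for it rather than reproduce it. Since the statement as given in the paper is exactly Thurston's theorem, for the purposes of this paper the "proof" is really the reduction to and citation of that result, together with the bookkeeping that identifies the pared manifold associated to the resulting $N$ with $(M,P)$ — the latter being a routine consequence of geometric finiteness and \refcor{StandardCore}.
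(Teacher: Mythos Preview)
Your proposal is correct and matches the paper's approach: the paper does not prove this theorem at all but simply attributes it to Thurston~\cite{Thurston:survey} and states it as background. Your outline of the hierarchy/skinning-map argument is accurate additional exposition, but strictly more than what the paper provides; for the purposes of matching the paper, the one-line citation is all that is needed.
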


The next classical result on hyperbolicity is a parametrization of the set of \emph{all} geometrically finite, minimally parabolic representations in $AH(M,P)$. 
Two representations $\rho \in AH(M,P)$ and $\rho' \in AH(M,P)$ are called \emph{quasi-conformally conjugated} if there is a quasi-conformal homeomorphism $f\from \bdy_{\infty}\HH^3\to \bdy_{\infty}\HH^3$ with $\rho(\gamma)\circ f = f\circ\rho'(\gamma)$ for all $\gamma\in \pi_1(M)$. Representations that are quasi-conformally conjugated to $\rho$ form an open subset $QH(\rho)$ of $AH(M,P)$. Moreover, if $\rho$ is minimally parabolic and geometrically finite, then so is $\rho'$, and the conformal boundary of $N'=\HH^3/\rho'(\pi_1(M))$ gives a point in Teichm\"uller space.

The statement below combines the work of
Ahlfors, Bers, Kra, Marden, Maskit, Mostow, Prasad, Sullivan, and Thurston. Our formulation is drawn from Namazi and Souto~\cite[Theorem~4.3]{namazi-souto:density}. See also Canary~\cite[Theorem 11.1]{Canary:TamenessSurvey}  and Canary and McCullough~\cite[Chapter~7]{CanaryMcCullough}.

\begin{theorem}[Ahlfors--Bers Uniformization]\label{Thm:ABUniformization}
  Let $(M,P)$ be a pared manifold. Let $\rho\in AH(M,P)$ be a minimally parabolic, geometrically finite representation, which exists by \refthm{Hyperbolization}. Then there is a covering map $\pi_{AB}$ from the Teichm\"uller space $\calT(\bdy_0 M)$ to $QH(\rho)\subset AH(M,P)$ with covering group $\Mod_0^+(M,P)$. Furthermore, for all $X\in \calT(\bdy_0 M)$, the hyperbolic manifold obtained from $\pi_{AB}(X)$ has associated pared manifold $(M,P)$ and conformal boundary bi-holomorphic to $X$.
\end{theorem}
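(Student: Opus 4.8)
The plan is to realize $\pi_{AB}$ as the composition of a change-of-marking covering $\calT(\bdy_0 M) \to \calT(\Omega_\Gamma/\Gamma)$, where $\Gamma = \rho(\pi_1(M))$ and $\Omega_\Gamma/\Gamma$ is the conformal boundary of $N = \HH^3/\Gamma$, with a canonical bijection $\calT(\Omega_\Gamma/\Gamma) \to QH(\rho)$ supplied by the measurable Riemann mapping theorem. Since the statement is a repackaging of the classical deformation theory of geometrically finite Kleinian groups, the work lies in fitting the cited results of Ahlfors, Bers, Kra, Marden, Maskit, Mostow, Prasad, Sullivan, and Thurston together, not in any one new argument.

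First I would pin down the conformal boundary. Geometric finiteness and minimal parabolicity of $\rho$ let me invoke the Ahlfors finiteness theorem, so that $\Omega_\Gamma/\Gamma$ is a finite-type Riemann surface, and the product structure of a geometrically finite end --- the region between the convex core $CC(N)$ and the conformal boundary, as in the discussion of geometric ends around \refcor{StandardCore} --- supplies a homeomorphism from $\Omega_\Gamma/\Gamma$ to the free boundary $\bdy_0 M$ of the pared manifold associated to $N$, compatibly with the $\pi_1(M)$--markings. This homeomorphism is canonical only up to the isotopy classes of self-homeomorphisms of $(M,P)$ inducing the identity outer automorphism of $\pi_1(M)$, i.e.\ up to $\Mod_0^+(M,P)$; hence it induces a covering $\calT(\bdy_0 M) \to \calT(\Omega_\Gamma/\Gamma)$ with deck group $\Mod_0^+(M,P)$. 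I would then check that this group acts properly discontinuously (it sits inside a mapping class group, which acts properly discontinuously on Teichm\"uller space) and freely (an element fixing a point would, via the identification below, produce a nontrivial self-isometry of the associated $N$ homotopic to the identity, contradicting Marden's isomorphism theorem), so the quotient is a genuine covering.

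Next I would construct the identification $\calT(\Omega_\Gamma/\Gamma) \to QH(\rho)$. Given a marked complex structure on $\Omega_\Gamma/\Gamma$, represent it by a Beltrami differential, lift it to a $\Gamma$--invariant Beltrami differential $\mu$ on $S^2 = \bdy_\infty \HH^3$ vanishing on the limit set and with $\|\mu\|_\infty < 1$, and apply the measurable Riemann mapping theorem of Ahlfors and Bers to get a quasiconformal $f^\mu \from S^2 \to S^2$ with Beltrami coefficient $\mu$. Invariance makes $\rho^\mu := f^\mu \rho (f^\mu)^{-1}$ a representation of $\pi_1(M)$; it is discrete and faithful since conjugate to $\rho$, type-preserving since $f^\mu$ carries parabolic fixed points to parabolic fixed points, and --- by quasiconformal stability of geometrically finite groups (Marden, Maskit) --- again geometrically finite and minimally parabolic, so $\rho^\mu \in QH(\rho) \subset AH(M,P)$. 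Marden's isomorphism theorem keeps the associated pared manifold equal to $(M,P)$, and by construction the conformal boundary of $\HH^3/\rho^\mu(\pi_1(M))$ carries exactly the marked complex structure we started from; feeding this through the first step gives the last sentence of the theorem. For surjectivity onto $QH(\rho)$, an arbitrary $\rho' = g\rho g^{-1}$ in $QH(\rho)$ has $\Gamma$--invariant Beltrami coefficient, and Sullivan's rigidity theorem --- a finitely generated Kleinian group carries no nonzero invariant Beltrami differential on its limit set --- lets me, after passing to a conjugate (allowed in $AH(M,P)$), assume that coefficient is supported on $\Omega_\Gamma$, realizing $\rho'$ as some $\rho^\mu$. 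Injectivity is a standard rigidity argument using the uniqueness clause of the measurable Riemann mapping theorem. Composing with the covering of the first step exhibits $\pi_{AB}$ as a covering map with deck group $\Mod_0^+(M,P)$.

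The hard part is that none of this is short: the statement is an assembly of deep theorems, and the two substantive inputs are Sullivan's rigidity theorem (which is what forces the deformation map to be \emph{onto} $QH(\rho)$, pins down the dimension count, and generalizes Mostow--Prasad rigidity, itself resting on the Ahlfors finiteness theorem and ergodic theory on the limit set) and the quasiconformal stability of geometrically finite groups together with Marden's isomorphism theorem (which confine the deformations to the geometrically finite, topologically constant locus). The remaining effort is bookkeeping about markings, and verifying that $\Mod_0^+(M,P)$ acts freely and properly discontinuously on $\calT(\bdy_0 M)$ so that $\pi_{AB}$ is a covering in the honest sense.
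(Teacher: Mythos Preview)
The paper does not prove this theorem at all: it is stated as background, attributed to Ahlfors, Bers, Kra, Marden, Maskit, Mostow, Prasad, Sullivan, and Thurston, with pointers to Namazi--Souto~\cite[Theorem~4.3]{namazi-souto:density}, Canary~\cite[Theorem~11.1]{Canary:TamenessSurvey}, and Canary--McCullough~\cite[Chapter~7]{CanaryMcCullough}. So there is no in-paper proof to compare your proposal against; your outline is precisely the standard assembly one finds in those references (measurable Riemann mapping theorem to build the deformation, Sullivan rigidity for surjectivity and to kill deformations supported on the limit set, Marden's stability/isomorphism theorem to keep the topology fixed), and is an accurate summary of how the result is established.

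One small caution on your bookkeeping: the claim that $\Mod_0^+(M,P)$ acts \emph{freely} on $\calT(\bdy_0 M)$ is more delicate than your parenthetical suggests, and in the references the map $\pi_{AB}$ is sometimes only asserted to be a quotient by a properly discontinuous action (so possibly a branched or orbifold covering) rather than a covering in the strict sense. If you intend to write out a full proof, that point deserves care; for the purposes of this paper the distinction is immaterial, since only the parametrization statement in the last sentence is ever used.
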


If $\bdy_0 M = \emptyset$, then 
\refthm{ABUniformization} restates the Mostow--Prasad rigidity theorem that $QH(\rho)$ contains a single point.
In the main case of interest, if $F_1, \ldots, F_s$ are the free sides of $(M,P)$, then a tuple  
\[
X = (X_1, \dots, X_s) \: \in \: \calT(\bdy_0 M) = \calT(F_1) \times \cdots \times \calT(F_s)\]
 in Techm\"uller space  determines a conjugacy class $\pi_{AB}(X) \in AH(M,P)$. Any representative of this conjugacy class is called the \emph{Ahlfors--Bers representation} corresponding to $X$. The corresponding hyperbolic manifold has the chosen  points $(X_1, \dots, X_s)$ as the full collection of end invariants. 

The analogue of \refthm{ABUniformization} in the presence of degenerate ends is \refthm{Realization} below. Stating this result is harder, as it requires definitions involving projectively measured laminations. The following construction follows Thurston~\cite[pages~421--422]{thurston:geometry-dynamics}.

Let $F$ be a connected, oriented surface of finite type (for instance, the free side of a pared manifold).  Let $\calC = \calC(F)$ be the set of essential non-peripheral simple closed curves in $F$, considered up to isotopy. Given $\alpha, \beta \in \calC$, we define the \emph{geometric intersection number}  $\iota(\alpha, \beta)$ to be the minimal intersection number between isotopy representatives of $\alpha$ and $\beta$.  For any $\alpha$ we use $\iota_\alpha \from \calC \to \RR$ to denote the resulting function $\beta \mapsto \iota(\alpha, \beta)$. 

Recall that $\RR^\calC$ is a topological vector space over $\RR$, equipped with the product topology. 
In a minor abuse of notation, we write $\iota \from \calC \to \RR^\calC$ for the resulting injection.  We define $\MC(F) = \RR_{\geq 0} \cdot \iota(\calC)$ to be the subset of \emph{measured curves}; that is, functions of the form 
\[
r \iota_\alpha \quad \mbox{where} \quad r \in \RR_{\geq 0}, \quad \alpha \in \calC.
\]
We define $\ML(F)$ to be the space of \emph{measured laminations}; this is the closure of $\MC(F)$ inside of $\RR^\calC$.  We define $\PML(F)$ to be the image of $\ML(F)$ in the projectivization $\PP\RR^\calC$.  We end this review by noting the important fact, recorded by Bonahon~\cite{Bonahon:Currents}, that $\iota$ extends to give a homogenous, continuous function from $\ML(F) \cross \ML(F) \to \RR$. 

Now, let $F$ be a free side of a pared manifold $(M,P)$. A \emph{meridian} on $F$ is a simple closed curve $\alpha \in \calC(F)$ that bounds a disk in $M$. 
The \emph{Masur domain} of $F$ consists of all $\lambda \in \PML(F)$ such that $\iota(\lambda, \mu) \neq 0$ for every measured lamination $\mu \in \ML(F)$  that arises as a limit of measured meridians. See \cite[Section 6.1]{namazi-souto:density}.

\begin{definition}[Filling end invariants]\label{Def:FillingEndInvts}
Let $(M,P)$ be a pared manifold with free sides $F_1, \ldots, F_s$. Let $0 \leq r \leq s$. Consider a collection of end invariants $(X_1, \ldots, X_r, \lambda_{r+1}, \ldots, \lambda_s)$, where $X_i \in \calT(F_i)$ for $i \leq r$ and $\lambda_i$ is an ending lamination on $F_i$ for $i \geq r+1$. This collection of end invariants is called
\emph{filling} if it satisfies the following conditions:
\begin{enumerate}
\item[(*)] If $M$ is an interval bundle over a compact surface $S$ and $N$ has no geometrically finite ends, then the projection of the ending laminations to $S$ has transverse self-intersection.
\item[(**)] If a compressible component $F_i  \subset \bdy_0 M$ corresponds to a degenerate end, then the end invariant $\lambda_i$, equipped with some transverse measure, is a Masur domain lamination. Equivalently, $\lambda_i$ is not contained in the Hausdorff limit of any sequence of meridians.
\end{enumerate}
\end{definition}

Canary~\cite{Canary:Ends} proved that conditions (*) and (**) are necessary for the end invariants to be realized by a hyperbolic structure on $(M,P)$. Namazi and Souto proved that these conditions are also sufficient for being realized by a hyperbolic structure~\cite[Theorem~1.3]{namazi-souto:density}:

\begin{theorem}[Realization]\label{Thm:Realization}
  Let $(M,P)$ be a pared 3-manifold, with a collection of end invariants on the free sides of $(M,P)$. Then there exists a minimally parabolic representation $\rho\in AH(M,P)$ yielding a hyperbolic manifold $N_{\rho}=\HH^3/\rho(\pi_1(M))$ with the given end invariants if and only if the collection of end invariants is filling.
\end{theorem}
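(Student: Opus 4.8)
The plan is to treat the two implications separately: cite Canary for necessity, and reconstruct (in outline) the Namazi--Souto argument for sufficiency.

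For the ``only if'' direction, I would show that conditions (*) and (**) are forced by the existence of $N_\rho$. If $M$ is an interval bundle over $S$ with no geometrically finite ends, then the ending laminations, pushed to $S$, must have transverse self-intersection: otherwise they amount to a single lamination on $S$, and by the doubly degenerate case of the ending lamination theorem together with the failure case of Thurston's double limit theorem (a sequence of quasi-Fuchsian groups whose two conformal endpoints limit to the \emph{same} lamination does not converge), no discrete faithful $\rho$ with those invariants can exist. For (**), the ending lamination of a degenerate end is the limit in $\PML(F_i)$ of the simple closed curves cut on $F_i$ by level surfaces exiting the end; if this limit were also a Hausdorff limit of meridians, Canary's branched-cover argument contradicts discreteness of $\rho$. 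This direction I would just cite.

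For the ``if'' direction the plan is approximation by geometrically finite manifolds. After reindexing, say $F_1,\dots,F_r$ carry points $X_i\in\calT(F_i)$ and $F_{r+1},\dots,F_s$ carry ending laminations $\lambda_i$. For each $i>r$ I would pick curves $\gamma_i^n\in\calC(F_i)$ with $[\gamma_i^n]\to\lambda_i$ in $\PML(F_i)$; when $F_i$ is compressible, the Masur domain hypothesis is exactly what lets me take the $\gamma_i^n$ to stay a definite distance (in the relevant curve/disk-set geometry) from the set of meridians, so that no Hausdorff subsequential limit of the $\gamma_i^n$ is a measured meridian. Choosing $Y_i^n\in\calT(F_i)$ with $\gamma_i^n$-length tending to $0$ and applying \refthm{ABUniformization} (based at a representation from \refthm{Hyperbolization}) yields geometrically finite $\rho_n\in AH(M,P)$ with conformal boundary $(X_1,\dots,X_r,Y_{r+1}^n,\dots,Y_s^n)$. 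The goal is then to show that, along a subsequence, $\rho_n\to\rho$ strongly, with $\rho\in AH(M,P)$ minimally parabolic and $N_\rho=\HH^3/\rho(\pi_1 M)$ having the prescribed end invariants.

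The technical core --- and the step I expect to be the main obstacle --- is a uniform bi-Lipschitz model: following Minsky's construction from the ending lamination theorem, but extended to compressible free sides, one builds a metric $g_n$ on $M$ from the subsurface and curve-complex data of $\rho_n$ and proves it is $K$-bi-Lipschitz to $N_{\rho_n}$ with $K$ independent of $n$. Adapting Minsky's a priori bounds and hierarchy machinery to the compressible case --- replacing the curve complex of $F_i$ by the geometry of the disk set and the handlebody group --- is precisely where the Masur domain/filling hypotheses are used: they prevent a compressing disk from ``dropping out'' in the limit (which would change the topological type) and force the models $g_n$ themselves to converge geometrically to a model with the correct ends. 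Given the uniform model, geometric convergence of the $g_n$ upgrades to geometric convergence of the $N_{\rho_n}$; a separate argument using that the compact core injects promotes this to algebraic convergence with no change of topology, so $\rho\in AH(M,P)$ is minimally parabolic; and reading off the ends of the limit model identifies the invariants as $(X_1,\dots,X_r,\lambda_{r+1},\dots,\lambda_s)$. I would expect the compressible-boundary model to be by far the hardest ingredient --- it is the bulk of the Namazi--Souto paper --- while the necessity direction and the Ahlfors--Bers bookkeeping are comparatively routine.
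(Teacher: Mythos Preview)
The paper does not prove this theorem at all: it is stated as a result of Namazi and Souto \cite[Theorem~1.3]{namazi-souto:density}, with the necessity direction attributed to Canary~\cite{Canary:Ends}, and no argument is given in the paper itself. So there is no ``paper's own proof'' to compare against; the theorem functions purely as a black-box citation in \refsec{Kleinian}.

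Your proposal is, in effect, a sketch of the Namazi--Souto proof rather than anything the present paper attempts. As such a sketch it is broadly accurate in shape --- necessity via Canary, sufficiency via approximation by geometrically finite structures together with uniform bilipschitz models extended to the compressible case --- and you correctly identify the compressible-boundary model theorem as the deep ingredient. But this is a full research program (the bulk of \cite{namazi-souto:density}), not something to be reproduced here; for the purposes of this paper you should simply cite the result, as the authors do.
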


\subsection{Geometric and strong limits}

In addition to the algebraic topology on the space of hyperbolic 3-manifolds, we need to use another, finer topology. 
Let $\Gamma_n$ be a sequence of Kleinian groups. We say that $\Gamma_n$ \emph{converges geometrically} to $\Gamma$ if the groups converge in the Chabauty topology on closed subsets of $\PSL(2,\CC)$. 
Convergence in this topology can be characterized as follows:
\begin{itemize}
\item every $\gamma \in \Gamma$ is the limit of some sequence $\{\gamma_n\}$ with $\gamma_n \in \Gamma_n$; 
\item if $\gamma_n \to \gamma$ is a convergent sequence with $\gamma_n\in\Gamma_n$, then $\gamma \in \Gamma$. 
\end{itemize}
The Chabauty topology on Kleinian groups is metrizable \cite[Proposition 3.1.2]{ceg:notes-on-notes}. We use the notation $d_{\rm Chaub}$ to denote a conjugation-invariant metric inducing this topology.

We endow $\HH^3$ with an origin (denoted $0$) and an orthonormal frame at $0$. Then each quotient manifold $N_n = \HH^3 / \Gamma_n$ is endowed with a \emph{baseframe} $\omega_n$, namely the quotient of the fixed orthonormal frame at $0 \in \HH^3$.  Then $\Gamma_n$ and the pair $(N_n, \omega_n)$ determine one another. Changing $\Gamma_n$ by conjugation in $\PSL(2,\CC)$ keeps the quotient manifold the same up to isometry, but changes the baseframe. We emphasize that the Chabauty topology is a topology on Kleinian groups (not conjugacy classes), or equivalently a topology on the set of hyperbolic manifolds endowed with baseframes. 

Geometric convergence has the following intrinsic characterization. Let $(N_n, \omega_n)$ be a sequence of framed hyperbolic 3-manifolds. Let $(N, \omega)$ be another framed hyperbolic 3-manifold, where $\omega$ is a baseframe at $x \in N$. For $R >0$, let $B_R(x) \subset N$ be the metric $R$--ball in $N$ centered at $x$, meaning the
set of points in $N$ of distance less than $R$ from $x$.
Then $(N_n, \omega_n)$ \emph{converges geometrically} to $(N, \omega)$ if and only if, for every $R$, there are embeddings
\begin{equation}\label{Eqn:GromovHausdorff}
f_{n,R} \from (B_R(x), \omega) \hookrightarrow (N_n, \omega_n),
\end{equation}
for all $n$ sufficiently large, which converge to isometries in the $C^\infty$ topology as $n \to \infty$.
See~\cite[Theorem~3.2.9]{ceg:notes-on-notes} for the equivalence between geometric convergence $(N_n, \omega_n) \to (N, \omega)$ and convergence $\Gamma_n \to \Gamma$ in the Chabauty topology.

In practice, it is often sufficient to keep track of the points where frames are based. If $(N_n, x_n)$ is a sequence of hyperbolic 3-manifolds endowed with basepoints, and $(N, x)$ is another hyperbolic 3-manifold with a basepoint $x$, we say that $(N_n, x_n) \to (N, x)$ if, for every $R$, there are embeddings 
\begin{equation}\label{Eqn:PointedGromovHausdorff}
g_{n,R} \from (B_R(x), x) \hookrightarrow (N_n, x_n),
\end{equation}
for all $n$ sufficiently large, which converge to isometries in the $C^\infty$ topology as $n \to \infty$. Now, suppose we have chosen orthonormal frames $\omega_n$ at $x_n$ and $\omega$ at $x$. If there is a geometric limit $(N_n, \omega_n) \to (N, \omega)$, then of course $(N_n, x_n) \to (N, x)$ as well. Conversely, since the set of orthonormal frames at a given point is compact, a pointed limit $(N_n, x_n) \to (N, x)$ implies that there is a subsequence $n_i$ and a frame $\nu$ at $x$ such that $(N_{n_i}, \omega_{n_i}) \to (N, \nu)$. See~\cite[Lemma 3.2.8]{ceg:notes-on-notes}.

By a mild abuse of notation, we will say that $(N_n, x_n)$ \emph{converges geometrically} to $(N, x)$, meaning that there exists a choice of frames at $x_n$ and $x$ such that $(N_n, \omega_n) \to (N, \omega)$. In most of our limit arguments, basepoints will be important while frames will remain implicit.

Now, consider a sequence of discrete, faithful representations $\rho_n \from \pi_1(M) \to \PSL(2,\CC)$ with image groups $\Gamma_n$, and a representation $\rho$ with image group $\Gamma$. We say that $\rho_n$ \emph{converges strongly} to $\rho$ if $\rho_n \to \rho$ algebraically and $\Gamma_n \to \Gamma$ in the Chabauty topology. 

The question of whether algebraic and geometric limits agree is subtle, and has received extensive attention in the literature. 
See Marden~\cite[Chapter~4]{Marden:HyperbolicManifolds} for a survey. 
For our purposes, we will need only the following foundational statement.

\begin{theorem}[Same topology on $QH(\rho)$]\label{Thm:SameTopologyQH}
Let $(M,P)$ be a pared 3-manifold, and let $\rho \in AH(M,P)$ be a geometrically finite, minimally parabolic representation. Then the algebraic and geometric topologies on the open set $QH(\rho) \subset AH(M,P)$ agree. More precisely:
\begin{itemize}
\item\label{Itm:AlgToStrong} If $\rho_n$ is a sequence in $QH(\rho)$ such that $\rho_n \to \rho$ algebraically, then $\rho_n \to \rho$ strongly.
\item\label{Itm:GeomToStrong} If $\rho_n$ is a sequence in $QH(\rho)$ such that $\rho_n(\pi_1 (M)) \to \rho(\pi_1(M))$ geometrically, then $\rho_n \to \rho$ strongly.
\end{itemize}
\end{theorem}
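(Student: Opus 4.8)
The plan is to deduce this foundational statement from the Ahlfors--Bers parametrization (\refthm{ABUniformization}) together with standard facts about geometric limits, rather than to reprove it from scratch; this is a well-known result in the Kleinian groups literature (see Marden~\cite[Chapter~4]{Marden:HyperbolicManifolds} or Canary--McCullough~\cite[Chapter~7]{CanaryMcCullough}), so the goal is to assemble the pieces cleanly. The key observation is that both algebraic and geometric convergence in $QH(\rho)$ are controlled by Teichm\"uller data: by \refthm{ABUniformization}, the covering map $\pi_{AB}\from \calT(\bdy_0 M) \to QH(\rho)$ identifies $QH(\rho)$ with a quotient of Teichm\"uller space, so a sequence $[\rho_n] \to [\rho]$ algebraically lifts (after passing to a subsequence and applying deck transformations in $\Mod_0^+(M,P)$) to a convergent sequence $X_n \to X$ in $\calT(\bdy_0 M)$.

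For the first bullet, I would argue as follows. Since algebraic convergence is the statement that we may choose representatives $\rho_n \to \rho$ convergent in $\PSL(2,\CC)$, the generators' images converge, so the groups $\Gamma_n = \rho_n(\pi_1 M)$ have a subsequential geometric limit $\Gamma_\infty$ containing $\Gamma = \rho(\pi_1 M)$ (every element of $\Gamma$ is an algebraic limit, hence a Chabauty limit). The containment $\Gamma \subseteq \Gamma_\infty$ is automatic; the content is the reverse containment $\Gamma_\infty \subseteq \Gamma$, i.e.\ that no "extra" parabolics or other elements appear in the geometric limit. Here one uses that $\rho$ is geometrically finite and minimally parabolic: the thick part of $N = \HH^3/\Gamma$ together with standard horoball neighborhoods of the cusps forms a compact "core with cusps" whose fundamental group is all of $\pi_1(M)$, and geometric convergence of the $N_n$ forces the geometric limit manifold $N_\infty = \HH^3/\Gamma_\infty$ to contain an isometric copy of larger and larger balls in $N$; since $\rho$ is geometrically finite these balls exhaust a set carrying all of $\pi_1(N_\infty)$, forcing the covering $N \to N_\infty$ to be trivial. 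This is exactly the classical statement that geometrically finite groups without extra parabolics are "not on the boundary" of their quasiconformal deformation space in a way that creates new geometric limit elements; one can cite Taylor's thesis or Jorgensen--Marden, but the cleanest route is via the continuity and properness properties of $\pi_{AB}$ from \refthm{ABUniformization}, namely that $\pi_{AB}$ being a covering map (hence a local homeomorphism and proper onto its image) means algebraic convergence in $QH(\rho)$ pulls back to convergence in $\calT(\bdy_0 M)$, which in turn yields geometric convergence because on Teichm\"uller space the marked conformal structures vary continuously and the corresponding Kleinian groups converge geometrically (this last fact is Bers' observation about continuity of the limit set / domain of discontinuity for quasi-Fuchsian-type deformations).

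For the second bullet, the logic is partly reversed: assume $\Gamma_n \to \Gamma$ geometrically. Geometric convergence gives, for each $R$, almost-isometric embeddings $B_R(x_n) \hookrightarrow N = \HH^3/\Gamma$; composing with the universal cover one extracts, after a subsequence, an algebraic limit $\rho' \in AH(M,P)$ of the $\rho_n$, and $\Gamma' = \rho'(\pi_1 M) \subseteq \Gamma$ with $\Gamma$ the geometric limit. We then need $\Gamma' = \Gamma$, i.e.\ the algebraic limit is onto the geometric limit. Again this uses geometric finiteness of $\rho$: since $\rho \in QH(\rho)$ and $QH(\rho)$ is open, for $n$ large $\rho_n$ also lies in $QH(\rho)$ and is geometrically finite with the same pared manifold, so $N_n = \HH^3/\Gamma_n$ has a compact core-with-cusps carrying $\pi_1(M)$; geometric convergence then transports this core to $N$, showing $\pi_1$ of the geometric limit is carried by the image of $\rho'$, hence $\Gamma' = \Gamma$ and $\rho_n \to \rho'$ strongly. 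Finally one identifies $[\rho'] = [\rho]$: the algebraic limit $\rho'$ has the same pared manifold $(M,P)$ and lies in $QH(\rho)$, and the geometric data (limit set, domain of discontinuity, conformal boundary) of $\Gamma_n$ converge to those of $\Gamma$, so via the covering $\pi_{AB}$ the conformal boundary of $N_{\rho'}$ agrees with that of $N_\rho$ up to $\Mod_0^+(M,P)$, forcing $[\rho'] = [\rho]$ after the usual diagonal/subsequence argument (every subsequence has a further subsequence converging strongly to $\rho$, hence the whole sequence does).

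The main obstacle, and the step deserving the most care, is the "no new parabolics / algebraic limit equals geometric limit" assertion in both bullets --- equivalently, ruling out that $QH(\rho)$ behaves like the algebraic limit phenomena that occur on the boundary of quasiconformal deformation spaces (the Kerckhoff--Thurston examples). The resolution is precisely that $QH(\rho)$ is an \emph{open} subset on which $\rho_n$ remains minimally parabolic and geometrically finite with unchanged pared type, so the pathologies cannot occur; making this rigorous amounts to invoking the structure of $\pi_{AB}$ as a covering map (\refthm{ABUniformization}) plus the standard fact that for geometrically finite groups the algebraic and geometric topologies coincide on each $QH$-chamber. I would present the proof as: (i) reduce both statements to claims about sequences in $\calT(\bdy_0 M)$ via $\pi_{AB}$; (ii) use continuity of $\pi_{AB}$ and of the associated conformal boundaries to get algebraic convergence from Teichm\"uller convergence; (iii) use geometric finiteness to upgrade algebraic convergence to strong convergence, citing the classical coincidence of topologies (Canary--McCullough~\cite[Chapter~7]{CanaryMcCullough}); and (iv) run the diagonal-subsequence argument to conclude the full-sequence statement in the second bullet.
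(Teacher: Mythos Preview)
The paper does not prove \refthm{SameTopologyQH} at all: immediately after the statement it simply records that the first bullet is due to Anderson and Canary~\cite[Theorem~3.1]{AndersonCanary} and the second to J{\o}rgensen and Marden~\cite[Theorem~4.9]{JorgensenMarden}, with a pointer to Marden's book. So there is no ``paper's own proof'' to compare against; the theorem is treated as a black-box import from the literature.

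Your sketch has the right shape and cites the right circle of ideas, but two steps deserve more care if you actually want a self-contained argument rather than a citation. First, your route through $\pi_{AB}$ for the first bullet is circular as written: the statement in \refthm{ABUniformization} that $\pi_{AB}$ is a covering map is with respect to the \emph{algebraic} topology on $QH(\rho)$, so pulling algebraic convergence back to $\calT(\bdy_0 M)$ is fine, but the step ``which in turn yields geometric convergence'' is exactly the content you are trying to prove and needs an independent argument (continuity of the developing map / limit set under quasiconformal deformation). Second, in the second bullet, the assertion ``composing with the universal cover one extracts, after a subsequence, an algebraic limit $\rho'$'' is not automatic: Chabauty convergence of $\Gamma_n \to \Gamma$ does not by itself give a convergent subsequence of the \emph{representations} $\rho_n$, since a priori $\rho_n(\gamma)$ could escape for some $\gamma$; one needs the J{\o}rgensen--Marden argument (or a compactness statement for $QH(\rho)$ coming from bounded Teichm\"uller data) to get this. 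Since your step~(iii) ultimately appeals to ``the classical coincidence of topologies'' anyway, you end up at the same citations the paper uses, which is the honest place to land.
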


The first bullet is due to Anderson and Canary~\cite[Theorem~3.1]{AndersonCanary}. The second bullet is due to J{\o}rgensen and Marden~\cite[Theorem~4.9]{JorgensenMarden}. See also Marden~\cite[Theorems~4.6.1 and~4.6.2]{Marden:HyperbolicManifolds}.

\subsection{Strong density and approximation theorems}

The density theorem, whose proof was concluded independently by Ohshika~\cite{ohshika:density} and Namazi and Souto~\cite{namazi-souto:density}, states that for every $\rho\in AH(M,P)$, there exists a geometrically finite sequence $\rho_n\in AH(M,P)$ converging algebraically to $\rho$.
The results we need, stated below as Theorems~\ref{Thm:Density} and~\ref{Thm:NamaziSouto}, are stronger. Stating these results requires the notion of a filling sequence.

\begin{definition}[Filling sequence in $\calT(\bdy M-P)$]\label{Def:FillingCocmpt}
Let $(M, P)$ be a pared manifold with free sides $F_1, \ldots, F_s$. As in \refdef{FillingEndInvts}, let $(X_1, \dots, X_r, \lambda_{r+1}, \dots, \lambda_s)$ be a filling tuple of end invariants,  where $X_1, \dots, X_r$ are points in Teichm\"uller space and $\lambda_{r+1}, \dots, \lambda_s$ are ending laminations.
 Suppose a sequence $(X_1^n, \dots, X_s^n)\in \calT(F_1)\times \dots \times \calT(F_s) = \calT(\bdy_0 M)$ satisfies:
\begin{enumerate}
\item for all $n$ and all $i\leq r$, we have $X_i^n = X_i$, and
\item for all $i\geq r+1$, there is a sequence of simple closed curves $\gamma_i^n \in \calC(F_i)$ which converge to $\lambda_i$ in $\PML(F_i)$, such that the ratio of lengths $\ell_{X_i^n}(\gamma_i^n) / \ell_{X_i^1}(\gamma_i^n)$ approaches zero.
\end{enumerate}
Then the sequence $(X_1^n, \dots, X_s^n)$ is said to be \emph{filling}.
\end{definition}

Let $\rho \in AH(M,P)$ be a discrete, faithful representation corresponding to $(X_1, \dots, X_r, \lambda_{r+1}, \dots, \lambda_s)$, with quotient manifold $N_\rho = \HH^3/\rho(\pi_1 M)$. Then, for $i > r$, the curves $\gamma_i^n$ appearing in item~(2)  define closed geodesics in the end of $N_{\rho}$ associated with $F_i$. These curves are said to \emph{exit the end} associated with $F_i$. This means that all but finitely many lie in the geometric end homeomorphic to $F_i\times (0,\infty)$, and for any compact set $K \subset N_{\rho}$, only finitely many $\gamma_i^n$ intersect $K$.

We can now state a strong form of the density theorem \cite[Corollary~12.3]{namazi-souto:density}.

\begin{theorem}[Strong density theorem]\label{Thm:Density}
Let $\Gamma$ be a finitely generated Kleinian group. Let $(M,P)$ be the pared manifold associated with $\HH^3 /\Gamma$, and let $\rho \from \Gamma \hookrightarrow PSL(2,\CC)$ be the inclusion map. Then there is a sequence of geometrically finite, minimally parabolic representations $\rho_n \in AH(M,P)$ converging strongly to $\rho$. Furthermore, the sequence of end invariants corresponding to $\rho_n$ is filling. 
\end{theorem}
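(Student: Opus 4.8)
The plan is to exhibit $\rho$ as a strong limit of the Ahlfors--Bers representations attached to a \emph{filling sequence} of points in Teichm\"uller space, in the sense of \refdef{FillingCocmpt}. The ending lamination theorem will be used to identify the algebraic limit of this sequence, and the uniform bilipschitz model manifolds underlying \refthm{ELC} will be used to upgrade algebraic convergence to strong convergence.

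First I would unpack the hypothesis. By \refthm{Tameness}, $N=\HH^3/\Gamma$ is tame; let $(M,P)$ be its associated pared manifold (\refcor{StandardCore}), with free sides $F_1,\dots,F_s$, and let $(X_1,\dots,X_r,\lambda_{r+1},\dots,\lambda_s)$ be the end invariants of $N$, where $X_i\in\calT(F_i)$ records a geometrically finite end and $\lambda_i$ is the ending lamination of a degenerate end. Since these end invariants are realized by $\rho$ itself, they satisfy the conditions of \refdef{FillingEndInvts} (this is the necessity half recalled before \refthm{Realization}); in particular, on each compressible degenerate free side the ending lamination lies in the Masur domain, and the interval-bundle condition holds.

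Next I would build the approximating sequence. For each degenerate end choose simple closed curves $\gamma_i^n\in\calC(F_i)$ with $\gamma_i^n\to\lambda_i$ in $\PML(F_i)$ --- taken inside the Masur domain when $F_i$ is compressible, and respecting the transverse self-intersection condition when $M$ is an interval bundle --- together with points $X_i^n\in\calT(F_i)$ that pinch $\gamma_i^n$ (for instance by collapsing the Fenchel--Nielsen length coordinate of $\gamma_i^n$), so that $\ell_{X_i^n}(\gamma_i^n)/\ell_{X_i^1}(\gamma_i^n)\to 0$. Put $X_i^n=X_i$ for $i\le r$; then $(X_1^n,\dots,X_s^n)$ is a filling sequence. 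Fixing a minimally parabolic geometrically finite base representation (\refthm{Hyperbolization}) and applying \refthm{ABUniformization}, each tuple determines via $\pi_{AB}$ a geometrically finite, minimally parabolic representation $\rho_n\in AH(M,P)$ with associated pared manifold $(M,P)$ and end invariants $(X_1^n,\dots,X_s^n)$. These $\rho_n$ are the desired representations, and their end invariants form a filling sequence by construction.

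It remains to show $\rho_n\to\rho$ strongly, and this is where the real work lies. The a priori length bounds supplied by the model-manifold technology of Minsky and of Namazi--Souto keep the algebraic limit from degenerating or acquiring accidental parabolics, so after passing to a subsequence $\rho_n$ converges algebraically to some $\rho_\infty\in AH(M,P)$; by \refthm{Tameness}, $N_{\rho_\infty}$ is tame with compact core $(M,P)$. Its end invariants are exactly $(X_1,\dots,X_r,\lambda_{r+1},\dots,\lambda_s)$: the conformal structures $X_i$ persist on the geometrically finite ends, while on each degenerate end the curves $\gamma_i^n$ exit the end with length collapsing relative to $N_{\rho_1}$, forcing the ending lamination to be $\lim\gamma_i^n=\lambda_i$. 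Applying \refthm{ELC} to the identity marking of $(M,P)$ yields an isometry $N_{\rho_\infty}\to N_\rho$ in the correct homotopy class, so $[\rho_\infty]=[\rho]$; since every subsequence of $\{\rho_n\}$ subconverges to $\rho$, the whole sequence converges algebraically to $\rho$. Finally, the same model-manifold control identifies the geometric limit of $(N_{\rho_n},\omega_n)$, for suitable baseframes, with $N_\rho$ --- the model provides, for large $n$, uniformly bilipschitz identifications of ever larger balls in $N_{\rho_n}$ with balls in $N_\rho$, which is exactly the assertion that $\Gamma_n\to\Gamma$ in the Chabauty topology --- so $\rho_n\to\rho$ strongly. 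I expect the genuine obstacle to be concentrated here: establishing the a priori bounds that make the algebraic limit non-degenerate and type-preserving, and then upgrading algebraic to strong convergence. These are the analytic heart of the density theorem, while the surrounding steps are essentially bookkeeping with \refthm{Tameness}, \refthm{ELC}, and \refthm{ABUniformization}. (When $\rho$ is already geometrically finite the argument shortcuts: all $\rho_n$ then lie in $QH(\rho)$, and \refthm{SameTopologyQH} promotes algebraic convergence to strong convergence directly.)
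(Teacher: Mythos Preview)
The paper does not prove this theorem; it is quoted from Namazi--Souto \cite[Corollary~12.3]{namazi-souto:density}, with only the remark that the ``furthermore'' clause about filling sequences is implicit in their construction. Your outline is an accurate high-level sketch of the Namazi--Souto strategy --- build a filling sequence of Teichm\"uller points, take the associated Ahlfors--Bers representations, and show they converge strongly to something the ending lamination theorem identifies with $\rho$ --- and you correctly flag that the substantive content (the a priori bounds preventing degeneration or accidental parabolics, and the upgrade from algebraic to strong convergence) lies in the step you call the ``genuine obstacle.'' That step is indeed the bulk of \cite{namazi-souto:density} and cannot be filled in within this paper; so your sketch is honest about its own limits, and matches what the paper takes as a black box. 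One small point: your parenthetical about choosing the $\gamma_i^n$ inside the Masur domain is not part of \refdef{FillingCocmpt} and is unnecessary for the Ahlfors--Bers representations $\rho_n$ to exist, though it does no harm.
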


Note that the ``furthermore'' sentence in our statement of \refthm{Density} is not stated directly in \cite[Corollary~12.3]{namazi-souto:density}. However, this assertion is central to the proof of \cite[Corollary~12.3]{namazi-souto:density}: the approximating manifolds appearing in Namazi and Souto's construction are taken to have ends forming a filling sequence. We also note that the proof of \refthm{Density} uses both the tameness and the ending lamination theorems.

The following related statement is \cite[Corollary~12.5]{namazi-souto:density}.

\begin{theorem}[Approximation theorem]\label{Thm:NamaziSouto}
  Let $(M,P)$ be a pared 3-manifold with free sides $F_1,\dots, F_s$. Suppose $(X_1^n, \dots, X_s^n) \in \calT(\bdy M-P)$ is a filling sequence converging to the filling end invariants $(X_1, \dots, X_r, \lambda_{r+1}, \dots, \lambda_s)$.
Let $\rho_n \in \pi_{AB}(X_1^n, \dots, X_s^n) \in AH(M,P)$ be the Ahlfors--Bers representation, giving an associated geometrically finite hyperbolic 3-manifold $N_n=\HH^3/\rho_n(\pi_1(M))$.

  Then, up to passing to a subsequence, $\rho_n$ converges strongly to a discrete and faithful representation $\rho$. If $(M',P')$ is the pared manifold associated with the hyperbolic manifold $N_{\rho}=\HH^3/\rho(\pi_1(M))$, then there is a homeomorphism $\phi\from (M,P)\to (M',P')$ in the homotopy class determined by $\rho$ which maps the filling tuple $(X_1, \dots, X_r, \lambda_{r+1}, \dots, \lambda_s)$ to the end invariants of $N_{\rho}$. 
\end{theorem}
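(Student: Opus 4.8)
The plan is to treat \refthm{NamaziSouto} as what it is — a strong form of the density theorem — and to assemble it from an algebraic compactness statement together with the rigidity supplied by the ending lamination theorem. First I would extract, after passing to a subsequence, an algebraic limit $\rho_n\to\rho$. This is not automatic, since the $\rho_n$ could a priori leave every compact subset of $AH(M,P)$; the input that prevents this is precisely that the Teichm\"uller parameters $(X_1^n,\dots,X_s^n)$ converge to the filling tuple. Using pleated surfaces spanning the free sides (equivalently, the bounded geometry of the $\epsilon$--thick part of $N_n$ in terms of the $X_i^n$ and of short curves on the conformal boundary), one bounds the translation lengths of a fixed generating set of $\pi_1(M)$, which gives subconvergence. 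The limit $\rho$ is then automatically discrete and faithful: this is the classical fact (J\o rgensen, via J\o rgensen's inequality, together with persistence of faithfulness for non-elementary groups) that an algebraic limit of discrete faithful representations of a non-elementary group is discrete and faithful.

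Next I would identify the limit. By the tameness theorem (\refthm{Tameness}), $N_\rho=\HH^3/\rho(\pi_1 M)$ is tame, and since $\rho$ realizes an isomorphism $\pi_1(M)\isom\pi_1(N_\rho)$, a compact core $M'$ of $N_\rho$ is homeomorphic to $M$; write $(M',P')$ for the associated pared manifold. The crux is to show that the end invariants of $N_\rho$ are exactly the prescribed filling tuple, and in particular that no accidental parabolics appear, so that $(M',P')$ is pared-homeomorphic to $(M,P)$. For a geometrically finite factor $F_i$ with $i\le r$, the conformal boundary component is held fixed at $X_i$, and one checks that this geometrically finite end persists in the limit with conformal structure $X_i$. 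For a degenerate factor $F_i$ with $i\ge r+1$, item~(2) of \refdef{FillingCocmpt} forces the geodesic representatives of the $\gamma_i^n$ in $N_n$, and hence in $N_\rho$, to exit an end associated with $F_i$; since $\gamma_i^n\to\lambda_i$ in $\PML(F_i)$, the definition of the ending lamination identifies the ending lamination of that (necessarily degenerate) end as $\lambda_i$. The filling hypothesis (conditions (*) and (**) of \refdef{FillingEndInvts}) excludes the configurations obstructed by Canary's work, so the end invariants of $N_\rho$ form a filling tuple matched with $(X_1,\dots,X_r,\lambda_{r+1},\dots,\lambda_s)$. By the ending lamination theorem (\refthm{ELC}), $N_\rho$ is then the unique hyperbolic manifold with these end invariants — i.e.\ the realization of \refthm{Realization} — and the homeomorphism $\phi\from(M,P)\to(M',P')$ carrying the filling tuple to the end invariants of $N_\rho$ is the one furnished by that uniqueness, in the homotopy class of $\rho$.

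Finally I would upgrade algebraic convergence to strong convergence. When the algebraic limit $\rho$ is geometrically finite this is immediate from \refthm{SameTopologyQH}; in general one argues that the geometric limit group $\Gamma_\infty\supseteq\rho(\pi_1 M)$ cannot be strictly larger, since a proper inclusion would present $N_\rho$ as a nontrivial cover of $N_\infty=\HH^3/\Gamma_\infty$, incompatible with $N_\rho$ carrying the realized (filling) end invariants. The genuine obstacle, I expect, lies exactly in the two places where deep input is unavoidable: algebraic compactness with the \emph{correct} limit, and the absence of accidental parabolics together with strong (rather than merely algebraic) convergence. Both amount to the model-manifold and a priori bounds technology underpinning the density and ending lamination theorems, with no elementary shortcut; accordingly, in the paper we simply invoke Namazi and Souto's Corollary~12.5~\cite{namazi-souto:density}, which packages precisely this.
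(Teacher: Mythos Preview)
Your proposal is correct and matches the paper's treatment: the paper does not prove \refthm{NamaziSouto} but simply cites it as \cite[Corollary~12.5]{namazi-souto:density}, exactly as you conclude in your final sentence. Your sketch of the underlying argument (algebraic compactness via bounded translation lengths, identification of end invariants via tameness and the ending lamination theorem, and upgrading to strong convergence) is a reasonable outline of what goes into Namazi and Souto's proof, but none of it appears in the paper itself.
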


\subsection{Circle packings}\label{Sec:CirclePacking}

Let $R$ be a Riemann surface of hyperbolic type, meaning that every component $R_i \subset R$ has $\chi(R_i)<0$. Let $\Omega$ be an open subset of the Riemann sphere $S^2$ that uniformizes $R$; that is, $R$ is a quotient of $\Omega$ by M\"obius transformations. A configuration of circles on $R$ (relative to $\Omega$) is a collection of simple closed curves on $R$ that bound disks, such that the interiors of the disks are disjoint, and the lifts of the curves to $\Omega$ are round circles on $S^2$. A configuration of circles is a \emph{circle packing} if the interstitial regions, complementary to the interiors of the disks, consist only of curvilinear triangles.

The following theorem is from Beardon and Stephenson~\cite[Theorem~6]{BeardonStephenson}.

\begin{theorem}[Uniformization theorem for circle packings]\label{Thm:CirclePackingRigidity}
Let $R$ be a Riemann surface that admits a circle packing. Then the circle packing uniquely determines a conformal structure on $R$. 
\end{theorem}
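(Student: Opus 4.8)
The plan is to treat this as the infinite-packing instance of the discrete uniformization theory of Koebe--Andreev--Thurston, the genuinely new content being a rigidity statement. Write $K$ for the \emph{nerve} of the given circle packing: the triangulated complex recording which disks are tangent, together with its embedding in the underlying surface $R$. Since the hypothesis already supplies a circle packing with nerve $K$ on the given Riemann surface, existence of a compatible conformal structure is free, and what must be shown is uniqueness: if $X$ and $X'$ are complex structures on $R$, each carrying a circle packing whose nerve is combinatorially $K$ --- the isomorphism of nerves being induced by a homeomorphism of $R$ isotopic to the identity --- then $X$ and $X'$ coincide up to a conformal homeomorphism taking one packing to the other.

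I would first lift the two packings to a common covering of $R$ on which their circles remain genuine round circles; for the surfaces relevant here this covering may be taken simply connected, so we are reduced to locally finite circle packings $P$, $P'$ by round circles in a simply connected domain of $S^2$, with the same infinite nerve $\widehat K$ and equivariant under a group of M\"obius transformations. It therefore suffices to show that such a packing is determined up to a M\"obius transformation by $\widehat K$: the identifying map is then canonical, hence conjugates one deck group to the other, hence descends to the desired conformal isomorphism $(R,X) \to (R,X')$.

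The core is a discrete maximum principle for the ratio of radii. Consider the comparison function $v \mapsto \log\bigl(r'(v)/r(v)\bigr)$ on the vertices of $\widehat K$, where $r(v)$ and $r'(v)$ are the radii of the circles assigned to $v$ in $P$ and $P'$. At every interior vertex the cone angles of the fan of triangles around $v$ sum to exactly $2\pi$ in both packings, and each such angle depends on the radii at $v$ and its neighbours in a way that is invariant under common rescaling and strictly decreasing in the central radius; feeding this into the angle-sum identity shows the comparison function admits no strict local maximum or minimum at an interior vertex. The Rodin--Sullivan \emph{ring lemma} --- which bounds the ratio of radii of any two tangent circles by a constant depending only on the degree of the shared vertex --- keeps the comparison function bounded, and a bounded function obeying this maximum principle is constant: when $\widehat K$ is of parabolic type one invokes recurrence of the simple random walk on $\widehat K$, and in the hyperbolic case one realizes $P$, $P'$ as packings of $\mathbb{D}$ and runs a Schwarz-lemma argument. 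Hence $r' \equiv c\, r$ and $P$, $P'$ differ by a M\"obius transformation.

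The step I expect to be hardest is precisely this infinite rigidity. In the finite Koebe--Andreev--Thurston theorem uniqueness drops out of a single application of the maximum principle, but in the infinite setting one must marry the uniform geometric estimate of the ring lemma to the correct potential-theoretic mechanism --- random-walk recurrence or the Schwarz lemma --- according to the type of $\widehat K$, and one must verify that the comparison function is genuinely bounded, which is where local finiteness of the packing is used. Granting the rigidity, the return trip to $R$ is routine bookkeeping: the canonical M\"obius identification of the lifted packings conjugates one deck group to the other and so descends to a conformal isomorphism of $R$ carrying the first circle packing to the second --- exactly the statement that the circle packing determines the conformal structure on $R$.
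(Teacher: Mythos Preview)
The paper does not prove this theorem; it simply quotes it from Beardon and Stephenson~\cite[Theorem~6]{BeardonStephenson}. So there is no ``paper's own proof'' to compare against.

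Your sketch is a reasonable outline of a rigidity argument in the spirit of the circle-packing literature, though it is closer to the later approaches of Schramm and He--Schramm than to Beardon--Stephenson's original argument, which proceeds more directly via hyperbolic polyhedra and the Mostow rigidity theorem. If you wanted to make your sketch into an honest proof, two points would need attention. First, the ring lemma bounds the ratio of adjacent radii by a constant depending on the degree of the central vertex; to conclude that $\log(r'/r)$ is \emph{globally} bounded you need the nerve to have bounded degree, which you should state and justify (here it follows because the packing is the lift of a finite packing on a compact surface, but this is worth saying). Second, the step ``a bounded function obeying this maximum principle is constant'' is exactly the hard part --- it is not automatic from the discrete maximum principle alone, and the recurrence/Schwarz-lemma dichotomy you allude to is a genuine theorem (essentially the He--Schramm rigidity theorem) rather than a routine verification. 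For the application in this paper only the hyperbolic-type case matters, since the conformal boundary components all have negative Euler characteristic.
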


Now, let $\Gamma$ be a finitely generated Kleinian group, with $\Lambda(\Gamma)$ its limit set and $\Omega(\Gamma)$ its domain of discontinuity. Then $\Omega(\Gamma)/\Gamma$ is a (possibly disconnected) Riemann surface of hyperbolic type.

The following theorem follows from work of Brooks~\cite{Brooks}. 

\begin{theorem}[Circle packings approximate]\label{Thm:Brooks}
Let $N=\HH^3/\Gamma$ be a geometrically finite hyperbolic 3-manifold with associated pared manifold $(M,P)$, and let $\rho \from \pi_1(M) \to \Gamma$ be the associated representation. Then, for every $\delta >0$, there is a geometrically finite representation $\rho_\delta \in QH(\rho)$, representing an $e^\delta$--quasiconformal deformation of $\rho$, such that the conformal boundary $\Omega(\Gamma_\delta)/\Gamma_\delta$ of the image group $\Gamma_\delta$ admits a circle packing.
\end{theorem}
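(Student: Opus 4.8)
The plan is to combine Brooks' circle-packing construction with the standard dictionary relating quasiconformal deformations of the conformal boundary of a Kleinian group to quasiconformal conjugacies of the group itself. Concretely, I would first reduce the statement to an approximation statement about the single Riemann surface $R := \Omega(\Gamma)/\Gamma$, then invoke the work of Brooks~\cite{Brooks} to approximate $R$ by a circle-packed conformal structure, and finally integrate the resulting Beltrami differential equivariantly to produce the group $\Gamma_\delta$.

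\textbf{Step 1: reduction to the conformal boundary.} Since $\Gamma$ is geometrically finite, $R = \Omega(\Gamma)/\Gamma$ is a Riemann surface of finite type, and by \refthm{ABUniformization} the component $QH(\rho) \subset AH(M,P)$ is covered by $\calT(\bdy_0 M)$, which we identify with the Teichm\"uller space of $R$. I would use the classical fact that any Beltrami differential $\mu$ on $R$ lifts to a $\Gamma$--invariant Beltrami differential on $\Omega(\Gamma)$, extends by $0$ across the limit set $\Lambda(\Gamma)$ without increasing its sup-norm, and integrates via the measurable Riemann mapping theorem to a quasiconformal homeomorphism $f_\mu$ of $\widehat{\CC}$ whose dilatation is controlled by $\|\mu\|_\infty$, such that $f_\mu \Gamma f_\mu^{-1}$ is again a geometrically finite, minimally parabolic Kleinian group, $\rho_\mu := f_\mu \rho f_\mu^{-1} \in QH(\rho)$, and the conformal boundary of $\rho_\mu$ is precisely the Riemann surface obtained by deforming $R$ along $\mu$ (indeed this is how $\pi_{AB}$ is built). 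Thus it suffices to find, for every $\delta > 0$, a Beltrami differential $\mu$ on $R$ of dilatation at most $e^\delta$ whose deformed surface carries a circle packing relative to its Kleinian uniformization.

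\textbf{Step 2: Brooks' approximation.} This is the heart of the matter and is supplied by~\cite{Brooks}. Brooks constructs inside $\Omega(\Gamma)$ a $\Gamma$--invariant, locally finite family of disjoint round disks whose interstitial regions are all curvilinear triangles except for finitely many (modulo $\Gamma$) ``defect'' interstices, each bounded by finitely many circular arcs; by refining the packing (using hexagonal-type packings on the thick part of the boundary surface and horocyclic packings in the cusp regions) so that the defects occupy an arbitrarily small region, Brooks shows that an arbitrarily small quasiconformal deformation, supported near the defects, can be chosen after which every interstice becomes triangular. The deformed conformal boundary then carries an honest circle packing, and is the unique conformal structure compatible with that packing by the Beardon--Stephenson theorem \refthm{CirclePackingRigidity}. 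In particular, for any $\delta > 0$ one obtains such a deformation of dilatation at most $e^\delta$.

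\textbf{Step 3: assembly, and the main obstacle.} Feeding the Beltrami differential $\mu$ from Step 2 into the equivariant integration of Step 1 yields $\rho_\delta := \rho_\mu$: it lies in $QH(\rho)$, is geometrically finite, is an $e^\delta$--quasiconformal deformation of $\rho$, and has conformal boundary $\Omega(\Gamma_\delta)/\Gamma_\delta$ admitting a circle packing, which is exactly the assertion. The only genuinely hard step is Step 2 — the existence of circle packings approximating a prescribed finite-type conformal structure, together with the quantitative control on how little distortion is needed to clear away the non-triangular interstices as the packing is refined — and this is precisely the content of Brooks' paper. Steps 1 and 3 are routine Ahlfors--Bers bookkeeping once Brooks' result is available; the only point requiring a moment's care is that extending the boundary Beltrami differential by zero across $\Lambda(\Gamma)$ produces a genuine quasiconformal conjugacy of $\Gamma$ with the same dilatation bound, which is classical.
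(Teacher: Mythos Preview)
Your proposal is correct and follows essentially the same approach as the paper: both reduce the statement to Brooks' circle-packing approximation theorem, with the Ahlfors--Bers dictionary converting a small Beltrami deformation of the conformal boundary into a small quasiconformal conjugacy of the group. The one place the paper is more explicit is the treatment of cusped boundary components, where it spells out the tangent-circles construction at rank--1 cusps (following Hoffman--Purcell) rather than just invoking ``horocyclic packings in the cusp regions''; you may wish to add a sentence on this, since Brooks' original argument addresses the closed case.
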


\begin{proof}[Proof sketch]
The ideas behind this statement are all contained in Brooks' proof of~\cite[Theorem~2]{Brooks}. If a component of $R=\Omega(\Gamma)/\Gamma$ is a closed surface $S$, then we may uniformize $S$ by a component of $\Omega(\Gamma)$. Pack circles into this component $S$, obtaining interstices that are triangles and quads. Brooks shows that for every $\delta > 0$, there is an $e^\delta$--quasiconformal deformation $\Gamma_\delta$ that eliminates the quads of a sufficiently fine packing, so the deformed conformal structure on $S$ admits a circle packing. 

When a component $S \subset R$ has cusps, the packing procedure requires some additional care. The argument is given, for example, in Hoffman and Purcell~\cite[Lemma~2.3]{Hoffman-Purcell}. Since $\Gamma$ is geometrically finite, there is a fundamental domain $\mathcal{F}$ for $\Gamma$ whose sides consist of finitely many geodesic hyperplanes. Where there is a rank--1 cusp, there will be two circular arcs $C_1, C_2$ bounding a polygonal region of the boundary of $\mathcal{F}$ such that $C_1$ and $C_2$ meet tangentially. Begin by adding two circles meeting orthogonally at that point of tangency, and ensure those circles are small enough that they meet no other sides of $\mathcal F$. Then fill in the rest of $\Omega(\Gamma)$ by circles, and use Brooks to perform a quasi-conformal deformation as above to obtain a circle packing.
\end{proof}

We close this section with a definition that will be useful for our constructions.

\begin{definition}[Scooped manifold, double--double of a circle packed manifold]\label{Def:DoubleDouble}
Let $N = \HH^3/\Gamma$ be a tame, geometrically finite hyperbolic manifold with associated pared manifold $(M,P)$, and suppose the conformal boundary associated with each free side of $\bdy M-P$ admits a circle packing. Let $\widetilde{C}$ denote the collection of inverse images in $\bdy_{\infty}\HH^3$ of the circles in the packing. Scoop out the half spaces in $\HH^3$ bounded by Euclidean hemispheres with boundary circles $\widetilde{C}$, and color their boundaries blue. The triangular interstices between circles of $\widetilde{C}$ uniquely determine additional circles dual to the blue ones. Scoop out the half spaces in $\HH^3$ bounded by these dual circles as well, and color their boundaries red. The resulting space, denoted $\widetilde{N}^{\circ}$, is $\HH^3$ with the interiors of red and blue hemispheres removed.

The group $\Gamma$ stabilizes $\widetilde{N}^{\circ}$. The quotient space $N^{\circ} = \widetilde{N}^{\circ}/\Gamma$ is a manifold with corners whose interior is homeomorphic to $N$, and whose boundary consists of geodesic blue ideal polygons and geodesic red ideal triangles. We call $N^{\circ}$  the \emph{scooped manifold} associated with $N$. 

Finally, starting with the scooped manifold $N^{\circ}$ associated with $N$, double first across the blue polygons, then double again across the red triangles. The result is a finite volume hyperbolic manifold with rank 2 cusps, which we will call the \emph{double--double of $N$}, and denote by $DD(N^{\circ})$. 
\end{definition}

\section{A 6--theorem for tame manifolds}\label{Sec:6Theorem}

The 6--theorem for finite volume 3-manifolds, proved by Agol~\cite{agol:6theorem} and Lackenby~\cite{lackenby:surgery}, states that Dehn filling along a slope of length greater than $6$ yields a manifold that admits a hyperbolic structure. In this section, we use many of the ideas behind their proof to extend their result to the infinite volume setting.

\begin{theorem}[6--theorem for pared manifolds]\label{Thm:6TheoremInf}
Let $Z$ be a tame hyperbolic 3-manifold, with associated pared manifold $(M,P\cup(T_1\cup \dots\cup T_k))$, where $T_1, \dots, T_k$ are torus boundary components of $M$. Assume that $\bdy M - (T_1\cup \dots\cup T_k) \neq \emptyset$.
Let $H_1 \cup \dots \cup H_k$ be pairwise disjoint horocusps, with $H_i$ a neighborhood of $T_i$.
Let $\ss = (s_1, \dots, s_k)$ be a tuple of slopes, such that the length of a geodesic representative of each $s_i$ on $\bdy H_i$ is strictly greater than $6$ for each $i$. 

Let $M(\ss) = M(s_1, \dots, s_k)$ denote the 3-manifold obtained by Dehn filling $M$ along the slopes $s_i$ on $T_i$. Then $(M(\ss), P)$ is a pared manifold, such that the free sides of $\bdy_0 M(\ss)$ are identical to those of $\bdy_0 M$. Furthermore, $(M(\ss), P)$ admits a hyperbolic structure $Y = Z(\ss)$ with end invariants identical to those of $Z$.
\end{theorem}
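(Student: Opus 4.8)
The plan is to mimic the Agol--Lackenby argument, with the density theorem standing in for the pieces of their proof that exploit finite volume. First I would verify the purely topological conclusion: that $(M(\ss), P)$ is a pared manifold with the same free sides as $(M,P)$. The free sides are unchanged because Dehn filling happens only along the torus components $T_1,\dots,T_k$ of the parabolic locus, and since $\bdy M-(T_1\cup\dots\cup T_k)\neq\emptyset$, the filled manifold is neither a ball nor a solid torus. Irreducibility and atoroidality of $M(\ss)$ — the real content here — follow from the length-$>6$ hypothesis: by the combinatorial area estimates of Lackenby (or the $2$-handle addition arguments of Agol), a slope of length greater than $6$ on a maximal horocusp cannot bound an essential disk, annulus, sphere, or torus in the filled manifold, because any such surface would give a normal or almost-normal surface meeting the filling solid tori in too few, too short arcs to have nonnegative combinatorial area. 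These arguments are local near the cusps $H_i$ and do not use compactness of $M$, so they transfer verbatim; one does need to know $M$ is irreducible and atoroidal to begin with, which is part of the standing hypothesis on pared manifolds. This gives that $(M(\ss),P)$ satisfies the topological conditions in \refdef{ParedManifold}.

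Next I would produce the hyperbolic structure $Y=Z(\ss)$ with the prescribed end invariants. Since $(M(\ss),P)$ is a pared manifold with nonempty free sides, \refthm{Realization} says it admits a hyperbolic structure realizing \emph{any} filling collection of end invariants. So it remains to check that the end invariants of $Z$ — which live on the free sides of $(M,P)$, hence on the free sides of $(M(\ss),P)$ since these coincide — still form a filling collection when regarded as end invariants of $(M(\ss),P)$. The only conditions in \refdef{FillingEndInvts} are (*), about interval bundles with no geometrically finite ends, and (**), a Masur-domain condition on compressible degenerate ends. Both conditions concern only the free sides and the meridians/compressibility of those free sides, which are unaffected by filling torus cusps; and since $Z$ is hyperbolic, \refthm{Realization} already guarantees its end invariants are filling for $(M,P)$. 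Thus they are filling for $(M(\ss),P)$, and \refthm{Realization} furnishes $Y$. The agreement of end invariants of $Y$ and $Z$ is then automatic from the construction.

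The step I expect to be the main obstacle is making the topological incompressibility/atoroidality argument genuinely work in the infinite-volume setting, i.e.\ checking that the area estimates underpinning the $6$-theorem only use the geometry of the cusps being filled and not global finiteness. Lackenby's argument runs an angled-polyhedron / combinatorial Gauss--Bonnet computation on a candidate essential surface; the computation assigns nonnegative combinatorial area to all pieces away from the filling tori and strictly negative contributions forced by short filling slopes, yielding a contradiction with $\chi\le 0$. Everything here is supported in a neighborhood of $H_1\cup\dots\cup H_k$, so the bookkeeping is unchanged, but one must be careful that a putative essential sphere/disk/annulus/torus in $M(\ss)$ can be isotoped into a position meeting the filling solid tori in meridian disks and otherwise lying in $M$ — this is standard, using irreducibility and atoroidality of $M$ and an innermost-disk/outermost-arc argument. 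An alternative, and perhaps cleaner, route for the hyperbolization half is to invoke the density theorem directly: approximate $Z$ by geometrically finite $Z_n$ (with the same pared manifold and filling end invariants converging to those of $Z$, via \refthm{Density}), fill each $Z_n$ using the geometrically finite case of the $6$-theorem or \refthm{EffectiveFillingTame}, and pass to a geometric limit; but this seems to require more machinery than the direct topological argument, so I would present the topological route as primary and remark on the limiting argument as an alternative.
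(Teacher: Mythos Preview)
Your argument for the first step (that $(M(\ss),P)$ is a pared manifold) is essentially the same as the paper's \reflem{6ThmParedMfld}: isotope a putative essential sphere, disk, torus, or annulus to meet the filling solid tori in meridian disks, pleat the resulting punctured surface in the hyperbolic metric on $Z$, and compare the Gauss--Bonnet area $2\pi m$ to the cusp-area lower bound $>6m$ coming from the slope lengths, via B\"or\"oczky's cusp-density estimate. So that half is fine.

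The second step, however, has a real gap. You assert that conditions (*) and (**) of \refdef{FillingEndInvts} ``concern only the free sides and the meridians/compressibility of those free sides, which are unaffected by filling torus cusps.'' This is false on both counts. First, $M(\ss)$ can be an interval bundle $S\times I$ even though $M$ (which has the extra torus boundary components $T_1,\dots,T_k$) is not; so (*) may genuinely fail for $(M(\ss),P)$ even though it holds vacuously for $(M,P\cup T_1\cup\dots\cup T_k)$. Second, and more seriously, Dehn filling \emph{creates new meridians}: a curve $\gamma$ on a free side $F$ that does not bound a disk in $M$ may very well bound a disk in $M(\ss)$ that passes over the filling solid tori. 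So $F$ can become compressible in $M(\ss)$ when it was not in $M$, and even if it was already compressible, the Masur domain for $(M(\ss),P)$ can be strictly smaller than that for $(M,P)$. Thus the fact that the end invariants of $Z$ are filling for $(M,P)$ does \emph{not} immediately imply they are filling for $(M(\ss),P)$.

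The paper's proof handles (*) and (**) with substantial additional arguments. For (**), one supposes an ending lamination $\lambda$ on a degenerate free side $F$ lies in a limit of meridians $\gamma^n=\bdy D_n$ \emph{in $M(\ss)$}; for large $n$ these disks are not contained in $M$, so $D_n'=D_n\cap M$ is a punctured disk. One then uses the strong density theorem (\refthm{Density}) to approximate $Z$ by geometrically finite $Z_n$, notes that the curves $\gamma^n$ exit the end corresponding to $F$ (since they converge to a lamination containing $\lambda$) and are therefore eventually disjoint from the horocusps $H^n$, pleats $D_n'$ in $Z_n$ with geodesic boundary $\gamma^n$, and derives the same Gauss--Bonnet versus cusp-area contradiction as in the pared-manifold step. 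Condition (*) is handled by a parallel argument with annuli $\gamma^n\times I$ in $S\times I$. The key geometric input you are missing --- that curves approximating an ending lamination exit the end and can thus be arranged to miss the horocusps --- is what makes the pleated-surface estimate apply. Your proposed alternative of filling approximates $Z_n$ and taking a limit would be circular (it presumes a $6$--theorem for the $Z_n$, whose proof still needs the filling-of-end-invariants check) and in any case does not sidestep this issue.
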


Observe that \refthm{6ThmIntro} follows immediately from \refthm{6TheoremInf} and prior work. If $\bdy M(\ss) = \emptyset$, then Agol and Lackenby's 6--theorem~\cite{agol:6theorem, lackenby:surgery}, combined with Perelman's geometrization theorem, says that $M(\ss) = M(s_1, \dots, s_k)$ admits a hyperbolic structure (with empty end invariants). Otherwise, if $\bdy M(\ss) \neq \emptyset$, \refthm{6TheoremInf} gives the desired conclusion. 
 
There is one case where we re-prove a portion of the original 6--theorem, by following the same line of argument: the case where $\vol(Z) < \infty$ and $P \neq \emptyset$. In this case, \reflem{6ThmParedMfld} combined with \refthm{Hyperbolization} shows that $M(\ss)$ admits a hyperbolic structure. 

Before beginning the proof of \refthm{6TheoremInf}, we record the following slight generalization  B\"or\"oczky's theorem on cusp density \cite{boroczky}.

\begin{lemma}\label{Lem:CuspDensitySurface}
Let $S$ be a hyperbolic surface with finite area, with a positive number of cusps, and with (possibly empty) boundary consisting of geodesics. Let $H \subset S$ be an embedded neighborhood of the cusps, such $H \cap \bdy S = \emptyset$ and $\bdy H$ is a disjoint union of horocycles. Then
\[
\area(S) \geq \frac{\pi}{3}\area(H).
\]
\end{lemma}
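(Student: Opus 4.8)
The statement is a local density bound: it says that a horocyclic neighborhood of the cusps occupies at most a $\frac{3}{\pi}$ fraction of the surface's area, even in the presence of geodesic boundary. B\"or\"oczky's original cusp density theorem \cite{boroczky} gives exactly this bound for a finite-area hyperbolic surface with cusps and \emph{no} boundary; the content here is that adding geodesic boundary only helps. So the plan is to reduce to the closed (boundaryless, finite-area) case by doubling.

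\textbf{Step 1: Double across the geodesic boundary.} Given $S$ with geodesic boundary $\bdy S$ and cusp neighborhood $H$ with $\bdy H \cap \bdy S = \emptyset$, form the double $DS = S \cup_{\bdy S} S$ by gluing two copies of $S$ along their geodesic boundary. Because the boundary is geodesic, $DS$ is a smooth hyperbolic surface of finite area. It has cusps exactly at the doubled cusps of $S$, and $\bdy(DS) = \emptyset$. The two copies of $H$ glue to an embedded cusp neighborhood $DH \subset DS$ whose boundary is a disjoint union of horocycles (here we use that $H$ was disjoint from $\bdy S$, so the two copies of $H$ stay disjoint from each other and from the gluing locus). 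Clearly $\area(DS) = 2\area(S)$ and $\area(DH) = 2\area(H)$.

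\textbf{Step 2: Apply B\"or\"oczky's theorem to the double.} Since $DS$ is a finite-area hyperbolic surface with cusps and empty boundary, and $DH$ is an embedded horocyclic cusp neighborhood, B\"or\"oczky's theorem \cite{boroczky} gives $\area(DS) \geq \frac{\pi}{3}\area(DH)$, i.e. $2\area(S) \geq \frac{\pi}{3} \cdot 2\area(H)$. Dividing by $2$ yields the claimed inequality $\area(S) \geq \frac{\pi}{3}\area(H)$.

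\textbf{Remark on the main obstacle.} The one point requiring a little care is the \emph{embeddedness} of $DH$ in $DS$: B\"or\"oczky's bound applies only to embedded horoball neighborhoods, not to immersed ones where the horocyclic annuli might overlap in $DS$. This is guaranteed by the hypothesis $H \cap \bdy S = \emptyset$, which ensures the two copies of $H$ are disjoint and neither one wraps around to meet the gluing curves. A secondary small point is to confirm that the statement of B\"or\"oczky's theorem one is quoting indeed allows the cusp neighborhood $H$ to be an arbitrary embedded horocyclic neighborhood rather than the maximal one; if one only has the maximal-neighborhood version, shrink: a smaller embedded horocyclic neighborhood $H' \subseteq H$ with the same number of boundary horocycles has $\area(H')$ proportional to... no — actually the inequality goes the correct way for \emph{any} embedded horocyclic neighborhood, since shrinking $H$ only decreases the left-hand side's relevance; one simply applies the maximal-neighborhood bound and notes monotonicity. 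In any case this is routine, so the doubling argument is the whole proof.
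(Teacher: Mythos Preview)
Your proposal is correct and follows essentially the same approach as the paper: double $S$ across its geodesic boundary, observe that $DH$ is an embedded horocyclic cusp neighborhood in the boundaryless surface $DS$ (using $H \cap \bdy S = \emptyset$), apply B\"or\"oczky's theorem to $DS$, and divide by $2$. The paper's proof is identical in structure, though more tersely stated.
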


\begin{proof}
If $\bdy S = \emptyset$, this result is due to B\"or\"oczky \cite[Theorem 4]{boroczky}.

If $\bdy S \neq \emptyset$, let $DS$ be the complete hyperbolic surface obtained by doubling $S$ along its geodesic boundary. Since $H \cap \bdy S = \emptyset$, the double of $H$ is an embedded cusp neighborhood $DH \subset DS$, with $\bdy (DH)$ a disjoint union of horocycles. B\"or\"oczky's theorem says that $\area(DS) \geq \frac{\pi}{3}\area(DH)$, hence $\area(S) \geq \frac{\pi}{3}\area(H)$.
\end{proof}

The first step of the proof of \refthm{6TheoremInf} is to show that $(M(\ss), P)$ is a pared manifold. The main idea of the proof is drawn directly from the arguments of Agol~\cite{agol:6theorem} and Lackenby~\cite{lackenby:surgery}.

\begin{lemma}\label{Lem:6ThmParedMfld}
With notation and hypotheses as in \refthm{6TheoremInf}, the pair $(M(\ss), P)$ is a pared manifold.
\end{lemma}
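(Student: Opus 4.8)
The plan is to verify the two defining conditions of a pared manifold for $(M(\ss), P)$ directly, following the Agol--Lackenby strategy of ruling out essential surfaces. First I would record the easy structural facts: $M(\ss)$ is compact with $\bdy M(\ss) = \bdy_0 M \cup P \neq \emptyset$, it is orientable, and it is neither a ball nor a solid torus (this uses the hypothesis $\bdy M - (T_1 \cup \dots \cup T_k) \neq \emptyset$, so that $\bdy_0 M(\ss) \neq \emptyset$ and carries negative Euler characteristic). The non-trivial points are irreducibility, atoroidality, and the two $\pi_1$--injectivity conditions (no essential tori off $P$, no essential annuli off $P$). All of these follow the same template: if such a surface $T$ or $A$ existed in $M(\ss)$, isotope it to meet the filling solid tori $V_1, \dots, V_k$ (glued along $T_1, \dots, T_k$) in a minimal collection of meridian disks; then $T \cap M$ (resp.\ $A \cap M$) is an essential surface in the hyperbolic manifold $M$ with boundary consisting of $\bdy$-parallel curves on $\bdy_0 M$ together with copies of the filling slopes $s_i$ on the $T_i$, capped off by meridian disks.

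The core geometric input is the length estimate: each $s_i$ has geodesic length $> 6$ on the horocusp boundary $\bdy H_i$. Following Lackenby, I would put the essential surface $Q = T \cap M$ or $A \cap M$ (a sphere, torus, disk, or annulus after capping — or rather, the un-capped surface in $M$) into a suitable normal or pleated-surface form, truncate it along $\partial H_i$, and bound its area two ways. On one hand, Gauss--Bonnet gives $\area(Q) \leq 2\pi |\chi(Q)|$ (or $-2\pi\chi$), which is small because $\chi$ is $0$ or close to it for the surfaces obstructing hyperbolicity. On the other hand, each meridian disk of $V_i$ that $Q$ passes through contributes, near the cusp, an annular collar whose area in the truncated picture is at least (length of $s_i$ on $\partial H_i$) $> 6$ times the number of such disks; more precisely one uses that the intersection $Q \cap H_i$ consists of disk pieces each bounded by a curve of length $> 6$, and an isoperimetric/B\"or\"oczky-type inequality — here \reflem{CuspDensitySurface} provides exactly the cusped-surface-with-geodesic-boundary version needed to handle the case $\vol(Z) < \infty$, $P \neq \emptyset$. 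Combining the two bounds forces the number of intersections with each $V_i$ to be zero, so $Q$ lives in $M$ itself; but $M$ is hyperbolic, hence irreducible and atoroidal and acylindrical rel $P$ (the latter because $(M, P \cup \bigcup T_i)$ is a pared manifold), so no such $Q$ exists. Reattaching the bookkeeping, $M(\ss)$ is irreducible, atoroidal, and satisfies the annulus and torus conditions with parabolic locus $P$.

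The main obstacle I anticipate is the area lower bound near the filled cusps, specifically making the ``length $>6$ implies area $>6\cdot(\#\text{disks})$'' step rigorous for surfaces that are only pleated or normal rather than totally geodesic, and doing so uniformly when some of the $T_i$ are being filled while $P$ simultaneously contains other cusps and annuli. This is precisely where one imports Agol's and Lackenby's arguments essentially verbatim, the one genuinely new wrinkle being the presence of the geodesic boundary $\bdy_0 M$ (or its pleated-surface analogue) and of the parabolic annuli in $P$ — which is why the statement isolates \reflem{CuspDensitySurface} (the doubling trick across geodesic boundary) as the tool that lets B\"or\"oczky's estimate go through in this more general setting. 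Once the surfaces are excluded, checking that the free sides of $\bdy_0 M(\ss)$ coincide with those of $\bdy_0 M$ is immediate since Dehn filling along the $T_i$ does not touch $\bdy_0 M$, and the incompressibility of $P$ in $M(\ss)$ follows because any compressing disk could be pushed off the filling tori by the same intersection argument, landing in $M$ where $P$ was already incompressible.
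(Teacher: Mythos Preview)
Your proposal is correct and follows essentially the same Agol--Lackenby strategy as the paper's proof: intersect a putative essential surface with the filling tori, pleat the remnant in $Z$, and compare the Gauss--Bonnet area against the cusp-density bound from \reflem{CuspDensitySurface}. The paper makes explicit two points you gloss over: it invokes the annulus and torus theorems to pass from $\pi_1$-injective \emph{maps} of tori and annuli to \emph{embedded} essential surfaces before isotoping (packaged as \refclaim{SurfaceObstruction}), and it handles the solid-torus possibility not via the assertion $\bdy_0 M(\ss) \neq \emptyset$---which can fail when $\vol(Z) < \infty$ and $P$ consists of tori---but by observing that a solid-torus filling would give a compressing disk for $P$, to which the same area argument then applies.
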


\begin{proof}
Since $M$ is compact and oriented, the filling $M(\ss)$ is compact and oriented as well. Since $\bdy M - (T_1\cup \dots\cup T_k) \neq \emptyset$, it follows that $\bdy M(\ss) \neq \emptyset$.
In addition, $M(\ss)$ cannot be a 3-ball: otherwise, $\bdy M$ has a component $\Sigma \cong S^2$ and a separate component $T_1 \cong T^2$, which is impossible because $M$ is irreducible. The remaining obstructions to $(M(\ss), P)$ being a pared manifold are as follows.

\begin{claim}\label{Claim:SurfaceObstruction}
If $(M(\ss), P)$ is not a pared manifold, then there is an essential, embedded surface $(S, \bdy S) \to (M(\ss), P)$, where $S$ is a sphere, disk, torus, or annulus.
\end{claim}

To prove the claim, we check the remaining conditions of \refdef{ParedManifold}. If $M(\ss)$ is reducible or toroidal, then by definition it contains an essential sphere or torus.
If $M(\ss)$ is a solid torus, then $P \subset \bdy M$ must be a torus that becomes compressible in $M(\ss)$, hence $(M(\ss), P)$ contains an essential compression disk. If there is a $\pi_1$--injective map of an annulus $(S^1\times I, S^1\times\bdy I) \to (M(\ss),P)$, which is not boundary-parallel, then the annulus theorem says that there is also a $\pi_1$--injective embedding of an annulus with the same property. (See Jaco~\cite[Theorem VIII.13]{Jaco:Lectures} or Scott~\cite{Scott:AnnulusTorus}.) Finally, if there is a $\pi_1$--injective map of a torus $T^2 \to M(\ss)$, then observe that $M(\ss)$ is either reducible (hence the claim holds) or Haken. In the latter scenario, the torus theorem~\cite{Scott:AnnulusTorus} says that $M(s_1, \ldots, s_k)$ either contains an essential torus or annulus (hence the claim holds), or is Seifert fibered. But every Seifert fibered 3-manifold with nonempty boundary contains an essential disk or annulus, proving the claim.

\smallskip

Now, let $(S, \bdy S) \to (M(\ss), P)$ be an essential, embedded surface as in the claim. Assume that $S$ has been moved by isotopy to intersect the filling solid tori a minimal number of times. 
Then every component of intersection must be a meridian disk of some filling solid torus. Note that the intersection is nonempty, because $(M,P\cup T_1 \cup \dots \cup T_k)$ is a pared manifold by hypothesis. After removing the meridian disks from $S$, we obtain an essential surface $S' \subset M$ whose boundary consists of $\bdy S$ and a collection of curves on the tori $T_1, \dots, T_k$, with each boundary on $T_i$ having slope $s_i$.

Because $M$ is homeomorphic to a standard compact core of $(Z^{\epsilon}, \bdy Z^{\epsilon})$ for small $\epsilon>0$, we obtain an embedding of $S'$ into $Z^{\epsilon}$, which we extend by a product into the cusps, obtaining a punctured surface that we continue to call $S'$.

Using the hyperbolic metric $Z$, we homotope $S'$ to be a \emph{pleated surface}. This means that after a homotopy, $S'$ becomes an immersed surface consisting of totally geodesic ideal triangles, with bending allowed along the edges of the triangles. A homotopy that moves $S'$ into pleated form exists by \cite[Lemma~2.2]{lackenby:surgery}. As in Agol's and Lackenby's proof of the original 6--theorem (see \cite[Theorem~5.1]{agol:6theorem}, and compare \cite[Lemma~2.5]{futer-schleimer:cusp-geometry}), the horocusp $H = H_1 \cup \dots \cup H_k$ induces a disjoint union of horocycles in the pleated surface $S'$, such that the length of each horocycle is at least the length of the corresponding slope $s_i$, hence each has length strictly larger than $6$.

Let $m$ be the number of boundary components of $S'$ on components $T_1, \dots, T_k$. This is the number of horocycles in $S'$ constructed in the previous paragraph. The area of the cusp ends of $S'$ cut off by these horocycles is the sum of the lengths of the horocycles; hence, the total area is strictly larger than $6m$. By \reflem{CuspDensitySurface}, the area of $S'$ is at least $\pi/3$ times the area of the cusp neighborhood, hence $\area(S') > 2\pi m$.

On the other hand, by the Gauss--Bonnet theorem, $\area(S') = -2\pi\chi(S')$. Thus the area of $S'$ satisfies:
\[ \area(S') = \begin{cases}
  2\pi(m-2) & \mbox{ if $S$ is a sphere,} \\
  2\pi(m-1) & \mbox{ if $S$ is a disk,} \\
  2\pi(m) & \mbox{ if $S$ is a torus or annulus.} 
\end{cases}
\]
In all cases, the area is at most $2\pi m$. This contradiction shows that $S$ cannot exist, hence \refclaim{SurfaceObstruction} shows that $(M(\ss), P)$ is a pared manifold.
\end{proof}

The second step in the proof of \refthm{6TheoremInf} is to show that the same end invariants that are realizable in $(M, P\cup(T_1\cup \dots T_k))$ are also realizable in $(M(\ss), P)$.

\begin{proof}[Proof of \refthm{6TheoremInf}]
By \reflem{6ThmParedMfld}, $(M(\ss), P)$ is a pared manifold. Notice that $\bdy_0 M = \bdy_0 M(\ss)$, hence the free sides of $(M(\ss), P)$ are identical to the free sides of $(M, P\cup(T_1\cup \dots T_k))$. Let $F_1, \ldots, F_s$ be these free sides. We further assume that the free sides have been ordered so that the end invariants of $Z$ are $(X_1, \dots, X_r, \lambda_{r+1}, \dots, \lambda_s)$, where $X_i\in\calT(F_i)$ for $i \leq r$ and $\lambda_j$ is an ending lamination for $F_j$ for $j \geq r+1$.

By    the realization theorem, \refthm{Realization}, 
$(M(\ss), P)$ admits a hyperbolic structure with these end invariants if and only if the tuple of end invariants $(X_1, \dots, X_r, \lambda_{r+1}, \dots, \lambda_s)$ is filling. That is, we must check that $(X_1, \dots, X_r, \lambda_{r+1}, \dots, \lambda_s)$ satisfy properties (*) and (**) of \refdef{FillingEndInvts}, when viewed as invariants of $(M(\ss),P)$.

\medskip

If (*) is false, then $M(\ss)$ is an interval bundle over a surface $S$. Without loss of generality (replacing $S$ by its orientable double cover if needed), we may assume that $S$ is orientable and $M(\ss) \cong S \times I$. Since (*) has failed, $M(\ss)$ has no geometrically finite ends. Furthermore, there is an ending lamination $\lambda_i$ on a free side $F_i \subset S \times \{0\}$ and an ending lamination $\lambda_j$ on a free side $F_j \subset S \times \{1\}$ containing parallel, non-isolated leaves. But these leaves are dense in $\lambda_i$ and $\lambda_j$ respectively, which means $(F_i, \lambda_i)$ and $(F_j, \lambda_j)$ have the same projection to $S$. Since $\bdy F_i$ and $\bdy F_j$ are contained in $P$, and the parabolic locus $P$ cannot admit essential annuli, it follows that $F_i = S \times \{0\}$ and $F_j = S \times \{1\}$ are the only free sides of $(M(\ss),P)$. We write $\lambda$ for the ending lamination on $S \times \{0\}$ and $\overline{\lambda}$ for the ending lamination on $S \times \{1\}$. By the above argument (dense leaves), $\lambda$ and $\overline{\lambda}$ are parallel.

Let $X^n \in \calT(S)$ be a filling sequence converging to $\lambda$. Writing $\overline{X^n} \in \calT(\overline S)$ for the same hyperbolic structure with opposite orientation, the sequence $\overline{X^n}$ converges to $\overline \lambda$.
Then $(X^n, \overline{X^n}) \in \calT(\bdy M-P)$ is a filling sequence converging to $(\lambda, \overline \lambda)$.
Theorems~\ref{Thm:Hyperbolization} and~\ref{Thm:ABUniformization} imply there exists a hyperbolic 3-manifold $Z_n$ with standard compact core homeomorphic to $(M, P\cup(T_1\cup \dots \cup T_k))$, and with end invariants $(X^n, \overline{X^n})$. Because the sequence $(X^n, \overline{X^n})$ is filling, there exists a sequence of simple closed curves $\gamma^n \subset S$ converging to $\lambda$, with 
$\ell_{X^n}(\gamma^n) / \ell_{X^1}(\gamma^n) \to 0$. 
In the hyperbolic manifold $Z_n$, the curve $\gamma^n$ is homotopic to a geodesic in the lower end whose conformal structure is $X^n$, and $\overline{\gamma^n}$ is homotopic to a geodesic in the upper end whose conformal structure is $\overline{X^n}$.

The approximation \refthm{NamaziSouto} implies that after passing to a subsequence, the manifolds $Z_n$ converge strongly to a manifold homeomorphic to $M$, with the same end invariants as $Z$. The ending lamination \refthm{ELC} 
implies that the limiting manifold is isometric to $Z$. For each $n$, let $H^n = H_1^n \cup \dots \cup H_k^n$ be a disjoint union of horocusps for $T_1\cup\dots\cup T_k$ in $Z_n$, which converge to the horocusp neighborhood $H = H_1 \cup \dots \cup H_k \subset Z$. The strong limit $Z_n \to Z$ implies that for sufficiently large $n$, we have $\len(s_i) > 6$ in the Euclidean metric on $\bdy H_i^n$.

Consider an annulus $A_n = \gamma^n \times I \subset S\times I = M(\ss)$. Let $A_n'$ be the remnant of $A_n$ in $M \subset M(\ss)$, moved by isotopy to minimize the intersection number with the cores of filling solid tori. Then $A_n'$ has a boundary component along $\gamma^n$ in the lower end, a boundary component along $\overline{\gamma^n}$ in the upper end, as well as some number of punctures along meridians $s_1, \dots, s_k$. In the hyperbolic metric $Z_n$, we may homotope $A_n'$ to be a pleated surface with geodesic boundary along $\gamma^n \cup \overline{\gamma^n}$. For sufficiently large $n$, the geodesics $\gamma^n \cup \overline{\gamma^n} \subset Z_n$ are disjoint from $H^n$, because the geodesic realizations of the same curves in $Z$ are exiting the ends of $Z$ as $n \to \infty$. Furthermore, the horocusp neighborhood
$H^n$
 induces a disjoint union of horocycles in $A'_n$, where each horocycle has length greater than $6$.

Let $m_n$ be the number of punctures in $A'_n$. 
As in the proof of \reflem{6ThmParedMfld}, the Gauss--Bonnet theorem implies that  $\area(A'_n) = 2\pi m_n$. On the other hand, for $n$ large, the cusp area of $A'_n$ is strictly larger than $6 m_n$, hence \reflem{CuspDensitySurface}
implies that $\area(A_n')  > 2\pi m_n$. This contradiction proves that the end invariants of $(M(\ss),P)$ must satisfy (*).

\smallskip

Next, suppose (**) is false. Then $(M(\ss), P)$ has a compressible free side $F$, and the ending lamination $\lambda$ is contained in a limit of meridians. It follows that there is a sequence of compression disks $D_n$ for $(M(\ss), P)$ such that the boundary curves $\gamma^n = \bdy D_n$ converge to a lamination $\mu$ containing $\lambda$.
Since $\lambda$ is \emph{not} contained in a limit of meridians in $(M,P)$, it follows that the curve $\gamma^n$ cannot be a meridian in $(M,P)$ for sufficiently large $n$. In other words, viewing $M$ as a submanifold of $M(s_1, \dots, s_k)$, it follows that the meridian disks $D_n$ cannot be contained in $M$ for large $n$. We isotope each $D_n$ to meet the filling solid tori as few times as possible, and set $D_n' = D_n \cap M$.

By strong density, \refthm{Density}, there is a sequence of geometrically finite and minimally parabolic representations $\rho_n \in AH(M,P)$ converging strongly to the representation $\rho$ corresponding to $Z$. Let $Z_n$ be the associated manifolds. As above, each $Z_n$ is equipped with a disjoint union of horocusps $H^n = H_1^n \cup \dots \cup H_k^n$, where for sufficiently large $n$ we have $\len(s_i) > 6$ in the Euclidean metric on $\bdy H_i^n$. Since the curves $\gamma^n$ limit to a lamination containing $\lambda$, they must exit the end of $Z$ corresponding to $F$, hence $\gamma^n$ is disjoint from the horocusp neighborhood $H^n$ for large $n$. As above, we may pleat the punctured disk $D_n'$ in $Z_n$, so that it has geodesic boundary along $\gamma^n$ and some number of punctures corresponding to meridians of $s_1, \ldots, s_k$. These punctures are cut off by disjoint horocycles in $D_n'$, where each horocycle has length greater than $6$. 

We can now obtain a contradiction as above. Let $m_n$ be the number of punctures in $D'_n$. The Gauss--Bonnet theorem implies that  $\area(D'_n) = 2\pi (m_n -1)$. On the other hand, for $n$ large, the cusp area of $D'_n$ is strictly larger than $6 m_n$, hence \reflem{CuspDensitySurface}
implies that $\area(D_n')  > 2\pi m_n$. This contradiction proves that the end invariants $(X_1, \dots, X_r, \lambda_{r+1}, \dots, \lambda_s)$ must satisfy (**) as well.

Since the tuple of end invariants $(X_1, \dots, X_r, \lambda_{r+1}, \dots, \lambda_s)$ satisfy both (*) and (**), \refthm{Realization} says that these invariants are realized by a hyperbolic metric $Y$ on $(M(\ss),P)$.
\end{proof}

\section{Convergence of geodesics}\label{Sec:ConvergenceOfLinks}
In several theorems in the subsequent sections, we will need to control geodesic links in a convergent sequence of manifolds. The following proposition says that geodesic links behave exactly as expected.

\begin{proposition}\label{Prop:GeodesicsConverge}
Let $N_\infty$ be a hyperbolic 3-manifold with standard compact core $(M,P)$ and associated representation $\rho_\infty \in AH(M,P)$. Let $\rho_n \in AH(M,P)$ be a sequence of geometrically finite, minimally parabolic representations converging strongly to $\rho_\infty$. Let $N_n$ be the hyperbolic 3-manifolds associated to $\rho_n$. Let $\Sigma = \sigma_1 \cup \ldots \cup \sigma_k \subset M$ be a smooth link. 

Then the following are equivalent:
\begin{enumerate}[\: \: $(1)$]
\item\label{Itm:GeoInLimit} $\Sigma$ is isotopic to a geodesic link $\Sigma_\infty \subset N_\infty$, where each component has length less than $\log 3$.
\item\label{Itm:GeoInApprox} For all $n \gg 0$, $\Sigma$ is isotopic to a geodesic link $\Sigma_n$ in the hyperbolic structure $N_n$, where each component has length less than $\log 3 - \delta$ for some uniform $\delta > 0$.
\end{enumerate}
Furthermore, assuming either \refitm{GeoInLimit} or \refitm{GeoInApprox} hold, we have $\Sigma_n \to \Sigma_\infty$ in the geometric limit. 
\end{proposition}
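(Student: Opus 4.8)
The plan is to prove the two implications of the equivalence separately, using in both directions the fact that strong convergence $\rho_n \to \rho_\infty$ gives geometric convergence of the quotient manifolds, and that a geodesic of length less than $\log 3$ is automatically contained in a standard Margulis tube (by \refthm{MargulisEstimate}, since $N_\infty$ has infinite volume, $\log 3$ is a Margulis number for $N_\infty$; for $n$ large the same is true for $N_n$ by the geometric convergence, or one uses that $\log 3 \le \mu(N_n)$ directly when $N_n$ also has infinite volume). First I would set up the geometric limit: fix basepoints so that $(N_n, \omega_n) \to (N_\infty, \omega_\infty)$, and recall the approximating embeddings $f_{n,R}\from B_R(x) \hookrightarrow N_n$ converging $C^\infty$ to isometries.

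For \refitm{GeoInLimit} $\Rightarrow$ \refitm{GeoInApprox}: suppose $\Sigma$ is isotopic to a geodesic link $\Sigma_\infty \subset N_\infty$ with each component of length $< \log 3$. Since $\Sigma_\infty$ is compact, it lies in some ball $B_{R_0}(x)$; enlarging $R_0$, we may assume the Margulis tubes around $\Sigma_\infty$ (with respect to $\log 3$, say) also lie in $B_{R_0}(x)$. For $R > R_0$ and $n$ large, the embedding $f_{n,R}$ carries $\Sigma_\infty$ to a link $f_{n,R}(\Sigma_\infty) \subset N_n$, which is $C^\infty$-close to being geodesic, with a definite embedded tube around it; moreover since $f_{n,R}$ is close to an isometry on a large ball, $f_{n,R}(\Sigma_\infty)$ is isotopic in $N_n$ to the image of $\Sigma$ under the marking (this uses that the isotopy class of $\Sigma$ in $M$ is carried consistently, via the markings $\rho_n, \rho_\infty$ of $\pi_1(M)$). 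Then I would straighten $f_{n,R}(\Sigma_\infty)$ to the actual geodesic representative $\Sigma_n$ in its isotopy class: each component is homotopic to a closed curve of length close to $\len(\sigma_i)$ in $N_\infty$, so its geodesic length is at most (slightly more than) that, hence $< \log 3 - \delta$ for a uniform $\delta > 0$ once $n$ is large. This gives \refitm{GeoInApprox}, and simultaneously shows $\Sigma_n \to \Sigma_\infty$ (the geodesics $\Sigma_n$ are $C^\infty$-close, via $f_{n,R}^{-1}$, to $\Sigma_\infty$).

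For \refitm{GeoInApprox} $\Rightarrow$ \refitm{GeoInLimit}: now suppose each $\Sigma_n$ is geodesic in $N_n$ with components of length $< \log 3 - \delta$. Each component $\sigma_i^n$ lies in its Margulis tube $\calT(\sigma_i^n) \subset N_n$; the tube has radius bounded below by a universal function of $(\log 3 - \delta)$ (by the Margulis lemma / Meyerhoff-type estimates), so the tubes $\calT(\Sigma_n)$ have definite size. The key point is that these tubes cannot escape to infinity: because $\Sigma_n$ is isotopic to the fixed link $\Sigma \subset M$, and the markings converge strongly, the core curves $\sigma_i^n$ represent a fixed conjugacy class $[\sigma_i] \in \pi_1(M)$, and $\rho_n(\sigma_i) \to \rho_\infty(\sigma_i)$, which is a loxodromic element with translation length $< \log 3$; hence the axes converge, and the geodesics $\sigma_i^n$ stay within bounded distance of the basepoint. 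Passing to the geometric limit, $f_{n,R}^{-1}(\sigma_i^n)$ converges (in $C^\infty$, after passing to a subsequence, using compactness of geodesics of bounded length in a fixed compact region) to a closed geodesic $\sigma_i^\infty \subset N_\infty$ of length $\le \log 3 - \delta < \log 3$, and the union $\Sigma_\infty$ is embedded because the limiting tubes are disjoint (disjointness is an open condition preserved in the limit, as the tubes have definite size). Finally $\Sigma_\infty$ is isotopic to $\Sigma$: the isotopy class is detected by $\pi_1$, and the markings are compatible in the limit. The ``furthermore'' then follows since we have exhibited $\Sigma_n \to \Sigma_\infty$ in both cases.

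The main obstacle I anticipate is the bookkeeping of isotopy classes across the geometric limit: one must be careful that the link $\Sigma \subset M$, which is only defined up to isotopy in the topological manifold $M$, gets a well-defined isotopy class in each $N_n$ and in $N_\infty$ via the (strongly convergent) markings, and that these are compatible with the approximating maps $f_{n,R}$. Strong convergence is exactly what makes this work — algebraic convergence pins down the conjugacy classes $\rho_n(\sigma_i) \to \rho_\infty(\sigma_i)$ so the geodesic representatives behave continuously, while geometric convergence provides the comparison maps $f_{n,R}$ on large balls — but combining the two cleanly, and ensuring the uniform $\delta$ in \refitm{GeoInApprox}, is the delicate part. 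A secondary technical point is justifying that a loxodromic length $< \log 3$ forces the geodesic into an embedded tube of definite radius, uniformly in $n$; this is where \refthm{MargulisEstimate} (for the limit) and a quantitative Margulis-lemma estimate (for the approximates) are invoked.
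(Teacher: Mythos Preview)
Your plan is correct and follows essentially the same approach as the paper: both directions use the algebraic limit $\rho_n(\gamma_i) \to \rho_\infty(\gamma_i)$ to control the axes and translation lengths, the geometric-limit embeddings $f_{n,R}$ to push curves back and forth, and the fact that $\log 3$ is a Margulis number (via \refthm{MargulisEstimate}, since $\bdy_0 M \neq \emptyset$ forces all the $N_n$ to have infinite volume) to place the short geodesics in disjoint tubes and straighten almost-geodesic images to tube cores. The one place where your sketch is looser than the paper is the line ``isotopy class is detected by $\pi_1$'' in \refitm{GeoInApprox}$\Rightarrow$\refitm{GeoInLimit}: free homotopy does not determine isotopy of links in general, and the paper instead argues (as you do implicitly in the other direction) that $f_{n,R}^{-1}(\sigma_{i,n})$ is an almost-geodesic curve inside the Margulis tube of $\sigma_{i,\infty}$, hence has no local knotting and is isotopic to the core---you have already flagged exactly this bookkeeping as the main obstacle, so just carry the tube argument through symmetrically.
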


\begin{proof}
If $\bdy_0 M = \emptyset$, then any convergent sequence in $AH(M,P)$ is eventually constant, hence the result is vacuous.
We may now suppose that $\bdy_0 M \neq \emptyset$, or equvivalently $\vol(N_n) = \vol(N_\infty) = \infty$.

We begin by setting some notation. For each component $\sigma_i \subset \Sigma$, orient $\sigma_i$ and choose an arc $\alpha_i$ that runs from the basepoint $x \in M$ to $\sigma_i$. Then the based loop $\alpha_i \cdot \sigma_i \cdot \overline \alpha_i$ represents a homotopy class $\gamma_i \in \pi_1(M,x)$. 
We also choose basepoints $x_n \in N_n$ and $x_\infty \in N_\infty$ so that $(N_n, x_n) \to (N_\infty, x_\infty)$ is a geometric limit.

\smallskip

We first prove that \refitm{GeoInLimit} implies \refitm{GeoInApprox}. Suppose $\Sigma_\infty \subset N_\infty$ is a geodesic realization of $\Sigma$. Since the components $\sigma_1, \ldots, \sigma_k \subset \Sigma$ are isotopic to disjoint, simple geodesics in $N_\infty$, the group elements $\gamma_1, \ldots, \gamma_k$ corresponding to $\sigma_1, \ldots, \sigma_k$ are primitive and pairwise non-conjugate.

Choose a radius $R$ large enough so that $\Sigma_\infty \subset B_R(x_\infty)$. According to the characterization of geometric convergence in \refeqn{PointedGromovHausdorff}, for large $n$ we have embeddings $g_{n,R} \from (B_R(x_\infty),x_\infty) \hookrightarrow (N_n, x_n)$ that converge to isometries in the $C^\infty$ topology. These embeddings map the geodesic link $\Sigma_\infty$ to a link $\Sigma'_n \subset N_n$. 

Since each of the $k$ components of $\Sigma_\infty$ is shorter than $\log 3$, there is a uniform $\delta > 0$ such that each component is shorter than $\log 3 - \delta$.

Let $\sigma_{i,\infty}$ be a component of $\Sigma_\infty$. Then, for all sufficiently large $n$, the image $\sigma'_{i,n} = g_{n,R}(\sigma_{i,\infty}) \subset \Sigma'_n$ is shorter than $\log 3 - \delta$. By \refthm{MargulisEstimate}, $\log 3$ is a Margulis number for each $N_n$. Since the group elements $\gamma_1, \ldots, \gamma_k$ represent distinct, primitive conjugacy classes, the curves $\sigma'_{1,n}, \ldots, \sigma'_{k,n}$
lie in disjoint tube components of $N_n^{< \log 3}$, with each component homotopic to the core of its tube. Furthermore, each $\sigma'_{i,n}$ can be taken to have arbitrarily small geodesic curvature (by choosing $n$ large and applying the definition of a geometric limit), hence $\sigma_n$ cannot have any local knotting. Thus $\sigma'_{i,n}$ is \emph{isotopic} to the core of the tube, denoted $\sigma_{i,n}$, where $\len(\sigma_{i,n}) < \log 3- \delta$. The isotopies in distinct tubes do not interact, hence $\Sigma'_n$ is isotopic to a geodesic link $\Sigma_n$, proving \refitm{GeoInApprox}.

Next, we check that $\Sigma_n \to \Sigma_\infty$. For each $n$, let $\rho_n(\gamma_i)$ be the holonomy of $\gamma_i$ in the hyperbolic structure $N_n$. For each $n \gg 0$, the group element $\rho_n(\gamma_i)$ stabilizes a geodesic axis $\widetilde \sigma_{i,n}$ that covers $\sigma_{i,n}$.
The algebraic limit $\rho_n \to \rho_\infty$ implies that $\rho_n(\gamma_i) \to \rho_\infty(\gamma_i) \in \Isom(\HH^3)$. Thus the fixed points of $\rho_n(\gamma_i)$ converge to the fixed points of $\rho_\infty(\gamma_i)$, and the axes $\widetilde \sigma_{i,n}$ converge to the axis $\widetilde \sigma_{i,\infty}$ that covers $\sigma_{i,\infty}$. Projecting down to the quotient manifold $M$, we learn that the closed geodesics $\sigma_{i,n} \subset N_n$ converge to $\sigma_{i,\infty}$, as desired.

\smallskip

Now, we prove that \refitm{GeoInApprox} implies \refitm{GeoInLimit}. Suppose that for $n \gg 0$, the hyperbolic structure $N_n$ contains a geodesic link $\Sigma_n$ isotopic to $\Sigma$. Then the component $\sigma_i \subset \Sigma$ is isotopic to a closed geodesic $\sigma_{i,n} \subset N_n$, of length $\len(\sigma_{i,n}) < \log 3 - \delta$ for some uniform $\delta$. By the same argument as in the above paragraph, the algebraic limit $\rho_n \to \rho_\infty$ implies that the geodesic axes $\widetilde \sigma_{i,n}$ for  $\rho_n(\gamma_i)$ converge to the geodesic axis $\widetilde \sigma_{i,\infty}$ for $\rho_\infty(\gamma_i)$. Thus, for each $i$, the closed geodesics $\sigma_{i,n} \subset N_n$ converge to a closed geodesic $\sigma_{i,\infty} \subset N_\infty$. Since translation lengths converge in the limit, we have
$\len(\sigma_{i,\infty}) \leq \log 3 - \delta < \log 3$ for each $i$.
 Consequently, the closed geodesics $\sigma_{1,\infty}, \ldots, \sigma_{k,\infty}$ lie at the cores of disjoint tube components of $N_\infty^{< \log 3}$, and $\Sigma_\infty = \sigma_{1,\infty} \cup \ldots \cup \sigma_{k,\infty}$ is a geodesic link.

By construction, the loop $\gamma_i = \alpha_i \cdot \sigma_i \cdot \overline \alpha_i$ is freely homotopic to $\sigma_i$, hence the closed geodesic $\sigma_{i,\infty}$ is freely homotopic to $\sigma_i$. It remains to show that this homotopy can be achieved by isotopy, and that the isotopies for different components of $\Sigma$ do not interact.

As above, we may choose a radius $R$ large enough that $\Sigma_\infty \subset B_{R/2}(x_\infty)$. Then, for $n \gg 0$, we have embeddings $g_{n,R} \from (B_R(x_\infty),x_\infty) \hookrightarrow (N_n, x_n)$ that converge to isometries in the $C^\infty$ topology. Since $\Sigma_n \to \Sigma_\infty$, it follows that for $n \gg 0$, we have $\Sigma_n \subset g_{n,R} (B_R(x_\infty))$. Then, for each component $\sigma_i$, the preimage $g_{n,R}^{-1}(\sigma_{i,n})$ is an almost-geodesic closed curve in the Margulis tube containing $\sigma_{i, \infty}$. As above, the control on geodesic curvature implies that $g_{n,R}^{-1}(\sigma_{i,n})$ cannot have local knotting, hence is isotopic to $\sigma_{i, \infty}$. Thus $\sigma_i \subset \Sigma$ is isotopic to $\sigma_{i, \infty}$, with isotopies of different components supported in disjoint Margulis tubes. Thus $\Sigma \subset M$ is isotopic to $\Sigma_\infty$, as desired.
\end{proof}

\section{Extracting bilipschitz limits}\label{Sec:Extracting}

In this section, we describe the construction of a bilipschitz function from a pair of geometric limits. The main result, \refthm{ConstructBilip}, is a technical statement that will be used in the proofs of Theorems~\ref{Thm:EffectiveDrillingTame} and~\ref{Thm:EffectiveFillingTame}. The idea is that if hyperbolic 3-manifolds $Y$ and $Z$ each have a sequence of manifolds limiting to them geometrically, and there are $J$--bilipschitz maps between the thick parts of the approximating manifolds, then there is also a $J$--bilipschitz map between the thick parts of $Y$ and $Z$.

The precise statement that we need involves \emph{convex submanifolds} of a hyperbolic manifold. If $Y$ is a hyperbolic 3-manifold with universal covering map $\pi \from \HH^3 \to Y$, a submanifold $Q \subset Y$ is called \emph{convex} if the full preimage $\pi^{-1}(Q)$ is a convex subset of $\HH^3$. The inclusion $Q \hookrightarrow Y$ is necessarily a homotopy equivalence.
The convex core $CC(Y)$ is always a convex submanifold; in fact, it is the intersection of all convex submanifolds of $Y$. Another important example of a convex submanifold is the scooped manifold $N^\circ \subset N$ in \refdef{DoubleDouble}.

One important property of a convex submanifold $Q \subset Y$ is that intrinsic and extrinsic notions of injectivity radius agree. Given a point $x \in Q$, the injectivity radius  $\injrad(x) = \epsilon/2$ is realized by a geodesic loop $\gamma$ based at $x$, of length exactly $2\epsilon$; compare \cite[Lemma 2.11]{FPS:EffectiveBilipschitz}. By convexity, this loop must be contained in $Q$. Consequently, $Q^{\geq \epsilon} = Q \cap Y^{\geq \epsilon}$.

We have the following theorem.

\begin{theorem}[Bilipschitz limit]\label{Thm:ConstructBilip}
Fix $\delta > 0$, $\epsilon >0$ and $J > 1$.  Let $(Y_m, y_m) \to (Y, y)$ and $(Z_m, z_m) \to (Z, z)$ be geometrically converging sequences of based hyperbolic manifolds.
For each $m$, let $Y_m^\circ$ and $Z_m^\circ$ be convex submanifolds of $Y_m$ and $Z_m$, respectively.  Suppose that $y \in Y^{> \epsilon}$ and  $y_m \in (Y_m^\circ)^{\geq \epsilon}$,  while  $z_m \in Z_m^\circ$. Suppose $(Y_m^\circ, y_m) \to (Y, y)$ and $(Z_m^\circ, z_m) \to (Z, z)$.

Suppose that, for each $m$, there is a $J$--bilipschitz inclusion $\varphi_m \from (Y_m^\circ)^{\geq \epsilon} \hookrightarrow (Z_m^\circ)^{\geq \delta}$, such that $d(\varphi_m(y_m), z_m)$ is uniformly bounded. Then there is also a $J$--bilipschitz inclusion $\varphi \from Y^{\geq \epsilon} \hookrightarrow Z^{\geq \delta}$.
\end{theorem}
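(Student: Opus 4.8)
The plan is to build $\varphi$ as a limit of the maps $\varphi_m$, using the geometric convergence to pull everything back to fixed compact balls. The core analytic tool is the Arzelà–Ascoli theorem: the $\varphi_m$ are $J$--bilipschitz, hence uniformly Lipschitz, and the uniform bound on $d(\varphi_m(y_m), z_m)$ keeps their images from escaping to infinity. So after precomposing and postcomposing with the almost-isometric embeddings $f_{m,R}$ and $g_{m,R}$ coming from the geometric limits $(Y_m^\circ, y_m)\to (Y,y)$ and $(Z_m^\circ, z_m)\to (Z,z)$, we get a sequence of almost-$J$--bilipschitz maps defined on larger and larger balls in $Y$, and we extract a subsequential limit by a diagonal argument over an exhaustion of $Y^{\geq \epsilon}$ by compact sets.

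**Key steps, in order.** First I would fix an exhaustion $K_1 \subset K_2 \subset \cdots$ of $Y^{\geq \epsilon}$ by compact sets, with $K_t \subset B_{R_t}(y)$ for radii $R_t \to \infty$. For each fixed $R$, geometric convergence gives embeddings $f_{m,R}\from (B_R(y), y)\hookrightarrow (Y_m^\circ, y_m)$ converging to isometries in $C^\infty$; a point in $K_t \cap Y^{\geq\epsilon}$ has image under $f_{m,R}$ lying in $(Y_m^\circ)^{\geq \epsilon - o(1)}$, which one must nudge into $(Y_m^\circ)^{\geq \epsilon}$ — here I would instead run the argument with a slightly smaller $\epsilon' < \epsilon$ on the source, or observe that $\injrad$ is continuous and the domains match in the limit, so the loss is absorbed. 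Second, compose: $\psi_{m,R} = g_{m,R}^{-1} \circ \varphi_m \circ f_{m,R}$ is defined on (most of) $B_R(y) \cap Y^{\geq \epsilon}$ once $m$ is large, it is $(J + o(1))$--bilipschitz onto its image in $Z$, and the basepoint bound plus the fact that $g_{m,R}^{-1}$ is almost an isometry pins $\psi_{m,R}(y)$ in a bounded region of $Z$. Third, apply Arzelà–Ascoli on each $K_t$ and diagonalize in $m$ and $t$ to get a subsequence along which $\psi_m \to \varphi$ uniformly on compact subsets of $Y^{\geq\epsilon}$; the limit $\varphi$ is $J$--bilipschitz because a uniform limit of $(J+o(1))$--bilipschitz maps is $J$--bilipschitz. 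Fourth, check that $\varphi$ lands in $Z^{\geq \delta}$ (each $\varphi_m$ maps into $(Z_m^\circ)^{\geq \delta}$, and $\injrad$ passes to the limit, using that $Z_m^\circ \to Z$ so the thick parts converge) and that $\varphi$ is an inclusion, i.e. injective — injectivity of the limit of uniformly bilipschitz maps is automatic from the lower Lipschitz bound, which survives the limit.

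**Main obstacle.** The delicate point is the bookkeeping at the boundary of the thick part: the embeddings $f_{m,R}$ only approximately preserve injectivity radius, so $f_{m,R}(Y^{\geq\epsilon}\cap B_R(y))$ is not exactly contained in $(Y_m^\circ)^{\geq\epsilon}$, where $\varphi_m$ is defined — it sits in $(Y_m^\circ)^{\geq \epsilon - \eta_m}$ with $\eta_m \to 0$. One must either work with a nested family of thresholds and take $\epsilon$ as a limit of smaller thresholds from inside, or invoke that on a convex submanifold $\injrad$ is realized by a geodesic loop contained in the submanifold so that the domains genuinely converge (as noted in the paragraph preceding the theorem, $Q^{\geq\epsilon} = Q\cap Y^{\geq\epsilon}$), and that this makes the "defect" set shrink out of any fixed compact $K_t$. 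The analogous care is needed on the target to conclude $\varphi(Y^{\geq\epsilon})\subset Z^{\geq\delta}$ rather than merely $Z^{\geq\delta'}$ for $\delta' < \delta$; since $\delta$ is fixed and the $\varphi_m$ hit $(Z_m^\circ)^{\geq\delta}$ exactly, the limit inequality $\injrad_Z(\varphi(x)) \geq \delta/2$ follows from continuity of $\injrad$ under geometric convergence, but this is the step I would write out most carefully.
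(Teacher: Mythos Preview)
Your proposal is correct and follows essentially the same route as the paper: conjugate $\varphi_m$ by the almost-isometric charts coming from the two geometric limits, then extract a subsequential limit via Arzel\`a--Ascoli and a diagonal argument over a compact exhaustion. You have also correctly located the one genuinely delicate point.

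One small correction on that point: your first suggestion, to ``run the argument with a slightly smaller $\epsilon' < \epsilon$ on the source,'' goes in the wrong direction. The map $\varphi_m$ is only defined on $(Y_m^\circ)^{\geq \epsilon}$, and $f_{m,R}$ sends $Y^{\geq \epsilon'}$ into roughly $(Y_m^\circ)^{\geq \epsilon'}$, which for $\epsilon' < \epsilon$ is \emph{larger} than the domain of $\varphi_m$, not smaller. The paper implements your other suggestion (``a nested family of thresholds'') by taking $\epsilon_n = (1+\tfrac1n)\epsilon > \epsilon$ on the source and $\delta_n = (1-\tfrac1n)\delta < \delta$ on the target, producing $J_n$--bilipschitz maps $h_n \from \overline B_n(y) \cap Y^{\geq \epsilon_n} \to Z^{\geq \delta_n}$ with $J_n \to J$. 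These domains exhaust $Y^{> \epsilon}$, the limit map is $J$--bilipschitz there, and one extends by continuity to the closure $Y^{\geq \epsilon}$. The image automatically lies in $Z^{\geq \delta}$ because each approximant lies in $Z^{\geq \delta_n}$ and $\delta_n \to \delta$; no separate injectivity-radius argument on the target is needed.
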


The proof of \refthm{ConstructBilip} proceeds in two steps. In the first step, carried out in \reflem{ConstructPartialBilip}, we construct compact sets $K_n \subset Y$ and bilipschitz functions $h_n \from K_n \to Z$, such that each $h_n$ is \emph{almost} $J$--bilipschitz and has image \emph{almost} contained in $Z^{\geq \delta}$. These compact sets are nested, with $K_1 \subset K_2 \subset \ldots$, and form an exhaustion of $Y^{> \epsilon}$. In the second step, carried out in \reflem{ArzelaFTW}, we extract subsequential limits of the locally defined functions $h_n$ to obtain the desired bilipschitz inclusion $\varphi$.

\begin{lemma}\label{Lem:ConstructPartialBilip}
Let the notation and hypotheses be as in \refthm{ConstructBilip}. For $n \in \NN$, define sequences of numbers as follows: 
\[
\delta_n = (1- \tfrac{1}{n})\delta, \quad \epsilon_n = (1+ \tfrac{1}{n}) \epsilon, \quad J_n = 2^{1/n} J.
\]
Then, for all sufficiently large $n$, there is a $J_n$--bilipschitz map
\[
h_n \from \overline B_{n}(y) \cap Y^{\geq \epsilon_n} \hookrightarrow Z^{\geq \delta_n}.
\]
Furthermore, $\{h_n(y)\}$ is a bounded sequence in $Z$.
\end{lemma}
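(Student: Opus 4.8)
The plan is to transport the bilipschitz maps $\varphi_m$ from the approximating manifolds back to $Y$ and $Z$ using the comparison maps furnished by geometric convergence, and to absorb all the resulting distortion into the slack parameters $\delta_n, \epsilon_n, J_n$. Fix $n$. First I would choose a radius $R = R_n$ large enough that the ball $\overline B_n(y)$ is comfortably contained in $B_R(y)$, and invoke \refeqn{PointedGromovHausdorff} for the convergence $(Y_m^\circ, y_m) \to (Y, y)$ to obtain, for all $m$ large, embeddings $f_{m} \from (B_R(y), y) \hookrightarrow (Y_m^\circ, y_m)$ that converge $C^\infty$ to isometries; likewise, using that $d(\varphi_m(y_m), z_m)$ is uniformly bounded, choose $R'$ large and apply \refeqn{PointedGromovHausdorff} to $(Z_m^\circ, z_m) \to (Z, z)$ to get embeddings $g_{m} \from (B_{R'}(z), z) \hookrightarrow (Z_m^\circ, z_m)$ converging $C^\infty$ to isometries. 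For $m$ large the composite
\[
h_n \from \overline B_n(y) \cap Y^{\geq \epsilon_n} \xrightarrow{\ f_{m}\ } Y_m^\circ \xrightarrow{\ \varphi_m\ } Z_m^\circ \xrightarrow{\ g_{m}^{-1}\ } Z
\]
is defined (here one must check the image of $\varphi_m \circ f_m$ lands in the domain of $g_m^{-1}$, which follows because $\varphi_m$ is $J$-bilipschitz, moves $y_m$ a bounded distance, and $\overline B_n(y)$ has bounded diameter, so the image sits in a ball of controlled radius about $z_m$, hence in $g_m(B_{R'}(z))$ for $R'$ chosen large enough). This $h_n$ — taking $m = m(n)$ large — is the desired map.

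The two things to verify are the bilipschitz constant and the image constraint. For the bilipschitz constant: $\varphi_m$ is exactly $J$-bilipschitz, and $f_m$, $g_m^{-1}$ are $C_m$-bilipschitz onto their images with $C_m \to 1$ as $m \to \infty$; choosing $m(n)$ large enough that $C_{m(n)}^2 \leq 2^{1/n}$ makes the composite $2^{1/n} J = J_n$-bilipschitz. For the image: a point $x \in \overline B_n(y)$ with $\injrad_Y(x) \geq \epsilon_n/2$ maps under $f_m$ to a point whose injectivity radius in $Y_m^\circ$ is at least $C_m^{-1}\epsilon_n/2 \geq \epsilon/2$ for $m$ large (using $\epsilon_n > \epsilon$ and $C_m \to 1$), and since $Y_m^\circ$ is convex this is the extrinsic injectivity radius, so $f_m(x) \in (Y_m^\circ)^{\geq \epsilon}$ — the domain of $\varphi_m$. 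Then $\varphi_m(f_m(x)) \in (Z_m^\circ)^{\geq \delta}$, so its injectivity radius in $Z_m^\circ$, hence in $Z_m$, is at least $\delta/2$; applying $g_m^{-1}$, which is $C_m$-bilipschitz, the injectivity radius in $Z$ of $h_n(x)$ is at least $C_m^{-1}\delta/2 \geq \delta_n/2$ for $m$ large (using $\delta_n < \delta$). Here I again use convexity of $Z_m^\circ$, exactly as in the paragraph preceding the theorem, to pass freely between intrinsic injectivity radius in $Z_m^\circ$ and injectivity radius in $Z_m$. Finally, $h_n(y) = g_m^{-1}(\varphi_m(f_m(y))) = g_m^{-1}(\varphi_m(y_m))$, which lies at distance at most (uniform bound)$\,+\,1$ from $g_m^{-1}(z_m) = z$ once $C_m$ is close to $1$; this bound is independent of $n$, so $\{h_n(y)\}$ is bounded.

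The main obstacle is purely bookkeeping: one must interleave three limits — $m \to \infty$ in the two geometric convergences (to drive $C_m \to 1$) and the bilipschitz maps $\varphi_m$ — and choose, for each fixed $n$, a single index $m(n)$ large enough to simultaneously (i) have $f_{m(n)}$, $g_{m(n)}$ defined on balls large enough to contain $\overline B_n(y)$ and its image, (ii) make $C_{m(n)}^2 \le 2^{1/n}$, and (iii) make the injectivity-radius losses small enough that the strict inequalities $\epsilon_n > \epsilon$ and $\delta_n < \delta$ give the needed room. Each of these is an open condition satisfied for all large $m$, so such $m(n)$ exists; there is no genuine analytic difficulty, only the need to organize the quantifiers carefully. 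The convexity of the submanifolds $Y_m^\circ$ and $Z_m^\circ$ is used in an essential way — without it, the hypothesis $y_m \in (Y_m^\circ)^{\geq \epsilon}$ and the target condition $(Z_m^\circ)^{\geq \delta}$ would refer to intrinsic injectivity radii that need not match the ambient ones, and the transport argument would break.
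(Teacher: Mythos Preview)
Your proposal is correct and takes essentially the same approach as the paper: define $h_n = g_m^{-1} \circ \varphi_m \circ f_m$ for a single well-chosen $m = m(n)$, using the slack built into $\epsilon_n, \delta_n, J_n$ to absorb the distortion of the comparison maps, and verify well-definedness by a diameter estimate. One step deserves more care: the assertion that $\injrad_{Y_m^\circ}(f_m(x)) \geq C_m^{-1}\,\injrad_Y(x)$ (and the analogous claim for $g_m^{-1}$) does not follow from the bilipschitz constant alone, because $f_m$ is only an embedding of a ball and injectivity radius is global --- a short geodesic loop at $f_m(x)$ in $Y_m^\circ$ could in principle leave the image of $f_m$. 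The paper instead invokes \cite[Lemma~3.2.6]{ceg:notes-on-notes}, the statement that injectivity radii converge in a geometric limit; this delivers exactly the conclusion you need (for $m$ large, $f_m\big(\overline B_n(y)\cap Y^{\geq \epsilon_n}\big)\subset (Y_m^\circ)^{\geq \epsilon}$ and the analogue on the $Z$ side), and with that substitution your argument matches the paper's.
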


\begin{proof}
We begin by characterizing what it means for $n$ to be ``sufficiently large''. Since $y \in Y^{> \epsilon}$ by hypothesis, and $\epsilon_n \to \epsilon$, we choose $n$ large enough to ensure $y \in Y^{\geq \epsilon_n}$. 
Next, let $D$ be an upper bound on the distances $d(\varphi(y_m) ,z_m)$, where $y_m \in (Y_m^\circ)^{\geq \epsilon}$ is the basepoint of $Y_m^\circ$ and $z_m \in Z_m^\circ$ is the basepoint of $Z_m^\circ$. We also choose $n$ large enough so that $4Jn > 2^{1/n} J \cdot 2n + D$.

For each sufficiently large $n$, we will construct a map $h_n$. To that end, consider the set of points in $Y$ of distance less than $2n$ from $y$, denoted $B_{2n}(y)$.
Since $(Y_m^\circ, y_m) \to (Y, y)$, equation~\refeqn{PointedGromovHausdorff} says that for large $m \in \NN$ there exist embeddings
 \[
 f_{m, 2n} \from (B_{2n}(y), y) \hookrightarrow (Y_m^\circ, y_m)
\]
that converge to isometries in the $C^\infty$ topology as $m \to \infty$. 

We will collect several desirable properties that hold for large $m$. First, observe that the closed set $\overline B_n(y)$ is compact and the derivatives of $ f_{m, 2n}$ are converging to the identity. Thus, for all sufficiently large $m$, we have:
\begin{equation}\label{Eqn:fLipschitz}
\text{The restriction of $ f_{m, 2n}$ to $\overline B_n(y)$ is $2^{1/2n}$--bilipschitz.}
\end{equation}
Next, a lemma of Canary, Epstein, and Green \cite[Lemma 3.2.6]{ceg:notes-on-notes} shows that injectivity radii converge in a geometric limit. Since $\epsilon_n > \epsilon$, it follows that for all sufficiently large $m$, we have
\begin{equation}\label{Eqn:fInjectivity}
f_{m, 2n} \big( \overline B_{n}(y) \cap Y^{\geq \epsilon_n} \big) \subset (Y_m^\circ)^{\geq \epsilon}.
 \end{equation}
 In particular, $f_{m, 2n} (y) = y_m \in (Y_m^\circ)^{\geq \epsilon}$.
Combining \refeqn{fLipschitz}, \refeqn{fInjectivity}, and the $J$--bilipschitz property of the map
 $\varphi_m \from (Y_m^\circ)^{\geq \epsilon} \to (Z_m^\circ)^{\geq \delta}$ gives
\[
\mathrm{diam} \left( \varphi_m \circ f_{m, 2n} \big( \overline B_{n}(y) \cap Y^{\geq \epsilon_n} \big) \right) \leq J \cdot 2^{1/2n} \cdot 2n.
\]
The image set $ \varphi_m \circ f_{m, 2n} \big( \overline B_{n}(y) \cap Y^{\geq \epsilon_n} \big) $ contains $\varphi_m(y_m)$, hence this set is contained in the $ \big( 2^{1/2n} J \cdot 2n +D \big)$--neighborhood of $z_m$, by the definition of $D$.

Now, consider the geometric limit $(Z_m^\circ, z_m)  \to (Z, z)$. For large $m$, there exist embeddings
 \[
g_{m, 4Jn} \from (B_{4Jn}(z), z) \hookrightarrow (Z_m^\circ, z_m)
\]
that converge to isometries as $m \to \infty$,  with injectivity radii converging as before. Since the image of $B_{4Jn}(z)$ converges to a set of point of distance less than $4Jn$ to $z_m$, and we have already chosen $n$ so that $4Jn > 2^{1/n} J \cdot 2n + D$, it follows that for large $m$ we have:
\begin{equation}\label{Eqn:gContainment}
 \varphi_m \circ f_{m, 2n} \big( \overline B_{n}(y) \cap Y^{\geq \epsilon_n} \big) \subset g_{m, 4Jn} (B_{4Jn}(z)) \subset (Z_m^\circ)^{\geq \delta}.
\end{equation}
Since the derivatives of $g_{m, 4Jn}$ and its inverse converge to the identity as $m \to \infty$, choosing $m$ large ensures that the lipschitz constants on any compact subset are close to $1$. Thus, for $m$ large:
\begin{equation}\label{Eqn:gLipschitz}
\text{The restriction of $ g_{m, 4Jn}^{-1}$ to $\varphi_m \circ f_{m, 2n} \big( \overline B_{n}(y) \cap Y^{\geq \epsilon_n} \big)$ is $2^{1/2n}$--bilipschitz.}
\end{equation}
Finally, since $\varphi_m \circ f_{m, 2n} \big( \overline B_{n}(y) \cap Y^{\geq \epsilon_n} \big) \subset (Z_m^\circ)^{\geq \delta}$, and injectivity radii converge in the limit $(Z_m^\circ, z_m)  \to (Z, z)$, it follows that choosing $m$ large ensures:
\begin{equation} \label{Eqn:gInjectivity}
  g_{m, 4Jn}^{-1} \circ \varphi_m \circ f_{m, 2n} \big( \overline B_{n}(y) \cap Y^{\geq \epsilon_n} \big) \subset Z^{\geq 
\delta_n}.
\end{equation}

We are now ready to define the function $h_n$ and check that it has all the desired properties. For each $n$, choose a single number $m = m_n$ so that conditions \refeqn{fLipschitz}--\refeqn{gInjectivity} hold simultaneously. Then, set
\[
h_n = g_{m, 4Jn}^{-1} \circ \varphi_{m} \circ f_{m, 2n} \: :  \: \overline B_{n}(y) \cap Y^{\geq \epsilon_n} \longrightarrow Z.
\]
This function is well-defined by \refeqn{fInjectivity} and \refeqn{gContainment}. It is $J_n$--bilipschitz for $J_n = 2^{1/n}J$, because $f_{m, 2n}$ and $g_{m, 4Jn}^{-1}$ are both $2^{1/2n}$--bilipschitz on the relevant domain by \refeqn{fLipschitz} and \refeqn{gLipschitz}, while $\varphi_{m}$ is $J$--bilipschitz. The image $h_n \big( \overline B_{n}(y) \cap Y^{\geq \epsilon_n} \big)$ is contained in $Z^{\geq 
\delta_n}$ by \refeqn{gInjectivity}. Finally, the points
\[
h_n(y) =  g_{m, 4Jn}^{-1} \big( \varphi_{m} (y_{m})  \big) \quad \text{and} \quad z = g_{m, 4Jn}^{-1} (z_{m})
\]
are within distance $2 D$, by \refeqn{gLipschitz} and the definition of $D$, hence $\{h_n(y)\}$ is a bounded sequence.
\end{proof}

The second step of the proof of \refthm{ConstructBilip} is to extract a subsequential limit of the functions $h_n$ that were built in \reflem{ConstructPartialBilip}. This step does not need any hyperbolic geometry or smoothness, and only needs $Y$ and $Z$ to be metric spaces. So we write down the proof in that generality.

\begin{lemma}\label{Lem:ArzelaFTW}
Let $Y$ and $Z$ be metric spaces. Let $K_1 \subset K_2 \subset \ldots $ be an exhaustion of $Y$ by compact sets. For each $n$, let $h_n \from K_n \to Z$ be a continuous function that is a $J_n$--bilipschitz homeomorphism to its image. Suppose that the sequence of images $\{h_n(y)\}$ is bounded for some basepoint $y \in K_1$, and that $\lim_{n \to \infty} J_n = J$ for some $J$.

Then there is a $J$--bilipschitz inclusion $h \from Y \hookrightarrow Z$. Furthermore, for every $x \in Y$, we have $h(x) = \lim h_{n_i}(x)$ for some subsequence $n_i$.
\end{lemma}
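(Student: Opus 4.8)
The plan is a standard Arzel\`a--Ascoli diagonal argument over the exhaustion $K_1 \subseteq K_2 \subseteq \cdots$, followed by passing to the limit in the bilipschitz inequalities. Write $d_Y$ and $d_Z$ for the two metrics, fix $w_0 \in Z$ and $R_0 > 0$ with $h_n(y) \in \overline{B}_{R_0}(w_0)$ for all $n$ (possible since $\{h_n(y)\}$ is bounded), and set $C = \sup_n J_n$, which is finite because $J_n \to J$.

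First I would fix a compact piece $K_m$ of the exhaustion and consider the maps $h_n$ with $n \geq m$, all of which are defined on $K_n \supseteq K_m$. Each restriction $h_n|_{K_m}$ is $C$--Lipschitz, so the family $\{h_n|_{K_m}\}_{n \geq m}$ is equicontinuous; it is also uniformly bounded, since for $x \in K_m$ (using $y \in K_1 \subseteq K_m$) we have
\[
d_Z\big(h_n(x), w_0\big) \;\leq\; d_Z\big(h_n(x), h_n(y)\big) + R_0 \;\leq\; C\, d_Y(x,y) + R_0 ,
\]
which is bounded independently of $n$ because $K_m$ has finite diameter. As closed bounded subsets of $Z$ are compact, Arzel\`a--Ascoli produces a subsequence along which $h_n|_{K_m}$ converges uniformly on $K_m$ to a continuous map. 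Running this successively on $K_1, K_2, \ldots$ and diagonalizing yields a single subsequence $(n_i)$ along which $h_{n_i}$ converges uniformly on every $K_m$; since the $K_m$ exhaust $Y$ and the resulting limits agree on overlaps (each being the pointwise limit of $h_{n_i}$ there), they glue to a continuous map $h \from Y \to Z$ with $h(x) = \lim_i h_{n_i}(x)$ for all $x \in Y$. This already supplies the ``furthermore'' clause.

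It then remains to check that $h$ is a $J$--bilipschitz inclusion. Given $x, x' \in Y$, both lie in some $K_m$, so for every $i$ with $n_i \geq m$ the bilipschitz hypothesis on $h_{n_i}$ gives
\[
J_{n_i}^{-1}\, d_Y(x,x') \;\leq\; d_Z\big(h_{n_i}(x), h_{n_i}(x')\big) \;\leq\; J_{n_i}\, d_Y(x,x') .
\]
Letting $i \to \infty$ and using $h_{n_i}(x) \to h(x)$, $h_{n_i}(x') \to h(x')$, continuity of $d_Z$, and $J_{n_i} \to J$, I obtain $J^{-1} d_Y(x,x') \leq d_Z(h(x), h(x')) \leq J\, d_Y(x,x')$. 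The lower bound forces $h$ to be injective and makes $h^{-1}$ $J$--Lipschitz on $h(Y)$, so $h$ is a $J$--bilipschitz homeomorphism onto its image, as desired. The argument is essentially formal, and the one step I would flag as the main obstacle is the compactness input for Arzel\`a--Ascoli, namely that bounded sequences in $Z$ subconverge; this is harmless in our setting because every target $Z$ is a complete hyperbolic $3$--manifold, hence proper by Hopf--Rinow. The role of the diagonalization is precisely to ensure that a \emph{single} subsequence witnesses $h$ as a pointwise limit over all of $Y$ simultaneously, rather than merely on each compact piece.
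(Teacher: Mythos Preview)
Your proof is correct and follows essentially the same Arzel\`a--Ascoli argument as the paper, with two cosmetic streamlinings: you use a single diagonal subsequence rather than nested subsequences, and you pass to the limit in the bilipschitz inequalities directly rather than first reducing to a monotone $\{J_n\}$. You are also right to flag the properness of $Z$ needed for Arzel\`a--Ascoli; the paper uses this implicitly as well.
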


\begin{proof}
After replacing $\{h_n\}$ by a subsequence, we may assume without loss of generality that $\{ J_n \}$ is a monotonic sequence. 
We will construct $h$ by repeated application of the Arzela--Ascoli theorem.

Focusing attention on a single compact set $K_m$, the functions $h_n$ are defined on $K_m$ for all $n \geq m$. The family $\{h_n : n \geq m\}$ is equicontinuous on $K_m$ because each $h_n$ is $\overline J$--lipschitz, where $\overline J = \sup J_n$. Furthermore, the set of images $\{h_n(y)\}$ is bounded by hypothesis, and $d(h_n(y), h_n(x))$ is also uniformly bounded for $x \in K_m$ because the functions $h_n$ are uniformly lipschitz. Thus $\{h_n : n \geq m\}$ is uniformly bounded on $K_m$. By Arzela--Ascoli, some subsequence converges uniformly on $K_m$.

Now, consider $K_1$. By the above paragraph, there is a subsequence $\{ h_n^1 \}$ of $\{ h_n \}$ that converges uniformly on $K_1$. Define $h(x) = \lim_{n \to \infty} h_n^1 (x)$ for $x \in K_1$.

Next, consider $K_2 \supset K_1$. As above, there is a subsequence $\{ h_n^2 \}$ of $\{ h_n^1 \}$ that converges uniformly on $K_2$. Define $h(x) = \lim_{n \to \infty} h_n^2 (x)$ for $x \in K_2$. This agrees with the previous definition of $h$ on $K_1 \subset K_2$ because we have taken a subsequence of a sequence that already converges on $K_1$.

Continuing inductively in this manner, we have a subsequence $\{ h_n^m \}$ of $\{ h_n^{m-1} \}$ that converges uniformly on $K_m$. We then have $h(x) = \lim_{n \to \infty} h_n^m (x)$ for $x \in K_m$. Since the $K_m$ provide an exhaustion of $Y$, and the definition is consistent as $m$ grows, this defines $h$ on all of $Y$.

It remains to show that $h$ is $J$--bilipschitz, where $J = \lim J_n$. Consider a pair of points $x, x' \in Y$, and let $K_m$ be such that $x, x' \in K_m$. Recall our assumption at the beginning of the proof that $\{ J_n \}$ is monotonic. If $J_n$ is monotonically increasing with $n$, then every $h_n^m$ is already $J$--bilipschitz. In particular, we have:
\[
J^{-1} \cdot d(x, x') \leq d\big(h_n^m(x), h_n^m(x') \big) \leq J \cdot d(x, x').
\]
As $n \to \infty$, the middle term converges to $d(h(x), h(x'))$,
hence $h$ is $J$--bilipschitz.

If $J_n$ is monotonically decreasing with $n$, then $J_n^m$ is also monotonically decreasing. Fix an integer $k \gg 0$. Then, for $n \geq k$, every $h_n^m$ is  $J_k^m$--bilipschitz. Thus, for all $n \geq k$, we have:
\[
(J_k^m)^{-1} \cdot d(x, x') \leq d\big(h_n^m(x), h_n^m(x') \big) \leq J_k^m \cdot d(x, x').
\]
As $n \to \infty$ (holding $k$ fixed), the middle term converges to $d(h(x), h(x') )$. Then we take a limit as $k \to \infty$, and $J_k^m$ converges to $J$. We obtain
\[
J^{-1} \cdot d(x, x') \leq d\big(h(x), h(x') \big) \leq J \cdot d(x, x'),
\]
hence $h$ is $J$--bilipschitz.
\end{proof}

\begin{proof}[Proof of \refthm{ConstructBilip}]
As in the statement of the theorem, we have hyperbolic manifolds $Y$ and $Z$, with a basepoint $y \in Y^{> \epsilon}$. Let $\delta_n \to \delta$, $\epsilon_n \to \epsilon$, and $J_n \to J$ be the convergent sequences of \reflem{ConstructPartialBilip}, and let $K_n = \overline B_n(y) \cap Y^{\geq \epsilon_n}$ be a compact set. \reflem{ConstructPartialBilip} says that for all sufficiently large $n$, there is a $J_n$--bilipschitz map $h_n \from K_n \hookrightarrow Z^{\geq \delta_n}$,
such that the images of the basepoint $\{h_n(y)\}$ are bounded in $Z$. We reindex the sequence so that it starts at $n=1$.

Observe that $K_1 \subset K_2 \subset \ldots$ is an exhaustion of $Y^{> \epsilon}$ by compact sets. Now, \reflem{ArzelaFTW} constructs a $J$--bilipschitz function $h \from Y^{> \epsilon} \hookrightarrow Z$. For every $x \in Y$, there is a subsequence $h_{n_i}$ (depending on the compact set containing $x$) such that $h(x) = \lim h_{n_i}(x)$. Since $h_{n_i}(x) \in Z^{\geq \delta_{n_i}}$, and $\delta_{n_i} \to \delta$, it follows that in fact $h(x) \in Z^{\geq \delta}$. Thus we have a $J$--bilipschitz map $h \from Y^{> \epsilon} \hookrightarrow Z^{\geq \delta}$.

Since $h$ is $J$--bilipschitz, it has a continuous and $J$--bilipschitz extension to $\bdy Y^{> \epsilon}$. This achieves our goal: a $J$--bilipschitz inclusion $\varphi = h \from Y^{\geq \epsilon} \hookrightarrow Z^{\geq \delta}$.
\end{proof}

\begin{remark}\label{Rem:ConstructBilipThin}
In the proof of \refthm{GeoInfiniteBB}, we will rely on the following straightforward extension of \refthm{ConstructBilip}. Suppose that  $\epsilon$ and $\delta$ are Margulis numbers for $Y$ and $Z$, respectively. Let $Y^{<\epsilon}_{\Sigma}$ be the disjoint union of certain components of $Y^{<\epsilon}$, and let $Z^{<\delta}_{\Sigma}$ be the disjoint union of certain components of $Z^{<\delta}$. Let $(Y_m)^{<\epsilon}_{\Sigma}$ and $(Z_m)^{<\delta}_{\Sigma}$ be the corresponding subsets of the approximating manifolds in the geometric limits $(Y_m, y_m) \to (Y, y)$ and $(Z_m, z_m) \to (Z, z)$.
Suppose that, for each $m$, there is a $J$--bilipschitz inclusion $\varphi_m \from (Y_m - (Y_m)^{<\epsilon}_{\Sigma}) \hookrightarrow (Z_m - (Z_m)^{<\delta}_{\Sigma})$, such that $d(\varphi_m(y_m), z_m)$ is uniformly bounded. Then there is also a $J$--bilipschitz inclusion $\varphi \from (Y - Y^{<\epsilon}_{\Sigma}) \hookrightarrow (Z - Z^{<\delta}_{\Sigma})$.

The main change from the statement of \refthm{ConstructBilip} is that $Y^{\geq \epsilon}$ has been replaced by $(Y - Y^{<\epsilon}_{\Sigma})$, and similarly for $Z$, so that the $J$--bilipschitz inclusion $\varphi$ now extends into some of the thin parts of $Y$. The only change needed in the proof is that in \reflem{ConstructPartialBilip} the compact set $\overline B_n(y) \cap Y^{\geq \epsilon}$ should be replaced by the compact set $\overline B_n(y) \cap (Y - Y^{<\epsilon}_{\Sigma})$. This again gives a compact exhaustion of the desired submanifold of $Y$; the rest of the argument works verbatim.
\end{remark}

\section{The effective drilling theorem}\label{Sec:Drilling}

In this section, we prove \refthm{EffectiveDrillingTame}. The proof proceeds in two main steps. In the first step, accomplished in \refthm{DrillingApproximation}, we use a number of results from Kleinian groups (\refsec{Kleinian}) to approximate both $Y$ and the drilled manifold $Z \cong Y-\Sigma$ with geometrically finite manifolds that admit a circle packing. We also double-double the scooped versions of the geometrically finite manifolds to obtain finite-volume hyperbolic manifolds converging to $Y$ and $Z$, respectively.
In the second step, we use the geometric limits constructed in \refthm{DrillingApproximation}, combined with  the finite-volume \refthm{BilipFiniteVolume} and the bilipschitz limit \refthm{ConstructBilip}, to build bilipschitz maps between the thick parts of $Y$ and $Z$. See \reffig{DrillingApproxOutline} for a visual outline. 

We also employ the same outline to prove \refthm{GeoInfiniteBB}, which extends Brock and Bromberg's \refthm{BBDrillingThm} to all tame hyperbolic 3-manifolds without rank-1 cusps.

Before beginning the proofs of those results, we need to verify that for any geodesic link $\Sigma$, the complement $Y - \Sigma$ admits a hyperbolic structure $Z$ with end invariants identical to those of $Y$.

\begin{lemma}\label{Lem:DrillHyperbolic}
Let $Y$ be a tame hyperbolic 3-manifold with standard compact core $(M, P)$. Let $\Sigma \subset Y$ be a geodesic link with a regular neighborhood $\calN(\Sigma) \subset M$. Then $(M- \calN(\Sigma), P\cup \bdy \calN(\Sigma))$ is a pared manifold that admits a hyperbolic metric $Z$ with the same end invariants as those of $Y$.
\end{lemma}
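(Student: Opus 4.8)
The plan is to recognize $(M',P') := (M-\calN(\Sigma),\, P\cup\bdy\calN(\Sigma))$ as a pared manifold whose free sides carry the same end invariants as those of $Y$, and then to invoke the realization theorem. We may assume $\Sigma\neq\emptyset$. Since $\bdy\calN(\Sigma)$ is a union of tori, we have $\bdy_0 M' = \bdy M' - P' = \bdy_0 M$, so the Teichm\"uller points and ending laminations comprising the end invariants of $Y$ on $\bdy_0 M$ can be read verbatim as a collection of end invariants on the free sides of $(M',P')$; the goal is then to produce a hyperbolic structure on $(M',P')$ realizing precisely this collection, which will automatically be a hyperbolic structure on $Y-\Sigma$ with the same end invariants as $Y$.

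First I would verify that $(M',P')$ satisfies \refdef{ParedManifold}. Compactness and orientability are immediate. Irreducibility follows because a $2$--sphere in $M'$ bounds a ball $B$ in the irreducible manifold $M$, and $B$ cannot contain a component $\sigma\subset\Sigma$, since a closed geodesic of $Y$ is not null-homotopic while every loop in $B$ is; thus $B\subset M'$. The boundary $\bdy M'$ is nonempty and contains the tori $\bdy\calN(\Sigma)$, so $M'$ is not a ball; it is not a solid torus either, since Dehn filling a solid torus along $\bdy\calN(\Sigma)$ would make $M$ closed or a solid torus, contradicting that $(M,P)$ is pared. Each component of $\bdy\calN(\Sigma)$ is incompressible in $M'$ by a standard argument using the irreducibility of $M$ together with the fact that the components of $\Sigma$ are primitive (being embedded closed geodesics), so $P'$ is an admissible parabolic locus. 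Ruling out essential tori and essential annuli in $(M',P')$ is where I expect the main work to lie. Given a $\pi_1$--injective essential torus (resp.\ annulus) $f$ in $(M',P')$, I would show it remains essential or boundary-parallel when regarded in $M$ — using that the $\sigma_i$ are primitive and that a torus bounding a solid torus would exhibit some $\sigma_i$ as an imprimitive or as a boundary-parallel core — and then, since $(M,P)$ is pared, obtain a homotopy $H$ carrying $f$ into $P$ inside $M$. After a transversality perturbation, $H^{-1}(\Sigma)$ is a disjoint union of circles, and each such circle $c$ maps to some $\sigma_i$ with a degree $d$; since $c$ is freely homotopic in the domain of $H$ to a loop on the source surface, the conjugacy class of $H|_c$ in $\pi_1(M)$ is represented both by $\sigma_i^{d}$ and by an element of the rank-two parabolic subgroup carrying the image of $f$, and since a nontrivial power of the loxodromic element $\sigma_i$ is never parabolic nor trivial, this forces $d=0$. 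Vanishing of all these degrees lets me homotope $H$ off $\Sigma$, producing a homotopy inside $M'$ that carries $f$ into $P\subset P'$, contradicting essentiality. (The annulus case needs extra bookkeeping when a boundary curve lies on $\bdy\calN(\Sigma)$; such an annulus would display a component of $\Sigma$ as cabled or as isotopic into $\bdy M$, impossible for a closed geodesic in the atoroidal manifold $M$.)

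Next I would check that the end invariants inherited from $Y$ are filling for $(M',P')$ in the sense of \refdef{FillingEndInvts}. Condition~(*) is vacuous, because $M'$ is not an interval bundle: the torus boundary components of an interval bundle lie in its horizontal boundary (the vertical boundary being a union of annuli, never closed), hence would be free sides, whereas $\bdy\calN(\Sigma)\subset P'$ is a nonempty union of tori; moreover an interval bundle with torus horizontal boundary has base a torus or Klein bottle, so filling $\bdy\calN(\Sigma)$ would make $M$ a solid torus or closed, again contradicting that $(M,P)$ is pared. For condition~(**), observe that a disk in $M'$ is a disk in $M$, so every meridian of a free side $F$ with respect to $M'$ is a meridian with respect to $M$; hence $F$ compressible in $M'$ implies $F$ compressible in $M$, and the Masur domain of $F$ with respect to $M'$ contains that with respect to $M$. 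Since $Y$ realizes $(M,P)$, the forward implication of \refthm{Realization} shows that the end invariants of $Y$ satisfy (**) for $(M,P)$, and therefore for $(M',P')$ as well.

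Finally, $(M',P')$ is a pared manifold carrying a filling collection of end invariants on its free sides, so the reverse implication of \refthm{Realization} yields a minimally parabolic $\rho\in AH(M',P')$ whose quotient $Z=\HH^3/\rho(\pi_1(M'))$ realizes exactly these end invariants. Since $\interior(M')\cong\interior(M)-\Sigma\cong Y-\Sigma$, this $Z$ is the desired hyperbolic structure on $Y-\Sigma$ with the same end invariants as those of $Y$. The anticipated obstacle is the topological step showing $(M',P')$ is pared — specifically handling the essential-annulus case and the reduction of essential tori in $(M',P')$ to essential or boundary-parallel tori in $(M,P)$ — while the filling conditions and the final appeal to realization are comparatively routine.
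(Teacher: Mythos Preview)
Your overall strategy matches the paper's: verify $(M',P')$ is pared, check that the end invariants remain filling for $(M',P')$, then invoke \refthm{Realization}. Your treatment of conditions (*) and (**) is essentially identical to the paper's. The difference lies in how you establish that $(M',P')$ is atoroidal. The paper cites a theorem of Kerckhoff (with a proof recorded by Agol): since $\Sigma$ is a geodesic link in the hyperbolic manifold $Y$, the complement $M - \Sigma$ carries a complete metric of variable negative curvature, from which irreducibility and algebraic atoroidality of $M - \calN(\Sigma)$ follow in one stroke; the annulus condition is then handled with atoroidality already available.

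Your direct homotopy-theoretic substitute has a genuine gap at the step ``vanishing of all these degrees lets me homotope $H$ off $\Sigma$.'' Granting that $f \from T^2 \to M'$ remains $\pi_1$--injective in $M$ (the case where it does not is only gestured at, and your gesture concerns embedded tori rather than $\pi_1$--injective maps), you correctly obtain a homotopy $H \from T^2 \times I \to M$ into $P$ and show that each component $c \subset H^{-1}(\sigma_i)$ satisfies $\deg(H|_c \from c \to \sigma_i) = 0$. But this is not enough to push $H$ off $\Sigma$. The obstruction to removing a single $c$ is the class of the meridian of a tubular neighbourhood of $c$ inside $\pi_1(\calN(\sigma_i) - \sigma_i) \cong \ZZ^2$: by transversality that meridian maps with degree $\pm 1$ to the normal circle of $\sigma_i$, so it represents the meridian $\mu_i$, which is nontrivial in $\pi_1(M')$. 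Put differently, you need two homomorphisms $\ZZ^2 \to \pi_1(M')$ that become conjugate in $\pi_1(M)$ to already be conjugate in $\pi_1(M')$; since $\ker(\pi_1(M') \to \pi_1(M))$ is normally generated by the $\mu_i$, this does not follow from your degree condition alone. Kerckhoff's negatively curved metric resolves the issue by constraining $\ZZ^2$ subgroups of $\pi_1(M')$ directly, without reference to $\pi_1(M)$.
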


\begin{proof}
We begin by checking that $(M- \calN(\Sigma), P\cup \bdy \calN(\Sigma))$ is a pared manifold. Since $M$ is compact, oriented, and not a 3-ball or solid torus, the same is true of $M - \calN(\Sigma)$. 
For the other properties, recall a theorem of Kerckhoff that  $M - \Sigma$ admits a complete metric of variable negative curvature. (See Agol~\cite[pages 908--909]{agol:drilling} for a proof.) It follows that $M - \calN(\Sigma)$ is irreducible and algebraically atoroidal, meaning that every $\pi_1$--injective map of a torus $T^2 \to  M - \calN(\Sigma)$ is homotopic into some boundary torus belonging to either $P$ or to $\bdy \calN(\Sigma)$.
    
Now, consider a $\pi_1$--injective map of an annulus $f \from (A, \bdy A) \to (M- \calN(\Sigma), P\cup \bdy \calN(\Sigma))$.
Since $\Sigma$ is a disjoint union of geodesics in $Y$, an essential curve on $\bdy \calN(\Sigma)$ cannot be homotopic to $P$ through $A$, and two distinct essential curves on $\bdy \calN(\Sigma)$ cannot be homotopic to each other through $A$. Thus both components of $\bdy A$ must be mapped to $P$, which means that we in fact have $f \from (A, \bdy A) \to (M - \calN(\Sigma), P)$. Since $(M,P)$ is a pared manifold, it follows that $f$ is homotopic into $P$ through $M$. Since we have already checked that $M- \calN(\Sigma)$ is irreducible and atoroidal, and the geodesic components of $\Sigma$ cannot be homotopic into $P$, the homotopy of $f$ can be taken to avoid $\calN(\Sigma)$. This proves that $(M- \calN(\Sigma), P\cup \bdy \calN(\Sigma))$ is a pared manifold. 

To prove that the end invariants of $Y$ are realized by a hyperbolic structure on $M - \calN(\Sigma)$,
we need to check that the end invariants of $Y$ are still filling when viewed as end invariants for $(M- \calN(\Sigma), P\cup\bdy \calN(\Sigma))$. That is, we need to check conditions (*) and (**) of \refdef{FillingEndInvts}. Condition (*) holds automatically: the only way $M- \calN(\Sigma)$ can be an interval bundle over a surface is if $M$ is a solid torus, which we have already ruled out. For condition (**), let $F_i$ be a free side of $\bdy_0 M$ and $\lambda_i$ be the ending lamination on $F_i$. If $\lambda_i$ is contained in the Hausdorff limit of a sequence of meridians in $M- \calN(\Sigma)$, then the same meridians are also meridians in $M$, a contradiction. Thus the end invariants of $M- \calN(\Sigma)$ are filling, and \refthm{Realization} says that these end invariants are realized by a hyperbolic structure $Z$.
\end{proof}

\subsection{Scooped manifolds approaching $Y$ and $Z$}
The following theorem encapsulates the limiting construction that will be used in the proof of \refthm{EffectiveDrillingTame}. 
We will also use this result in the proof of \refthm{ShortDrillingTame}.

\begin{theorem}\label{Thm:DrillingApproximation}
Let $Y$ be a tame, infinite-volume hyperbolic 3-manifold with associated pared manifold $(M,P)$ and associated representation $\rho \in AH(M,P)$. Let $\Sigma \subset Y$ be a geodesic link, such that each component of $\Sigma$ is shorter than $\log 3$. Then $(M - \calN(\Sigma), P \cup \bdy \calN(\Sigma))$ is a pared manifold admitting a hyperbolic metric $Z$ with the same end invariants as those of $Y$. Furthermore,  there exist approximating sequences such that the following properties hold for all $n \gg 0$:
\begin{enumerate}[\: \: $(1)$]
\item\label{Itm:CirclePack} There is a geometrically finite, minimally parabolic representation $\rho_n \in AH(M,P)$.
The conformal boundary of $V_n = \HH^3 / \rho_n(\pi_1 M)$ admits a circle packing $C_n$. Furthermore, $\rho_n \to \rho$ is a strong limit.
\smallskip
\item\label{Itm:ScoopConverge} For every $y \in Y$, there is a choice of basepoints $v_n \in V_n^\circ$, such that $(V_n^\circ, v_n)$ converges geometrically to $(Y, y)$.  There is a geodesic link $\Sigma_n \subset V_n^\circ$, carried to $\Sigma$ by a homeomorphism $V_n \to Y$, such that $\Sigma_n \to \Sigma$ in the geometric limit.
\smallskip
\item\label{Itm:ScoopedDrill} There is a geometrically finite, minimally parabolic representation $\xi_n \in AH(M - \calN(\Sigma), \allowbreak P \cup \bdy \calN(\Sigma))$ such that the associated hyperbolic manifold
 $W_n$ is homeomorphic to $Y- \Sigma$ and $V_n - \Sigma_n$ and has end invariants that are identical to those of $V_n$. In particular, the conformal boundary of $W_n$ admits the same circle packing $C_n$. 
\smallskip
\item\label{Itm:DrilledLimit} For every $z \in Z$, there is a choice of basepoints $w_n \in W_n^\circ$, such that $(W_n^\circ, w_n)$ converges geometrically to $(Z,z)$. Furthermore, there is a strong limit $\xi_n \to \xi$, where $\xi \in AH(M - \calN(\Sigma), \allowbreak P \cup \bdy \calN(\Sigma))$ is a representation associated to $Z$.
\smallskip
\item\label{Itm:DrillDoubleCommute} The operations of drilling and double-doubling commute. That is:
\[
DD(V_n^\circ) - DD(\Sigma_n) \cong DD(W_n^\circ).
\]

\smallskip
\item\label{Itm:FiniteVolumeLimit} The manifolds $Y$ and $Z$ are geometric limits of finite-volume hyperbolic manifolds, as follows: $(DD(V_n^\circ), v_n) \to (Y, y)$ and $(DD(W_n^\circ), w_n) \to (Z, z)$.
\end{enumerate}
\end{theorem}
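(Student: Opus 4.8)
The plan is to exhibit $Y$ and its drilling $Z$ as geometric limits of finite-volume manifolds, in three stages, by assembling the Kleinian machinery of \refsec{Kleinian} and then checking the commutation and limit statements directly. I begin with the topological and Kleinian input. \reflem{DrillHyperbolic} produces the pared manifold $(M-\calN(\Sigma),P\cup\bdy\calN(\Sigma))$ — whose free sides coincide with those of $(M,P)$ — together with a hyperbolic metric $Z$ on it having the same end invariants $(X_1,\dots,X_r,\lambda_{r+1},\dots,\lambda_s)$ as $Y$. The strong density theorem \refthm{Density} then gives geometrically finite, minimally parabolic $\rho_n'\in AH(M,P)$ converging strongly to $\rho$, whose end invariants form a filling sequence converging to $(X_1,\dots,X_r,\lambda_{r+1},\dots,\lambda_s)$. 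Since $\rho_n'\to\rho$ strongly, \refprop{GeodesicsConverge} produces, for $n\gg0$, a geodesic link $\Sigma_n'\subset V_n'$ isotopic to $\Sigma$ with $\Sigma_n'\to\Sigma$ geometrically; as a closed geodesic link, $\Sigma_n'\subset CC(V_n')$. Drilling $\Sigma_n'$ out of $V_n'$ via \reflem{DrillHyperbolic} yields a geometrically finite, minimally parabolic $\xi_n'\in AH(M-\calN(\Sigma),P\cup\bdy\calN(\Sigma))$ whose manifold $W_n'$ has the \emph{same} end invariants, hence the same conformal boundary, as $V_n'$. As the free sides and their types are unchanged, the end invariants of $W_n'$ are literally the same filling sequence, so the approximation theorem \refthm{NamaziSouto} applies: after passing to a subsequence, $\xi_n'\to\xi$ strongly with limit manifold realizing the filling tuple as its end invariants, and by the ending lamination theorem \refthm{ELC} that limit is isometric to $Z$. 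Thus $\xi$ is a representation for $Z$.

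Next I insert circle packings on the filled side and transport them to the drilled side. Since $V_n'$ is geometrically finite, Brooks' theorem \refthm{Brooks} gives, for each $\delta>0$, a representation $\rho_n(\delta)\in QH(\rho_n')$ that is an $e^\delta$--quasiconformal deformation of $\rho_n'$ whose conformal boundary admits a circle packing. By \refthm{SameTopologyQH}, $\rho_n(\delta)\to\rho_n'$ strongly as $\delta\to0$, so I may pick $\delta_n\to0$ with $\rho_n:=\rho_n(\delta_n)$ within algebraic and Chabauty distance $1/n$ of $\rho_n'$; then $\rho_n\to\rho$ strongly. By \refprop{GeodesicsConverge} the manifold $V_n$ then contains a short geodesic link $\Sigma_n$ isotopic to $\Sigma$, with $\Sigma_n\to\Sigma$ and $\Sigma_n\subset CC(V_n)$, and $V_n$ is homeomorphic to $Y$ by a homeomorphism carrying $\Sigma_n$ to $\Sigma$. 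Drilling $\Sigma_n$ out of $V_n$ (\reflem{DrillHyperbolic}) produces a geometrically finite, minimally parabolic $\xi_n\in AH(M-\calN(\Sigma),P\cup\bdy\calN(\Sigma))$ whose manifold $W_n\cong V_n-\Sigma_n\cong Y-\Sigma$ has the same end invariants as $V_n$, in particular the same circle-packed conformal boundary $C_n$; this establishes \refitm{CirclePack} and \refitm{ScoopedDrill}, as well as the claims in \refitm{ScoopConverge} about $\Sigma_n$. Moreover $\xi_n\in QH(\xi_n')$, because the conformal boundaries of $W_n$ and $W_n'$ differ by the same $e^{\delta_n}$--quasiconformal deformation as do those of $V_n$ and $V_n'$; so by \refthm{SameTopologyQH}, choosing $\delta_n\to0$ rapidly enough also makes $\xi_n\to\xi_n'$ within distance $1/n$, whence $\xi_n\to\xi$ strongly, giving the strong limit in \refitm{DrilledLimit}.

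Finally I pass to the scooped and doubled manifolds of \refdef{DoubleDouble}. The scooped manifolds $V_n^\circ,W_n^\circ$ are convex submanifolds, so each contains the respective convex core; in particular $\Sigma_n\subset V_n^\circ$ and the drilled locus lies in $W_n^\circ$. Taking the packing $C_n$ sufficiently fine — which Brooks' construction permits, and which only sharpens the estimates above — the scooped-away regions recede toward the conformal boundary, so one may choose basepoints $v_n$ with $(V_n,v_n)\to(Y,y)$ and $\overline B_n(v_n)\subset V_n^\circ$; then $(V_n^\circ,v_n)\to(Y,y)$, completing \refitm{ScoopConverge}, and the identical argument on the drilled side gives $(W_n^\circ,w_n)\to(Z,z)$, completing \refitm{DrilledLimit}. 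Since $V_n$ and $W_n$ carry the same packing $C_n$ and $\Sigma_n$ lies in the interior, away from the blue and red ideal polygons, $W_n^\circ$ is obtained from $V_n^\circ$ by removing $\calN(\Sigma_n)$, and this relation is preserved under doubling first across the totally geodesic blue faces and then the red faces, so $DD(V_n^\circ)-DD(\Sigma_n)\cong DD(W_n^\circ)$ with $DD(\Sigma_n)$ a geodesic link of four isometric copies of $\Sigma_n$; this is \refitm{DrillDoubleCommute}. Since $V_n^\circ$ embeds isometrically in the finite-volume manifold $DD(V_n^\circ)$ and, with $C_n$ fine, $v_n$ is arbitrarily far from the doubling locus, the geometric limit $(V_n^\circ,v_n)\to(Y,y)$ pushes forward to $(DD(V_n^\circ),v_n)\to(Y,y)$, and likewise $(DD(W_n^\circ),w_n)\to(Z,z)$; this is \refitm{FiniteVolumeLimit}.

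The main obstacle I anticipate is the bookkeeping needed to keep every convergence strong and mutually compatible: one must exploit that $V_n'$ and $W_n'$ (and afterwards $V_n$ and $W_n$) carry the same conformal boundary, so that a single Brooks deformation circle-packs both sides while preserving the drilling relation — here the uniqueness in \refthm{ABUniformization} forces the two drillings to remain a filled/drilled pair — and one must run a diagonal argument over both the quasiconformal constant $\delta_n$ and the fineness of the packing so that the strong limits and the containments $\overline B_n(v_n)\subset V_n^\circ$ hold simultaneously. The one genuinely geometric point, that a fine enough circle packing makes the scooped submanifold swallow an arbitrarily large ball about a fixed interior basepoint, is exactly what makes the scooping and doubling invisible in the geometric limit.
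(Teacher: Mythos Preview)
Your approach is essentially the paper's: strong density to approximate $Y$, Brooks to circle-pack, \reflem{DrillHyperbolic}/Ahlfors--Bers to produce the drilled manifolds with matching conformal boundary, and the approximation theorem plus ending lamination to get the strong limit on the drilled side. The bookkeeping you describe (diagonalizing over $\delta_n$ and packing fineness) is exactly what the paper does.

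There is one genuine gap in your argument for \refitm{DrillDoubleCommute}. You write that ``$W_n^\circ$ is obtained from $V_n^\circ$ by removing $\calN(\Sigma_n)$,'' but this is only true up to homeomorphism: $V_n^\circ$ and $W_n^\circ$ are scooped out of \emph{different} hyperbolic manifolds $V_n$ and $W_n$, carrying different metrics, and the scooping construction depends on the Kleinian group acting on $\HH^3$, not just on the combinatorics of $C_n$. What your argument actually yields is that $DD(W_n^\circ)$ and $DD(V_n^\circ) - DD(\Sigma_n)$ are \emph{homeomorphic} finite-volume manifolds (because $W_n \cong V_n - \Sigma_n$ with the same circle-packed conformal boundary, and the doubling pattern is determined by $C_n$). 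To get the isometry that the downstream applications need --- so that the equivariant bilipschitz maps of \refthm{BilipFiniteVolume} restrict to maps between $V_n^\circ$ and $W_n^\circ$ --- you must invoke Mostow--Prasad rigidity (or, equivalently, the rigidity of circle packings, \refthm{CirclePackingRigidity}) at this point, as the paper does. Incidentally, the paper does not rely on packing fineness to get $B_R(v_n)\subset V_n^\circ$; it simply uses that $V_n^\circ \supset CC(V_n)$ together with the strong limit.
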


\begin{figure}
\begin{tikzcd}[column sep=4.0em, row sep=3.0em]
& & & Y \arrow[from=dlll, dashed, "\text{geom limit}"] \arrow[from=dl, dashed] \arrow[from=dr] \arrow[from=dr] \arrow[from=drrr, "\text{strong density}"'] \arrow[dd, "\text{drill } \Sigma"'] & & & \\
  DD(V_n^{\circ}) \arrow[rr, hookleftarrow, "\text{isometric inclusion}"'] \arrow[dd, "\text{drill } DD(\Sigma_n)", "\text{finite vol}"'] & & V_n^{\circ} \arrow[rr, hookrightarrow, "\text{isometric \: inclusion}"', crossing over]  & & V_n  \arrow[rr,-,dashed, "< \delta_n", "\text{circle pack}"']  & & Y_n \arrow[dd,"\text{drill}"] \\
  & & & Z  \arrow[from=dlll, dashed, "\text{geom limit}"] \arrow[from=dl, dashed] \arrow[from=dr] \arrow[from=dr] \arrow[from=drrr, "\text{approximation thm}"'] & & & \\
  DD(W_n^{\circ}) \arrow[rr, hookleftarrow, "\text{isometric inclusion}"'] & & W_n^{\circ} \arrow[rr, hookrightarrow, "\text{isometric inclusion}"'] \arrow[from=uu,"\text{drill $\Sigma_n$}", crossing over] & & W_n \arrow[rr,-,dashed, "< \delta_n", "\text{circle pack}"'] \arrow[from=uu,"\text{drill } \Sigma_n", crossing over] & & Z_n
\end{tikzcd}

\caption{The manifolds appearing in the statement and proof of \refthm{DrillingApproximation}. Hooked horizontal arrows represent isometric inclusions. Dashed horizontal lines represent a small quasiconformal deformation that produces a circle-packed manifold. Vertical arrows represent drilling out an embedded link. Solid diagonal arrows represent strong limits. Dashed diagonal arrows represent geometric limits only.}
\label{Fig:DrillingApproxOutline}
\end{figure}

Observe  that \refthm{FiniteVolApproxCor}, stated in the Introduction, is an immediate corollary of the above theorem.

See \reffig{DrillingApproxOutline} for a commutative diagram encapsulating the main objects in the statement of \refthm{DrillingApproximation}, as well as in its proof. We will begin with the top-right of the diagram, with the strong limit $Y_n \to Y$, and then construct the approximating manifolds $V_n$ and $V_n^\circ$ by proceeding right to left. We will then drill out an appropriate copy of $\Sigma$ from each of these manifolds and construct the limiting sequences in the bottom row of the diagram.

\begin{proof}[Proof of \refthm{DrillingApproximation}]
Let $\Gamma = \rho(\pi_1(M))$ be the Kleinian group associated to $Y$. Let $\{ O_n : n \in \NN \}$ be an open neighborhood system about $[\rho] \in AH(M,P)$.
By strong density, \refthm{Density}, there exists a strongly convergent sequence $\sigma_n \to \rho$, such that $[\sigma_n] \in O_n$. Let $\Gamma_n = \sigma_n(\pi_1(M))$, and let $Y_n = \HH^3 / \Gamma_n$ be the associated
geometrically finite hyperbolic 3-manifolds.
After passing to a subsequence, we can ensure that $d_{\rm Chaub}(\Gamma, \Gamma_n) < 2^{-n}$.
 By \refthm{Density}, the end invariants of $Y_n$ form a filling sequence, converging to the end invariants of $Y$.  
Fix an arbitrary basepoint $y \in Y$.

For each $n$, pick a constant $\delta_n > 0$ such that $\lim \delta_n = 0$. (In subsequent paragraphs, we will impose additional constraints, all of which hold when $\delta_n$ is sufficiently small.)
For each $n$, Brooks' \refthm{Brooks} says that there is a geometrically finite Kleinian group $\Gamma_{n, \delta_n}$ representing an $e^{\delta_n}$--quasiconformal deformation  of $\Gamma_n$, such that the conformal boundary of $\Gamma_{n, \delta_n}$ admits a circle packing. 
Let $\rho_n \from \pi_1(M) \to \PSL(2,\CC)$ be the associated representation. 
The bound of $e^{\delta_n}$ on the quasiconformal deformation means that the distance in  $\calT(\bdy_0 M)$ between the conformal end invariants of $\Gamma_n$ and $\Gamma_{n, \delta_n}$ is at most $\delta_n$. Here, we are measuring distances in the Teichm\"uller metric on the Teichm\"uller space of the (possibly disconnected) surface $\bdy_0 M = \bdy M - P$.
We choose $\delta_n$ small enough that 
\begin{equation}\label{Eqn:GammaChaub}
d_{\rm Chaub}(\Gamma_{n, \delta_n}, \Gamma_n) < 2^{-n}. 
\end{equation}
We also choose $\delta_n$ small enough to ensure $[\rho_n] \in O_n$; this is possible by \refthm{SameTopologyQH}. The restrictions on $\delta_n$ ensure that $\rho_n \to \rho$ is a strong limit.
Let $V_n = \HH^3 / \Gamma_{n, \delta_n}$ be the quotient manifold.

Next, we construct three sets of hyperbolic manifolds with pared manifold $(M - \calN(\Sigma), P \cup \bdy\calN(\Sigma))$:
\begin{itemize}
\item A hyperbolic manifold $Z$ whose end invariants agree with those of $Y$,
\item A hyperbolic manifold $Z_n$ whose end invariants agree with those of $Y_n$,
\item A hyperbolic manifold $W_n$ whose end invariants agree with those of $V_n$.
\end{itemize}
The pared manifold $(M - \calN(\Sigma), P \cup \bdy\calN(\Sigma))$ and the hyperbolic structure $Z$ are both 
 guaranteed to exist by \reflem{DrillHyperbolic}. Let $\xi \from \pi_1(M - \calN(\Sigma)) \to PSL(2,\CC)$ be the discrete faithful representation corresponding to $Z$, with image $\Delta = \xi ( \pi_1(M - \calN(\Sigma)))$. Let $\xi' \in AH(M - \calN(\Sigma), P \cup \bdy\calN(\Sigma))$ be a geometrically finite, minimally parabolic representation; this exists by \refthm{Hyperbolization}. 
 Thus, by \refthm{ABUniformization}, there exist representations $\tau_n \in QH(\xi')$ with image group $\Delta_n$ and quotient manifold $Z_n$, as well as representations $\xi_n \in QH(\xi')$ with image group $\Delta_{n, \delta_n}$ and quotient manifold $W_n$.

Let $Q_0 = AH(M - \calN(\Sigma), P \cup \bdy\calN(\Sigma))$. Let $\{ Q_n : n \in \NN \}$ be a nested system of open neighborhoods about  $[\xi] \in AH(M - \calN(\Sigma), P \cup \bdy\calN(\Sigma))$. Now, define a sequence $m(n)$ as follows. If $[\tau_n] = [\xi]$, let $m(n)=n$; otherwise, let $m(n)$ be the largest integer $m$ such that $[\tau_n] \in Q_{m}$. In either case, we have $[\tau_n] \in Q_{m(n)}$. We will check below that $[\tau_n] \to [\xi]$, which implies $m(n) \to \infty$.

Observe that $\bdy_0(M - \calN(\Sigma)) = \bdy_0 M$, because $\bdy \calN(\Sigma)$ is part of the parabolic locus. Since the (geometrically finite) end invariants of $Z_n$ and $W_n$ agree with those of $Y_n$ and $V_n$, respectively,  the distance in $\calT(\bdy_0(M - \calN(\Sigma)))$ between the conformal end invariants of $Z_n$ and $W_n$ is at most $\delta_n$. 
Recall that by the Ahfors--Bers Uniformization \refthm{ABUniformization},  $\calT(\bdy_0(M - \calN(\Sigma)))$ provides a local parametrization of 
 the interior $QH(\xi') \subset AH(M - \calN(\Sigma), P \cup \bdy\calN(\Sigma))$, and by \refthm{SameTopologyQH} the algebraic topology on $QH(\xi')$ agrees with the Chabauty topology. Thus a sufficiently small choice of $\delta_n$ ensures that $\xi_n$ has the following properties. We choose $\delta_n$ so that 
\begin{equation}\label{Eqn:DeltaChaub}
d_{\rm Chaub}(\Delta_{n, \delta_n}, \Delta_n) < 2^{-n}.
\end{equation}
and so that $[\xi_n] \in Q_{m(n)}$ for each $n$.
This completes the list of requirements that $\delta_n$ needs to satisfy.

Now, we proceed to verify that the sequences $V_n$ and $W_n$ have all of the required properties. Conclusion \refitm{CirclePack} holds by construction, since we have chosen $\Gamma_{n, \delta_n}$ so that
the conformal boundary of $V_n = \HH^n / \Gamma_{n, \delta_n}$ admits a circle packing. We call this circle packing $C_n$.

Equation~\refeqn{GammaChaub} implies that $\Gamma_{n, \delta_n} \to \Gamma$ in the Chabauty topology. Thus there is a choice of basepoints $v_n \in V_n$ giving a geometric limit $(V_n, v_n)  \to (Y, y)$. In particular, for any fixed $R$ and large $n$, there is an almost-isometric embedding $f_n \from (B_R(y), y) \hookrightarrow (V_n, v_n)$. Since $R$ is fixed, a sufficiently large choice of $n$ ensures that the image $f_n(B_R(y))$ will be contained in the scooped manifold $V_n^{\circ}$. Thus we also have a geometric limit $(V_n^\circ, v_n) \to (Y, y)$. Since $\rho_n \to \rho$ is a strong limit, \refprop{GeodesicsConverge} says that the homeomorphic image of $\Sigma$ is a link in $V_n$, isotopic to a geodesic link $\Sigma_n$ when $n \gg 0$. Observe that $\Sigma_n \subset CC(V_n) \subset V_n^\circ$, hence~\refitm{ScoopConverge} holds.

Conclusion~\refitm{ScoopedDrill} holds by construction, because $W_n$ has pared manifold $(M - \calN(\Sigma), P \cup \bdy\calN(\Sigma))$ and conformal boundary identical to that of $V_n$. Thus $C_n$ is also a circle packing on the conformal boundary of $W_n$. Observe that $\xi_n \in QH(\xi')$ is geometrically finite and minimally parabolic by construction. The homeomorphism $W_n \cong Y - \Sigma \cong V_n - \Sigma_n$ holds by \refitm{ScoopConverge}.

Next, we turn to conclusion~\refitm{DrilledLimit}. Recall that the end invariants of $\sigma_n$ form a filling sequence, converging to the end invariants $(X_1, \ldots, X_r, \lambda_{r+1}, \dots, \lambda_s)$ associated to $\rho$ and to $Y$. Thus the end invariants of $\tau_n$ (which are the same as those of $\sigma_n$) also form a filling sequence, converging to the end invariants $(X_1, \ldots, X_r, \lambda_{r+1}, \dots, \lambda_s)$ of $\xi$. Now, the approximation \refthm{NamaziSouto} combined with the ending lamination \refthm{ELC} says that (after passing to a subsequence) 
we have a strong limit $\tau_n \to \xi$. 
Having passed to this subsequence, we have $[\tau_n] \in Q_{m(n)}$ where $m(n) \to \infty$, as well as a Chabauty limit $\Delta_n \to \Delta$.

In \refeqn{DeltaChaub}, we chose $\delta_n$ so that $d_{\rm Chaub}(\Delta_{n, \delta_n}, \Delta_n) < 2^{-n}$. Thus 
 $\Delta_{n, \delta_n} \to \Delta$. Similarly, $\delta_n$ was chosen so that $[\xi_n] \in Q_{m(n)}$ for the same sequence $m(n) \to \infty$. Since $Q_{m(n)}$ is a nested system of open neighborhoods of $[\xi]$, it follows that $[\xi_n] \to [\xi]$ in the algebraic topology. Thus $\xi_n \to \xi$ is a strong limit.

Since the geometric topology is the Chabauty topology, for every $z \in Z$ there is a choice of basepoints $w_n$ such that $(W_n, w_n) \to (Z, z)$. Furthermore, as above, the almost-isometric image of $B_R(z)$ will be contained in the scooped manifold $W_n^\circ$ for large $n$, hence we also have a geometric limit $(W_n^\circ, w_n) \to (Z, z)$, hence \refitm{DrilledLimit} holds.

After double-doubling the scooped manifold $V_n^\circ$, as in \refdef{DoubleDouble}, we obtain a finite-volume hyperbolic manifold $DD(V_n^\circ)$. This finite-volume manifold contains a link $DD(\Sigma_n)$, consisting of four isometric copies of $\Sigma_n$.  Recall that by \refitm{ScoopedDrill}, $W_n$ is homeomorphic to $V_n - \Sigma_n$, and has identical conformal boundary admitting the same circle packing $C_n$. Applying \refdef{DoubleDouble} again, we may double $W_n^\circ$ twice, first in the blue faces and then the red, to obtain a finite-volume hyperbolic manifold homeomorphic to $DD(V_n^{\circ} - \Sigma_n)$. Thus, by Mostow--Prasad rigidity (or by the rigidity of circle packings, \refthm{CirclePackingRigidity}), we have isometries 
\[
DD(V_n^{\circ}) - DD(\Sigma_n) =  DD(V_n^{\circ} - \Sigma_n) = DD(W_n^\circ),
\]
establishing conclusion \refitm{DrillDoubleCommute}.

Finally, conclusion \refitm{FiniteVolumeLimit} is a corollary of \refitm{ScoopConverge} and \refitm{DrilledLimit}, because for any $R> 0$, a metric $R$--ball about $v_n \in DD(V_n^\circ)$ will in fact be contained in the original copy of $V_n^\circ$ for $n \gg 0$. A similar statement holds in $DD(W_n^\circ)$.
\end{proof}

\begin{remark}\label{Rem:GFDrillingApproximation}
If the hyperbolic manifold $Y$ is geometrically finite, the preceding proof becomes considerably more lightweight. In this special case, one can take constant sequences $Y_n = Y$ and $Z_n = Z$. Consequently, the strong density \refthm{Density} and the approximation \refthm{NamaziSouto} become unnecessary, as does the ending lamination theorem. Thurston's hyperbolization, \refthm{Hyperbolization}, becomes unnecessary because the representation $\xi$ corresponding to $Z$ is already geometrically finite and minimally parabolic. Finally, the realization \refthm{Realization}, which is used inside \reflem{DrillHyperbolic} to establish a hyperbolic structure on $(M - \calN(\Sigma), P \cup \bdy\calN(\Sigma))$ with the correct end invariants, can be replaced with the Ahlfors--Bers \refthm{ABUniformization}. Thus, in the geometrically finite case, the only tools required in the proof are \refthm{ABUniformization} and Brooks' \refthm{Brooks}.
\end{remark}

The final tool that we need to prove \refthm{EffectiveDrillingTame} is a finite-volume analogue of the same result. The following is a restatement of~\cite[Theorem~1.2]{FPS:EffectiveBilipschitz}.

\begin{theorem}[Effective drilling in finite volume, \cite{FPS:EffectiveBilipschitz}]\label{Thm:BilipFiniteVolume}
Fix  $0 < \epsilon \leq \log 3$ and $J>1$. Let $V$ be a finite-volume hyperbolic 3-manifold and $\Sigma$ a geodesic link in $V$ whose total length $\ell$ satisfies
\begin{equation*}
\ell \leq \min\left\{ \frac{\epsilon^5}{6771 \cosh^5(0.6 \epsilon + 0.1475)}, \, \frac{\epsilon^{5/2}\log(J)}{11.35} \right\}.
\end{equation*}
Then $V-\Sigma$ admits a complete hyperbolic metric $W$.
There are canonical $J$--bilipschitz inclusions
\[
\varphi \from V^{\geq \epsilon} \hookrightarrow W^{\geq \epsilon/1.2}, 
\qquad
\psi \from W^{\geq \epsilon} \hookrightarrow V^{\geq \epsilon/1.2}.
\]
The maps $\varphi$ and $\psi$ are equivariant with respect to the symmetry group of the pair $(V, \Sigma)$. 
\end{theorem}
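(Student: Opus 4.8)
Since Theorem~\ref{Thm:BilipFiniteVolume} is a restatement of \cite[Theorem~1.2]{FPS:EffectiveBilipschitz}, the plan is to recall the structure of the argument given there. The strategy is to interpolate between $V$ and $W \cong V - \Sigma$ through a smooth one-parameter family of hyperbolic cone manifolds, following the cone deformation theory of Hodgson--Kerckhoff \cite{hk:ConeRigidity, hk:univ, hk:shape} as extended by Bromberg \cite{bromberg:conemflds}. Write $M_t$ for the hyperbolic cone structure on the underlying manifold $V$ with cone angle $t$ along $\Sigma$, normalized so that $M_{2\pi} = V$ is the complete smooth metric; as $t \to 0$ the cone locus opens into a rank--$2$ cusp and $M_t$ limits to the complete hyperbolic metric $W$ on $V - \Sigma$. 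The first step is therefore to show that this family is defined for all $t \in [0, 2\pi]$: the only way the deformation can fail to continue is for the maximal embedded tube about the singular locus to collapse, so everything reduces to a tube radius estimate.

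The key step, and the main obstacle, is a uniform lower bound on the radius $R(t)$ of the maximal embedded tube about $\Sigma$ in $M_t$, valid throughout $t \in [0, 2\pi]$. Here I would combine a Hodgson--Kerckhoff-type packing/area inequality, which bounds $R(t)$ below in terms of the cone-manifold length $\len_{M_t}(\Sigma)$, with an \emph{a priori} continuity argument showing that $\len_{M_t}(\Sigma)$ cannot grow far beyond $\ell = \len_V(\Sigma)$ while the tube remains large. The explicit hypothesis $\ell \leq \epsilon^5 / \big(6771\,\cosh^5(0.6\epsilon + 0.1475)\big)$ is calibrated precisely so that this bootstrap closes and produces a tube whose radius forces the $\epsilon$--thick part of each $M_t$ to lie in the tube's exterior; the $\cosh^5(0.6\epsilon + 0.1475)$ factor encodes the conversion between the tube radius and the distance from $\Sigma$ at which the injectivity radius reaches $\epsilon/2$.

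With the tube controlled, the next step is to bound the infinitesimal change of the metric. The derivative $\tfrac{d}{dt}g_t$ is represented by a harmonic strain field $\eta_t$ on the tube exterior whose pointwise norm decays exponentially in the distance from $\Sigma$; a Cauchy-type estimate converts an $L^2$ bound for $\eta_t$ near the tube boundary into a pointwise bound on the $\epsilon$--thick part, proportional to $\len_{M_t}(\Sigma)$. Integrating $\|\eta_t\|$ over $t \in [0,2\pi]$ and exponentiating yields a pointwise bilipschitz constant for the identity map between $V^{\geq \epsilon}$ and $W^{\geq \epsilon'}$; this is the origin of the second hypothesis $\ell \leq \epsilon^{5/2}\log(J)/11.35$, which bounds exactly this integral. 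The loss from $\epsilon$ to $\epsilon/1.2$ in the target reflects the fact that, under the given numerical bounds, the resulting $J$--bilipschitz map distorts injectivity radius by at most a factor of $1.2$, so points of $V^{\geq \epsilon}$ land in $W^{\geq \epsilon/1.2}$; the symmetric statement for $\psi$ is obtained by running the deformation in reverse. Equivariance of $\varphi$ and $\psi$ under the symmetry group of $(V, \Sigma)$ is then automatic: the cone deformation is the unique solution of a geometric variational problem, so it is preserved by any isometry of $V$ fixing $\Sigma$ setwise, and the harmonic strain fields, hence the integrated maps, inherit this symmetry.

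In summary, the hard part is the quantitative tube radius estimate together with the careful tracking of constants through (i) the Hodgson--Kerckhoff lower bound for $R(t)$ in terms of $\len_{M_t}(\Sigma)$, (ii) the exponential decay and $L^2$-to-pointwise conversion for $\eta_t$, and (iii) the bootstrap keeping $\len_{M_t}(\Sigma)$ small for all $t \in [0,2\pi]$. All three are carried out explicitly in \cite{FPS:EffectiveBilipschitz}, with a non-technical overview in \cite[Section~1.3]{FPS:EffectiveBilipschitz}; the resulting constants are not expected to be sharp but suffice for the applications here.
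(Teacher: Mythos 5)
The paper does not prove this theorem here; it is quoted verbatim from \cite[Theorem~1.2]{FPS:EffectiveBilipschitz}, and your proposal correctly treats it that way, giving an accurate high-level sketch of the cone-deformation argument (tube-radius bootstrap, harmonic strain field estimates, integration of the infinitesimal distortion, and equivariance) that is carried out in that reference. This matches the paper's own description of where the constants come from, so there is nothing further to check at the level of this paper.
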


We now have all the necessary tools to bootstrap from \refthm{BilipFiniteVolume} to \refthm{EffectiveDrillingTame}. The proof involves chasing the left half of the diagram in \reffig{DrillingApproxOutline}. Starting from $Y$, we will consider a circle-packed approximating manifold $V_n$, the finite-volume manifold $DD(V_n^\circ)$, its drilling $DD(W_n^\circ)$, and the scooped submanifold $W_n^\circ$ that approximates $Z$, the hyperbolic structure on $Y - \Sigma$.

\begin{proof}[Proof of \refthm{EffectiveDrillingTame}]
If $\vol(Y) < \infty$, the desired statement already appears in \refthm{BilipFiniteVolume}, substituting $V = Y$ and $W=Z$. For the rest of the proof, we assume $\vol(Y) = \infty$. 

Let $V_n$ and $W_n$ be the sequences of geometrically finite manifolds constructed in \refthm{DrillingApproximation}. In particular, every $V_n$ is homeomorphic to $Y$ and every $W_n$ is homeomorphic to $Z$. Furthermore,
the conformal boundaries of each $V_n$ and each $W_n$ admit the same circle packing $C_n$. By \refthm{DrillingApproximation}, we have strong limits $\rho_n \to \rho$ (corresponding to $V_n \to Y$) and $\xi_n \to \xi$ (corresponding to $W_n \to Z$).

Recall that by \refthm{MargulisEstimate}, $\epsilon < \log 3$ is a Margulis number for every infinite-volume hyperbolic $3$--manifold, hence $Y^{< \epsilon}$ is a disjoint union of tubes and horocusps. We let $T^{< \epsilon}$ denote a component of $\bdy Y^{< \epsilon}$; this is either a horotorus about a cusp or an equidistant torus about a short geodesic. We can choose a number $\eta \in (\epsilon, 2\epsilon)$ such that $\eta$ is still a Margulis number for $Y$.

Fix a basepoint $y \in Y$ so that $2 \injrad(y) = \eta \in (\epsilon, 2\epsilon)$, and furthermore $y$ lies on an embedded, $\eta$--thick equidistant torus $T^\eta(\sigma)$ about the first component $\sigma \subset \Sigma$. Such a choice of $y \in Y^{> \epsilon}$ is possible because $\eta$ is a Margulis number for $Y$, hence the thick part $Y^{> \eta}$ is non-empty. Similarly, fix a basepoint $z \in Z$ so that $2 \injrad(z) = \epsilon$, and furthermore $z$ lies on an embedded $\epsilon$--thick horotorus $T^\epsilon(\sigma)$ that bounds an embedded horocusp about the same component $\sigma \subset \Sigma$. Again, such a choice is possible because $\epsilon$ is a Margulis number for $Z$.

Now, \refthm{DrillingApproximation} says that for $n \gg 0$, there exist choices of basepoints $v_n \in V_n^\circ$ and $w_n \in W_n^\circ$, such that $(V_n^\circ, v_n) \to (Y, y)$ and $(W_n^\circ, w_n) \to (Z, z)$.
Furthermore, the homeomorphic image of $\Sigma$ is a link in $V_n$, isotopic to a geodesic link $\Sigma_n$ when $n \gg 0$. Note that $\Sigma_n \subset CC(V_n) \subset V_n^\circ$.

After doubling $V_n^{\circ}$ twice to obtain the finite-volume manifold $DD(V_n^{\circ})$, as in \refdef{DoubleDouble}, we also obtain a geodesic link $DD(\Sigma_n) \subset DD(V_n^\circ)$ consisting of four isometric copies of $\Sigma_n$. 
By \refprop{GeodesicsConverge}, the strong limit $\rho_n \to \rho$ means that $\Sigma_n \to \Sigma$. Thus for large $n$, the length of $\Sigma_n$ is arbitrarily close to $\len(\Sigma) = \ell$. In particular, we have
\begin{equation}\label{Eqn:FactorOf4Length}
\len(DD(\Sigma_n)) = 4 \len(\Sigma_n) \leq  \min\left\{ \frac{\epsilon^5}{6771 \cosh^5(0.6 \epsilon + 0.1475)}, \, \frac{\epsilon^{5/2}\log(J)}{11.35} \right\}. 
\end{equation}
Thus we may apply the finite-volume effective drilling result, \refthm{BilipFiniteVolume}, to $DD(V_n^{\circ})$ and  $DD(\Sigma_n)$. 
For the unique hyperbolic metric on $DD(V_n^{\circ}) \!-\! DD(\Sigma_n)$, \refthm{BilipFiniteVolume} gives $J$--bilipschitz inclusions 
\begin{align*} 
\varphi_n &\from DD(V_n^{\circ})^{\geq \epsilon} \longrightarrow \big( DD(V_n^{\circ}) \!-\! DD(\Sigma_n) \big)^{\geq \epsilon /1.2}, \\
\psi_n &\from \big( DD(V_n^{\circ}) \!-\! DD(\Sigma_n) \big)^{\geq \epsilon } \longrightarrow DD(V_n^{\circ})^{\geq \epsilon /1.2} .
\end{align*}
Furthermore, $\varphi_n$ and $\psi_n$ respect the symmetries of the pair $(DD(V_n^{\circ}), DD(\Sigma_n))$.

The pair $(DD(V_n^{\circ}), DD(\Sigma_n))$ has a  $\ZZ_2 \times \ZZ_2$ group of symmetries, where the generator of the first $\ZZ_2$ acts by reflection in the blue faces of $V_n^{\circ}$ and the generator of the second $\ZZ_2$ acts by reflection in the red faces. This action restricts to a $\ZZ_2 \times \ZZ_2$ group of symmetries of $DD(V_n^{\circ})-DD(\Sigma_n) = DD(V_n^{\circ} - \Sigma_n)$, with a fundamental domain of the form $V_n^{\circ}-\Sigma_n$. Since $\varphi_n$ and $\psi_n$ respect these symmetries, we obtain $J$--bilipschitz inclusions
\[
\varphi_n \from (V_n^{\circ})^{\geq \epsilon} \to ( V_n^{\circ}  \!-\! \Sigma_n )^{\geq \epsilon /1.2}, \qquad
\psi_n \from ( V_n^{\circ} \!-\! \Sigma_n )^{\geq \epsilon } \to (V_n^{\circ})^{\geq \epsilon /1.2} ,
\]
isotopic to the topological drilling of $\Sigma_n$. By \refthm{DrillingApproximation}, we have $V_n^{\circ}-\Sigma_n \cong W_n^\circ$.

We can now construct the $J$--bilipschitz inclusions $\varphi \from Y^{\geq \epsilon} \hookrightarrow Z^{\geq \epsilon/1.2}$ and
$\psi \from Z^{\geq \epsilon} \hookrightarrow Y^{\geq \epsilon/1.2}$ using \refthm{ConstructBilip}. Most of the hypotheses of that theorem have already been verified. We have geometrically convergent sequences $(V_n^\circ, v_n) \to (Y,y)$ and $(W_n, w_n) \to (Z, z)$. We have $y \in Y^{> \epsilon}$ as required. Since injectivity radii converge in a geometric limit \cite[Lemma 3.2.6]{ceg:notes-on-notes}, it follows that  $v_n \in (V_n^\circ)^{\geq \epsilon}$ for large $n$. For large $n$, we have a $J$--bilipschitz inclusion $\varphi_n \from (V_n^{\circ})^{\geq \epsilon} \to ( W_n^{\circ})^{\geq \epsilon /1.2}$. To apply \refthm{ConstructBilip}, it remains to check that $d(\varphi_n(v_n), w_n)$ is uniformly bounded.

This can be checked as follows. By construction, the basepoint $y \in Y$ lies on an equidistant torus $T^\eta$ about $\sigma \subset \Sigma$, where $\epsilon < 2\injrad(y) < 2\epsilon$. By \cite[Lemma 3.2.6]{ceg:notes-on-notes}, the same two-sided bound holds for $\injrad(v_n)$ for large $n$. By \cite[Theorem 9.30]{FPS:EffectiveBilipschitz},  of which \refthm{BilipFiniteVolume} is a corollary, the injectivity radius of $v_n \in (V_n^\circ)^{\geq \epsilon}$ changes by a factor of at most $1.2$ under $\varphi_n$. Thus
\[
\epsilon/1.2 < 2 \injrad(\varphi_n(v_n)) = \mu_n < 2.4 \epsilon,
\]
and furthermore $\varphi_n(v_n)$ still lies on a $\mu_n$--thin horospherical torus about the same component $\sigma \subset \Sigma$. Meanwhile, observe that $2\injrad(w_n)$ is nearly equal to $\epsilon$, and $w_n$ also lies on a horospherical torus about $\sigma \subset \Sigma$. The diameter of each of those tori is uniformly bounded, by the geometric limit $(W_n, w_n) \to (Z, z)$, while the distance between the $\mu_n$--thin and $\epsilon$--thin tori is uniformly bounded by \cite[Proposition 1.4]{FPS:Tubes}. Thus $d(\varphi_n(v_n), w_n)$ is uniformly bounded. Hence  \refthm{ConstructBilip} gives a $J$--bilipschitz inclusion $\varphi \from Y^{\geq \epsilon} \hookrightarrow Z^{\geq \epsilon/1.2}$.

The reverse inclusion $\psi \from Z^{\geq \epsilon} \hookrightarrow Y^{\geq \epsilon/1.2}$ is constructed in exactly the same way, tracing points backwards to check the hypotheses of  \refthm{ConstructBilip}. The points $v_n$ and $\psi_n(w_n)$ lie on equidistant tori in a tube about $\sigma \subset \Sigma$, hence $d(v_n, \psi_n(w_n))$ is uniformly bounded by \cite[Proposition 5.7]{FPS:Tubes}.
\end{proof}

\subsection{Extended and strengthened results}
The above proof method can be used to extend Brock and Bromberg's \refthm{BBDrillingThm} to all tame hyperbolic manifolds without rank-1 cusps. 

\begin{theorem}\label{Thm:GeoInfiniteBB}
Fix $J > 1$ and $\epsilon > 0$, where $\epsilon$ is smaller than the Margulis constant $\epsilon_3$.  Then there is a number $\ell_0 = \ell_0(\epsilon, J) > 0$ such that the following holds for every tame hyperbolic 3-manifold $Y$ without rank-1 cusps. 
Suppose that $\Sigma \subset Y$ is a geodesic link, whose total length is less than $\ell_0$. Then $Y - \Sigma$ admits a hyperbolic structure $Z$ with the same end invariants as those of $Y$.
Furthermore, the inclusion
\[ 
\iota \from Z \hookrightarrow Y
\]
restricts to a $J$--bilipschitz diffeomorphism on the complement of $\epsilon$--thin tubes about $\Sigma$. 
\end{theorem}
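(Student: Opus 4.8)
The plan is to run the same bootstrapping scheme as in the proof of \refthm{EffectiveDrillingTame}, but to feed Brock and Bromberg's \refthm{BBDrillingThm} directly into the geometrically finite approximates, rather than feeding the finite-volume effective drilling theorem into their double--doubles. Let $\ell_0^{\mathrm{BB}}(\epsilon, J)$ be the constant supplied by \refthm{BBDrillingThm}, and set $\ell_0(\epsilon, J) = \min\{\ell_0^{\mathrm{BB}}(\epsilon, J),\, \log 3\}$. The first step is a trivial reduction: if $Y$ is geometrically finite --- in particular if $\vol(Y) < \infty$ --- then, since $Y$ has no rank-1 cusps, \refthm{BBDrillingThm} applies to $(Y, \Sigma)$ directly and there is nothing more to prove. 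So from now on I assume $Y$ is geometrically infinite, hence of infinite volume, so that \refthm{DrillingApproximation} is available.

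Next I would apply \refthm{DrillingApproximation} to $Y$ and $\Sigma$, whose components are shorter than $\log 3$ by the choice of $\ell_0$. This yields: the hyperbolic structure $Z$ on $Y - \Sigma$ with the same end invariants as $Y$ (via \reflem{DrillHyperbolic}); geometrically finite, minimally parabolic manifolds $V_n = \HH^3/\rho_n(\pi_1 M)$ with $\rho_n \to \rho$ a strong limit, so that $(V_n, v_n) \to (Y, y)$ geometrically; geodesic links $\Sigma_n \subset V_n$ carried to $\Sigma$ by a homeomorphism $V_n \to Y$, with $\Sigma_n \to \Sigma$ in the geometric limit; and geometrically finite, minimally parabolic manifolds $W_n$ homeomorphic to $V_n - \Sigma_n$, with end invariants equal to those of $V_n$ and with $\xi_n \to \xi$ a strong limit, so that $(W_n, w_n) \to (Z, z)$ geometrically. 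The circle packings decorating $V_n$ and $W_n$ in \refthm{DrillingApproximation} are not needed here. Finally, since $Y$ has no rank-1 cusps the parabolic locus $P$ contains no annuli, and because each $\rho_n$ is minimally parabolic it follows that each $V_n$ has no rank-1 cusps either.

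Now fix $n \gg 0$. Since $\Sigma_n \to \Sigma$, translation lengths converge, so $\len(\Sigma_n) \to \len(\Sigma) < \ell_0(\epsilon,J) \leq \ell_0^{\mathrm{BB}}(\epsilon,J)$; hence for large $n$ the total length of $\Sigma_n$ lies below the Brock--Bromberg threshold $\ell_0^{\mathrm{BB}}(\epsilon,J)$. Applying \refthm{BBDrillingThm} to the geometrically finite, rank-1-cusp-free manifold $V_n$ and the geodesic link $\Sigma_n$ produces a hyperbolic structure on $V_n - \Sigma_n$ with the same end invariants as $V_n$; by uniqueness of geometrically finite, minimally parabolic structures with prescribed pared manifold and conformal boundary (\refthm{ABUniformization}), that structure is isometric to $W_n$. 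Moreover \refthm{BBDrillingThm} gives that the inclusion $\iota_n \from W_n \hookrightarrow V_n$ restricts to a $J$--bilipschitz diffeomorphism
\[
\iota_n \from W_n - (W_n)^{<\epsilon}_{\Sigma} \longrightarrow V_n - (V_n)^{<\epsilon}_{\Sigma},
\]
where $(V_n)^{<\epsilon}_{\Sigma}$ is the union of the $\epsilon$--thin Margulis tubes about $\Sigma_n$ and $(W_n)^{<\epsilon}_{\Sigma}$ is the union of the corresponding $\epsilon$--thin horocusps; here the hypothesis $\epsilon < \epsilon_3$ ensures that $\epsilon$ is a Margulis number for every hyperbolic $3$--manifold involved.

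The last step is to pass to the limit using \refrem{ConstructBilipThin}, with the geometric limits $(W_n, w_n) \to (Z, z)$ and $(V_n, v_n) \to (Y, y)$, the designated thin parts $Z^{<\epsilon}_{\Sigma} \subset Z$ (the horocusps arising from drilling $\Sigma$) and $Y^{<\epsilon}_{\Sigma} \subset Y$ (the $\epsilon$--thin tubes about $\Sigma$), and the maps $\iota_n$. The only hypothesis requiring attention is that $d(\iota_n(w_n), v_n)$ be uniformly bounded; exactly as in the proof of \refthm{EffectiveDrillingTame}, I would arrange this by choosing $z$ (hence $w_n$) and $y$ (hence $v_n$) to lie on corresponding horospherical and equidistant tori about a single fixed component $\sigma \subset \Sigma$, and then bound the basepoint displacement by the diameter of a thin-part boundary torus together with the tube estimates of \cite{FPS:Tubes}. \refrem{ConstructBilipThin} then produces a $J$--bilipschitz inclusion $Z - Z^{<\epsilon}_{\Sigma} \hookrightarrow Y - Y^{<\epsilon}_{\Sigma}$, which lies in the homotopy class of the topological inclusion $\iota \from Z \hookrightarrow Y$ because every $\iota_n$ is that inclusion under the fixed identifications of fundamental groups; a standard isotopy argument (as used for the drilling maps in the proof of \refthm{EffectiveDrillingTame}) then realizes it as the restriction of $\iota$ up to isotopy, giving the asserted conclusion. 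I do not expect any serious obstacle: the scheme is essentially a lighter-weight repackaging of the proof of \refthm{EffectiveDrillingTame}, and the substantive ingredients --- strong density, the approximation theorem, the ending lamination theorem, and Brock--Bromberg's drilling theorem --- are invoked as black boxes. The one point demanding genuine care is the basepoint bookkeeping needed to apply \refrem{ConstructBilipThin}, and this is entirely parallel to the finite-volume argument.
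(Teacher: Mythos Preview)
Your proposal is correct and follows essentially the same approach as the paper: reduce to the geometrically infinite case, invoke \refthm{DrillingApproximation} to get geometrically finite approximates $V_n$ (with no rank-1 cusps, since $P$ has no annuli and $\rho_n$ is minimally parabolic) and $W_n$, apply \refthm{BBDrillingThm} to each $(V_n,\Sigma_n)$, and pass to the limit via \refrem{ConstructBilipThin} with basepoints chosen on equidistant/horospherical tori about a fixed component of $\Sigma$. Your write-up is in fact slightly more careful than the paper's in two places --- explaining why $V_n$ has no rank-1 cusps, and invoking \refthm{ABUniformization} to identify the drilled structure produced by \refthm{BBDrillingThm} with $W_n$ --- and the reversed direction of the map is immaterial since the conclusion is a bilipschitz diffeomorphism.
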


In contrast with \refthm{EffectiveDrillingTame}, this result is not effective because the function $\ell_0(\epsilon, J)$ is not explicitly given. On the other hand, this result has the advantage that $J$--bilipschitz control extends into the thin parts of $Y$ and $Z$ that do not correspond to components of $\Sigma$.

For the following proof, it is helpful to consult the commutative square in \reffig{DrillingApproxOutline} with corners at $Y$, $V_n$, $W_n$, and $Z$.

\begin{proof}[Proof of \refthm{GeoInfiniteBB}]
Fix $\epsilon < \epsilon_3$, and recall from \refsec{TopologyGeometry} that $\epsilon_3 < 1$. 
One of the first constraints in Brock and Bromberg's proof of \refthm{BBDrillingThm}  is that $\ell_0 < \epsilon$. 

Let $Y$ be a tame hyperbolic 3-manifold without rank-1 cusps. We may assume that $Y$ is geometrically infinite, as otherwise \refthm{BBDrillingThm} already gives the desired conclusion. Let  $\Sigma \subset Y$  be a geodesic link with $\len(\Sigma) < \ell_0 < \epsilon$. Then \reflem{DrillHyperbolic} implies that $Y - \Sigma$ admits a hyperbolic structure $Z$ with the same invariants. 
\refthm{DrillingApproximation} constructs (type-preserving) geometric limits $(V_n, v_n) \to (Y, y)$ and $(W_n, w_n) \to (Z, z)$, where $W_n$ is obtained by drilling a geodesic link $\Sigma_n$ from $V_n$, and where $\Sigma_n \to \Sigma$ in the geometric limit.


We introduce the following notation. Let $Y_{\Sigma}^{< \epsilon}$ be the union of the $\epsilon$--thin Margulis tubes about the components of $\Sigma$. This is a disjoint union, because $\len(\Sigma) < \epsilon < \epsilon_3$. Let $Z_{\Sigma}^{< \epsilon}$ be the union of $\epsilon$--thin cusps in $Z$ corresponding to components of $\Sigma$. Define $(V_n)_{\Sigma}^{< \epsilon}$ and $(W_n)_{\Sigma}^{< \epsilon}$ similarly.

Since $V_n$ is geometrically finite, and $\len(\Sigma_n) < \ell_0 = \ell_0(\epsilon, J)$ for all sufficiently large $n$, \refthm{BBDrillingThm} says that the homeomorphism $V_n - \Sigma_n \to W_n$ induces a $J$--bilipschitz diffeomorphism 
\[
\varphi_n \from V_n - (V_n)_{\Sigma}^{< \epsilon} \longrightarrow W_n - (W_n)_{\Sigma}^{< \epsilon} 
\]
As in the  proof of \refthm{EffectiveDrillingTame}, the distance estimate \cite[Proposition 1.4]{FPS:Tubes} ensures that basepoints on equidistant tori about a component $\sigma \subset \Sigma$ do not escape under $\varphi_n$. Then \refthm{ConstructBilip} and \refrem{ConstructBilipThin}
imply that the $J$--bilipschitz maps $\varphi_n$ converge (after passing to a subsequence) to the desired $J$--bilipschitz map $ \varphi \from Y - Y_{\Sigma}^{< \epsilon} \longrightarrow Z - Z_{\Sigma}^{< \epsilon}$.
\end{proof}

We also discuss a quantitative strengthening of \refthm{EffectiveDrillingTame}.

\begin{remark}\label{Rem:FactorOf4}
An astute reader will notice that the quantitative hypothesis on $\ell = \len(\Sigma)$ in \refthm{EffectiveDrillingTame} differs from the hypothesis in \refthm{BilipFiniteVolume} by a factor of $4$. This discrepancy occurs because we need to double-double the scooped manifold $V_n^\circ$ to obtain a finite-volume manifold $DD(V_n^\circ)$. See Equation~\refeqn{FactorOf4Length}, where the transition from $\len(\Sigma)$ to $\len(DD(\Sigma_n))$ happens.

One may ask whether paying a factor of 4 is a necessary price in adapting finite-volume results in the infinite-volume setting. We suspect that that the factor of 4 can probably be eliminated, at the cost of a different price: reexamining some of the technical analytic estimates that were used to prove \refthm{BilipFiniteVolume}. Here are the two most salient points.

In the cone deformation theory of Hodgson and Kerckhoff \cite{hk:univ, hk:shape}, the length of the geodesic link $\Sigma$ is used to control the radius $R$ of an embedded tube about $\Sigma$; in turn, this tube radius is used to control almost all other quantities. (See the discussion around \refdef{Haze} for a more quantitative summary.)
Hodgson and Kerckhoff proved the original radius bound \cite[Theorem 5.6]{hk:shape}, and we adapted their proof in \cite[Theorem 4.21]{FPS:EffectiveBilipschitz}. The proof is essentially a packing argument. In the context of the symmetric manifold $DD(V_n^\circ)$, one should be able to adapt the argument to use $\len(\Sigma_n)$ rather than $\len(DD(\Sigma_n))$, provided that the tubes in different copies of $V_n^\circ$ do not meet during the cone deformation. This can probably be ensured, because the tubes stay in the core portion of $V_n^\circ$, whereas the circle-packing and scooping happen deep into the ends.

Since the length of $\Sigma$ and its cone angle $\alpha$ both change throughout the cone deformation, the change in length must itself be controlled. A key differential inequality, proved in \cite[Proposition 5.5 and page 1079]{hk:shape} and restated in \cite[Lemma 6.7]{FPS:EffectiveBilipschitz}, bounds the change in the ratio $\alpha/ \len(\Sigma)$ in terms of functions of the tube radius $R$. This estimate does not scale correctly when we pass from $V_n^\circ$ to $DD(V_n^\circ)$, because the length gets quadrupled but the radius stays the same. Thus removing the factor of 4 would also require adapting the proof of \cite[Proposition 5.5]{hk:shape} to work directly in $V_n^\circ$, thought of as a hyperbolic orbifold with mirrored boundary. This can be done, because the proof of \cite[Proposition 5.5]{hk:shape} is essentially an application of the Cauchy--Schwartz inequality.

Provided the above technical points are addressed, the rest of the estimates from \cite{FPS:EffectiveBilipschitz} should go through unchanged,
with $\len(\Sigma)$ in place of $\len(DD(\Sigma_n))$. 
\end{remark}

\section{The effective filling theorem}\label{Sec:Filling}

In this section, we prove \refthm{EffectiveFillingTame}, giving effective bilipschitz bounds on Dehn fillings of tame hyperbolic 3-manifolds. The proof follows the same two-step process as in the previous section.
The first step, \refthm{FillingApproximation}, is an analogue of \refthm{DrillingApproximation}. It uses a number of results from Kleinian groups to approximate both the drilled and filled manifolds with sequences of geometrically finite manifolds whose conformal boundaries admit a circle packing. See \reffig{FillingApproxOutline} for a visual preview. The second step uses the sequences constructed in \refthm{FillingApproximation}, together with a finite-volume bilipschitz theorem (\refthm{EffectiveFillFiniteVolume}, proved in \cite{FPS:EffectiveBilipschitz}), to complete the proof of \refthm{EffectiveFillingTame}.

The first step can be summarized by the following theorem, which is analogous to \refthm{FiniteVolApproxCor}, but in the filling rather than drilling case.

\begin{theorem}\label{Thm:FillingFteVolApproxCor}
Let $Z$ be a tame, infinite-volume hyperbolic 3-manifold with a fixed collection of rank-2 cusps, and with fixed slopes on those cusps of total normalized length at least $L^2\geq 230.1$. Then the Dehn filling of $Z$ along those slopes is a tame manifold that admits a hyperbolic metric $Y$ with the same end invariants as those of $Z$, and a geodesic link $\Sigma \subset Y$ such that $Z$ is homeomorphic to $Y-\Sigma$. Furthermore, there is a sequence of finite-volume approximating manifolds $DD(W_n^\circ)$ and $DD(V_n^\circ)$ with the following properties:
\begin{enumerate}[\: \: $(1)$]
\item The manifold $DD(V_n^\circ)$ contains a geodesic link $DD(\Sigma_n)$, consisting of four isometric copies of a link $\Sigma_n$, such that $DD(W_n^\circ) = DD(V_n^\circ) - DD(\Sigma_n)$.
\item For any choice of basepoints $y \in Y$ and $z \in Z$, there are basepoints in the approximating manifolds such that $(DD(V_n^\circ), v_n) \to (Y, y)$ and $(DD(W_n^\circ), w_n) \to (Z, z)$ are geometric limits.
\item In the geometric limit $(DD(V_n^\circ), v_n) \to (Y, y)$, we have $\Sigma_n \to \Sigma$.
\end{enumerate}
\end{theorem}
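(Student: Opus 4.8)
The plan is to prove \refthm{FillingFteVolApproxCor} in two stages. First I would produce the filled hyperbolic manifold $Y$, together with the geodesic link $\Sigma$ whose complement is $Z$; then I would feed the pair $(Y,\Sigma)$ directly into the drilling construction \refthm{DrillingApproximation} (whose summary is \refthm{FiniteVolApproxCor}) to read off all of the approximating sequences and properties~(1)--(3). The only genuinely new input compared with the drilling case is the $6$--theorem, together with the observation that the cores of the filling solid tori are short geodesics; once that is established, the conclusions we want are literally those of \refthm{FiniteVolApproxCor} applied to $(Y,\Sigma)$.

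For the first stage, let $(M,\, P\cup(T_1\cup\dots\cup T_k))$ be the pared manifold associated to $Z$, where $T_1,\dots,T_k$ are the torus ends corresponding to the chosen rank--$2$ cusps and $\ss=(s_1,\dots,s_k)$ is the tuple of slopes. Since $Z$ has infinite volume, $\bdy_0 M\neq\emptyset$, so $\bdy M-(T_1\cup\dots\cup T_k)\neq\emptyset$ and the hypotheses of \refthm{6TheoremInf} are met as soon as each $s_i$ admits a geodesic representative of length greater than $6$ on a suitable horocusp. I would deduce this last point from the hypothesis $L^2\geq 230.1$ via a standard three--dimensional cusp--area estimate, of the B\"or\"oczky type used to translate between normalized length and cusp length: it lets one select pairwise disjoint embedded horocusps $H_1,\dots,H_k$ of $Z$ on which $\len(s_i)>6$. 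Then \refthm{6TheoremInf} gives that $(M(\ss),P)$ is a pared manifold with the same free sides as $(M,P\cup(T_1\cup\dots\cup T_k))$, admitting a hyperbolic structure $Y$ with end invariants identical to those of $Z$; since $\bdy_0 M(\ss)=\bdy_0 M\neq\emptyset$, the manifold $Y$ is infinite volume. Letting $\Sigma\subset Y$ be the union of the cores of the filling solid tori, we have $Y-\calN(\Sigma)\cong M$, so $Y-\Sigma$ is homeomorphic to $\interior M\cong Z$.

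It remains, still within the first stage, to show that $\Sigma$ is isotopic to a geodesic link with every component shorter than $\log 3$. Note that $L(s_i)\geq L(\ss)\geq\sqrt{230.1}$, since $1/L(\ss)^2=\sum_j 1/L(s_j)^2$. Rather than apply cone--deformation theory directly on the infinite--volume manifold $Y$, I would argue by a limiting construction: using strong density (\refthm{Density}), Brooks' theorem (\refthm{Brooks}), and the double--double of \refdef{DoubleDouble}, approximate $Z$ by finite--volume hyperbolic manifolds converging geometrically to $Z$. Because scooping and doubling are performed on the free--side ends and do not touch neighborhoods of the torus cusps $T_i$, the normalized length of $s_i$ in these approximants converges to $L(s_i)$, so for $n\gg0$ the total normalized length of $\ss$ in the $n$-th approximant still satisfies the hypothesis of the finite--volume filling theorem \refthm{EffectiveFillFiniteVolume}. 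That theorem then makes the corresponding filled manifold hyperbolic with a geodesic core link whose components, by the accompanying bound relating core length to normalized length, are far shorter than $\log 3$. Passing to the geometric limit and using that translation lengths converge (as in the proof of \refprop{GeodesicsConverge}), we obtain that $\Sigma$ is a geodesic link in $Y$ with each component shorter than $\log 3$. Finally, \reflem{DrillHyperbolic} equips $Y-\Sigma$ with a hyperbolic structure having the end invariants of $Y$, hence of $Z$; since $Y-\Sigma\cong\interior M\cong Z$ and the end invariants agree, the ending lamination theorem \refthm{ELC} identifies this structure with $Z$ itself, which is what we will need in the second stage.

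For the second stage I would simply apply \refthm{DrillingApproximation} (equivalently \refthm{FiniteVolApproxCor}) to the tame, infinite--volume manifold $Y$ and the geodesic link $\Sigma$. This yields geometrically finite circle--packed manifolds $V_n$ homeomorphic to $Y$, each containing a geodesic link $\Sigma_n$ with $\Sigma_n\to\Sigma$; the drilled manifolds $W_n\cong V_n-\Sigma_n\cong Z$; the scooped submanifolds $V_n^\circ$ and $W_n^\circ$; and the finite--volume doubles $DD(V_n^\circ)\supset DD(\Sigma_n)$ with $DD(W_n^\circ)=DD(V_n^\circ)-DD(\Sigma_n)$, which is property~(1). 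For any basepoints $y\in Y$ and $z\in Z$ one gets $(DD(V_n^\circ),v_n)\to(Y,y)$ and $(DD(W_n^\circ),w_n)\to(Z,z)$ --- here the target of the second limit is the hyperbolic structure that \refthm{DrillingApproximation} places on $Y-\Sigma$, which equals $Z$ by the ELC identification above --- giving property~(2), while $\Sigma_n\to\Sigma$ in the first limit is property~(3). I expect the first stage, and specifically the claim that the filling cores are geodesics shorter than $\log 3$, to be the main obstacle: the $6$--theorem supplies hyperbolicity and the correct end invariants but no metric control on $\Sigma$, so one must import the effective $2\pi$--theorem estimate and take care that the constant $230.1$ --- which also has to absorb the $6$--theorem slope bound --- is robust under passage through the double--double, where the total length of $\Sigma$ is quadrupled (compare \refrem{FactorOf4}). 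Once that verification is in place, the second stage is a black--box application of the drilling template.
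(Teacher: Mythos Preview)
Your two-stage reduction to the drilling template is a different route from the paper's, which records \refthm{FillingFteVolApproxCor} as an immediate corollary of the parallel filling construction \refthm{FillingApproximation}. The idea of producing $(Y,\Sigma)$ first and then quoting \refthm{DrillingApproximation} is natural, but Stage~1 has a genuine gap. To conclude that $\Sigma$ is a short geodesic link in $Y$, you write ``passing to the geometric limit'' of the filled finite-volume approximants and invoke \refprop{GeodesicsConverge}. But that proposition takes as \emph{hypothesis} a strong limit $\rho_n\to\rho$ inside $AH(M(\ss),P)$; your density/Brooks/double--double construction only supplies convergence on the drilled side (approximants of $Z$), not on the filled side. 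Establishing that the filled approximants $V_n$ converge to $Y$ is precisely conclusion~\refitm{FilledLimit} of \refthm{FillingApproximation}, and its proof requires the approximation theorem (\refthm{NamaziSouto}) together with \refthm{ELC}: the end invariants of the $V_n$ form a filling sequence limiting to those of $Y$, hence after a subsequence $\rho_n\to\rho$ strongly. Without that ingredient there is no geometric limit on the filled side to pass to, and Stage~1 does not close.

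Two smaller points. First, after double--doubling, $L^2\geq 230.1$ becomes $L(DD(\ss^n))^2\gtrsim 57.5$; this suffices for Hodgson--Kerckhoff's filling theorem and its core-length bound (which the paper actually uses here), but is nowhere near the hypothesis of \refthm{EffectiveFillFiniteVolume}, which you cite. Second, once you do supply \refthm{NamaziSouto} to obtain $V_n\to Y$, you have essentially reproved \refthm{FillingApproximation}, and the approximating sequences you built in Stage~1 already satisfy (1)--(3); the Stage~2 appeal to \refthm{DrillingApproximation} then rebuilds from scratch what you already have in hand.
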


Indeed, \refthm{FillingFteVolApproxCor} is an immediate consequence of the following more detailed result.

\begin{theorem}\label{Thm:FillingApproximation}
Let $M$ be a compact 3-manifold, and $P \subset \bdy M$ a collection of annuli and tori.
Let $\Sigma \subset M$ be a smooth link with regular neighborhood $\calN(\Sigma)$. Suppose that $(M-\calN(\Sigma), P\cup \bdy \calN(\Sigma))$ is a pared manifold that admits an infinite-volume hyperbolic structure $Z$, uniformized by a representation $\xi$, where the total normalized length of the meridians of $\Sigma$ satisfies $L^2 \geq 230.1$.

Then $(M,P)$ is a pared manifold that admits a hyperbolic structure $Y$, uniformized by a representation $\rho \in AH(M,P)$, 
with the same end invariants as those of $Z$.
Furthermore, there exist approximating sequences such that the following hold for all $n \gg 0$:
\begin{enumerate}[\: \: $(1)$]
\item\label{Itm:CirclePackCusped} There is a geometrically finite, minimally parabolic representation $\xi_n \in AH(M-\calN(\Sigma), \allowbreak P\cup \bdy \calN(\Sigma))$, such that the 3-manifold $W_n = \HH^3 / \xi_n(\pi_1(M - \Sigma))$ has conformal boundary admitting a circle packing $C_n$. Furthermore, $\xi_n \to \xi$ is a strong limit.
\smallskip
\item\label{Itm:ScoopConvergeCusped} For every $z \in Z$, there is a choice of basepoints $w_n \in W_n^\circ$, such that $(W_n^\circ, w_n)$ converges geometrically to $(Z,z)$. 
\smallskip
\item\label{Itm:ScoopedFill} There is a geometrically finite, minimally parabolic representation $\rho_n \in AH(M,P)$ such that the associated hyperbolic 3-manifold $V_n$ has end invariants that are identical to those of $W_n$. In particular, the conformal boundary of $V_n$ admits the same circle packing $C_n$. Furthermore, $\Sigma \subset M$ is isotopic to a geodesic link $\Sigma_n \subset V_n^\circ$.
\smallskip
\item\label{Itm:FilledLimit} For every $y \in Y$, there is a choice of basepoints $v_n \in V_n^\circ$, such that $(V_n^\circ, v_n)$ converges geometrically to $(Y, y)$.  In the geometric limit, we have $\Sigma_n \to \Sigma_\infty$, where $\Sigma_\infty \subset Y$ is a geodesic link isotopic to $\Sigma$. Furthermore, there is a strong limit $\rho_n \to \rho$, where $\rho \in AH(M,P)$ is a representation associated to $Y$.
\smallskip
\item\label{Itm:FillDoubleCommute} The operations of filling and double-doubling commute. That is:
\[
DD(V_n^\circ) - DD(\Sigma_n) \cong DD(W_n^\circ).
\]

\smallskip
\item \label{Itm:FillFiniteVolumeLimit} The manifolds $Y$ and $Z$ are geometric limits of finite-volume hyperbolic manifolds, as follows: $(DD(V_n^\circ), v_n) \to (Y, y)$ and $(DD(W_n^\circ), w_n) \to (Z, z)$.
\end{enumerate}
\end{theorem}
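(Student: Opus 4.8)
The plan is to reduce \refthm{FillingApproximation} to its drilling counterpart \refthm{DrillingApproximation}, by first passing to the filled manifold $Y$ and then drilling $\Sigma$ back out of it. First I would verify that $(M,P)$ is a pared manifold carrying a hyperbolic structure $Y$ with the same end invariants as $Z$. Since $Z$ is infinite-volume, $\bdy_0 M=\bdy_0(M-\calN(\Sigma))\neq\emptyset$, so $(M,P)$ has a free side. The hypothesis $L^2\geq 230.1$ forces each individual normalized length to satisfy $L(s_j)\geq L$, since $1/L^2=\sum_j 1/L(s_j)^2$; combined with a standard estimate converting normalized length into slope length on a suitable family of pairwise disjoint embedded horocusps about $\bdy\calN(\Sigma)$ (as in \cite{FPS:EffectiveBilipschitz}), this lets me choose horocusps $H_i$ on which each meridian of $\Sigma$ has length greater than $6$. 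The $6$--theorem for pared manifolds, \refthm{6TheoremInf}, then shows that $(M,P)$ is a pared manifold and produces the hyperbolic structure $Y$ with the desired end invariants, inside which $\Sigma\subset M$ is realized as the union of the cores of the filling solid tori. Because the meridians have large normalized length (at least $\sqrt{230.1}$), standard hyperbolic Dehn filling estimates (Hodgson--Kerckhoff \cite{hk:univ}, Bromberg \cite{bromberg:conemflds}) show that $\Sigma$ is isotopic in $Y$ to a geodesic link $\Sigma_\infty$ each of whose components has length well below $\log 3$.

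At this point $Y$ is tame and infinite-volume with associated pared manifold $(M,P)$, and $\Sigma_\infty\subset Y$ is a geodesic link with each component shorter than $\log 3$, so \refthm{DrillingApproximation} applies verbatim to the pair $(Y,\Sigma_\infty)$. It produces a hyperbolic structure $Z'$ on $(M-\calN(\Sigma),P\cup\bdy\calN(\Sigma))$ with the same end invariants as $Y$; circle-packed geometrically finite minimally parabolic manifolds $V_n$ (homeomorphic to $Y$) with $(V_n^\circ,v_n)\to(Y,y)$ and geodesic links $\Sigma_n\subset V_n^\circ$ satisfying $\Sigma_n\to\Sigma_\infty$; circle-packed manifolds $W_n$ (homeomorphic to $Y-\Sigma_\infty$, hence to $Z$) sharing the packings $C_n$, with matching end invariants, and with $(W_n^\circ,w_n)\to(Z',z)$; strong limits $\rho_n\to\rho$ and $\xi_n\to\xi'$ uniformizing $Y$ and $Z'$; the commutation $DD(V_n^\circ)-DD(\Sigma_n)\cong DD(W_n^\circ)$; and the finite-volume limits $(DD(V_n^\circ),v_n)\to(Y,y)$ and $(DD(W_n^\circ),w_n)\to(Z',z)$. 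To finish, I would identify $Z'$ with the given $Z$: both are hyperbolic structures on the pared manifold $(M-\calN(\Sigma),P\cup\bdy\calN(\Sigma))$ with identical end invariants — namely those of $Y$, which are those of $Z$ — so the ending lamination theorem \refthm{ELC} gives an isometry $Z'\to Z$ in the identity homotopy class, and I may take $\xi$ to be the representation associated to $Z$. After this identification, conclusions (1)--(6) of \refthm{FillingApproximation} are precisely the conclusions of \refthm{DrillingApproximation} reread for the pair $(Y,\Sigma_\infty)$, with $\Sigma\subset M$ isotopic to $\Sigma_n\subset V_n^\circ$ as in conclusion~(3).

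The genuinely new content, beyond what \refthm{DrillingApproximation} already supplies, is concentrated in the first step, and that is where I expect the only real work: turning the large-normalized-length hypothesis into the pared-manifold and hyperbolicity statement for $(M,P)$ (the $6$--theorem input, together with the bookkeeping of passing from total normalized length to a slope length above $6$ on a family of disjoint horocusps) and into the fact that the filling cores become short geodesics in $Y$. The numerical value $230.1$ is larger than these two steps require on their own; it is dictated by the finite-volume filling theorem of \cite{FPS:EffectiveBilipschitz} applied to the double-doubled approximates, where a factor of $4$ enters as explained in \refrem{FactorOf4}. I note that one could instead argue self-containedly, mirroring the proof of \refthm{DrillingApproximation} with the roles of $Y$ and $Z$ exchanged (the construction depicted in \reffig{FillingApproxOutline}): apply strong density \refthm{Density} and Brooks' \refthm{Brooks} directly to $Z$ to build circle-packed drilled approximates $W_n\to Z$, Dehn fill each $W_n$ along the meridians of $\Sigma$ — keeping the cores geodesic via the $6$--theorem, or via finite-volume Dehn surgery inside $DD(W_n^\circ)$ — to obtain the geometrically finite $V_n$, and then invoke the approximation theorem \refthm{NamaziSouto}, the ending lamination theorem, and \refprop{GeodesicsConverge} to extract all the claimed limits. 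The only delicate ingredients either way are this existence-and-shortness step and the rigidity argument (Mostow--Prasad, or circle-packing rigidity \refthm{CirclePackingRigidity}) behind the commutation of filling with double-doubling.
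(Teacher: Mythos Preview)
Your primary strategy—first build $Y$ via the $6$--theorem, show $\Sigma$ is a short geodesic link in $Y$, and then invoke \refthm{DrillingApproximation}—has a genuine gap at the second step. The references you cite for the filling estimate apply only to finite-volume manifolds (Hodgson--Kerckhoff) or geometrically finite ones (Bromberg), whereas $Y$ may well have degenerate ends. Moreover, \refthm{6TheoremInf} produces $Y$ via \refthm{Realization}, a pure existence statement that carries no geometric control over the filling cores; there is no a priori reason the homotopy classes of the components of $\Sigma$ are represented by \emph{disjoint embedded} geodesics in $Y$, let alone short ones. In fact, the assertion that $\Sigma$ is isotopic to a short geodesic link $\Sigma_\infty\subset Y$ is part of conclusion~\refitm{FilledLimit} of \refthm{FillingApproximation}—it is an \emph{output} of the theorem, not an input. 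Closing this gap for a general tame $Y$ already forces you to approximate by geometrically finite or finite-volume manifolds, at which point you are executing your alternative approach and the reduction to \refthm{DrillingApproximation} saves nothing.

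Your alternative approach is correct and is precisely the paper's route. The paper applies strong density and Brooks to $Z$ to obtain the circle-packed $W_n$, constructs $V_n$ via Ahlfors--Bers with the same conformal boundary, and then—this is exactly the missing step above—proves $\Sigma$ is isotopic to a geodesic link $\Sigma_n\subset V_n^\circ$ by passing to the finite-volume double $DD(W_n^\circ)$, applying Hodgson--Kerckhoff \cite[Theorem~1.2]{hk:shape} there (where $L(DD(\ss^n))>7.584$) to get a hyperbolic filling $U_n$ with geodesic cores $\Upsilon_n$, and identifying $(U_n,\Upsilon_n)\cong(DD(V_n^\circ),DD(\Sigma))$ via Mostow--Prasad together with Waldhausen's rigidity. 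Only after this, with $\len(\Sigma_n)<0.04$ uniformly, does \refprop{GeodesicsConverge} yield the limiting geodesic link $\Sigma_\infty\subset Y$.
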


See \reffig{FillingApproxOutline} for a visual summary of the theorem. 

\begin{figure}
\begin{tikzcd}[column sep=4.0em, row sep=3.0em]
& & & Y \arrow[from=dlll, dashed, "\text{geom limit}"] \arrow[from=dl, dashed] \arrow[from=dr] \arrow[from=dr] \arrow[from=drrr, "\text{approximation thm}"'] \arrow[from=dd, "\text{fill }"] & & & \\
  DD(V_n^{\circ}) \arrow[rr, hookleftarrow, "\text{isometric inclusion}"']  & & V_n^{\circ} \arrow[rr, hookrightarrow, "\text{isometric \: inclusion}"', crossing over]  & & V_n  \arrow[rr,-,dashed, "< \delta_n", "\text{circle pack}"']  & & Y_n \\
  & & & Z  \arrow[from=dlll, dashed, "\text{geom limit}"] \arrow[from=dl, dashed] \arrow[from=dr] \arrow[from=dr] \arrow[from=drrr, "\text{strong density}"'] & & & \\
  DD(W_n^{\circ}) \arrow[rr, hookleftarrow, "\text{isometric inclusion}"'] \arrow[uu, "\text{finite vol}", "\text{fill}"'] & & W_n^{\circ} \arrow[rr, hookrightarrow, "\text{isometric inclusion}"'] \arrow[uu,"\text{fill}"', crossing over] & & W_n \arrow[rr,-,dashed, "< \delta_n", "\text{circle pack}"'] \arrow[uu,"\text{fill}"', crossing over] & & Z_n  \arrow[uu,"\text{fill}"']
\end{tikzcd}

\caption{The manifolds appearing in the statement and proof of \refthm{FillingApproximation}. Hooked horizontal arrows represent isometric inclusions. Dashed horizontal lines represent a small quasiconformal deformation that produces a circle-packed manifold. Vertical arrows represent Dehn filling. Solid diagonal arrows represent strong limits. Dashed diagonal arrows represent geometric limits only.}
\label{Fig:FillingApproxOutline}
\end{figure}

\begin{proof}
The proof is very similar to the proof of \refthm{DrillingApproximation}, with the notable difference  that drilling is replaced by filling. This change of direction means that some more work is required to ensure that the filled manifolds are hyperbolic and contain geodesic links representing $\Sigma$.

We begin by verifying the existence of a hyperbolic manifold $Y$ with pared manifold $(M,P)$ and the same end invariants as those of $Z$. Let $H_1, \ldots, H_k$ be disjoint horocusps in $Z$ about the components $\sigma_1, \ldots, \sigma_k$ of $\Sigma$. By a theorem of Meyerhoff \cite[Section 5]{meyerhoff}, the $H_i$ can be chosen so that $\area(\bdy H_i) \geq \sqrt{3}/2$ for every $i$. Let $s_i$ be a Euclidean geodesic on $\bdy H_i$ representing the meridian of $\sigma_i$. Let $L_i = \len(s_i)/\sqrt{\area(\bdy H_i)}$ be the normalized length of $s_i$, and $L = L(\ss)$ the total normalized length of $\ss = (s_1, \ldots, s_k)$. Thus our hypothesis $L(\ss)^2 \geq 230.1$ combined with \refeqn{NormalizedLength} implies
\begin{equation}
\frac{1}{230.1} \geq \frac{1}{L(\ss)^2} \geq \frac{1}{ L(s_i)^2} = \frac{\area(\bdy H_i)}{\len(s_i)^2} \geq \frac{\sqrt{3}/2}{\len(s_i)^2} \, ,
\end{equation}
hence $\len(s_i) > 14$ for each $i$. Consequently, \refthm{6TheoremInf} tells us that $(M, P)$ is a pared manifold that admits a hyperbolic structure $Y$ with  end invariants matching those of $Z$. 

Starting from $Z$, we apply strong density, \refthm{Density}, to find a sequence of geometrically finite, minimally parabolic representations $\tau_n \in AH(M-\calN(\Sigma), P\cup \bdy \calN(\Sigma))$, such that $\tau_n \to \xi$ is a strong limit.
The end invariants of the associated hyperbolic manifolds $Z_n$ form a filling sequence, converging to the end invariants of $Z$. The Kleinian groups $\Delta_n$ associated to $Z_n$ converge (both geometrically and algebraically) to the Kleinian group $\Delta$ associated to $Z$. In particular, we have $\Delta_n \to \Delta$ in the Chabauty topology. We also choose an open neighborhood system $Q_n$ about $[\xi]$, such that $[\tau_n] \in Q_n$.

For each $n$, we choose a positive constant $\delta_n$ such that $\lim \delta_n = 0$ and several more constraints (specified below) are all satisfied. For each $n$, Brooks' \refthm{Brooks} says that there is a geometrically finite Kleinian group $\Delta_{n, \delta_n}$ representing an $e^{\delta_n}$--quasiconformal deformation of $\Delta_n$, and such that the conformal boundary of $\Delta_{n, \delta_n}$ admits a circle packing. By picking $\delta_n$ small enough, we ensure that $d_{\rm Chaub}(\Delta_{n, \delta_n}, \Delta_n) < 2^{-n}$. Picking $\delta_n$ small enough also ensures that the associated representation $\xi_n$  satisfies $[\xi_n] \in Q_n$. These choices imply that $\xi_n \to \xi$ is a strong limit. We let $W_n = \HH^3 / \Delta_{n, \delta_n}$.

Next, we fill in the meridians of $\Sigma$ to recover the pared manifold $(M,P)$. We obtain three sets of hyperbolic structures on $(M,P)$:
\begin{itemize}
\item A hyperbolic structure $Y$, whose end invariants agree with those of $Z$,
\item A hyperbolic structure $Y_n$, whose end invariants agree with to those of $Z_n$,
\item A hyperbolic structure $V_n$, whose end invariants agree with those of $W_n$.
\end{itemize}
Since $(M,P)$ is a pared manifold, \refthm{Hyperbolization} says there is also a geometrically finite, minimally parabolic representation $\rho' \in AH(M,P)$. The hyperbolic structures $Y_n$ and $V_n$, represented by $\sigma_n \in QH(\rho')$ and $\rho_n \in QH(\rho')$, respectively, exist by \refthm{ABUniformization}.

By construction, the conformal boundary of $V_n$ is an $e^{\delta_n}$--quasiconformal deformation of the conformal boundary of $Y_n$. Thus the distance in $\calT(\bdy_0 M)$ between the conformal boundaries of those manifolds is at most $\delta_n$. By choosing $\delta_n$ small enough, we ensure that $Y_n$ and $V_n$ are represented by Kleinian groups $\Gamma_n$ and $\Gamma_{n, \delta_n}$, respectively, such that
$d_{\rm Chaub} (\Gamma_{n, \delta_n}, \Gamma_n) < 2^{-n}$. We also ensure that, for a neighborhood system $\{ O_n \}$ about $[\rho]$, we have $\rho_n \in O_{m(n)}$ whenever $\rho \neq \sigma_n \in O_{m(n)}$, and $\rho_n \in O_n$ whenever $\sigma_n = \rho$.
This completes the list of conditions that $\delta_n$ needs to satisfy.

Now, we check the conclusions of the proposition. Conclusion \refitm{CirclePackCusped} holds by construction, because we have chosen $\Delta_{n, \delta_n}$ so that the conformal boundary of $W_n = \HH^3/\Delta_{n, \delta_n}$ admits a circle packing $C_n$, and because we have chosen $\delta_n$ small enough to ensure  $\xi_n \to \xi$ is a strong limit.

For conclusion \refitm{ScoopConvergeCusped}, recall that we have already checked that $\Delta_{n, \delta_n} \to \Delta$ in the Chabauty topology. Thus, for every basepoint $z \in Z$, there exist basepoints $w_n \in W_n$ such that $(W_n, w_n) \to (Z, z)$. For any fixed radius, the ball $B_R(w_n)$ will lie in the scooped manifold $W_n^\circ$ when $n \gg 0$, hence the sequence of scooped manifolds $(W_n^\circ, w_n)$ also converges geometrically to $(Z, z)$.

We will prove conclusions \refitm{ScoopedFill} and \refitm{FillDoubleCommute} together. Observe that since the conformal boundary of $V_n$ agrees with that of $W_n$, it also admits the same circle packing $C_n$. Thus we may double-double the scooped manifolds $V_n^\circ$ and $W_n^\circ$ to obtain finite-volume manifolds $DD(V_n^\circ)$ and $DD(W_n^\circ)$, respectively. The scooped manifold $W_n^\circ$ contains a tuple of slopes $\ss^n = (s_1^n, \ldots, s_k^n)$ corresponding to the meridians of $\Sigma$, with the property that $L(s_i^n) \to L(s_i)$ in the geometric limit $(W_n^\circ, w_n) \to (Z, z)$. Thus, for $n \gg 0$, we have $L(\ss^n)^2 > 230.08$. Thus the finite-volume manifold 
 $DD(W_n^\circ)$ contains a tuple of slopes $DD(\ss^n)$, containing four copies of each meridian of $\Sigma$. The definition of normalized length in \refeqn{NormalizedLength} implies that for all $n \gg 0$, we have
 \begin{equation}\label{Eqn:DoubleDoubledNormLength}
 \frac{1}{ L(DD(\ss^n))^2} = \frac{4}{L(\ss^n)^2} < \frac{4}{230.08} = \frac{1}{57.52},
 \quad \text{hence} \quad
 L(DD(\ss^n)) > \sqrt{57.52} > 7.584.
 \end{equation}
 Consequently, a theorem of Hodgson and Kerckhoff \cite[Theorem 1.2]{hk:shape}, implies that Dehn filling  $DD(W_n^\circ)$ along the tuple of slopes $DD(\ss^n)$ produces a hyperbolic manifold $U_n$, where the union of cores of the Dehn filling solid tori is a geodesic link $\Upsilon_n$. By  \cite[Corollary 5.13]{hk:shape}, the total length of $\Upsilon_n$ is less than $0.16$. 

Now, recall that each $V_n$ is homeomorphic to $M$, and contains a smoothly embedded copy of $\Sigma$, such that $W_n$ is homeomorphic to $V_n - \Sigma$. After an isotopy, we may assume that the embedded copy of $\Sigma$ lies in $V_n^\circ$, hence $DD(V_n^\circ)$ contains the double-doubled link $DD(\Sigma)$. Since $\bdy V_n^\circ$ and $\bdy W_n^\circ$ contain exactly the same pattern of red and blue faces, the double-doubling construction shows that $DD(W_n^\circ)$ is homeomorphic to 
$DD(V_n^\circ) - DD(\Sigma)$ by a homeomorphism that carries $DD(\ss^n)$ to the tuple of meridians of $DD(\Sigma)$. The above homeomorphism extends to a homeomorphism of pairs $(U_n, \Upsilon_n) \to(DD(V_n^\circ) , DD(\Sigma))$.
Since $DD(V_n^\circ)$ is Haken, Waldhausen's topological rigidity theorem \cite[Theorem~7.1]{Waldhausen:SufficientlyLarge} implies that
$DD(\Sigma)$ is isotopic to the geodesic link $\Upsilon_n$.
The components of $\Upsilon_n$ that lie in the original copy of $V_n^\circ$ form a geodesic link $\Sigma_n$, with the property that
\[
DD(V_n^\circ) - DD(\Sigma_n) \cong DD(V_n^\circ - \Sigma_n) \cong DD(W_n^\circ),
\] 
proving \refitm{ScoopedFill} and \refitm{FillDoubleCommute}. We note that $\len(\Sigma_n) = \len(\Upsilon_n)/4 < 0.04$.

Conclusion \refitm{FilledLimit} is established exactly as in \refthm{DrillingApproximation}.
By construction, the end invariants of $Y_n$ agree with those of $Z_n$. Thus those end invariants form a filling sequence that limits to the end invariants of $Y$. Thus, by the approximation \refthm{NamaziSouto}, the representations $\sigma_n$ associated to $Y_n$ converge strongly (after a subsequence) to a representation of a hyperbolic manifold homeomorphic to $Y$ and having the same end invariants as $Y$. By  the ending lamination \refthm{ELC}, we have a strong limit $\sigma_n \to \rho$ for a representation $\rho$ associated to $Y$.
In particular, $\Gamma_n \to \Gamma$ in the Chabauty topology and $[\sigma_n] \in O_{m(n)}$ for a sequence $m(n) \to \infty$.

By our choice of $\delta_n$, we have a Chabauty limit $\Gamma_{n, \delta_n} \to \Gamma$ and $[\rho_n] \in O_{m(n)}$ for the same sequence $m(n)$, hence $\rho_n \to \rho$ is a strong limit. As above, we can fix a basepoint $y \in Y$ and then find basepoints $v_n \in V_n$, which lie in $V_n^\circ$ for $n \gg 0$, such that $(V_n^\circ, v_n) $ converges geometrically to $(Y, y)$. Since $\rho_n \to \rho$ is a strong limit, and $\len(\Sigma_n) < 0.04$ for all $n \gg 0$, \refprop{GeodesicsConverge} says that $\Sigma_n \to \Sigma_\infty \subset Y$, a geodesic link isotopic to $\Sigma$.

Finally, conclusion \refitm{FillFiniteVolumeLimit} is a corollary of \refitm{ScoopConvergeCusped} and \refitm{FilledLimit}.
\end{proof}

\begin{remark}\label{Rem:GFFillingApproximation}
If the end invariants of $Y$ (equivalently, the end invariants of $Z$) are geometrically finite, the preceding proof becomes considerably more lightweight. In this case, one can take constant sequences $Y_n = N$ and $Z_n = Z$. Thus, as in \refrem{GFDrillingApproximation}, the hyperbolization, strong density, ending lamination, and approximation theorems become unnecessary. Thus, in the geometrically finite case, the only tools required in the proof are \refthm{ABUniformization} and Brooks' \refthm{Brooks}.
\end{remark}

Next, we record a finite-volume version of \refthm{EffectiveFillingTame}, with some additional information. The following result is 
 \cite[Corollary 9.34]{FPS:EffectiveBilipschitz}. As in \refrem{FactorOf4}, the hypotheses of \refthm{EffectiveFillFiniteVolume} differ from those of \refthm{EffectiveFillingTame} by a factor of 4.

\begin{theorem}[Effective filling in finite volume, \cite{FPS:EffectiveBilipschitz}]\label{Thm:EffectiveFillFiniteVolume}
Fix any $0 < \epsilon \leq \log 3$ and any $J>1$. Let $M$ be a 3-manifold with empty or toroidal boundary, and $\Sigma$ a link in $M$. Suppose that $M - \Sigma$ admits a complete, finite-volume hyperbolic metric $W$, in which the total normalized length of the meridians of $\Sigma$ satisfies
\begin{equation*}
L^2 \geq \max \left\{ \frac{2\pi \cdot 6771 \cosh^5(0.6 \epsilon + 0.1475)}{\epsilon^5} + 11.7, \:
\frac{2\pi \cdot 11.35}{\epsilon^{5/2}\log(J)} + 11.7 \right\}.
\end{equation*}
Then $M$ has a hyperbolic metric $V$, in which $\Sigma$ a geodesic link.
Furthermore, there are canonical $J$--bilipschitz inclusions
\[
\varphi \from V^{\geq \epsilon} \hookrightarrow W^{\geq \epsilon/1.2}, 
\qquad
\psi \from W^{\geq \epsilon} \hookrightarrow V^{\geq \epsilon/1.2}.
\]
which are equivariant with respect to the symmetry group of the pair $(M, \Sigma)$. 
\end{theorem}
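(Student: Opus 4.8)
The statement is quoted verbatim from \cite[Corollary~9.34]{FPS:EffectiveBilipschitz}, so within the present paper the ``proof'' is a pointer to that reference. If one wanted to reconstruct the argument, the natural route --- and the one taken there --- is to deduce the effective \emph{filling} statement from the effective \emph{drilling} statement \refthm{BilipFiniteVolume}, which is already in hand. The plan has three steps.

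The first step is to convert the hypothesis on the total normalized length $L$ of the meridians into hyperbolicity of the filling together with an honest upper bound on the length of the resulting short geodesics. Using the cone deformation estimates of Hodgson and Kerckhoff \cite{hk:shape}, as packaged quantitatively in \cite{FPS:EffectiveBilipschitz}, one shows that the filled manifold $M$ admits a hyperbolic metric $V$ in which $\Sigma$ is isotopic to a geodesic link, and that an estimate of the shape $\len_V(\Sigma) \leq 2\pi/(L^2 - 11.7)$ holds. Consequently, if $L^2 \geq 2\pi/\ell + 11.7$ then $\len_V(\Sigma) \leq \ell$. Substituting the two candidate values $\ell = \epsilon^5 / \big(6771 \cosh^5(0.6\epsilon + 0.1475)\big)$ and $\ell = \epsilon^{5/2}\log(J)/11.35$ shows that the two terms inside the maximum in the hypothesis are precisely what is needed to make $\len_V(\Sigma)$ obey the length bound required by \refthm{BilipFiniteVolume}.

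The second step is the identification of the given metric $W$. The manifold $V$ is finite volume, so drilling the geodesic link $\Sigma$ out of $V$ produces a complete finite-volume hyperbolic metric on the interior of $M - \Sigma$; by Mostow--Prasad rigidity this metric is isometric to $W$. Hence \refthm{BilipFiniteVolume}, applied to the pair $(V, \Sigma)$, yields precisely the canonical $J$--bilipschitz inclusions $\varphi \from V^{\geq \epsilon} \hookrightarrow W^{\geq \epsilon/1.2}$ and $\psi \from W^{\geq \epsilon} \hookrightarrow V^{\geq \epsilon/1.2}$. The equivariance under the symmetry group of $(M, \Sigma)$ is inherited directly from the equivariance clause of \refthm{BilipFiniteVolume}: a symmetry of $(M, \Sigma)$ permutes the geodesic components of $\Sigma$ in $V$, hence is realized as an isometry of both $(V, \Sigma)$ and $W$, and the maps $\varphi$ and $\psi$ produced by \refthm{BilipFiniteVolume} respect it.

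The main obstacle is the first step: passing from the scale-invariant normalized length $L$ to a genuine length bound on the short cores of the filling. This is where the additive constant $11.7$ is generated, and it is exactly the place where one must invoke the delicate differential inequalities controlling the behaviour of $\len(\Sigma)$ and of the embedded tube radius along the cone deformation (compare \cite[Section~1.3]{FPS:EffectiveBilipschitz} and the discussion in \refrem{FactorOf4}). Once that estimate is available, the remainder --- the rigidity identification and the transport of the bilipschitz maps together with their symmetries --- is purely formal.
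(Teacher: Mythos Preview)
Your proposal is correct. The paper gives no proof of this theorem; it is simply recorded as \cite[Corollary~9.34]{FPS:EffectiveBilipschitz}, exactly as you say, and your reconstruction of the argument (convert the normalized-length hypothesis to the length bound $\len_V(\Sigma) \leq 2\pi/(L^2-11.7)$ via the Hodgson--Kerckhoff cone deformation estimates, then invoke \refthm{BilipFiniteVolume} and Mostow--Prasad rigidity) accurately reflects how the result is obtained in that reference.
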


We can now use Theorems~\ref{Thm:FillingApproximation} and~\ref{Thm:EffectiveFillFiniteVolume} to prove  \refthm{EffectiveFillingTame}. The proof involves chasing the left half of the diagram in \reffig{FillingApproxOutline}. Starting from the hyperbolic structure $Z$ on the complement of $\Sigma$, we will consider a circle-packed approximating manifold $W_n$, the finite-volume manifold $DD(W_n^\circ)$, its filling $DD(V_n^\circ)$, and the scooped submanifold $V_n^\circ$ that approximates $Y$.

\begin{proof}[Proof of \refthm{EffectiveFillingTame}]
Let $M$ be a tame 3-manifold and $\Sigma \subset M$ a link such that $M - \Sigma$ admits a hyperbolic structure $Z$. If $\vol(Z) < \infty$, then the desired conclusion is covered by \refthm{EffectiveFillFiniteVolume}, substituting $V = Y$ and $W=Z$. For the rest of the proof, we assume $\vol(Z) = \infty$.

Let $\ss = (s_1, \dots, s_k)$ be a tuple of slopes on the cusps of $Z$, with one slope for the meridian of each component of $\Sigma$. We may think of each $s_i$ as a slope on a torus of $\calN(\Sigma)$.  Recall our hypothesis on the total normalized length of the meridians:
\begin{equation}\label{Eqn:L2HypothFilling}
L(\ss)^2 > 4 \, \max \left\{ \frac{2\pi \cdot 6771 \cosh^5(0.6 \epsilon + 0.1475)}{\epsilon^5} + 11.7, \:
\frac{2\pi \cdot 11.35}{\epsilon^{5/2}\log(J)} + 11.7 \right\}.
\end{equation}
Since $\epsilon \leq \log 3$, and since $\cosh^5(x) \geq 1$ for any $x$,  hypothesis \refeqn{L2HypothFilling} is considerably stronger than the normalized length hypothesis of \refthm{FillingApproximation}. Thus $Y = Z(\ss)$ is a hyperbolic structure on $M$ with the same end invariants as those of $Z$.

Fix a basepoint $z \in Z$ so that $\epsilon = 2 \injrad(z)$, and furthermore $z$ lies on an embedded, $\epsilon$--thick horotorus $T^\eta(\sigma)$ about the first component $\sigma \subset \Sigma$. Such a choice of $z \in Z^{\geq \epsilon}$ is possible because $\epsilon$ is a Margulis number for $Z$ by \refthm{MargulisEstimate}; hence the thick part $Z^{> \epsilon}$ is non-empty. 

Let $V_n$ and $W_n$ be the sequences of geometrically finite manifolds constructed in \refthm{FillingApproximation}. In particular, the conformal boundaries of each $V_n$ and each $W_n$ admit the same circle packing $C_n$. 
Let $V_n^\circ \subset V_n$ and $W_n^\circ \subset W_n$ be the scooped submanifolds defined by $C_n$, as in \refdef{DoubleDouble}. We also have strong limits $\rho_n \to \rho$ and $\xi_n \to \xi$, as described in \refthm{FillingApproximation}.

By \refthm{FillingApproximation}, each approximating manifold $W_n$ has pared manifold $(M - \calN(\Sigma), P \cup \bdy \calN(\Sigma))$. Thus each $W_n$ has a rank-2 horocusp $H_i^n$ corresponding to each component $\sigma_i \subset \Sigma$,
with meridian slope $s_i^n$. We may assume $H_i^n \subset W_n^\circ$, after shrinking the horocusp as needed. For each $i$, the normalized length $L(s_i^n)$ converges to the normalized length $L(s_i)$ measured in $Z$. Write $\ss^n = (s_1^n, \ldots, s_k^n)$ for the tuple of slopes in $W_n$ representing the meridians of $\Sigma$. Since $L(s_i^n) \to L(s_i)$ as $n \to \infty$, it follows that for $n \gg 0$, the total normalized length $L(\ss^n)$ must also satisfy the lower bound \refeqn{L2HypothFilling}.

After doubling $W_n^\circ$ twice, as in \refdef{DoubleDouble}, we obtain a finite-volume hyperbolic manifold $DD(W_n^\circ)$. This manifold contains the disjoint union of four isometric copies of $H_i^n$ for each $i$. Then the tuple of slopes $\ss^n$ is also double-doubled to become a tuple of slopes $DD(\ss^n)$ on the cusps of $DD(W_n^\circ)$.
Because each slope $s_i^n$ appears four times in $DD(\ss^n)$,  the total normalized length of the meridians of $DD(H^n)$ satisfies
\[
L^2 >  \max \left\{ \frac{2\pi \cdot 6771 \cosh^5(0.6 \epsilon + 0.1475)}{\epsilon^5} + 11.7, \:
\frac{2\pi \cdot 11.35}{\epsilon^{5/2}\log(J)} + 11.7 \right\}.
\]
Thus, for $n \gg 0$, \refthm{EffectiveFillFiniteVolume} enables us to fill $DD(W_n^\circ)$ along the tuple of meridians of $\ss^n$ and obtain a hyperbolic 3-manifold $U_n$, in which the union of cores of the filled solid tori is a geodesic link $\Upsilon_n$.  By \refthm{FillingApproximation}, we have $U_n = DD(V_n^\circ)$ and $\Upsilon_n = DD(\Sigma_n)$ for a geodesic link $\Sigma_n \subset V_n^\circ$.

Now, \refthm{EffectiveFillFiniteVolume} says that there are $J$--bilipschitz inclusions
\[
\varphi_n \from U_n^{\geq \epsilon} \to DD(W_n^\circ)^{\geq \epsilon/1.2}, \qquad \psi_n \from DD(W_n^\circ)^{\geq \epsilon} \to U_n^{\geq \epsilon/1.2},
\]
which are equivariant with respect to the $\ZZ_2 \times \ZZ_2$ group of symmetries of the pair $(U_n, \Upsilon_n)$. Since $(V_n^\circ, \Sigma_n)$ is a fundamental domain for this group action, $\varphi_n$ and $\psi_n$ restrict to $J$--bilipschitz inclusions 
\[
\varphi_n \from (V_n^\circ)^{\geq \epsilon} \to (W_n^\circ)^{\geq \epsilon/1.2}, \qquad \psi_n \from DD(W_n^\circ)^{\geq \epsilon} \to (V_n^\circ)^{\geq \epsilon/1.2}.
\]
By \refthm{FillingApproximation}, the geodesic links $\Sigma_n \subset V_n^\circ$ converge to a geodesic link $\Sigma_\infty \subset Y$, isotopic to $\Sigma$. After performing this isotopy, we may suppose that $\Sigma \subset Y$ is a geodesic link.

In preparation for \refthm{ConstructBilip}, we choose appropriate basepoints for our geometric limits. 
Recall that we have picked a component $\sigma \subset \Sigma$, and chosen a basepoint $z \in Z$ so that $2 \injrad(z) = \epsilon$, and furthermore $z$ lies on an embedded $\epsilon$--thick horotorus $T^\epsilon(\sigma)$. In a similar fashion, we choose a basepoint $y \in Y$
so that $2 \injrad(y) = \eta \in (\epsilon, 2\epsilon)$, and furthermore $y$ lies on an embedded, $\eta$--thick equidistant torus $T^\eta(\sigma)$ about the same component $\sigma \subset \Sigma$. Such a choice of $y \in Y^{> \epsilon}$ is possible because $\epsilon$ is a Margulis number for $N$; hence the thick part $N^{> \epsilon}$ is non-empty.

\refthm{FillingApproximation} says that for $n \gg 0$, there exist choices of basepoints 
$v_n \in V_n^\circ$ and $w_n \in W_n^\circ$, such that $(V_n^\circ, v_n) \to (Y, y)$ and $(W_n^\circ, w_n) \to (Z, z)$.
The convergence of injectivity radii in a geometric limit implies that $2 \injrad(v_n) \in (\epsilon, 2 \epsilon)$ for large $n$. Similarly,
$2 \injrad(w_n) \to \epsilon$ as $n \to \infty$.

We are now ready to 
construct the $J$--bilipschitz inclusion $\varphi \from Y^{\geq \epsilon} \hookrightarrow Z^{\geq \epsilon/1.2}$, using \refthm{ConstructBilip}. 
We have geometrically convergent sequences $(V_n^\circ, v_n) \to (Y,y)$ and $(W_n, w_n) \to (Z, z)$. We have $y \in Y^{> \epsilon}$ and  $v_n \in (V_n^\circ)^{\geq \epsilon}$ for large $n$, as required. For large $n$, we have a $J$--bilipschitz inclusion $\varphi_n \from (V_n^{\circ})^{\geq \epsilon} \to ( W_n^{\circ})^{\geq \epsilon /1.2}$. Furthermore, $d(\varphi_n(v_n), w_n)$ is uniformly bounded, by exactly the same argument as in the end of the proof of \refthm{EffectiveDrillingTame}. (Essentially, this follows because injectivity radii are well-behaved under both geometric limits and bilipschitz maps.)
Thus  \refthm{ConstructBilip} gives a $J$--bilipschitz inclusion $\varphi \from Y^{\geq \epsilon} \hookrightarrow Z^{\geq \epsilon/1.2}$.

The reverse inclusion $\psi \from Z^{\geq \epsilon} \hookrightarrow Y^{\geq \epsilon/1.2}$ is constructed in exactly the same way, tracing points backwards to check the hypotheses of  \refthm{ConstructBilip}. 
\end{proof}

\section{Short geodesics in infinite-volume manifolds}\label{Sec:ShortGeodesic}

The main results of this section are \refthm{ShortDrillingTame} and \refthm{ShortFillingTame}, which bound 
the change in complex length of a short geodesic under drilling and filling, respectively. 
Corollaries of those results include \refthm{LenBoundDownInfinite} and \refcor{LenBoundUpInfinite}, where the functions that estimate the change in complex length are replaced by constants.

As in \refsec{Drilling}, the proof of \refthm{ShortDrillingTame} combines an approximation result (\refthm{DrillingApproximation}) with a previously proved theorem that works in finite volume (\refthm{ShortDrillingFinVolume}). Similarly, the proof of \refthm{ShortFillingTame} combines an approximation result (\refthm{FillingApproximation}) with a finite-volume theorem (\refthm{ShortFillingFinVolume}).
 
To set up our results, we need to define the functions that will estimate the change in length.

\begin{definition}\label{Def:Haze}
Let $z_0 = \sqrt{\sqrt{5}-2} =  0.5306 \ldots$. For $z \in [ z_0, 1 ]$, define a function
\[ \haze(z) = 3.3957 \, \frac{z(1-z^2)}{1+z^2}. \]
By a derivative computation, 
the function $\haze(z)$ is decreasing and invertible in this domain. Using Cardano's Formula, the inverse function $\haze^{-1}$ can be expressed as follows: 
\[
\haze^{-1}( 3.3957 x) =  \frac{2 \sqrt{ x^2 + 3 }}{3}   \cos \left(   \frac{\pi}{3}  + \frac{1}{3} \tan^{-1} \! \left(\frac{-3  \sqrt{ -3x^4 - 33x^2 + 3 } }{ x^3 + 18x} \right)  \right) - \frac{x}{3}.
\]
Note that $\haze^{-1}$ is defined and monotonically decreasing on $[0, 1.0196]$. Compare \cite[Remark 4.23]{FPS:EffectiveBilipschitz}.
\end{definition}

Here is the geometric meaning of $\haze$ and $\haze^{-1}$. If $(N, \Sigma, g_t)$ is a hyperbolic cone manifold whose singular locus $\Sigma$ has angle $\alpha \in [0, 2\pi]$, the \emph{visual area of $\Sigma$} is defined to be $\calA(\Sigma) = \alpha \len(\Sigma)$. Hodgson and Kerckhoff showed that under appropriate hypotheses, there is an embedded tube about $\Sigma$ of radius $R \geq \arctanh (\haze^{-1}(\calA(\Sigma)))$. See \cite[Theorem 5.6]{hk:univ} and \cite[Corollary 4.25]{FPS:EffectiveBilipschitz} for details. In turn, the radius of this tube is used to control a number of geometric quantities through the cone deformation \cite[Sections 5--7]{FPS:EffectiveBilipschitz}. One of those quantities is the complex length of a non-singular closed geodesic, which we seek to control here.

\begin{definition}\label{Def:ComplexLengthDistance}
Let $\gamma$ be a closed geodesic in a hyperbolic 3-manifold $N$.
Then $\gamma$ corresponds to a loxodromic isometry $\varphi=\varphi(\gamma)\in \Isom^+(\HH^3)$. This loxodromic isometry $\varphi$ has an invariant axis in $\HH^3$, which it translates by distance $\len(\gamma)$ and rotates by angle $\tau(\gamma)$. We define the complex length $\calL_N(\gamma) = \calL(\gamma) = \len(\gamma) + i \tau(\gamma)$.
Observe that $i \calL(\gamma)$ lies in the upper half-plane of $\CC$, which we identify with the hyperbolic plane $\HH^2$. 

Given two complex lengths $\calL_Y(\gamma)$, $\calL_Z(\delta)$, we define the \emph{hyperbolic distance} between them to be
\[
\dhyp(\calL_Y(\gamma), \calL_Z(\delta)) = d_{\HH^2} (i\calL_Y(\gamma), \, i \calL_Z(\delta)).
\]
This is closely related to distance in the Teichm\"uller space of the torus, which is isometric to $\HH^2$. See Minsky~\cite[Section 6.2]{minsky:punctured-tori} for details.
\end{definition}

The hyperbolic distance between lengths can be translated into a bound on the real and imaginary parts of length. The following elementary lemma is  \cite[Lemma~7.14]{FPS:EffectiveBilipschitz}. 

\begin{lemma}\label{Lem:KenPropH2}
Let $\calL_Y(\gamma)$ and $\calL_Z(\delta)$ be complex lengths.
Suppose that 
$ \dhyp(\calL_Y(\gamma), \,  \calL_Z(\delta)) \leq K$ for some $K > 0$.
Then the real and imaginary parts of $\calL_Y(\gamma)$ and $\calL_Z(\delta)$ are bounded as follows:
\[
e^{- K } \: \leq \: \frac{ \len_Z(\delta) }{ \len_Y(\gamma)} \: \leq \: e^K ,
\qquad
|\tau_Z(\delta) - \tau_Y(\gamma)| \: \leq \: \sinh ( K  )  \cdot \min \{ \len_Y(\gamma), \, \len_z(\delta) \}.
\]
\end{lemma}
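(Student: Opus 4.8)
The plan is to work entirely in the upper half-plane model $\HH^2 = \{x+iy : y>0\}$, using the identification from \refdef{ComplexLengthDistance}. Writing $\calL_Y(\gamma) = \len_Y(\gamma) + i\tau_Y(\gamma)$, the point $i\calL_Y(\gamma)$ has coordinates $(-\tau_Y(\gamma),\,\len_Y(\gamma))$, and likewise $i\calL_Z(\delta)$ has coordinates $(-\tau_Z(\delta),\,\len_Z(\delta))$. Thus the hypothesis says these two points of $\HH^2$ are at distance at most $K$, while the conclusion is a bound on the difference of their first coordinates and on the ratio of their second coordinates.

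First I would recall the explicit distance formula in the upper half-plane: for $p=(a,s)$ and $q=(b,t)$ with $s,t>0$,
\[
\cosh d_{\HH^2}(p,q) \;=\; 1 + \frac{(a-b)^2 + (s-t)^2}{2st}.
\]
Feeding in $d_{\HH^2}(p,q)\le K$ yields the single master inequality
\[
(a-b)^2 + (s-t)^2 \;\le\; 2st(\cosh K - 1),
\]
from which both conclusions drop out by elementary algebra. For the length ratio, discard the term $(a-b)^2\ge 0$ to get $(s-t)^2 \le 2st(\cosh K-1)$, i.e.\ $s^2+t^2 \le 2st\cosh K$; with $u = t/s$ this reads $u + u^{-1} \le 2\cosh K$, equivalently $(u-\cosh K)^2 \le \sinh^2 K$, whose solution set is $e^{-K} \le u \le e^{K}$. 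Taking $u = \len_Z(\delta)/\len_Y(\gamma)$ gives the first displayed bound.

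For the rotation bound, the master inequality gives $(a-b)^2 \le 2st\cosh K - s^2 - t^2$. Assume without loss of generality that $s\le t$ (otherwise swap the roles of $p$ and $q$, and hence of $\gamma$ and $\delta$). Then one checks the identity
\[
2st\cosh K - s^2 - t^2 \;=\; s^2\sinh^2 K - (t - s\cosh K)^2 \;\le\; s^2\sinh^2 K,
\]
so $|a-b| \le s\sinh K = \sinh(K)\min\{s,t\}$. Translating back, $|\tau_Z(\delta) - \tau_Y(\gamma)| = |a-b| \le \sinh(K)\min\{\len_Y(\gamma),\len_Z(\delta)\}$, which is the second displayed bound.

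I do not expect any genuine obstacle: the statement is a self-contained exercise in two-dimensional hyperbolic trigonometry, and the only mild care needed is to track which of $\len_Y(\gamma)$, $\len_Z(\delta)$ realizes the minimum so as to produce the sharp constant $\sinh K$ rather than a weaker one. Alternatively, one can run the argument purely geometrically: a hyperbolic ball of radius $K$ about $(a,s)$ is the Euclidean disk centered at $(a,\,s\cosh K)$ of Euclidean radius $s\sinh K$, and both conclusions are read off directly from the statement that $q$ lies in this disk.
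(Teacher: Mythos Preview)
Your proof is correct. The paper itself does not give a proof of this lemma; it simply cites \cite[Lemma~7.14]{FPS:EffectiveBilipschitz} and calls the statement elementary. Your argument via the explicit distance formula in the upper half-plane is exactly the natural elementary verification, and the alternative you mention (the hyperbolic $K$--ball about $(a,s)$ being the Euclidean disk of center $(a,s\cosh K)$ and radius $s\sinh K$) is an equally clean way to see both bounds at once. One minor remark: the WLOG step is unnecessary, since the identity $2st\cosh K - s^2 - t^2 = s^2\sinh^2 K - (t-s\cosh K)^2$ holds regardless of which of $s,t$ is larger, and the same identity with $s$ and $t$ swapped gives $|a-b|\le t\sinh K$ as well; taking the minimum is then automatic.
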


Finally, given a filled manifold $Y$ and a drilled manifold $Z$, we control the hyperbolic distance $ \dhyp(\calL_Y(\gamma), \,  \calL_Z(\gamma))$ via the following function.

\begin{definition}
\label{Def:FDefine}
For $z \in [z_0,1]$ and $\ell \in (0, 0.5085]$, define a function
\[
F(z, \ell) =  \frac{(1+ z^2)}{  z^3 (3-z^2)} \cdot \frac{\ell}{10.667 - 20.977 \ell} \, .
\]
Note that $F$ is positive everywhere on its domain, decreasing in $z$, and increasing in $\ell$. Compare \cite[Definition 7.2]{FPS:EffectiveBilipschitz}.
\end{definition}

\subsection{Short geodesics under drilling}
The first main result of this section controls the complex length of a short geodesic $\gamma$ under the drilling of a geodesic link $\Sigma$.
In the next theorem, $\ell = \len_Y(\Sigma)$ is the length of the geodesic link that we wish to drill, $m= \len_Y(\gamma)$ is the length of the geodesic that we wish to control, and $z = \zmin = \tanh \Rmin$, where $\Rmin$ is the minimum radius of an embedded tube about $\Sigma^+ = \Sigma \cup \gamma$. Then the function $F(z,\ell)$ controls the change in the complex length $\calL(\gamma)$. We will not compute $\Rmin$ or $\zmin$ directly; we will merely estimate $\zmin$ as a function of $\ell$ and $m$.

\begin{theorem}\label{Thm:ShortDrillingTame}
Let $Y$ be a tame hyperbolic 3-manifold.
Let $\Sigma$ be a geodesic link in $Y$, and $\gamma$ a closed geodesic disjoint from $\Sigma$.
Let $\ell = \len_Y(\Sigma)$ and $m = \len_Y(\gamma)$ be the lengths of $\Sigma$ and $\gamma$ in the complete metric on $Y$. 
Suppose $\ell < 0.018375$ and $m < 0.0996 - 1.408\cdot \ell$.
 Let
\[
\zmin = \haze^{-1}(2\pi(4\ell + m + \ERROR)). 
\]
Then $Y - \Sigma$ also admits a complete hyperbolic metric $Z$, with the same end invariants as those of $Y$. The closed curve $\gamma$ is isotopic to a geodesic in this metric. Furthermore, the complex lengths of $\gamma$ in $Y$ and $Z$ are related as follows:
\[
\dhyp(\calL_Y(\gamma), \,  \calL_{Z}(\gamma))
\: \leq \:  4\pi^2 \,F(\zmin, 4\ell) .
\]
\end{theorem}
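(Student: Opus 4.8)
The plan is to follow the same bootstrapping strategy used for \refthm{EffectiveDrillingTame}, combining the approximation \refthm{DrillingApproximation} with the finite-volume analogue \refthm{ShortDrillingFinVolume}, and then using the bilipschitz/complex-length limiting machinery of \refsec{Extracting} and \refsec{ConvergenceOfLinks}. First, if $\vol(Y) < \infty$ the statement is exactly \refthm{ShortDrillingFinVolume}, so assume $\vol(Y) = \infty$. Since each component of $\Sigma^+ = \Sigma \cup \gamma$ has length at most $4\ell + m < \log 3$ under the hypotheses $\ell < 0.018375$ and $m < 0.0996 - 1.408\ell$, \reflem{DrillHyperbolic} (or \refthm{DrillingApproximation} directly) gives the hyperbolic structure $Z$ on $Y-\Sigma$ with the same end invariants, and \refprop{GeodesicsConverge} guarantees that $\gamma$ remains a geodesic in $Z$, since it is a short geodesic disjoint from $\Sigma$.

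Next, I would invoke \refthm{DrillingApproximation} applied to the link $\Sigma^+ = \Sigma \cup \gamma$ (not just $\Sigma$) — or, more carefully, apply it to $\Sigma$ while simultaneously tracking $\gamma$ as a separate short geodesic via \refprop{GeodesicsConverge}. This produces geometrically finite approximants $V_n \to Y$ (with a circle-packed conformal boundary) and the drilled manifolds $W_n \to Z$, together with their scooped versions $V_n^\circ, W_n^\circ$ and finite-volume double-doubles $DD(V_n^\circ), DD(W_n^\circ)$, with $DD(W_n^\circ) \cong DD(V_n^\circ) - DD(\Sigma_n)$. In $V_n^\circ$ we have geodesic links $\Sigma_n \to \Sigma$ and geodesics $\gamma_n \to \gamma$; by convergence of translation lengths in the geometric limit, $\len_{V_n}(\Sigma_n) \to \ell$ and $\len_{V_n}(\gamma_n) \to m$. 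After double-doubling, $\len_{DD(V_n^\circ)}(DD(\Sigma_n)) = 4\len(\Sigma_n)$, which for large $n$ satisfies the hypothesis $4\ell < 0.018375 \cdot 4 = 0.0735$ required by \refthm{ShortDrillingFinVolume} (matching the ``$4\ell$'' appearing in the conclusion $4\pi^2 F(\zmin, 4\ell)$), while $\len_{DD(V_n^\circ)}(DD(\gamma_n)) = m$ stays below the needed threshold. Applying \refthm{ShortDrillingFinVolume} to the finite-volume pair $(DD(V_n^\circ), DD(\Sigma_n))$ then bounds $\dhyp(\calL_{DD(V_n^\circ)}(DD(\gamma_n)), \calL_{DD(W_n^\circ)}(DD(\gamma_n)))$ by $4\pi^2 F((\zmin)_n, 4\ell_n)$, where $(\zmin)_n = \haze^{-1}(2\pi(4\ell_n + m_n + \ERROR))$ and $\ell_n \to \ell$, $m_n \to m$. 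By symmetry of $DD$, the same bound holds for the single copy $\gamma_n$ in $V_n^\circ$ versus $W_n^\circ$.

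Finally, I would pass to the limit. The complex length $\calL(\gamma)$ is determined by the holonomy $\rho(\gamma) \in \PSL(2,\CC)$, which converges under the strong (hence algebraic) limits $\rho_n \to \rho$ and $\xi_n \to \xi$ furnished by \refthm{DrillingApproximation}; thus $\calL_{V_n}(\gamma_n) \to \calL_Y(\gamma)$ and $\calL_{W_n}(\gamma_n) \to \calL_Z(\gamma)$. Since $\dhyp$ is continuous on $\HH^2 \times \HH^2$, and since $(\zmin)_n \to \haze^{-1}(2\pi(4\ell + m + \ERROR)) = \zmin$ by continuity of $\haze^{-1}$ (using that the argument stays in the domain $[0,1.0196]$, which follows from the length hypotheses), the bound $\dhyp(\calL_{V_n}(\gamma_n), \calL_{W_n}(\gamma_n)) \leq 4\pi^2 F((\zmin)_n, 4\ell_n)$ passes to the limit to give $\dhyp(\calL_Y(\gamma), \calL_Z(\gamma)) \leq 4\pi^2 F(\zmin, 4\ell)$, using monotonicity and continuity of $F$ in both arguments.

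The main obstacle, as in the drilling theorem, is bookkeeping the factor of $4$ correctly: one must verify that the finite-volume hypotheses of \refthm{ShortDrillingFinVolume} are met after double-doubling — i.e. that $4\ell < 0.0735$, that $m$ satisfies the required bound relative to $4\ell$ (note the hypothesis $m < 0.0996 - 1.408\ell$ is engineered so that $m < 0.0996 - 1.408 \cdot \ell$ survives passage from $\ell$ to the per-copy length while $4\ell$ appears in the conclusion), and that the tube-radius estimate $\zmin = \haze^{-1}(2\pi(4\ell + m + \ERROR))$ is the one produced by the finite-volume argument applied to $DD(\Sigma_n) \cup DD(\gamma_n)$ whose total visual area is governed by $4\ell + m$. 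A secondary technical point is ensuring $\gamma_n$ and $\Sigma_n$ remain disjoint and unknotted in the approximants, which is exactly the content of \refprop{GeodesicsConverge} applied to the link $\Sigma \cup \gamma$.
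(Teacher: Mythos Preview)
Your proposal is correct and follows essentially the same route as the paper: reduce to finite volume via \refthm{DrillingApproximation}, apply \refthm{ShortDrillingFinVolume} inside $DD(V_n^\circ)$ to the link $DD(\Sigma_n)$ (of length $4\ell_n$) and the geodesic $\gamma_n$, then pass to the limit using the strong limits $\rho_n \to \rho$ and $\xi_n \to \xi$ together with \refprop{GeodesicsConverge}. Two small points of order are worth tightening. First, your notation $DD(\gamma_n)$ is misleading: the paper keeps $\gamma_n$ as a \emph{single} geodesic sitting in one copy $V_n^\circ \subset DD(V_n^\circ)$ (and argues it stays in one copy of $W_n^\circ$ since it can be isotoped off the red and blue faces); this is why the finite-volume input is $4\ell + m$ rather than $4\ell + 4m$. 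Second, the early claim that ``\refprop{GeodesicsConverge} guarantees that $\gamma$ remains a geodesic in $Z$'' is premature: that proposition, applied in the $W_n \to Z$ direction, requires first knowing that $\gamma_n$ is a geodesic in $W_n$ of uniformly bounded length, which only follows \emph{after} \refthm{ShortDrillingFinVolume} has been applied (the paper makes this step explicit, using \reflem{KenPropH2} to bound $\len_{W_n}(\gamma_n) < 0.2$ before invoking \refprop{GeodesicsConverge}).
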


The proof of \refthm{ShortDrillingTame} relies on the following finite-volume analogue \cite[Theorem 7.19]{FPS:EffectiveBilipschitz}.

\begin{theorem}[Short geodesics under drilling, \cite{FPS:EffectiveBilipschitz}]\label{Thm:ShortDrillingFinVolume}
Let $V$ be a complete, finite volume hyperbolic 3-manifold.
Let $\Sigma$ be a geodesic link in $V$, and $\gamma$ a closed geodesic disjoint from $\Sigma$.
Let $\ell = \len_V(\Sigma)$ and $m = \len_V(\gamma)$ be the lengths of $\Sigma$ and $\gamma$ in the complete metric on $V$. 
Suppose $\ell \leq 0.0735$ and $m \leq 0.0996 - 0.352\cdot \ell$.
Let
\[
z'_{\min} = \haze^{-1}(2\pi(\ell + m + \ERROR)) > 0.6288.
\]
Then $V - \Sigma$ also admits a complete hyperbolic metric $W$, in which $\gamma$ is again isotopic to a geodesic. Furthermore, the complex lengths of $\gamma$ in $V$ and $W$ are related as follows:
\[
\dhyp(\calL_V(\gamma), \,  \calL_{W}(\gamma))
\: \leq \:  4\pi^2 \,F(z'_{\min}, \ell) .
\]
\end{theorem}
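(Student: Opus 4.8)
The plan is to realize $W$ as the terminal structure of a Hodgson--Kerckhoff cone deformation on $\Sigma$, and to track the complex length of $\gamma$ as an auxiliary, \emph{non-singular} geodesic along the way. Write $\Sigma^+ = \Sigma \cup \gamma$, and for $t \in [0, 2\pi]$ let $V_t$ be the hyperbolic cone manifold, homeomorphic to $V$, in which $\Sigma$ is singular with cone angle $t$ while $\gamma$ is an ordinary (cone angle $2\pi$) closed geodesic, so that $V_{2\pi} = V$ and $V_0 = W$. Let $\calL_t(\gamma) = \len_t(\gamma) + i\,\tau_t(\gamma)$ be the complex length of $\gamma$ in $V_t$. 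By \refdef{ComplexLengthDistance}, $t \mapsto i\calL_t(\gamma)$ is a path in the upper half-plane, identified with $\HH^2$, and $\dhyp(\calL_V(\gamma), \calL_W(\gamma))$ is bounded above by the hyperbolic length of this path. So the whole problem reduces to bounding that length and then quoting \reflem{KenPropH2}.

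The first step is to secure a uniform embedded tube about $\Sigma^+$ that persists for all $t$. A short numerical check shows that the hypotheses $\ell \le 0.0735$ and $m \le 0.0996 - 0.352\,\ell$ are calibrated precisely so that $2\pi(\ell + m + \ERROR) < \haze(0.6288)$; since $\haze^{-1}$ of \refdef{Haze} is defined and strictly decreasing on $[0, 1.0196]$, this places $z'_{\min} = \haze^{-1}(2\pi(\ell + m + \ERROR))$ in the interval $(0.6288, 1]$, and in particular $2\pi(\ell+m+\ERROR)$ lies in the domain of $\haze^{-1}$; one also notes $\ell < 10.667/20.977$, so that $F(z'_{\min}, \ell)$ of \refdef{FDefine} is defined and positive. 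Treating $\gamma$ momentarily as a component of the singular locus with angle $2\pi$, the visual area of $\Sigma^+$ in $V$ is $\calA(\Sigma^+) = 2\pi\ell + 2\pi m$, which the $\ERROR$ margin absorbs into $2\pi(\ell + m + \ERROR)$; the Hodgson--Kerckhoff tube-packing estimate \cite[Theorem~5.6]{hk:univ}, made explicit in \cite[Corollary~4.25]{FPS:EffectiveBilipschitz}, then produces an embedded tube about $\Sigma^+$ of radius $R$ with $\tanh R \ge \haze^{-1}(\calA(\Sigma^+)) \ge z'_{\min}$. Because the visual area of $\Sigma$ does not increase along the cone deformation, the same lower bound $z'_{\min}$ on the tube radius about $\Sigma^+$ holds for every $t \in [0, 2\pi]$; this is exactly what keeps the deformation non-degenerate, so that it runs all the way to $t = 0$ and produces the complete hyperbolic metric $W$ on $V - \Sigma$ with $\gamma$ still isotopic to a geodesic.

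Next I would estimate the infinitesimal change of $i\calL_t(\gamma)$. Its derivative is a bounded linear functional of the harmonic, trace-free, divergence-free strain field $\omega_t$ representing the variation of the hyperbolic structure as the cone angle on $\Sigma$ varies; following Bromberg \cite{bromberg:conemflds} and the estimates of \cite[Sections~5--7]{FPS:EffectiveBilipschitz}, the $\HH^2$-speed of $t \mapsto i\calL_t(\gamma)$ is bounded by a universal constant times the pointwise size of $\omega_t$ near $\gamma$ --- the normalization of the $\HH^2$ metric is precisely what makes this scale-invariant, so no factor $1/\len_t(\gamma)$ intrudes. Since $\gamma$ lies at the core of an embedded tube of radius $\ge \arctanh z'_{\min}$ and $\omega_t$ is harmonic there, a mean-value inequality bounds $|\omega_t|$ near $\gamma$ by an explicit decreasing function of $z'_{\min}$ times $\|\omega_t\|_{L^2}$ over that tube, and the Hodgson--Kerckhoff boundary-term estimate bounds the latter in terms of the visual area of $\Sigma$ and its tube radius. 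Integrating $t$ over $[0, 2\pi]$, and using the Hodgson--Kerckhoff differential inequality for the ratio of cone angle to $\len(\Sigma)$ along the deformation (\cite[Proposition~5.5]{hk:shape}, restated as \cite[Lemma~6.7]{FPS:EffectiveBilipschitz}) to replace the running length $\len_t(\Sigma)$ by a function of its initial value $\ell$, the accumulated factors collect into the length term $\ell/(10.667 - 20.977\,\ell)$ of $F$, the factor $(1 + z^2)/(z^3(3 - z^2))$ evaluated at $z'_{\min}$, and the powers of $\pi$ coming from the cone-angle integration, giving $\dhyp(\calL_V(\gamma), \calL_W(\gamma)) \le 4\pi^2\, F(z'_{\min}, \ell)$. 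Applying \reflem{KenPropH2} with $K = 4\pi^2 F(z'_{\min}, \ell)$ then converts this into the displayed bounds on $\len_W(\gamma)/\len_V(\gamma)$ and $|\tau_W(\gamma) - \tau_V(\gamma)|$.

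The main obstacle is this middle step. Because $\gamma$ is \emph{not} part of the singular locus, the change in $\calL_t(\gamma)$ cannot be read off from the deformation parameters and must be extracted from the global strain field; one must therefore carry the $L^2$ control of $\omega_t$ obtained near $\Sigma$ all the way to a \emph{pointwise} bound near $\gamma$, and do so uniformly in $t$, which forces one to verify that the tube about $\gamma$ remains embedded and disjoint from the tube about $\Sigma$ for every cone angle. Keeping every constant explicit through this chain of elliptic and integral estimates --- and confirming that the stated numerical hypotheses hold $z'_{\min}$ inside $(0.6288, 1]$ and $\ell$ below the pole of $F$, so that all functions involved stay in their valid ranges --- is where essentially all the work lies; the existence of $W$ itself is comparatively soft once the tube radius is under control.
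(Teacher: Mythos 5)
This theorem is quoted verbatim from \cite[Theorem~7.19]{FPS:EffectiveBilipschitz} and is not proved in the present paper, so the only comparison available is with the cone-deformation argument of that reference as summarized around Definitions~\ref{Def:Haze} and~\ref{Def:FDefine}; your outline --- treating $\Sigma^+ = \Sigma \cup \gamma$ as the singular locus with visual area $2\pi(\ell+m)$ to get the tube radius $\arctanh z'_{\min}$, bounding the $\HH^2$--speed of $t \mapsto i\calL_t(\gamma)$ by pointwise estimates on the harmonic strain field derived from $L^2$ bounds in the tube, and integrating over the cone angle with the Hodgson--Kerckhoff differential inequality to assemble $4\pi^2 F(z'_{\min},\ell)$ --- is exactly that strategy, and your numerical check that the hypotheses force $z'_{\min} > 0.6288$ is correct. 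The analytic heavy lifting you defer (the open--closed argument keeping the tube about $\gamma$ embedded for all $t$, and the explicit mean-value constants) is precisely where the cited proof spends its effort, so this is the same approach, correctly reconstructed in outline.
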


\begin{proof}[Proof of \refthm{ShortDrillingTame}]
If $\vol(Y)<\infty$, the desired result already follows from \refthm{ShortDrillingFinVolume}. (Although the definition of $z'_{\min}$ in \refthm{ShortDrillingFinVolume} differs from the definition of $\zmin$ in \refthm{ShortDrillingTame}, the monotonicity of $\haze^{-1}$ implies that $z'_{\min} > \zmin$. Then, the monotonicity of $F$ ensures 
that the conclusion of \refthm{ShortDrillingFinVolume} still applies with $\zmin$ in place of $z'_{\min}$ and $4\ell$ in place of $\ell$.) 
For the rest of the proof, we assume that $\vol(Y)= \infty$. 

We will apply \refthm{DrillingApproximation}. Let $V_n$ and $W_n$ be the sequences of geometrically finite manifolds constructed in that theorem. Let $Z$ be the hyperbolic manifold homeomorphic to $Y-\Sigma$, with the same end invariants as those of $Y$. Then, by \refthm{DrillingApproximation}, the conformal boundaries of each $V_n$ and each $W_n$ admit the same circle packing $C_n$. That theorem also guarantees a strong limit $\rho_n \to \rho$ (where $\rho_n$ is the representation corresponding to $W_n$ and $\rho$ corresponds to $Y$) and a strong limit $\xi_i \to \xi$ (where $\xi_n$ is the representation corresponding to $W_n$ and $\xi$ corresponds to $Z$).

Now, let $\gamma \subset Y$ be a closed geodesic satisfying the length bound of the theorem. By Meyerhoff's theorem \cite[Section 7]{meyerhoff}, we have $\gamma \cap \Sigma = \emptyset$, hence $\Sigma^+ = \Sigma \cup \gamma$ is a geodesic link where each component is shorter than $0.1$.
Then \refprop{GeodesicsConverge} implies that for $n \gg 0$, the approximating manifold $V_n$ contains a geodesic link $\Sigma_n^+ = \Sigma_n \cup \gamma_n$, where the sequence $\{ \Sigma_n^+ \}$ converges to $\Sigma^+$ as $n \to \infty$.
In particular, setting $\ell_n = \len_{V_n}(\Sigma_n)$ and $m_n = \len_{V_n}(\gamma_n)$, we have $\ell_n \to \ell$ and $m_n \to m$. Consequently, for all $n \gg 0$, we have $\ell_n \leq 0.018375$ and $m_n \leq 0.0996 - 1.408\cdot \ell_n$.

For every $n$ where $\Sigma_n^+$ is defined, we have $\Sigma_n^+ \subset CC(V_n) \subset V_n^\circ$. Thus, for all $n \gg 0$, the double-double $DD(V_n^\circ)$ contains the double-double $DD(\Sigma_n)$, a geodesic link of total length $4 \ell_n \leq 0.0735$. Furthermore, by construction, we have
\[
m_n \leq 0.0996 - 1.408\cdot \ell_n = 0.0996 - 0.352\cdot 4 \ell_n.
\]
Thus $DD(V_n^\circ)$ satisfies the hypotheses of \refthm{ShortDrillingFinVolume}. Combining that result with \refthm{DrillingApproximation}, we may drill the link $DD(\Sigma_n)$ and obtain a cusped hyperbolic 3-manifold $DD(W_n^\circ) = DD(V_n^{\circ}) - DD(\Sigma_n)$ containing a closed geodesic isotopic to $\gamma_n$. Furthermore, \refthm{ShortDrillingFinVolume} gives
\[
\dhyp(\calL_{DD(V_n^\circ)}(\gamma_n), \,  \calL_{DD(V_n^\circ)}(\gamma_n))
 \leq   4\pi^2 \,F(z_{\min}^n, 4\ell_n),
 \]
where
\[
z_{\min}^n = \haze^{-1}(2\pi(4\ell_n + m_n + \ERROR)). 
\]

Observe that the isotopy class of $\gamma_n$ in $V_n^{\circ} - \Sigma_n$ contains a representative disjoint from the scooped boundary (the red and blue faces). 
Thus the closed geodesic $\gamma_n$ in the hyperbolic metric on $DD(W_n^\circ) = DD(V_n^{\circ}) - DD(\Sigma_n)$ must be disjoint from the red and blue totally geodesic surfaces that partition the four copies of the fundamental domains $W_n^\circ$. In short, we may take $\gamma_n$ to be a closed geodesic in $W_n^\circ$.
Thus, by the above displayed equation, we also have
\begin{equation}\label{Eqn:LengthChangeApprox}
\dhyp(\calL_{V_n}(\gamma_n), \,  \calL_{W_n}(\gamma_n))
 \leq   4\pi^2 \,F(z_{\min}^n, 4\ell_n),
\end{equation}
Observe that 
\[
\lim_{n\to \infty} z_{\min}^n = \lim_{n \to \infty} \haze^{-1}(2\pi(4\ell_n + m_n + \ERROR)) = \haze^{-1}(2\pi(4\ell + m + \ERROR)) = \zmin.
\]
Since $z_{\min}^n \geq 0.6288$ by \refthm{ShortDrillingFinVolume}, we can substitute $F(z_{\min}^n, 4\ell_n) \leq 0.0174$ in \refeqn{LengthChangeApprox}, hence \reflem{KenPropH2} implies $\len_{W_n}(\gamma_n) < 2 m_n < 0.2$.

Now, recall the strong limit $\xi_n \to \xi$. The closed geodesics $\gamma_n \subset W_n$ have length universally bounded by a constant less than $\log 3$, hence \refprop{GeodesicsConverge}, says that $\gamma_n \subset W_n$ converge to a closed geodesic $\gamma \subset Z$ in the geometric limit. In particular, $\calL_{W_n}(\gamma_n) \to \calL_Z(\gamma)$. Taking limits of the bound in \refeqn{LengthChangeApprox} as $n \to \infty$
gives
\[
\dhyp(\calL_{Y}(\gamma), \,  \calL_{Z}(\gamma))
 \leq   4\pi^2 \,F(\zmin, 4\ell),
\]
as desired.
\end{proof}

We can now derive \refthm{LenBoundDownInfinite}, which was stated in the introduction.

\begin{proof}[Proof of \refthm{LenBoundDownInfinite}]
Let $\ell = \len_Y(\Sigma)$ and $m = \len_Y(\gamma)$, and assume $\max\{4\ell, m\} < 0.0735$. This hypothesis implies $\ell < 0.018375$ and thus $m + 1.408 \ell < 0.0996$, hence the hypotheses of \refthm{ShortDrillingTame} are satisfied. By \reflem{DrillHyperbolic}, $Y - \Sigma$ admits a hyperbolic metric $Z$, with the same end invariants. In addition, the above hypothesis on $\ell$ and $m$, combined with the monotonicity of $\haze^{-1}$, implies
\[
\zmin = \haze^{-1}(2\pi(4\ell+m + \ERROR)) \geq 0.6299.
\]
Now, \refthm{ShortDrillingTame} gives
\[
\dhyp(\calL_{Y}(\gamma), \,  \calL_{Z}(\gamma))
 \leq  4 \pi^2 F(\zmin, 4\ell) 
 \leq 4 \pi^2 F(0.6299, 0.0735)
\leq 0.6827,
\]
where the second inequality uses the monotonicity of $F$ and the third inequality comes from evaluating \refdef{FDefine}.
Finally, 
\reflem{KenPropH2} gives
\[
1.9793^{-1} \leq \frac{\len_{Z}(\gamma)}{\len_{Y}(\gamma)} \leq 1.9793
\qquad \text{and} \qquad
|\tau_{Z}(\gamma) - \tau_{Y}(\gamma) | \leq 0.05417. \qedhere
\]
\end{proof}

\subsection{Short geodesics under filling}
Next, we turn our attention to bounding the length of a short geodesic under filling rather than drilling. The following result is the
filling analogue of \refthm{ShortDrillingTame}.

\begin{theorem}\label{Thm:ShortFillingTame}
Let $V$ be a tame, hyperbolic 3-manifold and $\Sigma$ a geodesic link in $V$. Suppose that $V - \Sigma$ admits a hyperbolic structure $W$ with the same end invariants as those of $V$, such that the total normalized length of the meridians of $\Sigma$ in $W$ satisfies $L^2 > 512$. Let $\gamma \subset W$ be a closed geodesic of length $m = \len_{W}(\gamma) < 0.056$. Define
\[
\zmin = \haze^{-1}\left(\frac{(2\pi)^2}{(L/2)^2 - 14.7} + 2\pi \cdot 1.656 \, m \right).
\]
Then $\gamma$ is isotopic to a closed geodesic in $V$. Furthermore, the complex lengths of $\gamma$ in $V$ and $W$ are related as follows:
\[
\dhyp(\calL_V(\gamma), \,  \calL_{W}(\gamma))
\: \leq \:  4\pi^2 \,F \big( \zmin, \tfrac{2\pi}{(L/2)^2 - 14.7} \big) .
\]
\end{theorem}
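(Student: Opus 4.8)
The plan is to mirror the two-step argument used for \refthm{ShortDrillingTame}, with filling in place of drilling. If $\vol(V)<\infty$ then $\vol(W)<\infty$ as well, and the statement follows at once from the finite-volume analogue \refthm{ShortFillingFinVolume}: the hypothesis $L^2>512$ is stronger than what that theorem demands, and although the present statement is phrased with $(L/2)^2$ rather than $L^2$, the monotonicities of $\haze^{-1}$ (decreasing) and of $F$ (decreasing in the first variable, increasing in the second) imply that replacing $L^2$ by the smaller quantity $(L/2)^2$ only weakens the bound, so it remains valid. A direct computation shows that with $L^2>512$ and $m<0.056$ the argument of $\haze^{-1}$ defining $\zmin$ lies in the admissible interval $[0,1.0196]$, so $\zmin$ is well defined and the finite-volume hypotheses hold. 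Assume henceforth that $\vol(V)=\infty$.

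First I would apply \refthm{FillingApproximation}, taking $(M,P)$ to be the pared manifold associated with $V$, $\calN(\Sigma)\subset M$ a regular neighborhood of $\Sigma$, and $Z:=W$ the given hyperbolic structure on $M-\calN(\Sigma)$; its hypothesis $L^2\geq 230.1$ on the normalized length of the meridians holds since $L^2>512$. This produces geometrically finite approximating manifolds $V_n$ and $W_n$ whose conformal boundaries share a circle packing $C_n$, strong limits $\rho_n\to\rho$ and $\xi_n\to\xi$ with geometric limits $(V_n^\circ,v_n)\to(Y,y)$ and $(W_n^\circ,w_n)\to(Z,z)$, a geodesic link $\Sigma_n\subset V_n^\circ$ with $W_n\cong V_n-\Sigma_n$ and meridian slopes $\ss^n$ on $W_n$ satisfying $L(\ss^n)\to L$, and the identification of finite-volume manifolds $DD(V_n^\circ)-DD(\Sigma_n)\cong DD(W_n^\circ)$. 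Here $Z$ is the hyperbolic structure $W$ on $V-\Sigma$, while $Y$ is a hyperbolic structure on the filled manifold with the same end invariants as $W$, hence as $V$; since $Y$ and $V$ share their underlying manifold and their end invariants, the ending lamination \refthm{ELC} identifies $Y$ with $V$.

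Next I would track $\gamma$. Because $m=\len_W(\gamma)<0.056<\log 3$, \refprop{GeodesicsConverge} applied to the strong limit $\xi_n\to\xi$ gives, for $n\gg 0$, a closed geodesic $\gamma_n\subset W_n$ isotopic to $\gamma$ with $\len_{W_n}(\gamma_n)<\log 3-\delta$ and $\gamma_n\to\gamma$ in the geometric limit; thus $m_n:=\len_{W_n}(\gamma_n)\to m$ and $m_n<0.056$ for $n\gg 0$. Since $\gamma_n\subset CC(W_n)\subset W_n^\circ$ and, exactly as in the proof of \refthm{ShortDrillingTame}, $\gamma_n$ may be isotoped off the red and blue geodesic faces, $\gamma_n$ is a closed geodesic of the finite-volume manifold $DD(W_n^\circ)$. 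The double-doubled slopes satisfy $L(DD(\ss^n))^2=L(\ss^n)^2/4\to(L/2)^2>128$, so for $n\gg 0$ the hypotheses of \refthm{ShortFillingFinVolume} are met by $DD(W_n^\circ)$ with the link $DD(\Sigma_n)$ and the geodesic $\gamma_n$; filling recovers $DD(V_n^\circ)$ by \refthm{FillingApproximation}. Writing $\zmin^n=\haze^{-1}\big(\tfrac{(2\pi)^2}{L(DD(\ss^n))^2-14.7}+2\pi\cdot 1.656\,m_n\big)$, that theorem yields
\[
\dhyp\big(\calL_{DD(V_n^\circ)}(\gamma_n),\,\calL_{DD(W_n^\circ)}(\gamma_n)\big)\;\leq\;4\pi^2\,F\big(\zmin^n,\tfrac{2\pi}{L(DD(\ss^n))^2-14.7}\big).
\]
Using the $\ZZ_2\times\ZZ_2$ symmetry of the pair $(DD(V_n^\circ),DD(\Sigma_n))$, whose fundamental domain is $(V_n^\circ,\Sigma_n)$, the geodesic representing $\gamma_n$ in $DD(V_n^\circ)$ lies in one copy of $V_n^\circ$, so $\gamma_n$ is isotopic to a closed geodesic of $V_n$ and the same inequality holds with $\calL_{V_n}(\gamma_n)$ and $\calL_{W_n}(\gamma_n)$ in place of $\calL_{DD(V_n^\circ)}(\gamma_n)$ and $\calL_{DD(W_n^\circ)}(\gamma_n)$.

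To finish, note that the right-hand side above is uniformly small because $\zmin^n\to\zmin$ stays bounded away from $0$; hence \reflem{KenPropH2} gives $\len_{V_n}(\gamma_n)\leq 1.656\,m_n<\log 3$ for $n\gg 0$. Now \refprop{GeodesicsConverge}, applied to the strong limit $\rho_n\to\rho$, shows that $\gamma$ is isotopic to a closed geodesic $\gamma_\infty\subset V$ with $\gamma_n\to\gamma_\infty$ geometrically, so $\calL_{V_n}(\gamma_n)\to\calL_V(\gamma)$ while $\calL_{W_n}(\gamma_n)\to\calL_W(\gamma)$. Passing to the limit in the displayed inequality --- using continuity of $\dhyp$, $\haze^{-1}$, and $F$, together with $L(DD(\ss^n))^2\to(L/2)^2$, $m_n\to m$, and $\zmin^n\to\zmin$ --- produces the asserted bound $\dhyp(\calL_V(\gamma),\calL_W(\gamma))\leq 4\pi^2F(\zmin,\tfrac{2\pi}{(L/2)^2-14.7})$. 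I expect the main obstacle to be the interlocking use of \refprop{GeodesicsConverge} in both directions: one needs a uniform upper bound on $\len_{V_n}(\gamma_n)$ strictly below $\log 3$ before concluding $\gamma_n\to\gamma_\infty$ in $V$, and that bound is precisely what \refthm{ShortFillingFinVolume} together with \reflem{KenPropH2} supplies --- this is exactly why the coefficient $1.656$ and the bound $m<0.056$ appear --- alongside the routine but delicate bookkeeping of normalized lengths through the double-doubling (where $L$ becomes $L/2$) and the accompanying monotonicity and domain checks.
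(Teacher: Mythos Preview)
Your proposal is correct and follows essentially the same route as the paper: reduce to the finite-volume case, then in infinite volume invoke \refthm{FillingApproximation} to get the approximating sequences $V_n$, $W_n$, track $\gamma$ through the strong limit on the drilled side via \refprop{GeodesicsConverge}, apply the finite-volume \refthm{ShortFillingFinVolume} to $DD(W_n^\circ)$ with $L(DD(\ss^n))^2=L(\ss^n)^2/4\to(L/2)^2$, restrict to one copy of $V_n^\circ$ by symmetry, bound $\len_{V_n}(\gamma_n)$ uniformly below $\log 3$ via \reflem{KenPropH2} so \refprop{GeodesicsConverge} applies on the filled side, and pass to the limit. Your explicit invocation of \refthm{ELC} to identify the filled manifold $Y$ produced by \refthm{FillingApproximation} with the given $V$ is a clean way to handle a point the paper leaves implicit; one cosmetic remark is that the ratio bound from \reflem{KenPropH2} is $e^K$ with $K$ the $\dhyp$ bound rather than literally $1.656$, but any constant below $\log 3/m$ suffices and the paper records $1.66$.
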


Just as with the drilling argument, the proof of \refthm{ShortFillingTame} relies on the following finite-volume analogue \cite[Theorem 7.21]{FPS:EffectiveBilipschitz}.

\begin{theorem}[Short geodesics under filling, \cite{FPS:EffectiveBilipschitz}]\label{Thm:ShortFillingFinVolume}
Let $Y$ be a complete, finite-volume hyperbolic 3-manifold and $\Sigma$ a geodesic link in $Y$. Suppose that $Y - \Sigma$ admits a hyperbolic structure $Z$, such that the total normalized length of the meridians of $\Sigma$ in $Z$ satisfies $L^2 \geq 128$. Let $\gamma \subset Z$ be a closed geodesic of length $m = \len_{Z}(\gamma) \leq 0.056$. Define
\[
z_{\min}' = \haze^{-1}\left(\frac{(2\pi)^2}{L^2 - 14.7} + 2\pi \cdot 1.656 \, m \right) > 0.624.
\]
Then $\gamma$ is isotopic to a closed geodesic in $Y$. Furthermore, the complex lengths of $\gamma$ in $Y$ and $Z$ are related as follows:
\[
\dhyp(\calL_Y(\gamma), \,  \calL_{Z}(\gamma))
\: \leq \:  4\pi^2 \,F \big( z_{\min}' , \tfrac{2\pi}{L^2 - 14.7} \big) .
\]
\end{theorem}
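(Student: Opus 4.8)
The plan is to prove \refthm{ShortFillingFinVolume} (\cite[Theorem~7.21]{FPS:EffectiveBilipschitz}) directly from the Hodgson--Kerckhoff cone deformation, treating the non-singular geodesic $\gamma$ as an auxiliary cone-angle-$2\pi$ component of an augmented singular locus $\Sigma^+ = \Sigma \cup \gamma$. First I would invoke Hodgson and Kerckhoff's filling theorem \cite[Theorem~1.2]{hk:shape}: since $L^2 \geq 128$, the total normalized length of the meridians of $\Sigma$ in $Z = Y - \Sigma$ comfortably exceeds the Hodgson--Kerckhoff threshold, so the Dehn filling $Y$ is hyperbolic, $\Sigma$ is a geodesic link in $Y$, and there is a smooth one-parameter family of hyperbolic cone structures $N_\alpha$ on $Y$ with cone angle $\alpha$ along $\Sigma$, defined for $\alpha \in [0, 2\pi]$, with $N_0 = Z$ and $N_{2\pi} = Y$. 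I would then record two quantitative inputs: an upper bound on the visual area $\calA_\alpha(\Sigma) = \alpha\,\len_{N_\alpha}(\Sigma)$ of the form $\calA_\alpha(\Sigma) \leq \tfrac{(2\pi)^2}{L^2 - 14.7}$ (the normalized-length-to-visual-area estimate that also produces the constant $11.7$ in Theorems~\ref{Thm:EffectiveFillingTame} and~\ref{Thm:EffectiveFillFiniteVolume}, with the extra slack absorbed into the $14.7$); and the Hodgson--Kerckhoff tube radius estimate, which gives an embedded tube of radius $R \geq \arctanh\!\big(\haze^{-1}(\calA)\big)$ about a geodesic of visual area $\calA$, cf.\ \refdef{Haze}.

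Second, I would show $\gamma$ stays isotopic to a closed geodesic of controlled length throughout the family. Because $\gamma$ is non-singular, its contribution to the visual area of $\Sigma^+$ at parameter $\alpha$ is $2\pi\,\len_{N_\alpha}(\gamma)$. I would run an open--closed bootstrap over $\alpha \in [0, 2\pi]$: on the set where $\len_{N_\alpha}(\gamma) \leq 1.656\,m$, the visual area of $\Sigma^+$ is at most $\tfrac{(2\pi)^2}{L^2 - 14.7} + 2\pi \cdot 1.656\,m$, so the tube estimate yields an embedded tube about $\Sigma^+$ with $\tanh R \geq z_{\min}'$; the numerical hypotheses $L^2 \geq 128$ and $m \leq 0.056$ are tuned exactly so that the argument of $\haze^{-1}$ stays below the value forcing $z_{\min}' > 0.624$. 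This in turn keeps the tube large enough for the complex-length estimate of the next step to reclose the loop, re-deriving $\len_{N_\alpha}(\gamma) < 1.656\,m$ with room to spare. Hence $\gamma$ never degenerates and its geodesic representative persists all the way to $\alpha = 2\pi$, giving the asserted conclusion that $\gamma$ is isotopic to a closed geodesic in $Y$.

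Third --- the analytic core --- I would bound the change in the complex length $\calL_{N_\alpha}(\gamma)$ through the deformation. Using the Hodgson--Kerckhoff harmonic representative of the deformation cohomology class together with the pointwise-from-$L^2$ estimates on the complement of the embedded $z_{\min}'$-tube about $\Sigma^+$, as packaged in \cite[Sections~5--7]{FPS:EffectiveBilipschitz}, one obtains a differential inequality bounding $\big|\tfrac{d}{d(\alpha^2)}\, i\calL_{N_\alpha}(\gamma)\big|$, measured in the upper half-plane metric of \refdef{ComplexLengthDistance}, by a product of the tube factor $\tfrac{1+z^2}{z^3(3-z^2)}$ (with $z = \tanh R \geq z_{\min}'$, a factor decreasing in $z$) and a factor governing the magnitude of the $\Sigma$-deformation, controlled by $\len_{N_\alpha}(\Sigma) \leq \tfrac{2\pi}{L^2 - 14.7}$. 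Integrating over $\alpha^2 \in [0, 4\pi^2]$ and using monotonicity of $F$ in both arguments (\refdef{FDefine}) collapses the estimate to
\[
\dhyp\big(\calL_{N_0}(\gamma),\, \calL_{N_{2\pi}}(\gamma)\big) \;\leq\; 4\pi^2\, F\big(z_{\min}',\, \tfrac{2\pi}{L^2 - 14.7}\big),
\]
which is the claimed bound since $N_0 = Z$ and $N_{2\pi} = Y$. This third step is the main obstacle: one must set up the deformation cohomology, pin down the harmonic form with the correct behaviour at the tube boundary and at the remaining cusps, and extract an \emph{explicit} (not merely uniform) pointwise bound on the variation of $\calL(\gamma)$ --- all while the bootstrap of the previous paragraph, on which the tube-radius input depends, is running simultaneously.

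Finally, I would note that one might hope to shortcut this by reducing to the drilling statement \refthm{ShortDrillingFinVolume}: once Hodgson--Kerckhoff filling produces $Y$ with $\Sigma$ geodesic and $\len_Y(\Sigma)$ small, one could try to apply \refthm{ShortDrillingFinVolume} to the pair $(Y, \Sigma)$. The difficulty is that the hypotheses and conclusion of that theorem presuppose $\gamma$ is already isotopic to a short geodesic in $Y$, and establishing this, together with a controlled length bound, seems to require essentially the same cone-deformation input as above; so the reduction does not actually avoid the hard step, and I would carry out the direct argument instead.
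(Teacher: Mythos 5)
Your proposal follows essentially the same route as the source of this theorem: the statement is quoted verbatim from \cite[Theorem~7.21]{FPS:EffectiveBilipschitz}, and the proof there is exactly the Hodgson--Kerckhoff cone deformation from $Z$ to $Y$, with the visual-area bound $\tfrac{(2\pi)^2}{L^2-14.7} + 2\pi\cdot 1.656\,m$ for the augmented locus $\Sigma\cup\gamma$ feeding the $\haze^{-1}$ tube-radius estimate, a bootstrap keeping $\len_{N_\alpha}(\gamma)\leq 1.656\,m$ throughout, and integration of the resulting differential inequality for $i\calL_{N_\alpha}(\gamma)$ over $\alpha^2\in[0,4\pi^2]$ to produce the factor $4\pi^2 F(z_{\min}',\cdot)$. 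Your outline, including the self-consistency of the constant $1.656$ with the output of \reflem{KenPropH2}, matches that argument; the remaining work is the explicit harmonic-deformation analysis already packaged in \cite[Sections~5--7]{FPS:EffectiveBilipschitz}.
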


\begin{proof}[Proof of \refthm{ShortFillingTame}]
If $\vol(Y)<\infty$, the desired result already follows from \refthm{ShortFillingFinVolume}. (Although the definition of $z'_{\min}$ in \refthm{ShortFillingFinVolume} differs from the definition of $\zmin$ in \refthm{ShortFillingTame}, the monotonicity of $\haze^{-1}$ and $F$ ensures 
that the conclusion of \refthm{ShortDrillingFinVolume} still applies with $(L/2)^2$ in place of $L^2$.) For the rest of the proof, we assume that $\vol(Y)= \infty$.

Let $V_n$ and $W_n$ be the sequences of geometrically finite manifolds constructed in \refthm{DrillingApproximation}. By that theorem, the conformal boundaries of each $V_n$ and each $W_n$ admit the same circle packing $C_n$. Furthermore, there is
a strong limit $\rho_n \to \rho$ (where $\rho_n$ is the representation corresponding to $W_n$ and $\rho$ corresponds to $Y$) and a strong limit $\xi_i \to \xi$ (where $\xi_n$ is the representation corresponding to $W_n$ and $\xi$ corresponds to $Z$).

Let $\gamma \subset Z$ be a closed geodesic satisfying the length bound of the theorem. Then \refprop{GeodesicsConverge} implies that for $n \gg 0$, the approximating manifold $W_n$ contains a closed geodesic $\gamma_n$, where the sequence $\{ \gamma_n \}$ converges to $\gamma$ as $n \to \infty$.
Consequently, $\calL_{W_n}(\gamma_n) \to \calL_{Z}(\gamma)$. In particular, for all $n \gg 0$, we have $m_n = \len_{W_n}(\gamma_n) \leq 0.056$.

Let $\ss$ be the tuple of slopes in $Z$ corresponding to the meridians of $\Sigma$, and let $\ss^n$ be the tuple of slopes in $W_n$ corresponding to the meridians of $\Sigma$. Then, as in the proof of \refthm{FillingApproximation}, we have $L(\ss^n) \to L(\ss) > 512$ as $n \to \infty$. In the double-doubled manifold $DD(W_n^\circ)$, we obtain a tuple of slopes $DD(\ss^n)$, 
where each coordinate of $\ss^n$ appears four times, once per copy of $W_n^\circ$. Thus, just as in \refeqn{DoubleDoubledNormLength}, we get
\[
\frac{1}{L(DD(\ss^n))^2} = \frac{4}{L(\ss^n)^2} \leq \frac{4}{512} = \frac{1}{128}, 
\]
where the inequality holds for $n \gg 0$. Thus $DD(W_n^\circ)$ satisfies the hypotheses of \refthm{ShortFillingFinVolume}. 
By \refthm{FillingApproximation}, filling $DD(W_n^\circ)$ along the tuple of slopes $DD(\ss^n)$ produces the finite-volume hyperbolic manifold $DD(V_n^\circ)$.

By \refthm{ShortFillingFinVolume}, the closed geodesic $\gamma_n \subset DD(W_n^\circ)$ is isotopic to a geodesic in the filled manifold $DD(V_n^\circ)$. Furthermore, since $\gamma_n$ can be isotoped to be disjoint from the red and blue totally geodesic surfaces that partition the copies of $V_n^\circ$, the geodesic representative of $\gamma_n$ must be entirely contained in one copy of $V_n^\circ$. Applying \refthm{ShortFillingFinVolume} to $DD(W_n^\circ)$ and $DD(V_n^\circ)$, we obtain
\[
\dhyp(\calL_{V_n^\circ}(\gamma_n), \,  \calL_{W_n^\circ}(\gamma_n))
 \leq   4\pi^2 \,F\big( z_{\min}^n, \tfrac{2\pi}{(L(\ss^n)/2)^2 - 14.7} \big),
\]
where
\[
z_{\min}^n 
= \haze^{-1}\left(\tfrac{(2\pi)^2}{L(DD(\ss^n))^2 - 14.7} + 2\pi \cdot 1.656 \, m_n \right)
= \haze^{-1}\left(\tfrac{(2\pi)^2}{(L(\ss^n)/2)^2 - 14.7} + 2\pi \cdot 1.656 \, m_n \right).
\]
Since we are using \refthm{ShortFillingFinVolume} with  $z_{\min}^n \geq 0.624$ and $(L(\ss^n)/2)^2 - 14.7 \geq 113.3$, we can substitute $F(0.642, \frac{2\pi}{113.3}) \leq 0.128$ in the above bound on complex length. Thus \reflem{KenPropH2} implies $\len_{V_n}(\gamma_n) < 1.66 m_n < 0.1$, enabling us to apply \refprop{GeodesicsConverge}.

Now, recall the strong limit $\rho_n \to \rho$. By \refprop{GeodesicsConverge}, the geodesics $\gamma_n \subset V_n$ converge to a geodesic $\gamma \subset Y$. In particular, $\calL_{V_n}(\gamma_n) \to \calL_Y(\gamma)$. Taking limits of the above bound on $\dhyp(\calL_{V_n^\circ}(\gamma_n), \,  \calL_{W_n^\circ}(\gamma_n))$
\[
\dhyp(\calL_{Y}(\gamma), \,  \calL_{Z}(\gamma))
 \leq   4\pi^2 \,F \big( \zmin, \tfrac{2\pi}{(L(\ss^n)/2)^2 - 14.7} \big),
\]
as desired.
\end{proof}

As a corollary of \refthm{ShortFillingTame}, we obtain

\begin{corollary}\label{Cor:LenBoundUpInfinite}
Let $Y$ be a tame hyperbolic 3-manifold and $\Sigma$ a geodesic link in $Y$. Suppose that $Y-\Sigma$ admits a hyperbolic structure $Z$ with the same end invariants as those of $Y$, and such that the total normalized length of the meridians of $\Sigma$ satisfies $L^2 > 512$. Let $\gamma \subset Z$ be a closed geodesic of complex length $\len_{Z} (\gamma)+i\tau_{Z}(\gamma)$, with $\len_{Z}(\gamma) < 0.056$. Then $\gamma$ is isotopic to a closed geodesic in $Y$, and furthermore
\[
1.657^{-1} \leq \frac{\len_{Z}(\gamma)}{\len_{Y}(\gamma)} \leq 1.657
\qquad \text{and} \qquad
|\tau_{Z}(\gamma) - \tau_{Y}(\gamma) | \leq 0.0295.
\]
\end{corollary}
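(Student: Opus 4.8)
The plan is to obtain \refcor{LenBoundUpInfinite} as a direct consequence of \refthm{ShortFillingTame} together with the elementary estimate \reflem{KenPropH2}, by substituting the hypotheses into the theorem and converting the resulting bound on $\dhyp$ into the stated bounds on the real and imaginary parts of complex length. First I would note that, in the notation of \refthm{ShortFillingTame}, the manifold $Y$ of the corollary plays the role of $V$ and the drilled structure $Z$ plays the role of $W$; the hypotheses $L^2 > 512$ and $\len_Z(\gamma) = m < 0.056$ are exactly the hypotheses of that theorem. Applying \refthm{ShortFillingTame} therefore gives at once that $\gamma$ is isotopic to a closed geodesic in $Y$ and that
\[
\dhyp(\calL_Y(\gamma), \calL_Z(\gamma)) \: \leq \: 4\pi^2 \, F\!\left(\zmin, \tfrac{2\pi}{(L/2)^2 - 14.7}\right),
\qquad
\zmin = \haze^{-1}\!\left(\tfrac{(2\pi)^2}{(L/2)^2 - 14.7} + 2\pi \cdot 1.656\, m\right).
\]

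Next I would make the right-hand side explicit. From $L^2 > 512$ we have $(L/2)^2 - 14.7 > 113.3$, and together with $m < 0.056$ this bounds the argument of $\haze^{-1}$ above by $\tfrac{(2\pi)^2}{113.3} + 2\pi \cdot 1.656 \cdot 0.056 < 1.0196$, so $\haze^{-1}$ is indeed defined there; since $\haze^{-1}$ is monotonically decreasing (\refdef{Haze}), we deduce $\zmin > 0.624$. Because $F$ is decreasing in its first variable and increasing in its second (\refdef{FDefine}), it follows that
\[
F\!\left(\zmin, \tfrac{2\pi}{(L/2)^2 - 14.7}\right) \: \leq \: F\!\left(0.624, \tfrac{2\pi}{113.3}\right) \: < \: 0.01278,
\]
where the final inequality is a direct evaluation of \refdef{FDefine}. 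Hence $\dhyp(\calL_Y(\gamma), \calL_Z(\gamma)) < 4\pi^2 \cdot 0.01278 < 0.5046$.

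Finally I would apply \reflem{KenPropH2} with $K = 0.5046$. That lemma gives $e^{-K} \leq \len_Z(\gamma)/\len_Y(\gamma) \leq e^{K}$, and $e^{0.5046} < 1.657$ yields the first conclusion; it also gives $|\tau_Z(\gamma) - \tau_Y(\gamma)| \leq \sinh(K)\cdot\min\{\len_Y(\gamma), \len_Z(\gamma)\}$, and since $\len_Z(\gamma) = m < 0.056$ and $\sinh(0.5046) < 0.5264$, this is at most $0.5264 \cdot 0.056 < 0.0295$. This is exactly the statement of the corollary. Since the entire argument is one substitution into \refthm{ShortFillingTame}, two monotonicity estimates, and one invocation of \reflem{KenPropH2}, there is no genuine obstacle: the only care required is to use the monotonicity of $\haze^{-1}$ and $F$ in the conservative direction and to verify that the numerics round up to the stated constants $1.657$ and $0.0295$.
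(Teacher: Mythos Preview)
Your proposal is correct and follows essentially the same route as the paper: apply \refthm{ShortFillingTame}, use the monotonicity of $\haze^{-1}$ and $F$ to bound $\dhyp(\calL_Y(\gamma),\calL_Z(\gamma))$ by $4\pi^2 F(0.624,\tfrac{2\pi}{113.3}) \leq 0.5045$, and then invoke \reflem{KenPropH2}. Your write-up is in fact slightly more detailed than the paper's (you check the domain of $\haze^{-1}$ and carry out the $\sinh(K)\cdot m$ computation explicitly), but the argument is the same.
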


\begin{proof}
The hypotheses of this corollary match those of \refthm{ShortFillingTame}. The assumption $L^2 > 512$ is equivalent to $(L/2)^2 - 14.7 \geq 113.3$, hence
\[
\zmin = \haze^{-1}\left(\frac{(2\pi)^2}{(L/2)^2 - 14.7} + 2\pi \cdot 1.656 \, m \right) \geq \haze^{-1}\left(\frac{(2\pi)^2}{113.3} + 2\pi \cdot 1.656 \cdot 0.056 \right) \geq 0.624. 
\]
Plugging $z = \zmin \geq 0.624$ and $\ell = \tfrac{2\pi}{(L/2)^2 - 14.7} \leq \frac{2\pi}{113.3}$ into 
\refthm{ShortFillingTame}, we obtain
\[
\dhyp(\calL_Y(\gamma), \,  \calL_{Z}(\gamma))
 \leq   4\pi^2 \,F (z, \ell )  
 \leq 4\pi^2 \,F(0.624, \tfrac{2\pi}{113.3})
 \leq 0.5045,
\]
where the second inequality uses the monotonicity of $F$ and the third inequality comes from evaluating \refdef{FDefine}.
Finally, \reflem{KenPropH2} converts the bound on $\dhyp(\calL_{Y}(\gamma), \,  \calL_{Z}(\gamma))$ into the desired upper bounds on the distance between the real and imaginary parts of $\calL_{Y}(\gamma)$ and $\calL_{Z}(\gamma)$.
\end{proof}

\bibliographystyle{amsplain}
\bibliography{biblio}

\providecommand{\bysame}{\leavevmode\hbox to3em{\hrulefill}\thinspace}
\providecommand{\MR}{\relax\ifhmode\unskip\space\fi MR }
\providecommand{\MRhref}[2]{%
  \href{http://www.ams.org/mathscinet-getitem?mr=#1}{#2}
}
\providecommand{\href}[2]{#2}
\begin{thebibliography}{10}

\bibitem{agol:6theorem}
Ian Agol, \emph{Bounds on exceptional {D}ehn filling}, Geom. Topol. \textbf{4}
  (2000), 431--449 (electronic). \MR{MR1799796 (2001j:57019)}

\bibitem{agol:drilling}
\bysame, \emph{Volume change under drilling}, Geom. Topol. \textbf{6} (2002),
  905--916. \MR{1943385}

\bibitem{Agol:Tameness}
\bysame, \emph{Tameness of hyperbolic 3-manifolds}, arXiv:math/0405568, 2004.

\bibitem{AndersonCanary}
James~W. Anderson and Richard~D. Canary, \emph{Cores of hyperbolic
  {$3$}-manifolds and limits of {K}leinian groups. {II}}, J. London Math. Soc.
  (2) \textbf{61} (2000), no.~2, 489--505. \MR{1760675}

\bibitem{BeardonStephenson}
Alan~F. Beardon and Kenneth Stephenson, \emph{The uniformization theorem for
  circle packings}, Indiana Univ. Math. J. \textbf{39} (1990), no.~4,
  1383--1425. \MR{1087197}

\bibitem{Bonahon:Currents}
Francis Bonahon, \emph{The geometry of {T}eichm\"uller space via geodesic
  currents}, Invent. Math. \textbf{92} (1988), no.~1, 139--162. \MR{931208}

\bibitem{boroczky}
K\'{a}roly B{\"o}r{\"o}czky, \emph{Packing of spheres in spaces of constant
  curvature}, Acta Math. Acad. Sci. Hungar. \textbf{32} (1978), no.~3-4,
  243--261. \MR{MR512399 (80h:52014)}

\bibitem{Bowditch:ELC}
Brian~H. Bowditch, \emph{The ending lamination theorem},
  \url{http://homepages.warwick.ac.uk/~masgak/papers/elt.pdf}.

\bibitem{brock-bromberg:density}
Jeffrey~F. Brock and Kenneth~W. Bromberg, \emph{On the density of geometrically
  finite {K}leinian groups}, Acta Math. \textbf{192} (2004), no.~1, 33--93.
  \MR{MR2079598 (2005e:57046)}

\bibitem{BrockCanaryMinsky:ELC}
Jeffrey~F. Brock, Richard~D. Canary, and Yair~N. Minsky, \emph{The
  classification of {K}leinian surface groups, {II}: {T}he ending lamination
  conjecture}, Ann. of Math. (2) \textbf{176} (2012), no.~1, 1--149.
  \MR{2925381}

\bibitem{bromberg:conemflds}
Kenneth~W. Bromberg, \emph{Hyperbolic cone-manifolds, short geodesics, and
  {S}chwarzian derivatives}, J. Amer. Math. Soc. \textbf{17} (2004), no.~4,
  783--826. \MR{2083468 (2005h:57024)}

\bibitem{Brooks}
Robert Brooks, \emph{Circle packings and co-compact extensions of {K}leinian
  groups}, Invent. Math. \textbf{86} (1986), no.~3, 461--469. \MR{860677}

\bibitem{CalegariGabai:Tameness}
Danny Calegari and David Gabai, \emph{Shrinkwrapping and the taming of
  hyperbolic 3-manifolds}, J. Amer. Math. Soc. \textbf{19} (2006), no.~2,
  385--446. \MR{2188131}

\bibitem{Canary:Ends}
Richard~D. Canary, \emph{Ends of hyperbolic {$3$}-manifolds}, J. Amer. Math.
  Soc. \textbf{6} (1993), no.~1, 1--35. \MR{1166330}

\bibitem{Canary:TamenessSurvey}
\bysame, \emph{Marden's tameness conjecture: history and applications},
  Geometry, analysis and topology of discrete groups, Adv. Lect. Math. (ALM),
  vol.~6, Int. Press, Somerville, MA, 2008, pp.~137--162. \MR{2464394}

\bibitem{ceg:notes-on-notes}
Richard~D. Canary, David B.~A. Epstein, and Paul Green, \emph{Notes on notes of
  {T}hurston}, Analytical and geometric aspects of hyperbolic space
  ({C}oventry/{D}urham, 1984), London Math. Soc. Lecture Note Ser., vol. 111,
  Cambridge Univ. Press, Cambridge, 1987, pp.~3--92. \MR{903850 (89e:57008)}

\bibitem{CanaryMcCullough}
Richard~D. Canary and Darryl McCullough, \emph{Homotopy equivalences of
  3-manifolds and deformation theory of {K}leinian groups}, Mem. Amer. Math.
  Soc. \textbf{172} (2004), no.~812, xii+218. \MR{2096234}

\bibitem{CullerShalen:Paradoxical}
Marc Culler and Peter~B. Shalen, \emph{Paradoxical decompositions,
  {$2$}-generator {K}leinian groups, and volumes of hyperbolic
  {$3$}-manifolds}, J. Amer. Math. Soc. \textbf{5} (1992), no.~2, 231--288.
  \MR{1135928}

\bibitem{FPS:Tubes}
David Futer, Jessica~S. Purcell, and Saul Schleimer, \emph{Effective distance
  between nested {M}argulis tubes}, Trans. Amer. Math. Soc. \textbf{372}
  (2019), no.~6, 4211--4237. \MR{4009389}

\bibitem{FPS:EffectiveBilipschitz}
\bysame, \emph{Effective bilipschitz bounds on drilling and filling}, Geom.
  Topol. \textbf{26} (2022), no.~3, 1077--1188.

\bibitem{futer-schleimer:cusp-geometry}
David Futer and Saul Schleimer, \emph{Cusp geometry of fibered 3-manifolds},
  Amer. J. Math. \textbf{136} (2014), no.~2, 309--356. \MR{3188063}

\bibitem{hk:ConeRigidity}
Craig~D. Hodgson and Steven~P. Kerckhoff, \emph{Rigidity of hyperbolic
  cone-manifolds and hyperbolic {D}ehn surgery}, J. Differential Geom.
  \textbf{48} (1998), no.~1, 1--59. \MR{1622600 (99b:57030)}

\bibitem{hk:univ}
\bysame, \emph{Universal bounds for hyperbolic {D}ehn surgery}, Ann. of Math.
  (2) \textbf{162} (2005), no.~1, 367--421. \MR{2178964 (2006g:57031)}

\bibitem{hk:shape}
\bysame, \emph{The shape of hyperbolic {D}ehn surgery space}, Geom. Topol.
  \textbf{12} (2008), no.~2, 1033--1090. \MR{2403805 (2010b:57021)}

\bibitem{Hoffman-Purcell}
Neil~R. Hoffman and Jessica~S. Purcell, \emph{Geometry of planar surfaces and
  exceptional fillings}, Bull. Lond. Math. Soc. \textbf{49} (2017), no.~2,
  185--201. \MR{3656288}

\bibitem{Jaco:Lectures}
William Jaco, \emph{Lectures on three-manifold topology}, CBMS Regional
  Conference Series in Mathematics, vol.~43, American Mathematical Society,
  Providence, R.I., 1980. \MR{565450 (81k:57009)}

\bibitem{JorgensenMarden}
Troels J{\o}rgensen and Albert Marden, \emph{Algebraic and geometric
  convergence of {K}leinian groups}, Math. Scand. \textbf{66} (1990), no.~1,
  47--72. \MR{1060898}

\bibitem{lackenby:surgery}
Marc Lackenby, \emph{Word hyperbolic {D}ehn surgery}, Invent. Math.
  \textbf{140} (2000), no.~2, 243--282. \MR{MR1756996 (2001m:57003)}

\bibitem{Marden:HyperbolicManifolds}
Albert Marden, \emph{Hyperbolic manifolds}, Cambridge University Press,
  Cambridge, 2016, An introduction in 2 and 3 dimensions. \MR{3586015}

\bibitem{meyerhoff}
Robert Meyerhoff, \emph{A lower bound for the volume of hyperbolic
  {$3$}-manifolds}, Canad. J. Math. \textbf{39} (1987), no.~5, 1038--1056.
  \MR{918586 (88k:57049)}

\bibitem{Minsky:ModelsBounds}
Yair Minsky, \emph{The classification of {K}leinian surface groups. {I}.
  {M}odels and bounds}, Ann. of Math. (2) \textbf{171} (2010), no.~1, 1--107.
  \MR{2630036}

\bibitem{minsky:punctured-tori}
Yair~N. Minsky, \emph{The classification of punctured-torus groups}, Ann. of
  Math. (2) \textbf{149} (1999), no.~2, 559--626. \MR{MR1689341 (2000f:30028)}

\bibitem{namazi-souto:density}
Hossein Namazi and Juan Souto, \emph{Non-realizability and ending laminations:
  proof of the density conjecture}, Acta Math. \textbf{209} (2012), no.~2,
  323--395. \MR{3001608}

\bibitem{ohshika:density}
Ken'ichi Ohshika, \emph{Realising end invariants by limits of minimally
  parabolic, geometrically finite groups}, Geom. Topol. \textbf{15} (2011),
  no.~2, 827--890. \MR{2821565}

\bibitem{Scott:AnnulusTorus}
Peter Scott, \emph{A new proof of the annulus and torus theorems}, Amer. J.
  Math. \textbf{102} (1980), no.~2, 241--277. \MR{564473}

\bibitem{Shalen:SmallOptimalMargulis}
Peter~B. Shalen, \emph{Small optimal {M}argulis numbers force upper volume
  bounds}, Trans. Amer. Math. Soc. \textbf{365} (2013), no.~2, 973--999.
  \MR{2995380}

\bibitem{Thurston:survey}
William~P. Thurston, \emph{Three-dimensional manifolds, {K}leinian groups and
  hyperbolic geometry}, Bull. Amer. Math. Soc. (N.S.) \textbf{6} (1982), no.~3,
  357--381. \MR{MR648524 (83h:57019)}

\bibitem{thurston:geometry-dynamics}
\bysame, \emph{On the geometry and dynamics of diffeomorphisms of surfaces},
  Bull. Amer. Math. Soc. (N.S.) \textbf{19} (1988), no.~2, 417--431. \MR{956596
  (89k:57023)}

\bibitem{Waldhausen:SufficientlyLarge}
Friedhelm Waldhausen, \emph{On irreducible {$3$}-manifolds which are
  sufficiently large}, Ann. of Math. (2) \textbf{87} (1968), 56--88.
  \MR{MR0224099 (36 \#7146)}

\end{thebibliography}

\end{document}